\theoremstyle{plain}
  \newtheorem{theorem}{Theorem}[section]
  \newtheorem{corollary}[theorem]{Corollary}
  \newtheorem{proposition}[theorem]{Proposition}
  \newtheorem{lemma}[theorem]{Lemma}
\theoremstyle{definition}
  \newtheorem{definition}[theorem]{Definition}
  \newtheorem*{notation}{Notation}
\theoremstyle{remark}
  \newtheorem{remark}[theorem]{Remark}
\newcommand\abs[2][]{#1\lvert#2#1\rvert}
\newcommand\norm[2][]{#1\lVert#2#1\rVert}
\newcommand\ceil[2][]{#1\lceil#2#1\rceil}
\newcommand\lb[1]{#1^-\!} 
\newcommand\ub[1]{#1^+\!} 
\newcommand\bndry{\partial}
\newcommand\Index[1]{#1\index{#1}}
\newcommand\idxsym[1]{} 
\DeclareMathOperator{\intr}{int}
\DeclareMathOperator{\clos}{cl}
\DeclareMathOperator{\dom}{dom}
\DeclareMathOperator{\distortion}{Dist}
\DeclareMathOperator{\graph}{graph}
\newcommand\distance{d}
\newcommand\uint{[0,1]}
\newcommand\setC{\mathcal{C}}
\newcommand\setD{\mathcal{D}}
\newcommand\setL{\mathcal{L}}
\newcommand\setK{\mathcal{K}}
\newcommand\setLS{\mathcal{L}^S}
\newcommand\setY{\mathcal{Y}}
\newcommand\opD{D}                
\newcommand\opN{N}                
\newcommand\opS{S}                
\newcommand\opZ{Z}                
\newcommand\opO{O}
\newcommand\intd{d}               
\newcommand\e{e}                  
\newcommand\inv{^{-1}}
\newcommand\bigoh{\mathcal{O}}
\newcommand\reals{\mathbb{R}}
\newcommand\nats{\mathbb{N}}
\newcommand\ints{\mathbb{Z}}
\newcommand\eps{\varepsilon}
\newcommand\opR{\mathcal{R}}
\newcommand\opT{T}                
\newcommand\slice{\mathcal{S}}
\newcommand\id{\mathrm{id}}
\newcommand\decomp[1]{\bar{#1}}
\newcommand\decomps{\mathcal{\bar{D}}}
\newcommand\pdecomps{\mathcal{\bar{Q}}}
\newcommand\pures{\mathcal{Q}}
\newcommand\lorenzd{\mathcal{\bar{L}}}
\newcommand\cone{H}
\newcommand\du{\oplus}
\newcommand\rdomd{\mathcal{\bar{L}}}
\newcommand\rtype{\bar{\omega}}
\newcommand\attr{\mathcal{A}}
\newcommand\fstret[1]{\tilde{#1}}
\newcommand\word[1]{w(#1)}
\newcommand\dinvset{\bar{\setK}}
\newcommand\trivl{\gamma_\mathrm{triv}^-}
\newcommand\trivr{\gamma_\mathrm{triv}^+}
\newcommand\mfd{\mathcal{W}}
\newcommand\limitset{\Lambda}
\newcommand\graphs{\mathcal{G}}
\newcommand\blok{\bar{\mathcal{B}}}
\newcommand\lbb{b_0}
\newcommand\propref[1]{Proposition~\ref{prop:#1}}
\newcommand\thmref[1]{Theorem~\ref{thm:#1}}
\newcommand\lemref[1]{Lemma~\ref{lem:#1}}
\newcommand\secref[1]{Section~\ref{sec:#1}}
\newcommand\appref[1]{Appendix~\ref{sec:#1}}
\newcommand\remref[1]{Remark~\ref{rem:#1}}
\newcommand\figref[1]{Figure~\ref{fig:#1}}
\newcommand\defref[1]{Definition~\ref{def:#1}}
\newcommand\corref[1]{Corollary~\ref{cor:#1}}
\newcommand\eqnref[1]{(\ref{eq:#1})}
\newcommand\lcv{c_1^-} 
\newcommand\rcv{c_1^+} 
\newcommand\xonn[2]{{#1/#2}}
\newcommand\xonp[2]{{#1/(#2)}}
\newcommand\xnp[2]{\xonp{#1}{#2}}
\newcommand\xo[1]{\xonn{1}{#1}}
\newcommand\xop[1]{\xonp{1}{#1}}
\newcommand\ooan[1]{{\alpha^{-#1}}}
\newcommand\smashooan[1]{{\alpha^{\mrlap{-#1}}}}
\newcommand\ooa{\xonn{1}{\alpha}}
\newcommand\TODO[1]{} 
\def\clap#1{\hbox to 0pt{\hss#1\hss}}
\def\mrlap{\mathpalette\mathrlapinternal}
\def\mathrlapinternal#1#2{\rlap{$\mathsurround=0pt#1{#2}$}}
\begin{document}

\title{On the Hyperbolicity of Lorenz Renormalization}
\author{Marco Martens}
\address{Department of Mathematics \\
         Stony Brook University \\
         Stony Brook NY, USA}
\email{marco@math.sunysb.edu}
\author{Bj\"orn Winckler}
\address{Deparment of Mathematics \\
         KTH \\
         Stockholm, Sweden}
\email{winckler@kth.se}
\date{\today}

\begin{abstract}
  We consider infinitely renormalizable Lorenz maps with real critical exponent
  $\alpha>1$ and combinatorial type which is monotone and satisfies a long
  return condition.  For these combinatorial types we prove the existence of
  periodic points of the renormalization operator, and that each map in the
  limit set of renormalization has an associated unstable manifold.  An
  unstable manifold defines a family of Lorenz maps and we prove that each
  infinitely renormalizable combinatorial type (satisfying the above
  conditions) has a unique representative within such a family.  We also prove
  that each infinitely renormalizable map has no wandering intervals and that
  the closure of the forward orbits of its critical values is a Cantor
  attractor of measure zero.
\end{abstract}

\maketitle

\def\IMSmarkvadjust{0 pt}
\def\IMSmarkhadjust{0 pt}
\def\IMSmarkhpadding{0 pt}
\def\IMSpubltext{Published in modified form:}
\def\SBIMSMark#1#2#3{
 \font\SBF=cmss10 at 10 true pt
 \font\SBI=cmssi10 at 10 true pt
 \setbox0=\hbox{\SBF \hbox to \IMSmarkhpadding{\relax}
                Stony Brook IMS Preprint \##1}
 \setbox2=\hbox to \wd0{\hfil \SBI #2}
 \setbox4=\hbox to \wd0{\hfil \SBI #3}
 \setbox6=\hbox to \wd0{\hss
             \vbox{\hsize=\wd0 \parskip=0pt \baselineskip=10 true pt
                   \copy0 \break%
                   \copy2 \break%
                   \copy4 \break}}
 \dimen0=\ht6   \advance\dimen0 by \vsize \advance\dimen0 by 8 true pt
                \advance\dimen0 by -\pagetotal
	        \advance\dimen0 by \IMSmarkvadjust
 \dimen2=\hsize \advance\dimen2 by .25 true in
	        \advance\dimen2 by \IMSmarkhadjust

%
%
  \openin2=publishd.tex
  \ifeof2\setbox0=\hbox to 0pt{}
  \else 
     \setbox0=\hbox to 3.1 true in{
                \vbox to \ht6{\hsize=3 true in \parskip=0pt  \noindent  
                {\SBI \IMSpubltext}\hfil\break
                \input publishd.tex 
                \vfill}}
  \fi
  \closein2
  \ht0=0pt \dp0=0pt
 \ht6=0pt \dp6=0pt
 \setbox8=\vbox to \dimen0{\vfill \hbox to \dimen2{\copy0 \hss \copy6}}
 \ht8=0pt \dp8=0pt \wd8=0pt
 \copy8
 \message{*** Stony Brook IMS Preprint #1, #2. #3 ***}
}

\SBIMSMark{2012/5}{April 2012}{}

\section{Introduction}




Flows in three and higher dimensions can exhibit chaotic behavior and are far
from being classified.  Understanding higher dimensional flows is important
since these have ties to physical systems, or at least simplifications thereof.
The simplest example is that of the Lorenz equations. This three-dimensional
flow is an approximate model for a convection flow in a box.  In this paper we
study \emph{geometric} Lorenz flows since this class:
\begin{inparaenum}
  \item exhibits a wide range of dynamically complex behavior,
  \item is ``large'' as a subset in the set of three-dimensional flows (in
    particular, it is open), and
  \item is intimately connected with the Lorenz equations and as such has a
    physical significance.\footnote{The reason why we consider geometric
    Lorenz flows instead of the Lorenz equations is that first-return maps of
    geometric flows automatically have nice properties, whereas for the Lorenz
    equations we would have to prove that such first-return maps exist, which
    is hard.}
\end{inparaenum}
We will describe the dynamics of individual infinitely renormalizable geometric
Lorenz flows as well as the structure of the class of these infinitely
renormalizable geometric Lorenz flows. The precise renormalization structure
will be discussed later.

Recall that a \emph{geometric Lorenz flow} is a flow whose associated vector
field has a singularity of saddle type with a two-dimensional stable manifold
$\mfd^s$ and one-dimensional unstable manifold.  The global dynamics of the
flow should be such that there exists a two-dimensional transverse section $S$
to the stable manifold which is divided into two components by the stable
manifold and such that the first-return map $F: S\setminus\mfd^s \to S$ is
well-defined (points on $\mfd^s$ can never return as they end up on the saddle
point which is why $F$ is undefined on~$\mfd^s$).  The final condition is that
$S$ has a smooth  $F$--invariant foliation whose leaves are exponentially
contracted by~$F$.

Under the above conditions $F$ is a well-defined map on the leaves of the
invariant foliation and by taking a quotient over leaves we get an interval map
$f: I\setminus\{c\} \to I$, where $I \subset \reals$ and $c$ corresponds to the
stable manifold.  Such a map is called a \emph{Lorenz map}.  Let $L$ and~$R$ be
the left and right components of $I\setminus\{c\}$, respectively.  From the
construction of~$f$ it follows that $f|_L$ is equal to $-\abs{x}^\alpha$ in a
left neighborhood of~$0$ up to a rescaling of the domain and range, and $f|_R$
is equal to $\abs{x}^\alpha$ in a right neighborhood of~$0$, again up to a
rescaling of the domain and range.  The parameter $\alpha>0$ is the
\emph{critical exponent} which by construction equals the
absolute value of the ratio between the weak stable eigenvalue and the unstable
eigenvalue of the saddle point of the flow.  Note in particular that there is
no ``preferred'' value for $\alpha$, for example it is \emph{not} an integer
generically, it really is just an arbitrary (positive) real number.  The
expanding case $\alpha < 1$ has been studied extensively elsewhere; we will
consider the significantly harder case $\alpha>1$ where there is a delicate
interplay between both expansion \emph{and} contraction.

A Lorenz map is \emph{renormalizable} if there exists an open interval around
the critical point on which the first-return map is again a Lorenz map and the
operator which takes a map to its first-return map is called a
\emph{renormalization operator}.  The critical point divides the return
interval into two halves, the forward orbits of which determine the
\emph{combinatorial type} of the renormalization.  For example, the type
$(01,100)$ encodes that the left half ($\underline{0}1$) is mapped to the right
of the critical point ($0\underline{1}$) and then returns, whereas the right
half ($\underline{1}00$) is first mapped to the left of the critical point
($1\underline{0}0$) and then left again ($10\underline{0}$) before it returns.
We will consider infinitely renormalizable maps (i.e.\ maps with a full forward
orbit under the renormalization operator) of monotone type (i.e.\ types of the
form $(01\dotsm1,10\dotsm0)$) where the number of steps taken to return is much
larger for one half of the return interval than it is for the other half (the
precise condition can be found in \secref{inv-set}).  It is \emph{very}
difficult to deal with arbitrary combinatorial types, so we had to make some
restrictions in order to make any progress.

The main part of this study relates to the hyperbolic properties of the
renormalization operator and shows in particular that this operator has an
expanding invariant cone-field on a renormalization invariant domain.  This
implies that each monotone Lorenz family (see \secref{island-structure}) has a
unique representative for every infinitely renormalizable combinatorial type,
see \thmref{cantor-archipelago}.  Contrast this with the important result of
monotonicity of entropy for families of unimodal maps which essentially states
that every nonperiodic kneading sequence is realized by a unique map in the
family.

We also show that every point in the limit set of the renormalization operator
has an associated unstable manifold and that the intersection of an unstable
manifold and the set of infinitely renormalizable maps is a Cantor set, see
\thmref{monotonicity}.  We believe the unstable manifolds to be
two-dimensional, but are only able to show that their dimension is at least
two.

Regarding the topological properties of the renormalization operator we show
that there exists a periodic point of the renormalization operator for every
periodic combinatorial type, see \thmref{periodic-points}.

The main conclusion for the dynamics of an individual infinitely renormalizable
Lorenz map is the absence of wandering intervals: two Lorenz maps of the same
infinite renormalization type are topologically conjugated.  We prove this
result by showing that infinitely renormalizable maps satisfy the weak Markov
property of \citep{Mar94} and hence cannot have a wandering interval, see
Theorems \ref{thm:noWI} and~\ref{thm:weak-markov}.  This is the first
nonwandering interval result for Lorenz maps.  The nonexistence of wandering
intervals for general Lorenz maps is still wide open and deserves attention.

We also prove that the closure of the orbits of the critical points of an
infinitely renormalizable map is a Cantor attractor of zero Lebesgue measure,
see \thmref{cantorattractor}.

Finally, let us briefly discuss the techniques employed in the proofs.  The
general idea is that by making one return time large we get a first-return map
which is essentially $\abs{x}^\alpha$ up to scaling by maps which are close to
being affine.  This allows us to explicitly calculate an almost invariant set
of the renormalization operator.  This is done in \secref{inv-set} which is the
first major part of this paper.  After this hurdle we are able to prove
properties of individual infinitely renormalizable maps (no wandering
intervals, Cantor attractor, periodic points of renormalization) in Sections
\ref{sec:apb} and~\ref{sec:ppts}.

The second major part is calculating the derivative of the renormalization
operator on a neighborhood of the limit set of renormalization.  This is done
in \secref{the-derivative}.  However, calculating the derivative of the
renormalization operator defined on interval maps is rather hopeless so we need
a better representation of the domain of the renormalization operator.  The
representation we choose are the so-called \emph{decompositions} which are
families of diffeomorphisms parametrized by ordered countable sets (see
\secref{decompositions}).  The renormalization operator is semi-conjugate to an
operator on (essentially) a space of decompositions.  There are two main
reasons why the derivative of this operator is easier to compute:
\begin{inparaenum}
  \item the limit set is essentially a Hilbert cube (see \propref{conv-to-Q}),
    and
  \item deformations in any of the countably many directions are monotone in a
    sense explained in \secref{the-derivative}.
\end{inparaenum}
The first point means that the derivative is just an infinite matrix and the
second allows us to calculate just a few partial derivatives and then make
sweeping estimates for the remaining (countably infinite) directions.

After having computed the derivative we are able to construct an invariant cone
field in \secref{cone-field}.  A by-product of the derivative calculations is
that the derivative is orientation-preserving in the unstable direction and
using this together with the invariant cone field we are able to prove the
association of each combinatorial type with a unique representative of a
monotone family of maps in \secref{island-structure}.  The invariant cone-field
also implies the existence of unstable manifolds in the limit set of
renormalization, see \secref{unstable}.

As a closing remark we point out that in order to deal with arbitrary
critical exponents $\alpha>1$ we had to invent real analytic methods.
To our knowledge, this work is the first to analyze the
hyperbolic structure of the limit of renormalization for arbitrary
critical exponents.

\medskip

\noindent\textbf{Acknowledgments.} A special thank you to Michael Benedicks who
were instrumental in checking many of the technical details, Welington De Melo
who pointed out mistakes in earlier versions of the manuscript, and to Masha
Saprykina who proof read and provided travel funds for Bj\"orn to visit Stony
Brook.  We would also like to thank Viviane Baladi, Kristian Bjerkl\"ov, Denis
Gaidashev, and Joerg Schmeling.

We thank all the institutions and foundations that have supported us in the
course of this work: Institut Mittag-Leffler, NSF, Simons Mathematics and
Physics  Endowment.




\section{The renormalization operator} 
\label{sec:renorm-op}

In this section we define the renormalization operator on Lorenz maps and
introduce notation that will be used throughout.

\begin{definition}
  \idxsym{$Q(x), u, v, c, \alpha$}
  The \Index{standard Lorenz family} $(u,v,c) \mapsto Q(x)$ is
  defined by
  \begin{equation} \label{eq:standard}
    Q(x) = \begin{cases}
      u\cdot\left( 1 - \left( \frac{c-x}{c} \right)^\alpha \right),
        &\text{ if } x \in [0,c), \\
      1 + v\cdot\left( -1 + \left( \frac{x-c}{1-c} \right)^\alpha \right),
        &\text{ if } x \in (c,1], \\
    \end{cases}
  \end{equation}
  where $u \in \uint$, $v \in \uint$, $c \in (0,1)$, and $\alpha>1$.  The
  parameter $\alpha$ is called the critical exponent\index{critical exponent}
  and will be fixed once and for all.
\end{definition}

\begin{remark}
  The parameters $(u,v,c)$ are chosen so that:
  \begin{inparaenum}[(i)]
    \item $u$ is the length of the image of~$[0,c)$,
    \item $v$ is the length of the image of~$(c,1]$,
    \item $c$ is the critical point\index{critical point} (which is the same as
      the point of discontinuity).
  \end{inparaenum}
  Note that $u$ and $1-v$ are the critical values of~$Q$.
\end{remark}

\begin{figure}
  \begin{center}
    \includegraphics{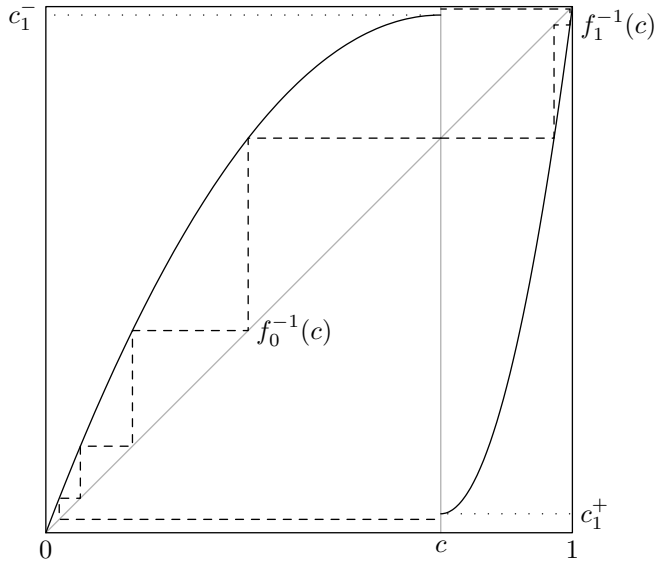}
  \end{center}
  \caption{Illustration of the graph of a $(01,1000)$--renormalizable Lorenz
           map.}
  \label{fig:graph}
\end{figure}

\begin{definition}
  A $\setC^k$--Lorenz map \index{Lorenz map} $f$ on $\uint\setminus\{c\}$ is any
  map which can be written as
  \begin{equation} \label{eq:lorenz-map}
    f(x) = \begin{cases}
      \phi \circ Q(x), &\text{ if }x \in [0,c), \\
      \psi \circ Q(x), &\text{ if }x \in (c,1], \\
    \end{cases}
  \end{equation}
  where $\phi,\psi \in \setD^k$ are orientation-preserving
  $\setC^k$--diffeomorphisms on~$\uint$, called the \Index{diffeomorphic parts}
  of~$f$.  See \figref{graph} for an illustration of a Lorenz map.  The set of
  $\setC^k$--Lorenz maps is denoted $\setL^k$; the subset $\setLS \subset
  \setL^3$ denotes the Lorenz maps with negative Schwarzian derivative (see
  \appref{schwarz} for more information on the Schwarzian derivative).

  A Lorenz map has two \Index{critical values} which we denote
  \[
    \lcv = \lim_{x \uparrow c} f(x)
    \quad\text{and}\quad
    \rcv = \lim_{x \downarrow c} f(x).
  \]
  If $\rcv < c < \lcv$ then $f$ is nontrivial,
  \index{Lorenz map!nontrivial}
  otherwise all points converge to some fixed point under iteration and for
  this reason $f$ is called trivial.
  \index{Lorenz map!trivial}
  Unless otherwise noted, we will always assume all maps to be nontrivial.

  \idxsym{$f = (u,v,c,\phi,\psi)$}
  We make the identification
  \[
    \setL^k = \uint^2 \times (0,1) \times \setD^k \times \setD^k,
  \]
  by sending $(u,v,c,\phi,\psi)$ to $f$ defined by~\eqnref{lorenz-map}.
  Note that $(u,v,c)$ defines $Q$ in~\eqnref{lorenz-map} according
  to~\eqnref{standard}.  For $k\geq2$ this identification turns $\setL^k$ into
  a subset of the Banach space $\reals^3 \times \setD^k \times \setD^k$.  Here
  $\setD^k$ is endowed with the Banach space structure of $\setC^{k-2}$ via the
  nonlinearity operator.  In particular, this turns $\setL^k$ into a metric
  space.  For $k<2$ we turn $\setL^k$ into a metric space by using the usual
  $\setC^k$ metric on $\setD^k$.  See \appref{nonlin} for more information on
  the Banach space $\setD^k$.
\end{definition}

\begin{remark} \label{rem:linstruct}
  It may be worth emphasizing that for $k\geq2$ we are \emph{not} using the
  linear structure induced from $\setC^k$ on the diffeomorphisms~$\setD^k$.
  Explicitly, if $\phi,\psi \in \setD^k$ and $\opN$ denotes the nonlinearity
  operator, then
  \[
    a\phi + b\psi = \opN\inv\left(a\opN\phi + b\opN\psi\right),
    \qquad\forall a,b\in\reals,
  \]
  and
  \[
  \norm{\phi}_{\setD^k} = \norm{\opN\phi}_{\setC^{k-2}}.
  \]
  We call this norm on $\setD^k$ the $\setC^{k-2}$--nonlinearity
  norm.\index{nonlinearity norm} The nonlinearity
  operator\index{nonlinearity operator} $\opN: \setD^k \to \setC^{k-2}$ is a
  bijection and is defined by
  \[
    \opN\phi(x) = \opD \log \opD\phi(x).
  \]
  See \appref{nonlin} for more details on the nonlinearity operator.
\end{remark}

We now define the renormalization operator for Lorenz maps.

\begin{definition} \label{def:renorm}
  A Lorenz map $f$ is \Index{renormalizable} if there exists an interval
  $C\subsetneq\uint$ (properly containing $c$) such that the first-return map
  to~$C$ is affinely conjugate to a nontrivial Lorenz map.  Choose
  $C$ so that it is maximal with respect to these properties.  The
  first-return map affinely rescaled to~$\uint$ is called the
  \Index{renormalization} of~$f$ and is denoted $\opR f$.  The operator $\opR$
  which sends $f$ to its renormalization is called the \Index{renormalization
  operator}.

  \idxsym{$\opR$}
  \idxsym{$C = \clos L \cup R, a, b$}
  Explicitly, if $f$ is renormalizable then there exist minimal positive
  integers $a$ and~$b$ such that the first return map $\fstret{f}$ to~$C$ is
  given by
  \[
    \fstret{f}(x) = \begin{cases}
      f^{a+1}(x), &\text{if $x \in L$,} \\
      f^{b+1}(x), &\text{if $x \in R$,}
    \end{cases}
  \]
  where $L$ and~$R$ are the left and right components of $C \setminus \{c\}$,
  respectively.  The renormalization of~$f$ is defined by
  \[
    \opR f(x) = h\inv \circ \fstret{f} \circ h(x),
    \qquad x \in \uint \setminus \{h\inv(c)\},
  \]
  where $h: \uint \to C$ is the affine orientation-preserving map taking
  $\uint$ to~$C$.  Note that $C$ is chosen maximal so that $\opR f$ is uniquely
  defined.
\end{definition}

\begin{remark}
  We would like to emphasize that the renormalization is assumed to be a
  nontrivial Lorenz map.  It is possible to define the renormalization operator
  for maps whose renormalization is trivial but we choose not to include these
  in our definition.  Such maps can be thought of as degenerate and including
  them makes some arguments more difficult which is why we choose to exclude
  them.
\end{remark}

Next, we wish to describe the combinatorial information encoded in a
renormalizable map.

\begin{definition}
  A \Index{branch} of $f^n$ is a maximal open interval $B$ on which $f^n$ is
  monotone (here maximality means that if $A$ is an open interval which
  properly contains~$B$, then $f^n$ is not monotone on~$A$).

  To each branch $B$ of~$f^n$ we associate a word
  $\word{B} = \sigma_0\dotsm\sigma_{n-1}$ on symbols $\{0,1\}$ by
  \[
  \sigma_j = \begin{cases}
      0 & \text{if $f^j(B)\subset(0,c)$,} \\
      1 & \text{if $f^j(B)\subset(c,1)$,}
      \end{cases}
  \]
  for $j=0,\dots,n-1$.
\end{definition}

\begin{definition}
  Assume $f$ is renormalizable and let $a$, $b$, $L$ and~$R$ be as in
  \defref{renorm}.  The forward orbits of $L$ and~$R$ induce a pair of words
  $\omega = (\word{\hat L},\word{\hat R})$ called the type of
  renormalization\index{renormalization!type of}, where $\hat L$ is the branch
  of~$f^{a+1}$ containing~$L$ and $\hat R$ is the branch of~$f^{b+1}$
  containing~$R$.  In this situation we say that $f$ is
  $\omega$--renormalizable.  See \figref{dynint} for an illustration of these
  definitions.

  Let $\rtype = (\omega_0,\omega_1,\dotsc)$.  If $\opR^n f$ is
  $\omega_n$--renormalizable for $n=0,1,\dotsc$, then we say that $f$ is
  \Index{infinitely renormalizable} and that $f$ has \Index{combinatorial type}
  $\rtype$.  If the length of both words of $\omega_k$ is uniformly bounded
  in~$k$, then $f$ is said to have bounded combinatorial type.
  \index{combinatorial type!bounded}

  \idxsym{$\setL_\omega, \setL_{\rtype}, \setL_\Omega$}
  The set of $\omega$--renormalizable Lorenz maps is denoted $\setL_{\omega}$.
  We will use variations of this notation as well; for $\rtype =
  (\omega_0,\dotsc,\omega_{n-1})$ we let $\setL_{\rtype}$ denote the set of
  Lorenz maps $f$ such that $\opR^i f$ is $\omega_i$--renormalizable, for $i =
  0,\dotsc,n-1$, and similarly if $n=\infty$.  Furthermore, if $\Omega$ is a
  set of types of renormalization, then $\setL_\Omega$ denotes the set of
  Lorenz maps which are $\omega$--renormalizable for some $\omega \in \Omega$.

  We will almost exclusively restrict our attention to \Index{monotone
  combinatorics}, that is renormalizations of type
  \[
    \omega = (0\overbrace{1\dotsm1}^a,1\overbrace{0\dotsm0}^b).
  \]
\end{definition}

\begin{figure}
  \begin{center}
    \includegraphics{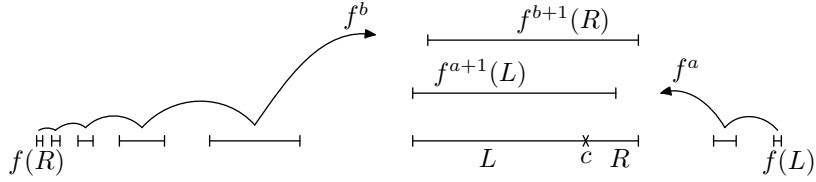}
  \end{center}
  \caption{Illustration of the dynamical intervals of a Lorenz map which is
           $\omega$--renormalizable, with $\omega = (011,100000)$,
           $a=2$, $b=5$.}
  \label{fig:dynint}
\end{figure}

In what follows we will need to know how the five-tuple representation of a
Lorenz map changes under renormalization.  It is not difficult to write down
the formula for any type of renormalization but it becomes a bit messy so we
restrict ourselves to \Index{monotone combinatorics}.  However, first we need
to introduce the zoom operator.

\begin{definition} \label{def:zoom}
  \idxsym{$\opZ(g;I)$}
  The \Index{zoom operator} $\opZ$ takes a diffeomorphism and rescales it
  affinely to a diffeomorphism on~$\uint$.  Explicitly, let $g$ be a map and
  $I$ an interval such that $g|_I$ is an orientation-preserving diffeomorphism.
  Define
  \[
    \opZ(g; I) = \zeta\inv_{g(I)} \circ g \circ \zeta_I,
  \]
  where $\zeta_A: \uint \to A$ is the orientation-preserving affine map which
  takes $\uint$ onto~$A$.  See \appref{nonlin} for more information on zoom
  operators.
\end{definition}

\begin{remark}
  The terminology ``zoom operator'' is taken from \citet{Mar98}, but our
  definition is somewhat simpler since we only deal with orientation-preserving
  diffeomorphisms.  We will use the words `rescale' and `zoom' synonymously.
\end{remark}

\begin{lemma} \label{lem:tuple-renorm}
  If $f = (u,v,c,\phi,\psi)$ is renormalizable of monotone combinatorics, then
  \[
    \opR f = (u',v',c',\phi',\psi')
  \]
  is given by
  \begin{align*}
    u' &= \frac{\abs{Q(L)}}{\abs{U}},&
    v' &= \frac{\abs{Q(R)}}{\abs{V}},&
    c' &= \frac{\abs{L}}{\abs{C}}, \\
    \phi' &= \opZ(f_1^a \circ \phi; U),&
    \psi' &= \opZ(f_0^b \circ \psi; V),
  \end{align*}
  where $U = \phi\inv \circ f_1^{-a}(C)$ and $V = \psi\inv \circ f_0^{-b}(C)$.
\end{lemma}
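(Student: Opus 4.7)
The plan is to unpack the renormalization operator, bring in the zoom operator $\opZ$, and match the result against the explicit form of the standard Lorenz family $Q$ from \eqnref{standard}; I focus on the left branch, as the right is entirely symmetric.

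From \defref{renorm} the affine conjugacy $h\colon\uint\to C$ is the orientation-preserving affine $\zeta_C$, so immediately $c' = h\inv(c) = \abs{L}/\abs{C}$. For monotone combinatorics $\omega=(01\dotsm1,10\dotsm0)$ the first-return map factors as $\fstret{f}|_L = f_1^a\circ\phi\circ Q|_L$ (one step via $f|_L = \phi\circ Q$ into the right half, then $a$ iterations of the right branch $f_1$). By definition, $U = \phi\inv\circ f_1^{-a}(C)$ is the interval on which $f_1^a\circ\phi$ is a diffeomorphism onto $C$, so setting $\phi' := \opZ(f_1^a\circ\phi; U)$ we obtain the zoom identity $f_1^a\circ\phi = \zeta_C\circ\phi'\circ\zeta_U\inv$ on $U$. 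Plugging this into $\opR f|_{[0,c')} = \zeta_C\inv\circ f_1^a\circ\phi\circ Q\circ\zeta_C$ and cancelling the outer $\zeta_C\inv\circ\zeta_C$ gives
\[
  \opR f(x) = \phi'\bigl(\zeta_U\inv(Q(\zeta_C(x)))\bigr),\qquad x\in[0,c').
\]
It remains to check that $Q'(x) := \zeta_U\inv(Q(\zeta_C(x)))$ is exactly the standard Lorenz family element with parameters $(u',v',c')$.

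For the core computation, $\zeta_C(x) = (c-\abs{L}) + \abs{C}x$ gives $(c-\zeta_C(x))/c = (\abs{L}/c)\cdot(c'-x)/c'$, hence $Q(\zeta_C(x)) = u - u(\abs{L}/c)^\alpha((c'-x)/c')^\alpha$. Applying $\zeta_U\inv(y) = (y-u_1)/\abs{U}$, with $u_1$ the left endpoint of $U$, yields
\[
  Q'(x) = \frac{u-u_1}{\abs{U}} - \frac{u(\abs{L}/c)^\alpha}{\abs{U}}\left(\frac{c'-x}{c'}\right)^\alpha.
\]
Matching this against $u'\bigl(1-((c'-x)/c')^\alpha\bigr)$, the $((c'-x)/c')^\alpha$--coefficient forces $u' = u(\abs{L}/c)^\alpha/\abs{U}$; and since $u(\abs{L}/c)^\alpha = u - Q(c-\abs{L}) = \abs{Q(L)}$, this is precisely $u' = \abs{Q(L)}/\abs{U}$. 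The right-branch calculation on $(c',1]$ with $\psi$, $f_0^b$, $V$ replacing $\phi$, $f_1^a$, $U$ is structurally identical and delivers $\psi' = \opZ(f_0^b\circ\psi; V)$ together with $v' = \abs{Q(R)}/\abs{V}$.

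The step I expect to be the main obstacle is the simultaneous matching of the constant term and the coefficient above to the same value $u'$: this forces $u_1 = Q(c-\abs{L})$, i.e.\ the left endpoint of $U$ must coincide with the left endpoint of $Q(L)$. Unpacking the definitions, this is equivalent to $f^{a+1}((c-\abs{L})^+) = c-\abs{L}$, i.e.\ the left endpoint of $C$ is a fixed point of $\fstret{f}|_L$. Rather than an additional hypothesis, this is forced by requiring $\opR f$ to be a Lorenz map in the form \eqnref{lorenz-map}: then $\opR f(0) = \phi'(Q'(0)) = 0$, which translated via $h = \zeta_C$ is exactly the stated fixed-point condition. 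Equivalently, since $\fstret{f}|_L$ is an increasing homeomorphism of $L$ into $C$, maximality of $C$ combined with the monotone symbolic itinerary pins the two one-sided limits at $\partial L\setminus\{c\}$ to distinct endpoints of $C$, forcing the left limit at $(c-\abs{L})^+$ to be $c-\abs{L}$ itself.
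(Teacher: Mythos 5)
Your proof is correct and follows essentially the same route as the paper's, which simply cites the chain rule $\opZ(h\circ g;I)=\opZ(h;g(I))\circ\opZ(g;I)$ together with the fact that zooming $x^\alpha$ on an interval adjacent to the critical point returns $x^\alpha$; you make this explicit by computing $\zeta_U^{-1}\circ Q\circ\zeta_C$ directly and matching against \eqnref{standard}. The one point worth keeping from your write-up is the observation that both the constant term and the coefficient of $((c'-x)/c')^\alpha$ must equal $u'$, which forces $Q_0(L)=U$, i.e.\ $u_1=Q_0(c-\abs{L})$; this is exactly the identity the paper's chain-rule step uses implicitly (it is what makes $\opZ(f_1^a\circ\phi;Q_0(L))=\opZ(f_1^a\circ\phi;U)$), and your derivation of it from the requirement that $\opR f(0)=0$ is a clean justification. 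The appeal to "maximality plus monotone itinerary" at the very end is redundant once you have the fixed-point argument, and is vaguer, so I would drop it.
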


\begin{proof}
  This follows from two properties of zoom operators:
  \begin{inparaenum}[(i)]
    \item the map $q(x) = x^\alpha$ on~$\uint$ is `fixed' under zooming on
      intervals adjacent to the critical point, that is $\opZ(q; (0,t)) = q$
      for $t\in(0,1)$ (technically speaking we have not defined $\opZ$ in this
      situation, but applying the formula for $\opZ$ will give this result),
      and
    \item zoom operators satisfy $\opZ(h\circ g; I) = \opZ(h; g(I)) \circ
      \opZ(g; I)$.
  \end{inparaenum}
\end{proof}

\begin{notation}
  The notation introduced in this section will be used repeatedly throughout.
  Here is a quick summary.

  A Lorenz map is denoted either $f$ or $(u,v,c,\phi,\psi)$ and these two
  notations are used interchangeably.  Sometimes we write $f_0$ or~$f_1$ to
  specify that we are talking about the left or right branch of~$f$,
  respectively.  Similarly, when talking about the inverse branches of~$f$, we
  write $f_0\inv$ and~$f_1\inv$.  The subscript notation is also used for the
  standard family~$Q$ (so $Q_0$ denotes the left branch, etc.).

  A Lorenz map has one critical point~$c$ and two critical values which we
  denote $\lcv = \lim_{x \uparrow c} f(x)$ and $\rcv = \lim_{x \downarrow c}
  f(x)$.  The critical exponent is denoted~$\alpha$ and is always assumed to be
  fixed to some $\alpha>1$.

  In general we use primes for variables associated with the renormalization
  of~$f$.  For example $(u',v',c',\phi',\psi') = \opR f$.  Sometimes we use
  parentheses instead of primes, for example $\lcv(\opR f)$ denotes the left
  critical value of~$\opR f$.  In order to avoid confusion, we try to use
  $\opD$ consistently to denote derivative instead of using primes.

  \idxsym{$U, V, U_i, V_j$}
  With a renormalizable $f$ we associate a return interval $C$ such that
  $C\setminus\{c\}$ has two components which we denote $L$ and~$R$.  We use the
  notation $a+1$ and~$b+1$ to denote the return times of the first-return map
  to $C$ from $L$ and~$R$, respectively.  The letters $U$ and~$V$ are reserved
  to denote the pull-backs of~$C$ as in \lemref{tuple-renorm}.  We let $U_1 =
  \phi(U)$, $U_{i+1} = f^i(U_1)$ for $i=1,\dots,a$, and $V_1 = \psi(V)$,
  $V_{j+1} = f^j(V_1)$ for $j=1,\dots,b$ (note that $U_{a+1} = C = V_{b+1}$).
  We call $\{U_i\}$ and $\{V_j\}$ the \Index{cycles of renormalization}.
\end{notation}


\section{Generalized renormalization} 
\label{sec:genren}

In this section we adapt the idea of generalized renormalization introduced
by~\citet{Mar94}.  The central concept is the weak Markov property which is
related to the distortion of the monotone branches of iterates of a map.

\begin{definition}
  An interval $C$ is called a \Index{nice interval} of~$f$ if:
  \begin{inparaenum}[(i)]
    \item $C$ is open,
    \item the critical point of~$f$ is contained in $C$, and
    \item the orbit of the boundary of~$C$ is disjoint from~$C$.
  \end{inparaenum}
\end{definition}

\begin{remark}
  A `nice interval' is analogous to a `nice point' for unimodal maps
  \citep[see][]{Mar94}.
  The difference is that for unimodal maps one
  point suffices to define an interval around the critical point (the `other'
  boundary point is a preimage of the first), whereas for Lorenz maps the
  boundary points of a nice interval are independent.  The term `nice' is
  perhaps a bit vague but its use has become established by now.
\end{remark}

\begin{definition} \label{def:transfer}
  Fix $f$ and a nice interval $C$.  The \Index{transfer map} to~$C$ induced
  by~$f$,
  \[
    \opT : \bigcup_{n \geq 0} f^{-n}(C) \to C,
  \]
  is defined by $\opT(x) = f^{\tau(x)}(x)$, where
  \[
    \tau: \bigcup_{n \geq 0} f^{-n}(C) \to \nats
  \]
  is the \Index{transfer time} to~$C$; that is $\tau(x)$ is the smallest
  nonnegative integer~$n$ such that $f^n(x) \in C$.
\end{definition}

\begin{remark}
  Note that:
  \begin{inparaenum}[(i)]
    \item the domain of $\opT$ is open, since $C$ is open by assumption,
      and $f\inv(U)$ is open if $U$ is open (even if $U$ contains a critical
      value), since the point of discontinuity of~$f$ is not in the domain
      of~$f$,
    \item $\opT$ is defined on~$C$ and $\opT|_C$ equals the identity map
      on~$C$.
  \end{inparaenum}
\end{remark}

\begin{proposition} \label{prop:Tbranches}
  Let $\opT$ be the transfer map of~$f$ to a nice interval~$C$.  If $I$ is a
  component of the domain of~$\opT$, then $\tau|_I$ is constant and $I$ is
  mapped monotonically onto~$C$ by~$f^{\tau(I)}$.  Furthermore
  $I,f(I),\dots,f^{\tau(I)}(I)$ are pairwise disjoint.
\end{proposition}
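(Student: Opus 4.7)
The plan is to establish the three parts of the conclusion in order---constancy of $\tau$, monotonicity of $f^{\tau(I)}$, and the equality $f^{\tau(I)}(I) = C$---and then to obtain pairwise disjointness as a corollary. First I would show $\tau$ is locally constant on $\dom(\opT)$. Fix $x_0 \in I$ and set $n = \tau(x_0)$. The nice property of~$C$ excludes $f^k(x_0) \in \bndry C$ for $0 \le k < n$: otherwise the entire forward orbit of $f^k(x_0)$ would miss~$C$, contradicting $f^n(x_0) \in C$. Thus each $f^k(x_0)$ lies in the open set $\uint \setminus \clos C$, and continuity of $f$ at these non-critical iterates produces a neighborhood of~$x_0$ on which $\tau \equiv n$. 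Since $I$ is connected, $\tau|_I \equiv n$.

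For monotonicity I would observe that $C$ is itself a component of $\dom(\opT)$, because $\bndry C \cap \dom(\opT) = \emptyset$ (again by the nice property). Hence for $I \ne C$ we have $c \notin I$, and in fact $c \notin f^k(I)$ for $0 \le k < n$ since $f^{k+1}$ is defined on all of~$I$. Consequently each $f^k(I)$ lies entirely on one side of~$c$, and $f^n|_I$ is a monotone homeomorphism onto an open subinterval of~$C$.

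For the surjection I would induct on~$n$; the $n=0$ case gives $I = C$ trivially. For the inductive step, let $I_1$ be the component of $\dom(\opT)$ containing the connected set $f(I)$, so $\tau|_{I_1} \equiv n-1$ and by the inductive hypothesis $f^{n-1}(I_1) = C$; it therefore suffices to prove $f(I) = I_1$. Writing $I = (a,b)$, the crux is that $a, b \notin \dom(\opT)$---else a small neighborhood of $a$ or $b$ inside the open set $\dom(\opT)$ would overlap~$I$, forcing the endpoint into~$I$ and contradicting its openness. Their forward orbits therefore avoid~$C$, giving also $f(a), f(b) \notin \dom(\opT)$. Since $\clos I$ is disjoint from~$c$ (as $I$ lies on one side of~$c$ and is separated from it by~$\bndry C$), $f$ is continuous at $a$ and~$b$, so $f(a)$ and $f(b)$ are precisely the endpoints of $f(I) \subset I_1$. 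Any strict containment $f(I) \subsetneq I_1$ would place $f(a)$ or $f(b)$ into the open set $I_1 \subset \dom(\opT)$, a contradiction. Hence $f(I) = I_1$ and $f^n(I) = C$. Iterating the same reasoning, $f^k(I)$ is the unique component of $\dom(\opT)$ with transfer time $n-k$; since these values are distinct, the intervals $I, f(I), \dots, f^n(I)$ are pairwise disjoint.

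The main obstacle is the surjection step, and within it the observation that endpoints of a component never belong to $\dom(\opT)$. The nice property of~$C$ does the essential work: it keeps $\bndry C$ out of $\dom(\opT)$, which prevents components from extending across $\bndry C$ and forces endpoints of components to map to endpoints of other components rather than to interior points, ruling out the ``leaking'' scenario $f(I) \subsetneq I_1$.
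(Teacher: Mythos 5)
Your proof is correct, and it takes a genuinely different route from the paper's. The paper works ``top-down'' with the branch $B$ of $f^n$ containing a given $x\in I$: it shows $f^n(\bndry B)\cap C=\emptyset$ (by tracking when a boundary point of $B$ hits $\{0,c,1\}$ and exploiting that the orbit of $\bndry C$ never re-enters $C$), deduces $f^n(B)\supset C$, and then reads off constancy of $\tau$ and surjectivity at once; disjointness is by contradiction (overlap of $f^i(I)$ and $f^j(I)$ would produce a shorter transfer time). You instead work ``bottom-up'' through the component structure of $\dom\opT$: local constancy of $\tau$ plus connectedness gives $\tau|_I\equiv n$, definedness of $f^{k+1}$ on $I$ gives monotonicity, and the observation that endpoints of components of an open set cannot lie in that set forces $f$ to send each non-$C$ component onto the component containing its image with transfer time reduced by one, from which surjectivity follows by induction on $n$. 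Your disjointness argument (the $f^k(I)$ are distinct components because their transfer times differ) is arguably cleaner than the paper's contradiction argument, and you additionally extract the structural fact that $\opT$-components map onto $\opT$-components; the paper's approach is more compact and stays closer to the combinatorics of branches of $f^n$. One harmless slip: you call $f^k(I)$ ``the unique component with transfer time $n-k$'' --- uniqueness may fail for $0<k<n$, but you only use distinctness of the transfer times, so nothing is affected.
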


\begin{remark}
  This means in particular that the components of the domain of~$\opT$ are the
  same as the branches of~$\opT$.  In what follows we will use the terminology
  ``a branch of~$\opT$'' interchangeably with ``a component of the domain
  of~$\opT$''.
\end{remark}

\begin{proof}
  If $I = C$ then the proposition is trivial since $\opT|_C$ is the identity
  map on~$C$, so assume that $I \neq C$.

  Pick some $x \in I$ and let $n = \tau(x)$.  Note that $n > 0$ since
  $I \neq C$.  We claim that the branch $B$ of~$f^n$ containing $x$ is mapped
  over~$C$.  From this it immediately follows that $\tau|_I = n$
  and~$f^n(I) = C$.

  Since $f^n|_B$ is monotone and $f(x) \in C$ it suffices to show that
  $f^n(\bndry B) \cap C = \emptyset$.  To this end, let $y \in \bndry B$.  Then
  there exists $0 \leq i < n$ such that $f^i(y) \in \{0,c,1\}$.

  If $f^i(y) \in \{0,1\}$ then we are done, since these points are fixed
  by~$f$.

  So assume that $f^i(y) = c$ and let $J = (x,y)$.  Then
  $f^i(J) \cap \bndry C \neq \emptyset$ since $f^i(x) \notin C$ by minimality
  of $\tau(x)$.  Consequently $f^n(y) \notin C$, otherwise $f^n(J) \subset C$
  which would imply $f^{n-i}(\bndry C) \cap C \neq \emptyset$.  But this is
  impossible since $C$ is nice and hence the claim follows.

  From $\tau(I) = n$ it follows that $I,\dots,f^n(I)$ are pairwise disjoint.
  Suppose not, then $J = f^i(I) \cap f^j(I)$ is nonempty for some $0 \leq i <
  j \leq n$.  But then the transfer time on $I \cap f^{-i}(J)$ is at most
  $i+(n-j)$ which is strictly smaller than~$n$, and this contradicts the fact
  that~$\tau(I) = n$.
\end{proof}

\begin{proposition} \label{prop:Tdomain}
  Assume that $f$ has no periodic attractors and that $\opS f < 0$.  Let $\opT$
  be the transfer map of~$f$ to a nice interval~$C$.  Then the complement of
  the domain of $\opT$ is a compact, $f$--invariant and hyperbolic set (and
  consequently it has zero Lebesgue measure).
\end{proposition}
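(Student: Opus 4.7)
The plan is to recognize $K := \uint \setminus \dom\opT$ as a compact, forward-invariant set that avoids a neighborhood of the critical point, and then invoke a Ma\~n\'e-type hyperbolicity theorem for one-dimensional maps with negative Schwarzian derivative.

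First, I would verify the structural properties. Since $C$ is open and the only discontinuity $c$ of $f$ lies inside $C$, each preimage $f^{-n}(C)$ is open (the preimage under $f$ of an open set is open, even across the critical value, because $c$ itself is excluded from the domain of $f$), so $\dom\opT = \bigcup_{n\geq 0} f^{-n}(C)$ is open and $K$ is closed, hence compact. For forward invariance, note that $x \in K$ means $f^n(x)\notin C$ for all $n\geq 0$; in particular $x\neq c$, so $f(x)$ is defined, and $f^{n+1}(x)\notin C$ for all $n\geq 0$ gives $f(x)\in K$. Moreover $K \cap C = \emptyset$, so $K$ is separated from the critical point by the positive distance $\distance(c,\bndry C)$.

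With this geometric setup in place, I would appeal to a version of Ma\~n\'e's hyperbolicity theorem: for an interval map with negative Schwarzian derivative and no periodic attractors, every compact forward-invariant set disjoint from a neighborhood of the critical set is uniformly expanding. Both hypotheses are in force by assumption, and $K$ satisfies the geometric requirement by the previous paragraph. The conclusion is the existence of constants $C_0>0$ and $\lambda>1$ with $|\opD f^n(x)|\geq C_0\lambda^n$ for every $x\in K$ and $n\geq 0$. Zero Lebesgue measure of $K$ then follows by a standard argument: if $|K|>0$, take a Lebesgue density point $x\in K$; then along the monotone branches of the iterates $f^n$ about $x$, uniform expansion combined with Koebe distortion allows one to pull arbitrarily large subintervals of $\uint$ back into arbitrarily small neighborhoods of $x$, forcing these neighborhoods to meet $C$ and contradicting $K \cap C = \emptyset$.

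The main subtlety I anticipate is justifying the use of Ma\~n\'e's theorem in the Lorenz setting. The classical statement is phrased for smooth unimodal or multimodal maps, whereas here $f$ has a point of discontinuity and is only $\setC^1$ at the critical point (since the critical exponent $\alpha>1$ may be noninteger). However, $K$ lies entirely outside $C$, so along orbits in $K$ the map $f$ acts as a smooth diffeomorphism on each of its monotone branches with negative Schwarzian, and the standard estimates (the minimum principle and the Koebe inequality) apply unchanged on this region. Periodic orbits of $f$ in $K$ are genuinely in the interior of smooth branches, so the hypothesis that $f$ has no periodic attractors rules out neutral periodic behavior on $K$ in the usual way. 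Checking that the classical Ma\~n\'e argument ports over under these structural adjustments is the main work; everything else is formal.
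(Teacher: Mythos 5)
Your proposal is correct and follows essentially the same route as the paper: establish that $K$ is compact, forward-invariant, and disjoint from $C$, then invoke Ma\~n\'e's theorem (which is precisely \citet[Theorem~III.3.2]{dMvS93}, the reference used in the paper) for hyperbolicity, and a density-point/Koebe argument for zero measure (which is the content of \citet[Theorem~III.2.6]{dMvS93}). The only place you gloss slightly is the assertion that ``no periodic attractors rules out neutral periodic behavior in the usual way'': this does require the negative-Schwarzian ingredient (Singer/Misiurewicz, cited by the paper as \citet[Theorem~1.3]{Mis81}) to convert absence of attractors into absence of nonhyperbolic periodic points, but you have that hypothesis in hand and clearly intend to use it, so this is a presentational omission rather than a gap.
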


\begin{proof}
  Let $U = \dom \opT$ and let $\Gamma = \uint \setminus U$.

  Since $U$ is open $\Gamma$ is closed and hence compact (since it is obviously
  bounded).

  By definition $f\inv(U) \subset U$ which implies $f(\Gamma) \subset \Gamma$.

  We can characterize $\Gamma$ as the set of points $x$ such that
  $f^n(x) \notin C$ for all $n \geq 0$.  Since $\opS f < 0$ it follows  that
  $f$ cannot have nonhyperbolic periodic points in~$\Gamma$
  \citep[Theorem~1.3]{Mis81}
  and by assumption $f$ has no periodic
  attractors so $\Gamma$ must be hyperbolic
  \citep[Theorem~III.3.2]{dMvS93}.\footnote{
    The theorems from \citet{dMvS93} that are referenced in this proof are
    stated for maps whose domain is an interval but their proofs go through,
    mutatis mutandis, for Lorenz maps. \label{fn:Tdomain}
  }

  Finally, it is well known that a compact, invariant and hyperbolic set has
  zero Lebesgue measure if $f$ is at least $\setC^{1+\text{H\"older}}$
  \citep[Theorem~III.2.6]{dMvS93}.\footref{fn:Tdomain}
\end{proof}

\begin{definition} \label{def:weak-Markov}
  A map~$f$ is said to satisfy the \Index{weak Markov property} if there exists
  a $\delta > 0$ and a nested sequence of nice intervals $C_1 \supset C_2
  \supset \cdots$, such that $C_n$ contains a $\delta$--scaled neighborhood
  of~$C_{n+1}$ and such that the transfer map to $C_n$ is defined almost
  everywhere, for every $n > 0$.
\end{definition}

\begin{remark}
  If $I \subset J$ are two intervals, then $J$ is said to contain a
  $\delta$--scaled neighborhood of $I$ if $J \setminus I$ consists of two
  components $I_0$ and~$I_1$, and if $\abs{I_k} > \delta\abs{I}$ for $k=0,1$.

  The relevance of this property in \defref{weak-Markov} is that it can be used
  in conjunction with the Koebe lemma to control the distortion of the transfer
  map to~$C_n$.
\end{remark}

\begin{theorem} \label{thm:noWI}
  If $f$ satisfies the weak Markov property, then $f$ has no wandering
  intervals. \index{wandering interval}
\end{theorem}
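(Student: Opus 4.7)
The plan is a proof by contradiction using Koebe distortion estimates along pullbacks of the nested nice intervals. Suppose $J\subset\uint$ is a wandering interval of~$f$, which we may take to be maximal. Because $J$ is wandering, no iterate $f^k(J)$ contains the critical point~$c$, so $f^k|_J$ is a monotone diffeomorphism for every $k\ge 0$, the iterates $\{f^k J\}_{k\ge 0}$ are pairwise disjoint open intervals, and in particular $|f^k J|\to 0$.

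First I would show that for each $n$ the interval $J$ lies in a single branch $B_n$ of the transfer map $\opT_n$ to~$C_n$. The weak Markov hypothesis (\defref{weak-Markov}) ensures that $\opT_n$ is defined on a subset of $J$ of positive measure, so at least one point of $J$ has a finite transfer time. Because $f^k|_J$ is monotone for every~$k$, $J$ cannot straddle a discontinuity of $\opT_n$ (any such discontinuity would be a preimage of the critical point of~$f$, which would force some iterate of $J$ to contain~$c$). Hence the transfer time $\tau_n$ is constant on $J$ and, by \propref{Tbranches}, $f^{\tau_n}\colon B_n\to C_n$ is a diffeomorphism.

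Next I would invoke Koebe's lemma. Since $C_n$ contains a $\delta$--scaled neighborhood of~$C_{n+1}$, pulling this neighborhood back along $f^{\tau_n}|_{B_n}$ furnishes universal distortion control: the restriction of $f^{\tau_n}$ to $B_n^{\ast}:=(f^{\tau_n})^{-1}(C_{n+1})\cap B_n$ has distortion at most some $K=K(\delta)$. Because $|f^{\tau_n}J|\to 0$, for $n$ large we have $f^{\tau_n}(J)\subset C_{n+1}$ and therefore $J\subset B_n^{\ast}$, so the distortion bound applies to $J$ itself. This yields a definite Koebe neighborhood $\hat J_n\supsetneq J$ with $|\hat J_n\setminus J|\ge \eta\,|J|$ for some universal $\eta=\eta(K)>0$, on which $f^{\tau_n}$ remains monotone with image contained in~$C_n$.

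Finally, I would extract a contradiction by letting $n\to\infty$: since $\tau_n\to\infty$, the Koebe neighborhoods $\hat J_n$ persist under monotone iteration for arbitrarily long times, producing in the limit an interval $\hat J\supsetneq J$ with $f^k|_{\hat J}$ monotone for every $k$. Standard bootstrapping (iterating the Koebe gain $1+\eta$ through successive pullbacks, so that the $m$--fold enlargement has length $\ge (1+\eta)^m|J|$) forces $|J|=0$, contradicting that $J$ is a nondegenerate interval. The hardest part, I expect, is making the Koebe-bootstrap rigorous: one must verify that the gain $\eta$ is uniform in~$n$, that the enlarged intervals remain in the domain where $f^k$ is monotone, and that the orbits of $\hat J_n\setminus J$ do not fall into a periodic attractor or accumulate on the critical point---this is precisely where the $\delta$--scaled neighborhood condition from \defref{weak-Markov} is needed to propagate Koebe space through all intermediate pullbacks.
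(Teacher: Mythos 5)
Your overall plan parallels the paper's proof --- locate the branch of the transfer map $\opT_n$ containing a maximal wandering interval, use the $\delta$--scaled nesting of the $C_n$ from \defref{weak-Markov} to pull back Koebe space, and derive a contradiction --- but two steps do not hold up.

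First, the argument that $J$ lies in a single branch of $\opT_n$ is incomplete. A discontinuity of $\opT_n$ need not be a preimage of the critical point: the boundary of a branch $I$ is mapped by $f^{\tau(I)}$ into $\bndry C_n$, so in general it is a preimage of the endpoints of the nice interval $C_n$, not of~$c$, and monotonicity of $f^k|_J$ alone does not exclude such a point from the interior of $J$. The right way to close this gap (and what the paper does) is to first establish that the orbit of $J$ accumulates on~$c$, then observe that an interior point $y\in J$ with $f^{\tau}(y)\in\bndry C_n$ has an orbit disjoint from $C_n$, whereas $|f^k J|\to 0$ and accumulation on~$c$ force $f^k(J)\subset C_n$ for $k$ large --- a contradiction. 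The paper gets the accumulation by a modification argument reducing to bimodal maps and \citet{MdMvS92}; your observation that $\opT_n$ is defined on a positive-measure subset of $J$ for every~$n$ gives the same conclusion and is a reasonable alternative route, but you need to spell out the contradiction above rather than invoke preimages of~$c$. Relatedly, the assertion ``$|f^{\tau_n}J|\to 0$, so for $n$ large $f^{\tau_n}(J)\subset C_{n+1}$'' is not justified: $|C_{n+1}|\to 0$ too, and nothing controls where in $C_n$ the interval $f^{\tau_n}(J)$ sits, so it may well land persistently in $C_n\setminus C_{n+1}$.

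Second, and more seriously, the endgame fails. The Koebe gain does not compound: each $\hat J_n$ is a single pullback giving a definite relative collar around $J$, but there is no mechanism producing $|\hat J_m|\ge(1+\eta)^m|J|$ --- that inequality is simply false since every $\hat J_n\subset\uint$. The correct contradiction is not with $|J|>0$ but with \emph{maximality} of~$J$. In the paper's argument the branches $B_n\supset J$ of the transfer maps $\opT_n$ are nested and each contains a $\delta'$--scaled neighborhood of~$J$, so $B=\bigcap_n B_n$ strictly contains $J$; moreover by \propref{Tbranches} the intervals $\{f^i(B_n)\}_{0\le i\le \tau_n}$ are pairwise disjoint with $\tau_n\to\infty$, so $B$ is itself a wandering interval, contradicting the choice of $J$ as maximal. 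You would need to replace the $(1+\eta)^m$ bootstrap with this nesting-and-maximality argument to obtain a valid proof.
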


\begin{proof}
  In order to reach a contradiction assume that there exists a wandering
  interval~$W$ which is not contained in a strictly larger wandering interval.

  Note that the orbit of~$W$ must accumulate on at least one side
  of~$c$.  Otherwise there
  would exist an interval $I$ disjoint from the orbit of~$W$ with
  $c \in \clos I$.  We could then modify $f$ on~$I$ in such a way that the
  resulting map would be a bimodal $\setC^2$--map with nonflat critical points
  and $W$ would still be a wandering interval for the modified map, see
  \figref{wandering}.  However,
  such maps do not have wandering intervals \citep{MdMvS92}.

  Now let $\{C_k\}$ be the sequence of nice intervals that we get from the weak
  Markov property and let $T_k$ denote the transfer map to~$C_k$.  We claim
  that $W \subset \dom T_k$.  To see this, note that $f^{n_k}(W) \cap C_k \neq
  \emptyset$ for some minimal $n_k$, since the orbit of~$W$ accumulates on the
  critical point.  But $C_k$ is a nice interval, so in fact we must have
  $f^{n_k}(W) \subset C_k$, else there would exist $x \in W$ such that
  $f^{n_k}(x) \in \bndry C_k$ and hence the orbit of~$x$ would never enter
  $C_k$ which is impossible since $W$ is wandering and its orbit accumulates on
  the critical point.  This shows that $W$ is contained in the domain of the
  transfer map to $C_k$ as claimed.

  Let $B_k$ be the component of $\dom T_k$ which contains $W$.  By
  \propref{Tbranches} $T_k(B_k) = C_k$.  From the weak Markov property we get a
  $\delta$ (not depending on~$k$) such that $C_k$ contains a $\delta$--scaled
  neighborhood of $C_{k+1}$.  Applying the Macroscopic Koebe lemma we can pull
  this space back to get that $B_k$ contains a $\delta'$--scaled neighborhood
  of~$B_{k+1}$, where $\delta'$ only depends on~$\delta$.

  Now let $B = \bigcap B_k$.  By the above $B_k$ contains a $\delta'$--scaled
  neighborhood of $W$ for every $k$, hence $B$ strictly contains $W$.  By
  \propref{Tbranches} the collection $\{f^i(B_k)\}_{i=0}^{n_k}$ is pairwise
  disjoint for every $k$.  Thus $B$ is a wandering interval which strictly
  contains the wandering interval $W$ (note that $n_k \to \infty$ since
  $\abs{C_k} \to 0$).  This contradicts the maximality of~$W$ and hence $f$
  cannot have wandering intervals.
\end{proof}

\begin{figure}
  \begin{center}
    \includegraphics{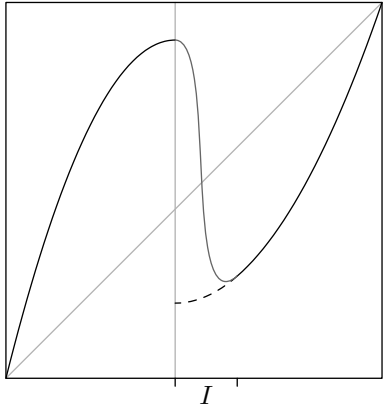}
  \end{center}
  \caption{Illustration showing why the orbit of a wandering interval
  must accumulate on the
  critical point.  If $f$ has a wandering interval whose orbit does not
  intersect some (one-sided) neighborhood $I$ of the critical point, then by
  modifying $f$ on~$I$ according to the gray curve we create a bimodal map
  with a wandering interval.  This is impossible since bimodal maps with
  nonflat critical points do not have wandering intervals.}
  \label{fig:wandering}
\end{figure}

\begin{theorem} \label{thm:ergodicity}
  If $f$ satisfies the weak Markov property, then $f$ is \Index{ergodic}.
\end{theorem}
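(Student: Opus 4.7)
The plan is to carry out a classical Lebesgue density argument, transporting density estimates through the transfer maps $\opT_n$ with the bounded distortion supplied by the weak Markov property, exactly in the spirit of the proof of $\thmref{noWI}$.

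I would argue by contradiction: suppose $A\subset\uint$ is measurable with $f^{-1}(A)=A$ modulo a null set, and both $A$ and $A^c=\uint\setminus A$ have positive Lebesgue measure. Let $\{C_n\}$ and $\delta>0$ be as in $\defref{weak-Markov}$, and write $\opT_n$ for the transfer map to $C_n$. Since by hypothesis each $\dom\opT_n$ has full measure, the set $\Omega=\bigcap_n\dom\opT_n$ also has full measure. For $x\in\Omega$, denote by $B_n(x)$ the component of $\dom\opT_n$ containing $x$; by $\propref{Tbranches}$, $\opT_n|_{B_n(x)}$ is a monotone diffeomorphism onto $C_n$ and the intervals $B_n(x),f(B_n(x)),\dots,f^{\tau_n(x)}(B_n(x))$ are pairwise disjoint. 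Two technical ingredients are needed: (i) $|B_n(x)|\to 0$ for every $x\in\Omega$, and (ii) a uniform distortion bound $|\opD\opT_n(p)|/|\opD\opT_n(q)|\leq K$ for all $p,q\in B_n(x)$, with $K=K(\delta)$ independent of $n$ and $x$. Fact (i) is essentially free: otherwise $B_\infty=\bigcap_n B_n(x)$ is a nondegenerate interval, and the pairwise disjointness above, valid simultaneously for every $n$, forces the whole forward orbit of $B_\infty$ to be pairwise disjoint, making $B_\infty$ a wandering interval and contradicting $\thmref{noWI}$. Fact (ii) is the main obstacle; it is obtained by the same Macroscopic Koebe estimate invoked in the proof of $\thmref{noWI}$, the $\delta$-scaled nesting $C_n\supset C_{n+1}$ providing the Koebe space that survives pulling back through $\opT_n|_{B_n(x)}$ with constants depending only on $\delta$. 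The delicate point is verifying that this Koebe space really does survive uniformly in $n$, which is handled by the extension arguments behind the noWI proof.

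Granted (i) and (ii), I finish using the Lebesgue density theorem. Pick density points $x\in A\cap\Omega$ and $y\in A^c\cap\Omega$, which exist by our positive-measure assumption. Invariance of $A$ gives $f^{-\tau_n(x)}(A^c)=A^c$ modulo null sets, and since $\opT_n|_{B_n(x)}$ is a bijection onto $C_n$, (ii) applied to $E=A^c\cap B_n(x)$ gives
\[
  \frac{m(A^c\cap C_n)}{m(C_n)}=\frac{m(\opT_n(E))}{m(C_n)}\leq K\cdot\frac{m(A^c\cap B_n(x))}{m(B_n(x))},
\]
and the right-hand side tends to $0$ as $n\to\infty$ by (i) together with the density-point property at $x$. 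The analogous estimate centred at $y$ yields $m(A\cap C_n)/m(C_n)\to 0$, and the two together contradict $m(A\cap C_n)+m(A^c\cap C_n)=m(C_n)$. Hence $m(A^c)=0$, and $f$ is ergodic.
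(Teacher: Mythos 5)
Your proof is correct and follows essentially the same strategy as the paper's: take a Lebesgue density point of the invariant set lying in the full-measure domain of all transfer maps, show the containing branch $B_n$ shrinks to zero via the no-wandering-interval theorem, use the weak Markov spacing together with the Koebe lemma to get a distortion bound on $f^{\tau_n}|_{B_n}$ uniform in $n$, and transport the density estimate to $C_n$ to force $m(A\cap C_n)/m(C_n)\to 1$ (the paper phrases the contradiction via two forward-invariant positive-measure sets $X,Y$ rather than the complementary pair $A,A^c$, but these are interchangeable). The one point you flag as "delicate" — that the Koebe space survives the pull-back uniformly in $n$ — is exactly what the paper also relies on without further elaboration, so your level of detail matches theirs.
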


\begin{proof}
  In order to reach a contradiction, assume that there exist two invariant
  sets $X$ and~$Y$ such that $\abs{X}>0$, $\abs{Y}>0$ and
  $\abs{X \cap Y} = 0$.  Let $\{C_k\}$ be the sequence of nice intervals
  that we get from the weak Markov property.  We claim that
  \[
    \frac{\abs{X \cap C_k}}{\abs{C_k}} \to 1 \quad\text{and}\quad
    \frac{\abs{Y \cap C_k}}{\abs{C_k}} \to 1, \quad\text{as } k \to \infty.
  \]
  Thus we arrive at a contradiction since this shows that $\abs{X \cap Y} > 0$.

  Let $\Gamma_k$ be the complement of the domain of the transfer map to~$C_k$.
  By the weak Markov property $\abs{\Gamma_k} = 0$, hence $\bigcup \Gamma_k$
  also has zero measure.  This and the assumption that $\abs{X} > 0$ implies
  that there exists a density point $x$ which lies in $X$ as well as in the
  domain of the transfer map to $C_k$, for every~$k$.

  Let $B_k$ be the branch of the transfer map to $C_k$ containing $x$, and let
  $\tau_k$ be the transfer time for $B_k$.  We contend that $\abs{B_k} \to 0$.
  If not, there would exist a subsequence $\{k_i\}$ such that
  $B = \bigcap B_{k_i}$ had positive measure, and thus $B$ would be contained
  in a wandering interval (which is impossible by \thmref{noWI}).  Here we
  have used that $C_k$ is a nice interval so the orbit of~$B_k$ satisfies the
  disjointness property of \propref{Tbranches}.

  Since $f^{\tau_k}(B_k) = C_k$ we can use the weak Markov property and the
  Koebe lemma to get that there exists $K < \infty$ (not depending on~$k$) such
  that the distortion of $f^{\tau_k}$ on~$B_k$ is bounded by $K$.  This,
  together with the assumption that $f(X) \subset X$, shows that
  \[
    \frac{\abs{C_k \setminus X}}{\abs{C_k}}
    \leq \frac{\abs{f^{\tau_k}(B_k \setminus X)}}{\abs{f^{\tau_k}(B_k)}}
    \leq K \frac{\abs{B_k \setminus X}}{\abs{B_k}} \to 0,
    \quad\text{as } k \to \infty.
  \]
  The last step follows from $x$ being a density point, since
  $\abs{B_k} \to 0$.

  Now apply the same argument to~$Y$ and the claim follows.
\end{proof}



\section{The invariant set} 
\label{sec:inv-set}

In this section we construct an `invariant' and relatively compact set for the
renormalization operator.  This construction works for types of renormalization
where the return time of one branch is much longer than the other.  This
result will be exploited in the following sections.

\begin{definition} \label{def:Omega-K}
  Fix $\alpha>1$, $\sigma \in (0,1)$, $\beta \in (0,(\sigma/\alpha)^2)$
  and let $\lbb \in \nats$ be a free parameter.
  Define $\Omega$ to be the following (finite) set of monotone types
  \idxsym{$\Omega,\lbb$}
  \begin{equation} \label{eq:Omega}
    \Omega = \big\{ \vphantom{\bigg(}\smash{(0\overbrace{1\dotsm1}^a,1\overbrace{0\dotsm0}^b)}
    \mid \alpha + \sigma \leq a + 1 \leq 2\alpha - \sigma,\;
    \lbb \leq b \leq (1 + (\sigma/\alpha)^2 - \beta)\lbb \big\}.
  \end{equation}
  (Note that $a$ and~$b$ are integers.)  We assume that $\sigma$ has been
  chosen so that the two inequalities involving $a$ have at least one integer
  solution.\footnote{For $\alpha \in (1,2]$ there is exactly one integer
  solution if $\sigma$ is small enough.  For $\alpha>2$ it is possible to
  choose $\sigma$ so that there are at least two solutions.}

  Let $\delta = (1/\lbb)^2$, $\eps = 1 - c$ (note that $\eps$
  depends on $f$) and define
  \idxsym{$\setK,\delta,\eps$}
  \begin{equation} \label{eq:K}
    \setK = \big\{ f \in \setL^1 \mid
    \alpha^{-\lbb/\alpha} \leq \eps \leq
    \theta\alpha^{-\lbb \sigma/\alpha^2},\;
    \distortion{\phi} \leq \delta,\; \distortion{\psi} \leq \delta \big\},
  \end{equation}
  where $\theta>1$ is a constant not depending on $\lbb$.\footnote{The
  constant $\theta$ is given by \propref{ubeps}.}  We assume that $\lbb$ is
  large enough for the two inequalities involving $\eps$ to have at least one
  solution.
\end{definition}

We are going to show that $\setK$ is `invariant' under the restriction
of~$\opR$ to types in~$\Omega$ as long as $\lbb$ is large enough.  Recall
that $f \in \setLS_\Omega$ if and only if $f$ has negative Schwarzian
derivative and is $\omega$--renormalizable for some $\omega\in\Omega$.

\begin{theorem} \label{thm:K-inv}
  If $f \in \setLS_\Omega$ and $1 - \rcv(\opR f) \geq \lambda > 0$ for some
  constant $\lambda$ (not depending on $\lbb$), then
  \[
    f \in \setK \implies \opR f \in \setK,
  \]
  for $\lbb$ large enough.
\end{theorem}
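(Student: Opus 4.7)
The strategy is to verify each of the three defining conditions of $\setK$ for $\opR f = (u',v',c',\phi',\psi')$: the window on $\eps' = 1 - c'$, and the distortion bounds $\distortion{\phi'}, \distortion{\psi'} \leq \delta$. Because $\delta = 1/\lbb^2$ is tiny and $\eps$ is exponentially small in $\lbb$, the hypotheses force $\phi, \psi$ close to affine and $f$ quantitatively close to the standard family $Q_{u,v,c}$. The left branch $f_0$ has an expanding fixed point at $0$ with multiplier $\lambda_0 \approx u\alpha/c$, while $f_1$ has an expanding fixed point at $1$ with multiplier $\lambda_1 \approx v\alpha/\eps$. The hypothesis $1 - \rcv(\opR f) \geq \lambda$ prevents degeneration (in particular forcing $v$ bounded below uniformly in $\lbb$), and the combinatorial requirement $U_1 \subset (c,1)$ combined with $U_1 \approx \phi(u)$ and $c \approx 1$ forces $u$ close to $1$. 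Hence $\lambda_0 \approx \alpha > 1$ is a bounded expansion while $\lambda_1 \sim 1/\eps$ is huge.

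Next I would derive scale estimates for the cycles $\{U_i\},\{V_j\}$ and for the halves $L,R$. Iteratively pulling $C$ back along the cycle using the near-linearizations of $f_1$ at $1$ and $f_0$ at $0$, combined with the Koebe principle (justified by $\opS f < 0$ and the definite space afforded by the critical scaling at $c$), yields $|U_i| \asymp |C|\lambda_1^{-(a+1-i)}$ and $|V_j| \asymp |C|\lambda_0^{-(b+1-j)}$ with uniform multiplicative constants. The $\alpha$-power critical scaling of $Q$ at $c$, namely $|Q_0(L)| = u|L|^\alpha/c^\alpha$ and $|Q_1(R)| = v|R|^\alpha/\eps^\alpha$, combined with the bounded distortion of $\phi,\psi$, inverts to give $|L|^\alpha \asymp (c^\alpha/u)|U_1|$ and $|R|^\alpha \asymp (\eps^\alpha/v)|V_1|$. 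Since $b \gg a$ and $\lambda_1 \gg \lambda_0$, one finds $|R| \ll |L|$, so $|C| \asymp |L|$ and $\eps' = |R|/|C|$ becomes an explicit function of $\eps, a, b$ and the bounded parameters $u,v,c$. The combinatorial window on $(a,b)$ in $\Omega$ and the hypothesis window on $\eps$ are finely tuned so this expression lies in the target window $[\alpha^{-\lbb/\alpha}, \theta\alpha^{-\lbb\sigma/\alpha^2}]$ for $\lbb$ large enough, with the slack $\beta>0$ and the constant $\theta>1$ absorbing multiplicative and higher-order errors.

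For the distortion bounds, zooming preserves distortion and distortion is subadditive under composition, giving
\[
  \distortion{\phi'} \leq \distortion{\phi} + \sum_{i=1}^{a} \distortion{f_1|_{U_i}},
\]
and an analogous estimate for $\distortion{\psi'}$. Since $\log Df_1$ is Lipschitz on intervals bounded away from $c$ (in particular on each $U_i$, which lies near $1$), each summand is $O(|U_i|)$; the sum is a geometric series with ratio $\lambda_1^{-1}$ and total $O(|U_a|) = O(\eps/\lambda_1)$, exponentially small in $\lbb$ and thus far less than $\delta$. The bound on $\distortion{\psi'}$ is identical with $(U_i,a,\lambda_1) \leftrightarrow (V_j,b,\lambda_0)$: although $b \sim \lbb$ is large, the $V_j$ lie near $0$ where $\log Df_0$ is smooth, and the geometric series still totals $O(|C|/\lambda_0)$, again far less than $\delta = 1/\lbb^2$. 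The main obstacle I anticipate is the precise bookkeeping for the $\eps'$-window: the exponents $-1/\alpha$ and $-\sigma/\alpha^2$, the upper bound $b \leq (1+(\sigma/\alpha)^2-\beta)\lbb$, and the critical scaling must combine exactly so that the resulting $\eps'$ stays in the prescribed window. A secondary subtlety is that the linearization of $f_1$ at $1$ breaks down on the last step of the cycle (where $1 - U_{a+1} \sim \eps$), so one must use the full $\alpha$-power scaling of $Q_1$ near $c$ rather than pure linearization there.
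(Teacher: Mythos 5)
Your plan — verify the three conditions of $\setK$ one by one — is the right shape, and your relation $|L|^\alpha \asymp (c^\alpha/u)|U_1|$, $|R|^\alpha \asymp (\eps^\alpha/v)|V_1|$ via the critical scaling of $Q$ is correct.  The genuine gaps are in the cycle geometry and the distortion estimate for $\psi'$.  Your model $|V_j| \asymp |C|\lambda_0^{-(b+1-j)}$ with a single multiplier $\lambda_0 \approx u\alpha/c \approx \alpha$ is wrong near $c$: the factor $\bigl(\lcv/(\lcv - x)\bigr)^{1-1/\alpha}$ in $\opD f_0\inv$ is $\asymp \eps^{-(1-1/\alpha)}$ for $x \in C$ (not $\asymp 1$), so the first backward step $C \mapsto V_b$ is a large expansion $\asymp \eps^{-1+1/\alpha}$, and it takes roughly $\log_\alpha b$ steps before the orbit escapes the root regime near $c$ and behaves like a linear contraction toward $0$.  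The correct total is $\opD f_0^{-b}(c) \asymp \alpha^{-b}\eps^{-1+\alpha^{-b}}$ (the paper's \lemref{derivs}), and the extra $\eps^{-1+\alpha^{-b}}$ factor is exactly what makes the bookkeeping for the $\eps'$ window close; you cannot ``finely tune'' the window without it, since it changes which power of $\alpha^{-\lbb}$ you end up with.

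The distortion estimate for $\psi'$ has a related flaw: it is not true that ``the $V_j$ lie near $0$.''  The interval $V_b = f_0\inv(C)$ sits at distance $\asymp \eps^{1/\alpha}$ from $c$, i.e.\ far from $0$, and there $\opN Q_0(x) = -(\alpha-1)/(c-x)$ is unbounded, so $\distortion f_0|_{V_j}$ is $O(|V_j|/\distance(V_j,c))$, not $O(|V_j|)$.  Your geometric series argument and the claimed total $O(|C|/\lambda_0)$ are therefore unjustified for $j$ near $b$.  The paper sidesteps this by a Koebe argument (\propref{dist}): $f^{-b}|_C$ extends monotonically to $(\rcv,\lcv)$, and the two components of $(\rcv,\lcv)\setminus C$ have lengths of order $1$ and $\eps$ respectively while $|C| \lesssim \eps^{a/(\alpha-1)}$; the condition $a+1 \geq \alpha + \sigma$ in $\Omega$ is precisely what makes $a/(\alpha-1) - 1 > 0$, so there is Koebe space of relative size $\eps^{-t}$, $t>0$.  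Your proposal never identifies why the lower bound on $a$ is in $\Omega$ at all, which is a sign that the mechanism producing the distortion bound is missing.  (A secondary remark: your ``secondary subtlety'' about linearization at $1$ breaking down is misplaced — $\opD f_1\inv \approx \eps/\alpha$ holds uniformly on the entire $U$--cycle since the $U_i$ stay well away from $\rcv$, so the $U$--cycle really is geometric; it is the $V$--cycle that needs the two-regime analysis.)
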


The condition on $\rcv(\opR f)$ is a bit unpleasant but we need it to exclude
maps such that $\eps(\opR f)$ is too small for us to deal with.  This situation
occurs when the right branch of the renormalization is trivial.  We
can work around this problem by considering twice renormalizable maps because
such maps will automatically satisfy the condition on $\rcv(\opR f)$, and this
leads us to:

\begin{theorem} \label{thm:K-inv2}
  If both $f \in \setLS_\Omega$ and $\opR f \in \setLS_\Omega$, then
  \[
    f \in \setK \implies \opR f \in \setK.
  \]
  for $\lbb$ large enough.
\end{theorem}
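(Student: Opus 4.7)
The plan is to reduce \thmref{K-inv2} to \thmref{K-inv} by showing that the hypothesis $\opR f \in \setLS_\Omega$ automatically supplies the missing bound $1-\rcv(\opR f) \geq \lambda > 0$ with $\lambda$ independent of $\lbb$.

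Write $f'=\opR f$ and let $\omega' = (0\,1^{a'},\,1\,0^{b'}) \in \Omega$ be its renormalization type, so that $b' \geq \lbb$.  By the monotone combinatorics, the forward orbit of $\rcv(f')$ under the left branch $(f')_0$ is required to remain in $(0,c(f'))$ for $b'-1 \geq \lbb-1$ iterates before the $b'$th iterate first enters the return interval $C''$ around $c(f')$.  Since $(f')_0 = \phi'\circ Q'_0$ fixes $0$ and
\[
  \opD(f')_0(0) \;=\; \opD\phi'(0)\cdot\frac{\alpha u'}{c(f')},
\]
the dynamics near $0$ is expanding.  Unpacking \lemref{tuple-renorm} together with the defining bounds of $\setK$---small distortion of $\phi,\psi$ and the prescribed window for $\eps$---yields a uniform lower bound $\opD(f')_0(0) \geq k > 1$, independent of $\lbb$ once $\lbb$ is large.

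Consequently, any point of $(0,c(f'))$ whose orbit stays in $(0,c(f'))$ for $\lbb$ iterates must lie within distance $c(f')\cdot k^{-\lbb}$ of $0$.  Applied to $\rcv(f')$ this gives $\rcv(f') \leq c(f')\cdot k^{-\lbb} \to 0$ as $\lbb \to \infty$, so $1 - \rcv(f') \geq \tfrac{1}{2}$ once $\lbb$ is sufficiently large.  \thmref{K-inv} applied with $\lambda = 1/2$ then delivers $\opR f \in \setK$.

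The main obstacle is the uniform expansion estimate $\opD(f')_0(0) \geq k > 1$.  This requires tracking through one renormalization step that $\opD\phi'(0)$ stays close to $1$ (via the distortion bound on $\phi$ together with the chain rule for nonlinearities) and that the ratio $u'/c(f')$ remains bounded away from zero (via the explicit formulas for $u'$ and $c'$ in \lemref{tuple-renorm} and the geometric bounds built into $\setK$).  The rest of the proof is then a pure invocation of \thmref{K-inv}.
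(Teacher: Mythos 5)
Your overall strategy matches the paper's: reduce \thmref{K-inv2} to \thmref{K-inv} by deriving the missing hypothesis $1 - \rcv(\opR f) \geq \lambda$ from the fact that $\opR f$ is itself renormalizable. However, the central step of your argument --- that $\opD(f')_0(0) \geq k > 1$ forces any point of $(0,c(f'))$ whose orbit remains in $(0,c(f'))$ for $\lbb$ iterates to lie within $c(f')\,k^{-\lbb}$ of $0$ --- is unjustified and in fact fails. For $\alpha>1$ the map $(f')_0 = \phi'\circ Q'_0$ has a root singularity at $c(f')$: $\opD Q'_0(x)\to 0$ as $x\uparrow c(f')$, so $(f')_0$ is strongly \emph{contracting} near the critical point. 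The expansion rate $k>1$ is valid only in a fixed neighborhood of $0$, yet the orbit of $\rcv(f')$ necessarily passes close to $c(f')$ before returning to the renormalization interval, spending time in the contracting region. Concretely, the backward orbit $(f')_0^{-n}(c')$ escapes the root-dominated region near $c'$ only after roughly $\log_\alpha\lbb$ steps (this is exactly what Lemmas~\ref{lem:invorbits} and~\ref{lem:crit-vals} quantify), and the delicate choice of $\eps \sim \alpha^{-\lbb\,\cdot}$ in \defref{Omega-K} is precisely what guarantees that the remaining $\lbb-\log\lbb$ steps of genuine linear expansion dominate. A naive geometric estimate by $k^{-\lbb}$ silently assumes away the entire analytic difficulty that \secref{inv-set} is built to handle.

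The paper's proof handles this by first re-running the argument of \propref{lbeps} using only $f^{b}(f(R))\supset R$ (a consequence of $\opR f$ being a nontrivial Lorenz map), which yields a weaker but sufficient lower bound $\eps(\opR f)\geq k_1\alpha^{-K\lbb}$ without appealing to the $\rcv$ condition. Combined with Propositions~\ref{prop:ubeps} and~\ref{prop:dist}, this verifies the hypotheses of \lemref{invorbits} for $\tilde f = \opR f$, which then gives $c' - (\opR f)_0^{-b'}(c') \geq \nu c' (k_1\alpha^{-K\lbb})^{\alpha^{-b'}} \geq k_2 c' > 0$; since $\rcv(\opR f)\leq (\opR f)_0^{-b'}(c')$ for monotone combinatorics, the required $\lambda$ follows. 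Note also that your proposal never checks that $\eps(\opR f)$ is large enough for \lemref{invorbits} to apply to $\opR f$ --- the bound from \thmref{K-inv} on $\eps(\opR f)$ is only available \emph{after} you have established the $\rcv$ condition, so you would need the weaker a priori bound from the re-run of \propref{lbeps} in any case.
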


The proofs of Theorems \ref{thm:K-inv} and~\ref{thm:K-inv2} can be found at the
end of this section.

\begin{remark}
  The full family theorem \citep{MdM01} implies that:
  \begin{inparaenum}[(i)]
    \item for every $\lambda\in(0,1)$ there exists $f \in \setLS_\Omega \cap
      \setK$ such that $\rcv(\opR f) \geq 1 - \lambda$ (e.g.\ any
      $f\in\setK$ can be deformed in the $(u,v)$ directions in such a
      way that $f$ is renormalizable to a map such that $\rcv(\opR f)=0$), and
    \item $\setK$ intersects the set of twice renormalizable maps (of
      \emph{any} combinatorics).
  \end{inparaenum}
  This shows that both theorems above are not vacuous.
\end{remark}

The main reason for introducing the set $\setK$ is the following:

\begin{proposition} \label{prop:K-relcpt}
  $\setK$ is relatively compact in $\setL^0$.
\end{proposition}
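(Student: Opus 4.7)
The plan is to show that $\setK$ is contained in a product of compact sets, factor by factor of the parametrization $f = (u,v,c,\phi,\psi)$. The finite-dimensional part is essentially immediate: $u,v\in\uint$, while the two-sided bound $\alpha^{-\lbb/\alpha}\le\eps\le\theta\alpha^{-\lbb\sigma/\alpha^2}$ on $\eps = 1-c$ confines $c$ to a closed subinterval of $(0,1)$. So the projection of $\setK$ onto the $(u,v,c)$ coordinates lies in a compact subset of $\reals^3$ (independent of $\lbb$, once $\lbb$ is fixed).

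The substance of the proof lies in the diffeomorphism factors. The key observation is that an orientation-preserving diffeomorphism $\phi\colon\uint\to\uint$ satisfies $\int_0^1\opD\phi = 1$, so $\inf\opD\phi \le 1 \le \sup\opD\phi$; combined with the distortion bound $\distortion\phi\le\delta$, which yields a uniform upper bound on $\sup\opD\phi / \inf\opD\phi$, one obtains uniform pointwise bounds on $\opD\phi$ from both sides (e.g.\ $e^{-\delta}\le\opD\phi(x)\le e^\delta$ under the standard convention). Consequently the family of admissible $\phi$ is uniformly Lipschitz and uniformly bounded in $\uint$, hence equicontinuous; by Arzel\`a--Ascoli it is relatively compact in $\setC^0(\uint)$. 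The identical argument handles $\psi$.

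Combining the three compact factors in the product metric defining $\setL^0 = \uint^2\times(0,1)\times\setD^0\times\setD^0$ shows $\setK$ sits inside a compact subset of $\setL^0$, which is exactly relative compactness. The only step carrying any content is the passage from the distortion bound on $\phi$ to a uniform two-sided bound on $\opD\phi$ (made possible only because $\phi$ is a diffeomorphism of $\uint$ onto itself, so that its derivative has average $1$); everything else is a routine Arzel\`a--Ascoli argument applied componentwise. There is no significant obstacle here, as $\setK$ was designed precisely so that its defining conditions control each factor of $\setL^0$ into a compact piece.
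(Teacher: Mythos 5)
Your proof is correct and follows essentially the same route as the paper's: split $\setK$ into the finite-dimensional $(u,v,c)$ factor (trivially precompact) and the diffeomorphism factors, then apply Arzel\`a--Ascoli using the distortion bound $\distortion\phi\le\delta$ to get the uniform Lipschitz estimate $\abs{\opD\phi}\le e^\delta$. The paper states this Lipschitz bound directly while you spell out the intermediate step (that $\opD\phi$ takes the value $1$ somewhere), but the argument is the same.
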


\begin{proof}
  Clearly $\eps(f)$ for $f \in \setK$ lies inside a compact set in $(0,1)$.
  Hence we only need to show that the ball $B = \{ \phi \in \setD^1(\uint) \mid
  \distortion \phi \leq \delta \}$ is relatively compact in~$\setD^0(\uint)$.
  This is an application of the Arzel\`a--Ascoli theorem; if $\{\phi_n \in B\}$
  then $\abs{\phi_n(y)-\phi_n(x)} \leq \e^\delta \abs{y-x}$ hence this sequence
  is equicontinuous (as well as uniformly bounded), so it has a uniformly
  convergent subsequence.
\end{proof}

The rest of this section is devoted to the proof of \thmref{K-inv}.  We will
need the following expressions for the inverse branches of~$f$ which can be
derived from equations \eqnref{standard} and~\eqnref{lorenz-map}:

\begin{align}
  f_0\inv(x) &= c - c \left(
    \frac{\abs{\phi\inv([x,\lcv])}}{\abs{\phi\inv([0,\lcv])}}
    \right)^{\mrlap\ooa}, \label{eq:f0inv} \\
  f_1\inv(x) &= c + (1-c) \left( 1 -
    \frac{\abs{\psi\inv([x,1])}}{\abs{\psi\inv([\rcv,1])}}
    \right)^{\mrlap\ooa}. \label{eq:f1inv}
\end{align}

The following lemma gives us control over certain backward orbits of the
critical point.  This is later used to control the critical values and the
derivative of the first-return map.  The underlying idea for these results is
that the backward orbit of~$c$ under $f_0$ initially behaves like a root and
eventually like a linear map whose multiplier is determined by $\opD f(0)$,
whereas the backward orbit of~$c$ under $f_1$ behaves like a linear map whose
multiplier is determined by $\opD f(1)$.

\begin{lemma} \label{lem:invorbits}
  There exist $\mu > 0$ and $\nu, k \in (0,1)$ such that if $f \in
  \setL^1_\Omega$, $\distortion\phi \leq \delta$, $\distortion\psi \leq
  \delta$, $\eps \leq k$, and $\eps \geq \gamma^{\alpha^{\lbb}}$ for some
  $\gamma\in(0,1)$ not depending on~$\lbb$, then
  \begin{equation*}
    \frac{f_1\inv(c) - c}{\eps} \geq 1 - \mu\eps
    \quad\text{and}\quad
    \frac{c - f_0^{-n}(c)}{c} \geq \nu \eps^\smashooan{n},
  \end{equation*}
  for $\lbb$ large enough.
\end{lemma}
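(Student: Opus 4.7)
The plan is to use the explicit formulas~\eqnref{f0inv} and~\eqnref{f1inv} for the inverse branches of~$f$, combined with bounded distortion of $\phi,\psi$ (to convert $\phi\inv$-- and $\psi\inv$--lengths to Euclidean lengths) and the combinatorial constraints from $\omega$--renormalizability (to pin down the critical values). First I would establish two a priori estimates: (a)~$\lcv - c \geq \eps/2$, so that $\lcv$ is very close to~$1$ and $u = \phi\inv(\lcv)$ is close to~$1$; and (b)~$1 - \rcv \geq 1/2$, so that $v = 1 - \psi\inv(\rcv)$ is bounded away from~$0$. Estimate~(a) is obtained by pulling the return interval~$C$ back $a$~times under $f_1\inv$ near the fixed point~$1$, where the multiplier is $\opD f_1(1) \approx \alpha v/\eps$; this shrinks~$C$ by a factor $\gtrsim(\alpha v/\eps)^a$ and gives $1 - \lcv = \bigoh(\eps^{a+1})$. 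Estimate~(b) follows because the orbit $\rcv,f_0(\rcv),\dotsc,f_0^{b-1}(\rcv) \subset (0,c)$ with $b \geq \lbb$ forces $\rcv$ to be super-exponentially small: $0$ is an expanding fixed point of $f_0$ with multiplier $\opD f_0(0) \approx \alpha u \gtrsim \alpha > 1$ by~(a).

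For the first inequality, evaluating~\eqnref{f1inv} at $x = c$ gives
\[
\frac{f_1\inv(c) - c}{\eps} = \left( 1 - \frac{\abs{\psi\inv([c,1])}}{v} \right)^{1/\alpha}.
\]
Bounded distortion of $\psi\inv$ on $[\rcv,1]$ yields $\abs{\psi\inv([c,1])}/v \leq e^\delta\eps/(1-\rcv) \leq 2e^\delta\eps$ by~(b), and since $(1-y)^{1/\alpha} \geq 1 - y$ for $y \in [0,1]$ and $\alpha>1$, the claim follows with $\mu = 2e^\delta$.

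For the second inequality, set $a_n = c - f_0^{-n}(c)$. From~\eqnref{f0inv},
\[
\left(\frac{a_{n+1}}{c}\right)^\alpha = \frac{\abs{\phi\inv([c - a_n,\lcv])}}{u},
\]
and bounded distortion of $\phi\inv$ on $[0,\lcv]$ gives $(a_{n+1}/c)^\alpha \geq e^{-\delta}(\lcv - c + a_n)/\lcv \geq e^{-\delta} a_n$ (using $\lcv \geq c$ and $\lcv \leq 1$). Writing $b_n = a_n/c$, this is a recursion $b_{n+1} \geq K b_n^{1/\alpha}$ for an absolute constant $K > 0$, with base case $b_1 \geq \nu_0 \eps^{1/\alpha}$ from~(a). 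Iterating and summing the resulting geometric series in~$\log b_n$ yields $\log b_n \geq \frac{\alpha}{\alpha-1}\log K + \log\nu_0 + \alpha^{-n}\log\eps$, hence $b_n \geq \nu\eps^{\alpha^{-n}}$ with $\nu = K^{\alpha/(\alpha-1)}\nu_0$.

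The main obstacle is estimate~(a): it requires controlling the distortion of $f_1^a$ across all $a$ iterates near the fixed point~$1$ despite $\opD f_1(1)$ being large. This is where the hypothesis $\eps \geq \gamma^{\alpha^\lbb}$ enters, keeping the iterated derivatives from growing uncontrollably relative to the diameters of the pulled-back intervals.
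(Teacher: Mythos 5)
Your derivations from \eqnref{f0inv} and \eqnref{f1inv} are essentially the same calculations the paper performs, and the recursion $b_{n+1} \geq K b_n^{1/\alpha}$ with base case $b_1 \gtrsim \eps^{1/\alpha}$ correctly reproduces the paper's inequality~\eqnref{invorbc0}. The gap is in how you propose to establish the two ``a priori estimates'' (a) and (b), and it is a genuine circularity, not a matter of missing details.

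Unwind the dependencies: your justification of (a) pulls $C$ back under $f_1\inv$ and uses the multiplier $\opD f_1(1) \approx \alpha v/\eps$ to conclude $1-\lcv$ is small. For this to shrink anything, you need $v$ bounded away from~$0$, which is exactly what (b) supplies. Your justification of (b) uses the expansion $\opD f_0(0) \approx \alpha u > 1$ ``by (a).'' So (a) needs (b) and (b) needs (a). (The secondary claim that (b) then follows from expansion at $0$ is also not immediate: $\opD f_0$ is not uniformly $>1$ on $(0,c)$ --- it tends to $0$ near the critical point --- so ``$b$ iterates in $(0,c)$ forces $\rcv$ small'' needs an argument about escape times, not just the multiplier at the fixed point.) Your final paragraph attributes the main obstacle to distortion control of $f_1^a$, but that is not where the difficulty lies; the difficulty is precisely the coupled structure you have set up and not resolved.

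The paper's proof recognizes this coupling explicitly. It derives the two mutually dependent inequalities \eqnref{invorbc1} and \eqnref{invorbc0}, whose composition becomes a self-map $t \mapsto h(t)$ of the putative lower bound $t = (f_1\inv(c)-c)/\eps$. The map $h$ has a repelling fixed point $t_0$ near $0$ and an attracting fixed point $t_1$ near $1$, so the whole lemma reduces to exhibiting \emph{any} initial lower bound $t' > t_0$; iterating $h$ then improves it to $1 - \mu\eps$. The initial bound is extracted from the renormalizability condition $f^{b+1}(R) \supset R$, which gives
\[
\abs{R} \leq (\e^\delta u\alpha/c)^b \, \e^\delta v \, (\abs{R}/\eps)^\alpha,
\]
using only the trivial bound $v \leq 1$ --- no lower bound on $v$ is needed. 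This yields $t'$ of order $\eps^{1/(\alpha-1)}\alpha^{-b}$, which beats $t_0$ (of order $\eps^{\alpha^b}$) once $\lbb$ is large, and this is precisely where the hypothesis $\eps \geq \gamma^{\alpha^{\lbb}}$ is used. That seed estimate, and the fixed-point bootstrap that converts a weak bound into the sharp one, are the content of the lemma; without them your estimates (a) and (b) are assertions of the conclusion rather than steps towards it.
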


\begin{proof}
  We claim that
  \begin{align}
    f_1\inv(c) - c &\geq \eps \cdot \left( 1 -
      \frac{\e^\delta \eps}{c-f_0^{-b}(c)}
      \right)^{\mrlap\ooa}, \label{eq:invorbc1} \\
    c - f_0^{-n}(c) &\geq c \e^{-\delta/(\alpha-1)}
      \big( f_1\inv(c) - c \big)^\smashooan{n}. \label{eq:invorbc0}
  \end{align}
  Assume for the moment that these equations hold.  The idea of the proof is
  that if we have some initial lower bound on $f_1\inv(c)-c$ then we can plug
  that into \eqnref{invorbc0} (with $n=b$), and this bound in turn can be
  plugged back into \eqnref{invorbc0} to get a new lower bound on
  $f_1\inv(c)-c$.  We will show that iterating the initial bound in this way
  will actually improve it and that this iterative procedure will lead to the
  desired statement.  Finally we show that there exists an intial bound that is
  good enough to start off the iteration.

  To begin with consider \eqnref{invorbc1}.  This equation follows from a
  computation using \eqnref{f1inv} and the fact that $1 - \rcv > c -
  f_0^{-b}(c)$ holds for monotone combinatorics.

  Next, we prove \eqnref{invorbc0}.  Apply \eqnref{f0inv} to get
  \[
    f_0\inv(x) \leq c - c \left( \e^{-\delta} \frac{\lcv-x}{\lcv}
    \right)^{\mrlap\ooa}, \qquad x \leq \lcv.
  \]
  Since $\lcv \leq 1$ this implies that
  \begin{equation} \label{eq:base}
    f_0\inv(c) \leq c - c\cdot \e^{-\delta/\alpha} (\lcv-c)^\ooa,
  \end{equation}
  and if $x<c$ then we can use that $c < \lcv$ to get
  \begin{equation} \label{eq:ubf0inv}
    f_0\inv(x) \leq c - c\cdot \e^{-\delta/\alpha}
    \left( 1 - \frac{x}{c} \right)^{\mrlap\ooa}.
  \end{equation}
  Using \eqnref{base} and \eqnref{ubf0inv} we get (note that $f_0\inv(c) < c$):
  \[
    f_0^{-2}(c) \leq c - c\cdot \e^{-\delta/\alpha}
      \left( 1 - \frac{f_0\inv(c)}{c} \right)^\ooa
      \leq c - c\cdot \e^{-\delta(1+\alpha\inv)/\alpha}
      \left( \lcv - c \right)^\ooan{2}.
  \]
  By repeately applying \eqnref{ubf0inv} to the above inequality we arrive at
  \[
    f_0^{-n}(c) \leq c - c \cdot \exp\left\{-\frac{\delta}{\alpha}
      \left(1+\dotsm+\alpha^{-(n-1)}\right)\right\} \cdot
      \left( \lcv - c \right)^\smashooan{n},
  \]
  which together with the fact that $1+\dots+\alpha^{-n} < \alpha/(\alpha-1)$
  proves \eqnref{invorbc0}.

  Having proved \eqnref{invorbc1} and~\eqnref{invorbc0} we now continue the
  proof of the lemma.  Note that the left-hand side of \eqnref{invorbc1}
  appears in the right-hand side of~\eqnref{invorbc0} and vice versa.  Thus we
  can iterate these inequalities once we have \emph{some} bound for either of
  them.  To this end, suppose $f_1\inv(c)-c \geq t \eps$, for some $t > 0$.  If
  we plug this into \eqnref{invorbc0} and then plug the resulting bound into
  \eqnref{invorbc1}, we get that
  \begin{equation} \label{eq:h}
    f_1\inv(c) - c \geq \eps \cdot \left( 1 -
    \frac{\e^{\delta\alpha/(\alpha-1)}
    \eps^{1-\alpha^{-b}}}{c t^{\alpha^{-b}}} \right)^\ooa
    = \eps h(t).
  \end{equation}

  We claim that the map $h$ has two fixed points: a repeller $t_0$ close to~$0$
  and an attractor $t_1$ close to~$1$.  To see this, solve the fixed point
  equation $t = h(t)$ to get
  \begin{equation} \label{eq:h-fp}
    t^\ooan{b} (1 - t^\alpha) = \eps^{1-\alpha^{-b}}
    \e^{\delta\alpha/(\alpha-1)} / c.
  \end{equation}
  Let $g(t) = t^\ooan{b} (1 - t^\alpha)$ and let $\rho = \eps^{1-\alpha^{-b}}
  \e^{\delta\alpha/(\alpha-1)} / c$.  Note that $g(0)=0$, $g(1)=1$, and $g$ has
  exactly one turning point $\tau$ at which $g(\tau) > \rho$ for $b$ large
  enough.  This shows that $g(t)=\rho$ has two solutions $t_0 < t_1$.  That
  $t_0$ is repelling and $t_1$ attracting (for $h$) follows from the fact that
  $h(t)^\alpha \to -\infty$ as $t \downarrow 0$ and $h(t) \to 1$ as $t \uparrow
  \infty$.

  We now find bounds on the fixed points of~$h$.  Solving $\opD g(\tau) = 0$
  gives
  \begin{equation} \label{eq:turning}
    \tau = (\alpha^{b+1} + 1)^{-1/\alpha}.
  \end{equation}
  Hence \eqnref{h-fp} shows that
  \begin{equation} \label{eq:t0}
    \rho = t_0^\ooan{b} (1 - t_0^\alpha) > t_0^\ooan{b} (1 - \tau^\alpha)
    \implies t_0 < \left( \frac{\rho}{1-\tau^\alpha} \right)^{\alpha^b}
  \end{equation}
  and
  \begin{equation} \label{eq:t1}
    \rho = t_1^\ooan{b} (1 - t_1^\alpha) > \tau^\ooan{b} (1 - t_0^\alpha)
    \implies t_1 > \left( 1 - (\tau\eps)^{-\ooan{b}} \tilde\rho \eps
    \right)^\ooa,
  \end{equation}
  where $\rho = \tilde\rho \eps^{1-\ooan{b}}$ so that $\tilde\rho$ is a
  constant not depending on~$\lbb$.
  By assumption
  \[
    \eps^{-\ooan{b}} = (1/\eps)^\ooan{b} \leq (1/\gamma^{\alpha^{\lbb}})^\ooan{b} \leq 1/\gamma
  \]
  and $\tau^{-\ooan{b}} \to 1$ as $b \to \infty$ by \eqnref{turning}, so
  \eqnref{t1} shows that there exists a constant $\mu$ such that
  \begin{equation} \label{eq:t1-2}
    t_1 > 1 - \mu\eps.
  \end{equation}

  All that is need to complete the proof is \emph{some} initial bound
  $f_1\inv(c)-c \geq t' \eps$ such that $t' > t_0$, because then $h^i(t') \to
  t_1$ as $i \to \infty$, which together with \eqnref{h} and~\eqnref{t1-2}
  shows that
  \[
    f_1\inv(c)-c \geq \eps h(t_1) = \eps t_1 = \eps(1 - \mu \eps).
  \]
  Plugging this into \eqnref{invorbc0} also shows that
  \[
    c - f_0^{-n}(c) \geq c \e^{\delta/(\alpha-1)}
    \big( (1-\mu\eps)\eps \big)^\ooan{n} > 
    c \e^{\delta/(\alpha-1)} (1-\mu\eps) \eps^\ooan{n}
    = c \nu \eps^\smashooan{n}.
  \]

  To get an initial bound $t'$ we use the fact that $f_1\inv(c) - c > \abs{R}$
  and look for a bound on $\abs{R}$.  Since $\opR f$ is nontrivial we have
  $f^{b+1}(R) \supset R$, which implies
  \[
    \abs{R} \leq \abs{f^b\left( f(R) \right)}
      \leq \max_{x<c} f'(x)^b \cdot \e^\delta \abs{Q(R)}
      \leq (\e^\delta u \alpha / c)^b \e^\delta v \left( \abs{R}/\eps
        \right)^\alpha
  \]
  and thus
  \begin{equation}
    f_1\inv(c) - c > \abs{R} \geq \eps \cdot \left(
      \frac{c \eps^\xo{b}}{\alpha \e^{\delta(b+1)/b}}
      \right)^\xnp{b}{\alpha-1} = \eps t'.
    \end{equation}
  Here $t'$ is of the order $\eps^{1/(\alpha-1)}\alpha^{-b}$ whereas $t_0$ is
  of the order $\eps^{\alpha^b}$, so $t' > t_0$ for $\lbb$ large enough.  To
  see this, solve $t' > (\rho/(1-\tau^\alpha))^{\alpha^b}$ for $\eps$ to get
  \[
    \log\eps < \frac{\alpha-1}{(\alpha-1)\alpha^b - \alpha}
    \cdot \log\left\{
    \left(\frac{c}{\alpha\e^{\delta(b+1)/b}}\right)^{b/(\alpha-1)}
    \left( \frac{c(1-\tau^\alpha)}{\e^{\delta\alpha/(\alpha-1)}}
    \right)^{\alpha^b} \right\}.
  \]
  The right-hand side tends to $\log\{c\e^{-\delta\alpha/(\alpha-1)}\}$ as $b
  \to \infty$, so it suffices to choose
  \[
    \eps \leq \bigoh\left(
    \exp\left\{-\frac{\delta\alpha}{\alpha-1}\right\}\right)
  \]
  and $t' > t_0$ will hold (for $\lbb$ large enough).
\end{proof}

The next lemma is the reason why we chose $\eps$ to be of the order
$\alpha^{-\lbb\cdots}$.  The previous lemma is first used to show that the
region where the backward orbit of~$c$ under $f_0$ is governed by a root
behavior is escaped after $\log b$ steps, and that the remaining $b - \log b$
steps are then governed by the fixed point at~$0$.  The choice of $\eps$ will
make sure that the linear behavior dominates the root behavior and hence $\rcv$
will approach the fixed point at~$0$ as $b$ is increased.

\begin{lemma} \label{lem:crit-vals}
  There exists $K$ such that if $f \in \setL^1_\Omega \cap \setK$, then $1-\lcv
  < K\eps^2$.  Also, $\rcv \to 0$ exponentially in $\lbb$ as $\lbb \to
  \infty$.
\end{lemma}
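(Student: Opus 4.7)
The plan is to prove the two statements separately; both rely on the combinatorial constraints from $\omega$--renormalizability combined with the estimates of \lemref{invorbits}.

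For $1 - \lcv < K\eps^2$, nontriviality of $\opR f$ forces $\lcv(\opR f) > c(\opR f)$, which unwinds to $f_1^a(\lcv) > c$; since $f_1^{-a}$ is increasing this gives $\lcv > f_1^{-a}(c)$, hence $1 - \lcv < 1 - f_1^{-a}(c)$.  The base case $1 - f_1^{-1}(c) \leq \mu\eps^2$ is supplied by \lemref{invorbits}.  I then iterate $a-1$ further preimage steps: near $x = 1$ one has $(f_1^{-1})'(y) \leq e^{O(1)}/f_1'(1)$ with $f_1'(1) = \psi'(1) v\alpha/\eps \sim 1/\eps$ (using bounded distortion of $\psi$ and the explicit form of $Q_1$), so by the mean value theorem each additional preimage step multiplies $1 - (\cdot)$ by a factor of order $\eps$.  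Since $a \leq 2\alpha$ the accumulated constants stay $O(1)$, yielding $1 - f_1^{-a}(c) \leq K\eps^{a+1}$; and $a+1 \geq \alpha+\sigma > 1$ is an integer, so $a + 1 \geq 2$ and thus $1 - \lcv \leq K\eps^2$.

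For $\rcv \to 0$ exponentially in $\lbb$, the combinatorics force $f_0^b(\rcv) < c$ (because $\rcv(\opR f) < c(\opR f)$), so $\rcv < f_0^{-b}(c)$ and it suffices to bound $x_n := f_0^{-n}(c)$ exponentially in $\lbb$ at $n = b \geq \lbb$.  I split the backward orbit into two phases.  In Phase~1, \lemref{invorbits} gives $c - x_n \geq c\nu\eps^{1/\alpha^n}$; since $\log(1/\eps) \gtrsim \lbb$ by the definition of $\setK$, the condition $\eps^{1/\alpha^n} \geq 1/2$ holds once $\alpha^n \gtrsim \lbb$, i.e.\ for $n \geq N_1 = O(\log\lbb)$, yielding $x_{N_1} \leq c(1-\nu/2)$.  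In Phase~2, concavity of $Q_0$ on $[0,c]$ (valid for $\alpha > 1$) makes $Q_0(y)/y$ decreasing, so on $(0, c(1-\nu/2)]$ one has
\[
  f_0(y)/y \;\geq\; e^{-\delta}\,\xfnn{u}{c}\cdot \xfnn{1-(\nu/2)^\alpha}{1-\nu/2};
\]
using Part~1 to force $u/c \geq 1$ (since $\lcv = \phi(u)$ is close to $1$ and $\phi$ is close to the identity by $\distortion\phi \leq \delta$), together with the elementary inequality $(1-\nu'^\alpha)/(1-\nu') > 1$ for $\alpha > 1$ and $\nu' \in (0,1)$, one obtains a uniform constant $M > 1$ with $f_0(y)/y \geq M$ throughout.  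Hence for $n > N_1$ the backward orbit stays in $(0, c(1-\nu/2)]$ and contracts by at least $1/M$ per step, giving $f_0^{-b}(c) \leq c \cdot M^{-(b - N_1)} \leq K\lambda^\lbb$ for any fixed $\lambda \in (1/M, 1)$ once $\lbb$ is large.

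The main technical point is ensuring the contraction rate $M$ in Part~2 is uniform in $\lbb$; this reduces to the elementary inequality above and the observation that the distortion corrections $e^{-\delta}$ with $\delta = 1/\lbb^2$ are asymptotically trivial.  A secondary point is that Part~2 uses Part~1 (to bound $u/c$ from below), which is the only coupling between the two statements; otherwise the arguments proceed independently.
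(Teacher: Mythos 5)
Your proof is correct and follows essentially the same route as the paper: both parts reduce to \lemref{invorbits}, and the $\rcv$ estimate uses the same two-phase split ($\log_\alpha\lbb$ steps to escape the critical region, followed by exponential contraction towards the attracting fixed point at $0$). For the $\lcv$ bound the paper simply takes $1-\lcv < 1-f_1^{-1}(c)\leq\mu\eps^2$ directly from \lemref{invorbits} in one step, so your iteration through all $a$ preimages---while sound and yielding the stronger $O(\eps^{a+1})$---is more work than needed since $a+1\geq 2$.
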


\begin{remark} \label{rem:uv}
  This lemma also implies that the parameters $u$ and~$v$ are close to one for
  $f \in \setL^1_\Omega \cap \setK$ since $\phi(u) = \lcv$ and $\psi(1-v) =
  \rcv$.  Hence, for example
  \[
    1-u = \phi\inv(1) - \phi\inv(\lcv) = \abs{\phi\inv([\lcv,1])} \leq
    \e^\delta \abs[\big]{[\lcv,1]} < K\e^\delta \eps^2,
  \]
  and
  \[
    1-v = \psi\inv(\rcv) - \psi\inv(0) = \abs{\psi\inv([0,\rcv])} \leq
    \e^\delta \abs[\big]{[0,\rcv]} \leq K' \e^{-\lbb},
  \]
  for some $K'$.
\end{remark}

\begin{proof}
  The proof is based on the fact that $\rcv < f_0^{-b}(c)$ and $\lcv >
  f_1^{-a}(c)$ for monotone combinatorics, so we can use \lemref{invorbits} to
  bound the position of the critical values.

  \lemref{invorbits} shows that
  \[
    1 - \lcv < 1 - f_1\inv(c) \leq 1 - c - (1-\mu\eps)\eps = \mu\eps^2,
  \]
  which proves the statement about about~$\lcv$.

  Next, let $n = \ceil{\log_\alpha \lbb}$.  Then $\alpha^{-n} \leq 1/\lbb$
  and $\eps^\ooan{n} \geq \eps^{1/\lbb}\!\!$, so applying \lemref{invorbits}
  again we get
  \[
    \frac{f_0^{-n}(c)}{c} \leq 1 - \nu(\lb{\eps})^{1/\lbb}
    = 1 - \nu \alpha^{-\sigma}.
  \]
  Thus $f_0^{-n}(c)$ is a uniform distance away from~$c$.  Since $\lbb -
  \ceil{\log_\alpha \lbb} \to \infty$, and since $0$ is an attracting fixed
  point for $f_0\inv$ with uniform bound on the multiplier, it follows that
  $f_0^{-b}(c)$ approaches~$0$ exponentially as $\lbb \to \infty$.  This
  proves the statement about~$\rcv$.
\end{proof}

Now that we have control over the critical values we can estimate the
derivative of the return map.  The derivative of $f_1^a$ is easy to control
since $f_1^a$ is basically a linear map on a neighborhood of~$f(L)$.  However,
the derivative of $f_0^b$ is a bit more delicate and we are only able to
estimate it on a subset of $f(R)$.  The idea is to split the derivative
calculation into two regions; one expanding region governed by the fixed point
at~$0$ and one contracting region in the vicinity of the critical point.  The
choice of $\eps$ will ensure that the expanding region dominates the
contracting region if $b$ is sufficiently large.

We will need the following expressions for the derivatives of the inverse
branches of~$f$:
\begin{align}
  \opD f_0\inv(x) = \frac{c}{\alpha} \cdot
    \frac{\opD \phi\inv(x)}{u}
    \left( \frac{ \abs{\phi\inv([0,\lcv])} }{ \abs{\phi\inv([x,\lcv])} }
    \right)^{\mrlap{1-1/\alpha}}, \label{eq:df0inv} \\
  \opD f_1\inv(x) = \frac{\eps}{\alpha} \cdot
    \frac{\opD \psi\inv(x)}{v}
    \left( \frac{ \abs{\psi\inv([\rcv,1])} }{ \abs{\psi\inv([\rcv,x])} }
    \right)^{\mrlap{1-1/\alpha}}. \label{eq:df1inv}
\end{align}
The above equations can be derived from \eqnref{f0inv} and~\eqnref{f1inv}.

\begin{lemma} \label{lem:derivs}
  There exists $K$ such that if $f \in \setL^1_\Omega \cap \setK$, then
  \begin{align*}
    K\inv(\eps/\alpha)^a \leq &\opD f_1^{-a}(x)
      \leq K (\eps/\alpha)^a,&
      &\forall x > f_0\inv(c), \\
    K\inv \alpha^{-b} \eps^{-1+\ooan{b}} \leq &\opD f_0^{-b}(c)
      \leq K \alpha^{-b} \eps^{-1+\smashooan{b}}.
  \end{align*}
\end{lemma}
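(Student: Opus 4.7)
The plan is to exploit the explicit formulas \eqnref{df0inv} and \eqnref{df1inv} and to reduce everything to (i) the bounded-distortion hypothesis $\distortion\phi,\distortion\psi\le\delta$, (ii) the estimates on the backward orbit of~$c$ in \lemref{invorbits}, and (iii) the fact from \lemref{crit-vals} and \remref{uv} that the critical values and the parameters $u,v$ are all within $\bigoh(\eps^2)$ or better of the endpoints $0$, $1$. Throughout I track multiplicative constants that do not depend on $\lbb$.

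For $\opD f_1^{-a}(x)$ the computation is essentially linear. Writing $x_0=x$ and $x_{i+1}=f_1\inv(x_i)$, I apply the chain rule $\opD f_1^{-a}(x)=\prod_{i=0}^{a-1}\opD f_1\inv(x_i)$ and substitute \eqnref{df1inv}. The prefactor $\eps/\alpha$ appears $a$ times. The factor $\opD\psi\inv(x_i)/v$ is uniformly comparable to~$1$ by bounded distortion and since $v$ is close to~$1$. For the root factor, note that each $x_i$ lies in one of the intervals $U_j$; the hypothesis $x>f_0\inv(c)$ together with the monotone combinatorics keeps $x_i$ uniformly bounded away from $c$, while $\rcv$ is exponentially small in $\lbb$ by \lemref{crit-vals}, so the ratio $\abs{\psi\inv([\rcv,1])}/\abs{\psi\inv([\rcv,x_i])}$ lies in a fixed compact subinterval of~$(0,\infty)$. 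Taking the product yields the two-sided bound $(\eps/\alpha)^a$ up to a universal constant.

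For $\opD f_0^{-b}(c)$ the product is nontrivial because the root factor in \eqnref{df0inv} is unbounded near~$c$. Let $y_n=f_0^{-n}(c)$ so that $\opD f_0^{-b}(c)=\prod_{n=0}^{b-1}\opD f_0\inv(y_n)$. The prefactor contributes $(c/\alpha)^b$, which is comparable to $\alpha^{-b}$, and the distortion and $u$-factors collect into a bounded constant as before. For the root factor, the key input is \lemref{invorbits} which, together with $\lcv$ being within $\bigoh(\eps^2)$ of~$1$, gives
\[
\nu\,\eps^{\smashooan{n}}\;\le\;\frac{\lcv-y_n}{\lcv}\;\le\;M\eps^{\smashooan{n}},
\]
for some constants $\nu,M>0$ not depending on $\lbb$; the upper bound is obtained by iterating \eqnref{ubf0inv} in the reverse direction to the one used in the proof of \lemref{invorbits}, starting from the trivial upper bound $(\lcv-c)/\lcv\le K\eps$. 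Inserting these bounds into \eqnref{df0inv} yields
\[
\opD f_0\inv(y_n)\;\asymp\;\frac{c}{\alpha}\,\eps^{-(1-1/\alpha)\alpha^{-n}}.
\]
Multiplying over $n=0,\dots,b-1$, the exponents telescope via the geometric sum
\[
-(1-\xo{\alpha})\sum_{n=0}^{b-1}\alpha^{-n}=-(1-\alpha^{-b}),
\]
so that the product equals $\alpha^{-b}\eps^{-1+\alpha^{-b}}$ up to a universal multiplicative constant.

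The main obstacle is the lower bound on $\opD f_0^{-b}(c)$: this requires the \emph{upper} estimate on $(\lcv-y_n)/\lcv$, which does not appear in \lemref{invorbits}. The remedy is a parallel iteration: starting from $\lcv-c\le K\eps^2\le K\eps$ and using the companion inequality
\[
c-f_0\inv(x)\;\le\;c\,\e^{\delta/\alpha}\left(1-\xfnn{x}{c}\right)^{\mrlap{\ooa}}
\]
(the upper-bound analogue of \eqnref{ubf0inv}, available because $\phi$ has bounded distortion and $\lcv\le 1$), one gets the matching upper bound by induction on~$n$. Once both bounds on $(\lcv-y_n)/\lcv$ are in hand, the remainder is bookkeeping: collecting the finitely many $\e^{\pm\bigoh(\delta)}$ factors and the near-$1$ factors coming from $u$ and the distortion of $\phi$ into the single constant $K$.
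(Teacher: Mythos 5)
Your treatment of $\opD f_1^{-a}$ is correct and matches the paper's. The framework for $\opD f_0^{-b}(c)$—telescoping the product of root-factors via two-sided control of $(\lcv-y_n)/\lcv$—is also right, and you have correctly identified that the lower bound on $\opD f_0^{-b}(c)$ (equivalently the upper bound on $(\lcv-y_n)/\lcv$) is the delicate direction, since \lemref{invorbits} gives only the other one.

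However, the ``companion inequality'' you invoke,
\[
c-f_0\inv(x)\;\le\;c\,\e^{\delta/\alpha}\left(1-\frac{x}{c}\right)^{\!1/\alpha},
\]
is false. From \eqnref{f0inv} and $\distortion\phi\le\delta$ one only gets
\[
c-f_0\inv(x)\;\le\;c\,\e^{\delta/\alpha}\left(\frac{\lcv-x}{\lcv}\right)^{\!1/\alpha},
\]
and since $c<\lcv$ the quantity $(c-x)/c$ is \emph{smaller} than $(\lcv-x)/\lcv$ for $0<x<c$, so substituting the former tightens an upper bound in the illegal direction. Concretely, with $\alpha=2$ and $\phi=\id$ one has $c-f_0\inv(x)=c\sqrt{(\lcv-x)/\lcv}$ exactly, which strictly exceeds $c\sqrt{1-x/c}$ precisely because $\lcv>c$. (The monotone comparison of $(\lcv-x)/\lcv$ with $(c-x)/c$ is harmless in the derivation of \eqnref{ubf0inv}, where it only loosens a lower bound; it cannot be reused in reverse.)

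Once the correct distortion estimate is used, the iteration does not close by itself, because one picks up the additive term $\lcv-f_0\inv(x)=(\lcv-c)+(c-f_0\inv(x))$ with $\lcv-c$ of order $\eps$. The paper's equation \eqnref{const-derivs} converts this additive error to a multiplicative one, using $\lcv-x\ge\lcv-c\gtrsim\eps$ for $x\le c$, yielding
\[
\frac{\lcv-f_0\inv(x)}{\lcv}\;\le\;(1+\gamma\eps^\rho)\,\e^{\delta/\alpha}\left(\frac{\lcv-x}{\lcv}\right)^{\!1/\alpha},\qquad\rho=1-1/\alpha,
\]
and then iterates this on the quantity $(\lcv-y_n)/\lcv$ directly. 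This also repairs a second defect in your write-up: the bound $\nu\,\eps^{\ooan{n}}\le(\lcv-y_n)/\lcv\le M\eps^{\ooan{n}}$ with $\lbb$-independent constants $\nu<1<M$ is too coarse for the $b$-fold product, which would accumulate a factor $\nu^{-b}$ (resp.\ $M^b$) that diverges with $\lbb$. What is actually established, and what makes the telescoping legitimate, is the multiplicative estimate \eqnref{f0invorb} with per-step error $\e^{\pm\delta/(\alpha-1)}(1+\gamma\eps^\rho)^{\alpha/(\alpha-1)}=1+o_{\lbb}(1)$, whose $b$-fold accumulation stays bounded because $b\delta\to0$ and $b\eps^\rho\to0$.
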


\begin{proof}
  We start by proving the lower bound on $\opD f_1^{-a}$.  From \eqnref{df1inv}
  we get $\opD f_1\inv(x) \geq \e^{-\delta} \eps/\alpha$ and hence
  \[
    \opD f_1^{-a}(x) \geq \e^{-a\delta} (\eps/\alpha)^a,
    \qquad \forall x \in [\rcv,1].
  \]
  Note that $\e^{-a\delta}$ has a lower bound that does not depend on $\lbb$,
  so the above equation shows that $\opD f_1^{-a}(x) \geq K\inv
  (\eps/\alpha)^a$ for some $K$ (not depending on~$\lbb$).

  Next consider the upper bound on $\opD f_1^{-a}$.  Use $\lcv \leq 1$
  and~\eqnref{f0inv} to see that
  \begin{equation} \label{eq:lbf0invc}
    f_0\inv(c) \geq c \left( 1 - (\e^\delta \eps)^\ooa \right).
  \end{equation}
  Equation \eqnref{df1inv}, the fact that $1-\rcv\leq1$, and the assumption
  that $x>f_0\inv(c)$ together imply that
  \begin{equation} \label{eq:lbDf1inv}
    \opD f_1\inv(x) \leq \frac{\eps \e^\delta}{\alpha v}
    \left( \e^\delta \frac{1-\rcv}{x-\rcv} \right)^{1-1/\alpha}
    \leq \frac{\e^\delta}{v}
    \left( \frac{\e^\delta}{f_0\inv(c)-\rcv} \right)^{1-1/\alpha}
    \cdot \frac{\eps}{\alpha}
  \end{equation}
  Equation \eqnref{lbf0invc} and \lemref{crit-vals} show that $f_0\inv(c)-\rcv$
  has a lower bound that is independent of $\lbb$ and \remref{uv} can be used
  to bound $v$.  Hence the expression in front of $\eps/\alpha$ in
  \eqnref{lbDf1inv} has an upper bound that does not depend on $\lbb$.  Since
  $x > f_1\inv(c)$ implies that $f_1^{-i}(x)
  > f_0\inv(c)$ for all $i=1,\dotsc,a$, the previous argument and
  \eqnref{lbDf1inv} shows that
  \[
    \opD f_1^{-a}(x) \leq K (\eps/\alpha)^a,
  \]
  for some $K$ (not depending on $\lbb$).

  \medskip
  We now turn to proving the bounds on $\opD f_0^{-b}(c)$.  Equation
  \eqnref{df0inv} shows that
  \begin{equation} \label{eq:estDf0inv}
    \frac{c\e^{-\delta}}{\alpha} \left(
    \e^{-\delta} \frac{\lcv}{\lcv - x}
    \right)^{1-1/\alpha}\!\!\!\! \leq
    \opD f_0\inv(x) \leq \frac{\e^\delta}{\alpha u} \left(
    \e^\delta \frac{\lcv}{\lcv - x}
    \right)^{\mrlap{1-1/\alpha}}.
  \end{equation}

  The upper bound in \eqnref{estDf0inv} gives
  \begin{equation} \label{eq:ubDf0b}
    \opD f_0^{-b}(x) = \prod_{i=0}^{b-1} \opD f_0\inv\big( f_0^{-i}(x) \big)
    \leq \left( \frac{\e^{2\delta}}{\alpha u} \right)^{\! b} \cdot
    \prod_{i=0}^{b-1} \left( \frac{\lcv}{\lcv - f_0^{-i}(x)}
    \right)^{\mrlap{1-1/\alpha}}.
  \end{equation}
  The expression before the last product is bounded by $K\alpha^{-b}$ for some
  constant $K$ since:
  \begin{inparaenum}[(i)]
    \item $b\delta \leq (1+(\sigma/\alpha)^2-\beta)\lbb /
      (\lbb)^2 \to 0$ by \eqnref{K}, and
    \item $u^b \geq (1-\bigoh(\eps^2))^b$ by \remref{uv} and $b\eps \leq
      b\theta\alpha^{-\lbb \sigma/\alpha^2} \to 0$, so $u^b$ has a lower bound
      which does not depend on $b$.
  \end{inparaenum}

  The lower bound in \eqnref{estDf0inv} similarly shows that
  \begin{equation} \label{eq:lbDf0b}
    \opD f_0^{-b}(x) \geq \left( \frac{c}{\alpha\e^{2\delta}} \right)^{b} \cdot
    \prod_{i=0}^{b-1} \left( \frac{\lcv}{\lcv - f_0^{-i}(x)}
    \right)^{\mrlap{1-1/\alpha}}.
  \end{equation}
  The expression before the product is bounded by $K\inv \alpha^{-b}$ for some
  constant $K$ since $b\delta \to 0$ as in (i) above, and $c^b \geq
  (1-\theta\alpha^{-\lbb \sigma/\alpha^2})^b$ so $c^b$ has a lower bound
  independent of $b$.

  The product in \eqnref{ubDf0b} and \eqnref{lbDf0b} is the same, so we will
  look for bounds on this product next.  We claim that there exists constants
  $\gamma,\rho > 0$ such that
  \begin{equation} \label{eq:f0invorb}
    \e^{-\delta / (\alpha - 1)} \leq
    \frac{\lcv - f_0^{-n}(x)}{\lcv} \cdot
    \left( \frac{\lcv - x}{\lcv} \right)^{-\ooan{n}}
    \!\!\! \leq \e^{\delta/(\alpha-1)}
    \big( 1 + \gamma\eps^\rho\big)^{\alpha/(\alpha-1)}\!,
  \end{equation}
  for $x \leq c$.  Assume that this holds for the moment (we will prove it
  shortly).
  
  Equations \eqnref{ubDf0b} and~\eqnref{f0invorb} show that
  \[
    \opD f_0^{-b}(c) \leq \frac{K}{\alpha^b} \left\{
    \prod_{i=0}^{b-1} \e^{\delta/(\alpha-1)}
    \left( \frac{\lcv-c}{\lcv} \right)^{-\ooan{i}} \right\}^{1-1/\alpha}
    \!\!
    = \frac{K \e^{b\delta/\alpha}}{\alpha^b}
    \left( \frac{\lcv-c}{\lcv}
    \right)^{\mrlap{-1+\ooan{b}}}.
  \]
  (The equality follows from a computation using the fact that the logarithm of
  the above product is a geometric sum.)  From \lemref{crit-vals} we get
  \[
    \left( \frac{\lcv-c}{\lcv} \right)^{-1+\ooan{b}}
    \!\!\!\leq
    \left( \eps \frac{1-K_1\eps}{1-K_1\eps^2} \right)^{-1+\ooan{b}}
    \!\!\!\leq K_2\eps^{-1+\smashooan{b}},
  \]
  and hence
  \[
    \opD f_0^{-b}(c) \leq 
    \frac{K_3 \e^{b\delta/\alpha}}{\alpha^b} \eps^{{-1+\ooan{b}}}.
  \]
  Since $b\delta \to 0$ this finishes the proof of the upper bound on $\opD
  f_0^{-b}(c)$.
  
  Similarly, \eqnref{lbDf0b} and~\eqnref{f0invorb} show that
  \[
    \opD f_0^{-b}(c) \geq
    \left( K\alpha^b \e^{b\delta/\alpha} (1+\gamma\eps^\rho)^b
    \right)\inv
    \left( \frac{\lcv-c}{\lcv}
    \right)^{\mrlap{-1+\ooan{b}}}.
  \]
  Use $\lcv < 1$ to get $(\lcv-c)/\lcv = 1 - c/\lcv < 1-c =\eps$.
  This finishes the proof of the lower bound of $\opD f_0^{-b}(c)$, since:
  \begin{inparaenum}[(i)]
    \item $b\delta \to 0$, and
    \item $(1+\gamma\eps^\rho)^b \to 1$ since $b\eps^\rho \leq b \theta
      \alpha^{-\rho \lbb \sigma/\alpha^2} \to 0$ for any $\rho>0$.
  \end{inparaenum}

  \medskip
  It only remains to prove the claim \eqnref{f0invorb}.  We start with the
  lower bound.  From \eqnref{f0inv} and $c < \lcv$ (the latter follows from
  \lemref{crit-vals}) we get that
  \[
    \frac{\lcv - f_0\inv(x)}{\lcv} > \frac{c-f_0\inv(x)}{c}
    \geq \e^{-\delta/\alpha} \left( \frac{\lcv - x}{\lcv}
    \right)^{\mrlap{\ooa}}.
  \]
  Hence
  \[
    \frac{\lcv - f_0^{-2}(x)}{\lcv}
    \geq \e^{-\delta/\alpha} \left( \frac{\lcv - f_0\inv(x)}{\lcv}
    \right)^\ooa\!\!
    \geq \e^{-\delta(1+1/\alpha)/\alpha} \left( \frac{\lcv - x}{\lcv}
    \right)^{\mrlap{\ooan{2}}}.
  \]
  so an induction argument and $1+\dotsb+1/\alpha^{n-1} < \alpha/(\alpha-1)$
  finishes the proof of the lower bound of~\eqnref{f0invorb}.

  We finally prove the upper bound of \eqnref{f0invorb}.  From
  $c < \lcv\! < 1$ and~\eqnref{f0inv} we get
  \begin{equation} \label{eq:const-derivs}
    \frac{c}{\lcv}\frac{\lcv - f_0\inv(x)}{c - f_0\inv(x)}
    \leq \frac{(1-c) + (c-f_0\inv(x))}{c - f_0\inv(x)}
    \leq 1 + \frac{\eps\e^{\delta/\alpha}}{c (\lcv-x)^\ooa}.
  \end{equation}
  Assuming that $x \leq c$ we get $\lcv-x \geq \lcv-c$.\footnote{The condition
  $x\leq c$ is unnecessarily strong here; we could get away with $\lcv-x \geq
  k\eps^t$ for some constant $t$ close to (but smaller than) $\alpha$.} By
  \lemref{crit-vals}, $\lcv-c \geq k\eps$ which together with \eqnref{f0inv}
  and~\eqnref{const-derivs} shows that
  \[
    \frac{\lcv-f_0\inv(x)}{\lcv} \leq 
    \left( 1 + \frac{\e^{\delta/\alpha}}{c k^\ooa} \eps^{1-\ooa} \right)
    \frac{c-f_0\inv(x)}{c}
    \leq (1+\gamma\eps^\rho) \e^{\delta/\alpha}
    \left( \frac{\lcv-x}{\lcv} \right)^{\mrlap{\ooa}},
  \]
  for some constant $\gamma>0$ and $\rho = 1-1/\alpha > 0$.  This shows that
  \begin{align*}
    \frac{\lcv-f_0^{-2}(x)}{\lcv}
    &\leq (1+\gamma\eps^\rho) \e^{\delta/\alpha}
    \left( \frac{\lcv-f_0\inv(x)}{\lcv} \right)^\ooa \\
    &\leq \left( (1+\gamma\eps^\rho) \e^{\delta/\alpha} \right)^{1+1/\alpha}
    \left( \frac{\lcv-x}{\lcv} \right)^{\mrlap{\ooan{2}}},
  \end{align*}
  so an induction argument and $1+\dotsb+1/\alpha^{n-1} < \alpha/(\alpha-1)$
  finishes the proof of the upper bound of~\eqnref{f0invorb}.
\end{proof}

Armed with the above lemmas we can start proving invariance.  The first step is
to show that $\eps(\opR f)$ is small.  The proof is complicated by the fact
that we do not know anything about $\eps(\opR f)$.  Once we find some
bound on $\eps(\opR f)$ we can show that it in fact is very small.

\begin{proposition} \label{prop:ubeps}
  There exists $\theta>0$ (not depending on $\lbb$) such that
  \[
    f \in \setLS_\Omega \cap \setK \implies
    \eps(\opR f) \leq \theta\alpha^{-\lbb \sigma/\alpha^2},
  \]
  for $\lbb$ large enough.
\end{proposition}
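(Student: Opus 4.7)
The plan is to prove this by computing $\eps(\opR f) = 1 - c(\opR f) = |R|/|C|$ directly from \lemref{tuple-renorm} and reducing the estimate to the derivative bounds of \lemref{derivs}. Since $|C| = |L| + |R| \geq |L|$, it suffices to bound $|R|/|L|$.

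First I would compute $|C|$ in two different ways using the fact that both $f^{a+1}: L \to C$ and $f^{b+1}: R \to C$ are diffeomorphisms. On the $L$-side, writing $f^{a+1}|_L = f_1^a \circ f|_L$ with $f|_L = \phi \circ Q_0$ and using bounded distortion of $\phi$, one obtains $|f(L)| \approx u(|L|/c)^\alpha$; combined with $\opD f_1^a \approx (\alpha/\eps)^a$ from \lemref{derivs}, a change of variables gives $|C| \approx (\alpha/\eps)^a |L|^\alpha$. The analogous calculation on the $R$-side, using $|f(R)| \approx v(|R|/\eps)^\alpha$ together with $\opD f_0^b \approx \alpha^b \eps^{1-\alpha^{-b}}$ from \lemref{derivs}, gives $|C| \approx \alpha^b \eps^{1-\alpha^{-b}-\alpha} |R|^\alpha$. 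Dividing eliminates $|C|$ and produces
\[
  (|R|/|L|)^\alpha \approx \alpha^{a-b} \eps^{\alpha - 1 - a + \alpha^{-b}}.
\]

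The conclusion then reduces to exponent arithmetic. The condition $a+1 \geq \alpha+\sigma$ bounds the $\eps$-exponent above by $-\sigma + \alpha^{-b}$; combined with the lower bound $\eps \geq \alpha^{-\lbb/\alpha}$ from the definition of $\setK$, this yields $\eps^{-\sigma+\alpha^{-b}} \leq \alpha^{\lbb\sigma/\alpha}$. The condition $a \leq 2\alpha - 1 - \sigma$ and $b \geq \lbb$ together give $\alpha^{a-b} \leq \alpha^{2\alpha-1-\sigma-\lbb}$. Multiplying and taking $\alpha$-th roots yields $|R|/|L| \leq C \alpha^{-\lbb(\alpha-\sigma)/\alpha^2}$ for some constant $C$ independent of $\lbb$. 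Since $\sigma \leq \alpha/2$ is built into the definition of $\Omega$ (it is exactly what makes the interval $[\alpha+\sigma, 2\alpha-\sigma]$ nonempty), we have $(\alpha-\sigma)/\alpha^2 \geq \sigma/\alpha^2$, so the bound $\eps(\opR f) \leq \theta \alpha^{-\lbb\sigma/\alpha^2}$ follows for an appropriate $\theta$.

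The main technical obstacle is justifying the $\approx$ signs: the derivative bounds from \lemref{derivs} need to apply uniformly (up to harmless multiplicative factors) over the whole intervals $f(L)$ and $f(R)$, not just at a single point, and the estimates for $|f(L)|$, $|f(R)|$ require mean-value computations with controlled distortion of $\phi$ and $\psi$. For $f_1^a$ the cycle $f(L),\dots,f^a(L)$ lies in a neighborhood of the fixed point at $1$ where $f_1$ is essentially affine, so distortion is controlled by a telescoping estimate on $\log \opD f_1$. For $f_0^b$ the cycle passes through both the root-dominated region near $c$ and the linear region near $0$, but the explicit estimates already carried out in the proof of \lemref{derivs} contain the needed uniformity; moreover, any stray distortion factor is of the form $\e^{O(b\delta)} = \e^{O(1/\lbb)}$ by \eqnref{K} and is thus absorbed by the constant $\theta$.
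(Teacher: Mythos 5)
Your high-level plan---reduce to a bound on $\abs{R}/\abs{L}$, compare the two returns via the mean value theorem, and then do the exponent arithmetic with \lemref{derivs} and the inequalities of \eqnref{Omega} and~\eqnref{K}---is the paper's plan, and your final exponent bookkeeping is correct. However, there are two real gaps in the middle of the argument, both of which the paper's proof is forced to confront and both of which you brush aside.

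First, the statement that $f^{a+1}\colon L \to C$ and $f^{b+1}\colon R\to C$ are diffeomorphisms \emph{onto} $C$ is false: a Lorenz map sends each half of its domain strictly inside the domain, so $f^{a+1}(L) \subsetneq C$ and $f^{b+1}(R)\subsetneq C$, with no a priori control on how far short they fall. So you do not get two expressions for $\abs{C}$ you can divide; you only get $\abs{f^{a+1}(L)}\lesssim (\alpha/\eps)^a\abs{L}^\alpha$ and $\abs{f^{b+1}(R)}\lesssim \alpha^b\eps^{1-\alpha-\alpha^{-b}}\abs{R}^\alpha$ as \emph{upper} bounds for subsets of $C$, plus the opposite-direction facts $\abs{L}\leq\abs{f^{a+1}(L)}$ (nontriviality) and $\abs{C}\leq 2\max\{\abs{L},\abs{R}\}$. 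Closing this requires knowing which of $\abs{L},\abs{R}$ is bigger, which is exactly the two-case analysis the paper carries out: Case~1 assumes $\abs{R}>\abs{L}$ and derives a contradiction, and Case~2 (where $\abs{C}\leq 2\abs{L}$) is where the quantitative bound is actually obtained. Your ``divide the two expressions for $\abs{C}$'' step silently uses the Case~2 substitution $\abs{C}\lesssim\abs{L}$ without having justified it.

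Second, and more seriously, the uniformity of $\opD f^b$ over $f(R)$ is not ``already contained in \lemref{derivs}'' and is not an $\e^{O(b\delta)}$ factor. \lemref{derivs} estimates $\opD f_0^{-b}$ at the single point $c$; the mean value theorem gives you a point $\xi\in f(R)$, and transporting the estimate from $c$ to $\xi$ costs a factor $\exp\{\distortion f^b|_{f(R)}\}$. This distortion is not controlled by the distortion of $\phi$ (which is what $b\delta$ would account for) because $f^b$ passes through the region near the critical point where the derivative of $Q$ varies wildly; it is controlled by the Koebe lemma, which requires the components of $[0,\lcv]\setminus C$ to be large compared to $\abs{C}$. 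But this is a statement about the very quantity $\abs{C}$ you are trying to estimate---a genuine chicken-and-egg problem. The paper resolves it by first establishing a rough a priori smallness of $\abs{C}$ (in Case~1 via the containment $f(R)\subset(f_0^{-b-1}(c),f_0^{-b+1}(c))$ and \lemref{crit-vals}; in Case~2 via an upper bound $\abs{L}\lesssim\eps^{a/(\alpha-1)}$ that uses \lemref{derivs} on the $f_1^a$ side, where uniformity is unproblematic, together with $a\geq\alpha-1+\sigma$ for Koebe space). Without an argument of this kind, the appeal to \lemref{derivs} for the $f_0^b$ side is not justified.

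So: the exponent arithmetic at the end is right, the estimates on the $f_1^a$ side are fine, but the passage from the pointwise derivative bound at $c$ to a bound valid on $f(R)$ is the technical heart of the proof, and your proposal does not contain it.
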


\begin{proof}
  First we find an upper bound on $\abs{R}$.  Since $f$ is renormalizable
  $f^b(f(R)) \subset C$.  By the mean value theorem there exists $\xi \in f(R)$
  such that $\opD f^b(\xi) \abs{f(R)} = \abs{f^b(f(R))}$.  We estimate
  $\abs{f(R)} \geq \e^{-\delta} \abs{Q_1(R)} \geq \e^{-\delta} v
  (\abs{R}/\eps)^\alpha$.  Taken all together we get
  \begin{equation} \label{eq:ubR}
    \abs{R}^\alpha \leq
    \frac{\e^\delta \eps^\alpha \abs{f(R)}}{v} =
    \frac{\e^\delta \eps^\alpha \abs{f^b(f(R))}}{v \opD f^b(\xi)} \leq
    \frac{\e^\delta \eps^\alpha \abs{C}}{v \opD f^b(\xi)},
    \qquad \xi \in f(R).
  \end{equation}

  Next, we find a lower bound on $\abs{L}$.  Since $f$ is renormalizable and
  $\opR f$ is nontrivial we get that $f^a(f(L)) \supset L$.  By the mean
  value theorem there exists $\eta \in f(L)$ such that $\abs{f^a(f(L))} = \opD
  f^a(\eta) \abs{f(L)}$.  Use these two facts to estimate
  \[
    \abs{L} \leq \abs{f^a(f(L))} = \opD f^a(\eta) \abs{f(L)} \leq
    \opD f^a(\eta) \e^\delta u (\abs{L}/c)^\alpha,
  \]
  and hence
  \begin{equation} \label{eq:lbL}
    \abs{L}^{\alpha-1} \geq \frac{c^\alpha}{\e^\delta u \opD f^a(\eta)},
    \qquad \eta \in f(L).
  \end{equation}

  There are now two cases to consider: either $\abs{L} < \abs{R}$ or $\abs{L}
  \geq \abs{R}$.  The former case will turn out not to hold, but we do not know
  that yet.

  \medskip\noindent
  \textit{Case 1:}  In order to reach a contradiction, we assume that $\abs{L}
  < \abs{R}$.  This implies that $\abs{C} < 2\abs{R}$ so equations
  \eqnref{ubR} and~\eqnref{lbL} show that
  \begin{equation} \label{eq:RoverL1}
    \frac{\abs{R}}{\abs{L}} \leq \bigoh\left(
    \eps^\alpha \frac{\opD f^a(\eta)}{\opD f^b(\xi)}
    \right)^{\mrlap{\xop{\alpha-1}}}.
  \end{equation}
  We would like to apply \lemref{derivs}, but we do not know the position of
  $f^b(\xi)$ in relation to~$c$.  However, we claim that the distortion of
  $f^b$ on $f(R)$ is very small which will allow us to use \lemref{derivs}
  anyway, since
  \[
    \opD f^b(\xi) = \frac{\opD f^b(\xi)}{\opD f^b(f_0^{-b}(c))}
    \frac{1}{\opD f_0^{-b}(c)} \geq
    \Big(
    \opD f_0^{-b}(c) \cdot \exp\left\{\distortion f^b|_{f(R)}\right\}
    \Big)^{\mrlap{-1}}.
  \]
  Note that $f^a(\eta) > f_0\inv(c)$ since $f$ is renormalizable, so we can
  directly apply \lemref{derivs} to estimate $\opD f^a(\eta)$.

  We now prove that the distortion of $f^b$ on $f(R)$ is small.  For monotone
  combinatorics we have
  \[
    f(R) \subset (f_0^{-b-1}(c),f_0^{-b+1}(c)),
  \]
  thus
  \[
    \abs{f_0^{-b+1}(c) - f_0^{-b-1}(c)} \geq \abs{f(R)} \geq
    \eps^{-\delta} v (\abs{R}/\eps)^\alpha,
  \]
  and consequently
  \[
    \frac{\abs{R}}{\eps} \leq \left(
    \frac{\e^\delta}{v} \cdot \abs{f_0^{-b+1}(c) - f_0^{-b-1}(c)}
    \right)^{\mrlap\ooa}
    \to 0, \quad \text{as $\lbb \to \infty$.}
  \]
  This and \lemref{crit-vals} shows that the length of the right component of
  $[0,\lcv] \setminus C$ is much larger than~$C$ (since $\abs{R} > \abs{C}/2$
  by assumption).  Since $f^{-b}|_C$ extends monotonously to $[0,\lcv]$ the
  Koebe lemma implies that the distortion of $f^b|_{f(R)}$ tends to zero as
  $\lbb \to \infty$.  (Note that the left component of $[0,\lcv] \setminus C$
  is of order~$1$ so it is automatically large compared to~$C$.)

  Now that we have control over the distortion, apply \lemref{derivs} to get
  \[
    \eps^\alpha \frac{\opD f^a(\eta)}{\opD f^b(\xi)} = \bigoh\left(
    \eps^{-(a + 1 - \alpha - \ooan{b})} \alpha^{-(b-a)} \right).
  \]
  By \eqnref{Omega} $a+1-\alpha-\ooan{b} \geq \sigma - \ooan{b}$ and we may
  assume that $\sigma > \ooan{b}$ (by choosing $\lbb$ sufficiently large) so
  that the exponent of $\eps$ is negative.  Inserting
  $\eps \geq \alpha^{-\lbb/\alpha}$ we get that the
  right-hand side is at most of the order
  $\alpha^{-t}$, where
  \[
    t = -\frac{\lbb}{\alpha} (\alpha-\sigma-\alpha^{-b}) + \lbb
    = \left(\frac{\sigma+\alpha^{-b}}{\alpha}\right) \lbb.
  \]
  The expression in front of $\lbb$ is positive so $t \to \infty$ as $\lbb
  \to \infty$.  Hence \eqnref{RoverL1} shows that $\abs{R}/\abs{L} \to 0$ as
  $\lbb \to \infty$.  This contradicts the assumption that $\abs{R} >
  \abs{L}$, so we conclude that $\abs{R} \leq \abs{L}$.

  \medskip\noindent
  \textit{Case 2:}  From the argument above we know that $\abs{L} \geq
  \abs{R}$.  In particular, $\abs{C} \leq 2\abs{L}$, so equations \eqnref{ubR}
  and~\eqnref{lbL} show that
  \begin{equation} \label{eq:RoverL2}
    \frac{\abs{R}}{\abs{L}} \leq \frac{\eps}{c}
    \left(\frac{2 \e^{2\delta} u}{v}
    \frac{\opD f^a(\eta)}{\opD f^b(\xi)} \right)^{\mrlap\ooa}.
  \end{equation}
  As in Case~1 we would like to apply \lemref{derivs} but first we need to show
  that the distortion of $f^b$ on~$f(R)$ is small.  In order to so we need an
  upper bound on $\abs{L}$.

  Since $f$ is renormalizable $f(L) \subset C$, so another mean value theorem
  estimate gives
  \[
    2\abs{L} \geq \abs{C} \geq \opD f^a(\zeta) \abs{f(L)} \geq
  \opD f^a(\xi) \e^{-\delta} u (\abs{L}/c)^\alpha,
  \]
  for some $\zeta \in f(L)$.  Now apply \lemref{derivs} to get that
  \begin{equation} \label{eq:ubL}
    \abs{L} \leq \bigoh\left(\eps^{a/(\alpha-1)}\right).
  \end{equation}
  By \eqnref{Omega} $a \geq \alpha - 1 + \sigma$ so once again we get that the
  length of the right component of $[0,\lcv] \setminus C$ is large compared
  to~$C$ (use \lemref{crit-vals} to bound $\lcv$).  The Koebe lemma shows that
  the distortion of $f^b|_{f(R)}$ tends to zero as $\lbb \to \infty$.

  We can now apply \lemref{derivs} to \eqnref{RoverL2} to get that
  \begin{equation} 
    \eps(\opR f) = \frac{\abs{R}}{\abs{L}+\abs{R}} < \frac{\abs{R}}{\abs{L}}
    \leq K \left(\eps^{-(a + 1 - \alpha - \ooan{b})} \alpha^{-(b-a)}
    \right)^{\mrlap\ooa}.
  \end{equation}
  As in the above we may assume that the exponent of $\eps$ is negative, so
  inserting $\eps \geq \alpha^{-\lbb/\alpha}$ we get
  \[
    \eps(\opR f) \leq K_1 \left(
    \alpha^{\lbb (a+1-\alpha-\alpha^{-b})/\alpha - \lbb}
    \right)^\ooa
    \leq K_2 \left(
    \alpha^{\lbb (\alpha-\sigma)/\alpha - \lbb}
    \right)^\ooa
    = K_2 \alpha^{-\lbb \sigma/\alpha^2}\!\!.
  \]
  Let $\theta = K_2$ to finish the proof.
\end{proof}

Knowing that $\eps(\opR f)$ is small it is relatively straightforward to use
the Koebe lemma to prove that the distortion of the diffeomorphic parts of
$\opR f$ is small.  Here we really need the condition that the return time of
the left branch satisfies $a > \alpha - 1$ in order to find some Koebe space.
Also note that we assume negative Schwarzian derivative so that we can apply the
strong version of the Koebe lemma (see \lemref{S-koebe}) which gives explicit
bounds on the distortion.

\begin{proposition} \label{prop:dist}
  If $f \in \setLS_\Omega \cap \setK$, then $\distortion\phi(\opR f) \leq
  \delta$ and $\distortion\psi(\opR f) \leq \delta$,
  for $\lbb$ large enough.
\end{proposition}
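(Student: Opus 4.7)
My plan is to apply the strong Koebe lemma (justified by $f \in \setLS$) to the dynamical iterates $f_1^a$ and $f_0^b$ appearing in the expressions for $\phi'$ and $\psi'$, and to use the smallness of the restriction domains $U$ and $V$ to handle the contributions from $\phi$ and $\psi$. By \lemref{tuple-renorm}, $\phi' = \opZ(f_1^a \circ \phi; U)$ and $\psi' = \opZ(f_0^b \circ \psi; V)$. Since affine rescaling preserves distortion, it suffices to bound
\[
\distortion \phi' \leq \distortion(\phi|_U) + \distortion(f_1^a|_{U_1}),
\]
where $U_1 = \phi(U) = f_1^{-a}(C)$, together with the analogous inequality for $\psi'$.

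For the Koebe contribution $\distortion(f_1^a|_{U_1})$, the maximal interval $\hat U_1 \subset (c,1)$ of monotonicity of $f_1^a$ containing $U_1$ has image $(\rcv,1)$, whose complement of $C$ consists of two intervals of length $\approx c$ and $\approx \eps$ respectively; both dwarf $\abs{C}$, since from the proof of \propref{ubeps} (Case~2) we have $\abs{L}\leq\bigoh(\eps^{a/(\alpha-1)})$ and $\abs{R}$ is similarly tiny, so $\abs{C}\leq\bigoh(\eps^{1+\sigma/(\alpha-1)})$. Because $\opS f<0$, the strong Koebe lemma then yields a distortion bound in terms of the ratio $\abs{C}/\eps=\bigoh(\eps^{\sigma/(\alpha-1)})$, which is super-polynomially small in $\lbb$; hence $\distortion(f_1^a|_{U_1})=\littleoh(\delta)$. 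The analogous bound $\distortion(f_0^b|_{V_1})=\littleoh(\delta)$ was essentially already carried out in Case~2 of the proof of \propref{ubeps}, using the Koebe space $[0,\lcv]\setminus C$.

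For the restriction term, \lemref{derivs} gives $\abs{U_1}\leq K(\eps/\alpha)^a\abs{C}$, which is doubly small in $\lbb$, and $\abs{U}\leq\e^\delta\abs{U_1}$. Writing $\distortion(\phi|_U)$ as the total variation of $\log\opD\phi$ over $U$, we get $\distortion(\phi|_U)\leq\abs{U}\cdot\sup_U\abs{\opN\phi}$; a uniform bound on $\sup\abs{\opN\phi}$ over the relatively compact family in $\setK$ (cf.\ \propref{K-relcpt}) yields $\distortion(\phi|_U)=\littleoh(\delta)$. Summing, $\distortion\phi'\leq\delta$ for $\lbb$ large, and $\psi'$ is handled identically.

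The main obstacle is producing sufficient Koebe space on both sides of $C$ for a single monotone extension of $f_1^a$ (and $f_0^b$): one side is automatic from the fixed points $0,1$ of $f$, while the other requires combining the size estimates on $\abs{L},\abs{R}$ from \propref{ubeps} with the locations of $\lcv,\rcv$ from \lemref{crit-vals} to verify that the monotone extension is long enough for the Koebe ratio to defeat $\delta=1/\lbb^2$.
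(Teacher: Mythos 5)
Your overall strategy is the same as the paper's: apply the Koebe lemma to the return‐time iterates $f_1^a$ and $f_0^b$, using the fact that $|C|\lesssim\eps^{a/(\alpha-1)}$ while the shorter Koebe component of $(\rcv,\lcv)\setminus C$ has length of order $\eps$, so the distortion of the dynamical factor is $\bigoh(\eps^t)$ with $t=-1+a/(\alpha-1)>\sigma/(\alpha-1)>0$, which beats $\delta=\lbb^{-2}$. You also make explicit the split
\[
\distortion\phi'\ \le\ \distortion\bigl(f_1^a|_{\hat U}\bigr)+\distortion\bigl(\phi|_U\bigr),
\]
which the paper's own proof does not articulate — the paper passes directly from $\distortion(f^a|_{\hat U})\lesssim\eps^t$ to $\distortion\phi(\opR f)\lesssim\eps^t$, leaving the restriction term $\distortion(\phi|_U)$ implicit.

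The step that does not go through is your bound on the restriction term. You write $\distortion(\phi|_U)\le|U|\cdot\sup_U|\opN\phi|$ and then claim a \emph{uniform} bound on $\sup|\opN\phi|$ over $\setK$ from \propref{K-relcpt}. That proposition only gives relative compactness of $\setK$ in $\setL^0$, i.e.\ in the $\setC^0$ topology on the diffeomorphic parts; this does not control $\opN\phi=\opD\log\opD\phi$, which is a second‐derivative quantity. The only quantitative control that membership in $\setK$ supplies on $\phi$ is $\distortion\phi\le\delta$, an $\ell^1$‐type bound on $\opN\phi$, not an $\ell^\infty$ one; nothing rules out the oscillation of $\log\opD\phi$ being concentrated on the tiny interval $U$, in which case $\distortion(\phi|_U)$ is comparable to $\delta$ rather than $\littleoh(\delta)$. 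So the ``$|U|$ small $\Rightarrow$ restriction term negligible'' step needs a genuinely stronger hypothesis than $\setK$ alone provides. (For comparison, this is exactly what the paper does acquire in the decomposed setting of \propref{dinvset}, where purity of the decomposition forces $\norm{\opN\phi_\tau}_\infty\asymp\distortion\phi_\tau$ and \lemref{dzoom-contract} then delivers the $|U|$‐contraction cleanly.) As written, your bound on $\sup|\opN\phi|$ is the gap; either justify it from a stronger compactness statement on the diffeomorphic parts, or follow the paper in treating the whole object via Koebe on the dynamical factor and acknowledge that the handling of the $\phi|_U$ term is deferred to the decomposition framework.
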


\begin{proof}
  From \propref{ubeps} we know that $\abs{L} > \abs{R}$ and thus \eqnref{ubL}
  applies, which shows that $\abs{C}$ is at most of the order
  $\eps^{a/(\alpha-1)}$.  Hence \lemref{crit-vals} shows that the right
  component of $(\rcv,\lcv) \setminus C$ has length of order $\eps$ and the
  left component has length of order~$1$.

  Let $\hat U = f^{-a}_1(C)$ and $\hat V = f^{-b}_0(C)$.  The inverses of
  $f^a|_{\hat U}$ and $f^b|_{\hat V}$ extend monotonously (at least) to
  $(\rcv,\lcv)$ so the Koebe lemma (see \corref{koebe}) implies that the
  distortion of these maps is of the order $\eps^t$, where
  \[
    t = -1 + a/(\alpha-1) > \sigma/(\alpha-1) > 0
  \]
  by \eqnref{Omega}.
  
  Since $\phi(\opR f)$ equals $f^a|_{\hat U} \circ \phi$ and $\psi(\opR f)$
  equals $f^b|_{\hat V} \circ \psi$ (up to rescaling) this shows that
  \begin{equation} \label{eq:dist-Rf}
    \distortion \phi(\opR f) \leq K \eps^t \quad\text{and}\quad
    \distortion \psi(\opR f) \leq K \eps^t.
  \end{equation}
  Note that $\eps^t \leq \theta\alpha^{-t \lbb \sigma/\alpha^2} \ll \delta$
  for $\lbb$ large enough, since $\delta = (1/\lbb)^2$ by \eqnref{K}.
\end{proof}

The final step in the invariance proof is showing that $\eps(\opR f)$ is not
too small.  This is the only place where we use the condition on $\rcv(\opR
f)$.  This condition excludes maps whose renormalization has a trivial right
branch.  Such maps are difficult for us to handle because $\eps(\opR f)$ may be
smaller than the lower bound on~$\eps$.

\begin{proposition} \label{prop:lbeps}
  If $f \in \setLS_\Omega \cap \setK$ and if $1-\rcv(\opR f) \geq \lambda$ for
  some $\lambda > 0$ not depending on $\lbb$,
  then $\eps(\opR f) \geq \alpha^{-\lbb/\alpha}$, for $\lbb$ large enough.
\end{proposition}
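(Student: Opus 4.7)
The plan is to use the condition $1 - \rcv(\opR f) \geq \lambda$ to extract a uniform lower bound on $v' := v(\opR f)$, convert this into a lower bound on $\abs{R}$ via the renormalization formula of \lemref{tuple-renorm} and the derivative estimates of \lemref{derivs}, and then combine with the upper bound on $\abs{L}$ already obtained in \eqnref{ubL} in the proof of \propref{ubeps} to conclude that $\eps' = \abs{R}/\abs{C} \geq \alpha^{-\lbb/\alpha}$.

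First, note that the right branch of $\opR f$ has the form $\psi' \circ Q'$, where $Q'$ is the standard family with parameters $(u',v',c')$, so $\rcv(\opR f) = \psi'(1-v')$; combined with $\distortion \psi' \leq \delta$ from \propref{dist}, this gives
\[
  \lambda \leq 1 - \psi'(1-v') \leq \e^\delta v',
\]
hence $v' \geq \lambda\e^{-\delta}$, a bound that is uniform in $\lbb$. Since $v' = \abs{Q(R)}/\abs{V}$ with $\psi(V) = f_0^{-b}(C)$ by \lemref{tuple-renorm}, bounded distortion of $\psi$ together with bounded distortion of $f_0^{-b}$ on $C$ (the latter following from the Koebe space exploited in the proof of \propref{ubeps}) and the lower derivative bound in \lemref{derivs} yield
\[
  \abs{V} \gtrsim \alpha^{-b} \eps^{-1+\ooan{b}} \abs{C}.
\]
Using $\abs{Q(R)} = v(\abs{R}/\eps)^\alpha \leq (\abs{R}/\eps)^\alpha$ together with $\abs{Q(R)} = v'\abs{V} \geq \lambda\e^{-\delta}\abs{V}$, I would then derive $\abs{R}^\alpha \gtrsim \alpha^{-b} \eps^{\alpha-1+\ooan{b}} \abs{C}$. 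Dividing by $\abs{C}^\alpha$ and invoking \eqnref{ubL} together with $\abs{C} \leq 2\abs{L}$ to bound $\abs{C}^{\alpha-1} \lesssim \eps^a$, the resulting estimate is
\[
  (\eps')^\alpha \gtrsim \alpha^{-b} \eps^{\alpha - 1 + \ooan{b} - a}.
\]

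By \eqnref{Omega} the exponent $E := a + 1 - \alpha - \ooan{b}$ is positive for $\lbb$ large, so inserting the upper bound $\eps \leq \theta\alpha^{-\lbb\sigma/\alpha^2}$ from \defref{Omega-K} gives $(\eps')^\alpha \gtrsim \alpha^{\lbb\sigma E/\alpha^2 - b}$. The desired bound $(\eps')^\alpha \geq \alpha^{-\lbb}$ then reduces to showing that $\lbb\sigma E/\alpha^2 - b + \lbb$ diverges with $\lbb$; using $E \geq \sigma - \ooan{b}$ and the worst case $b \leq (1 + (\sigma/\alpha)^2 - \beta)\lbb$, this quantity simplifies to $\lbb(\beta - \sigma\ooan{b}/\alpha^2)$. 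I expect the main obstacle to be precisely this final bookkeeping: one must verify that the strict slack $\beta > 0$ built into \defref{Omega-K} is large enough to dominate both the correction $\ooan{b}$ arising from the finite-$b$ deviation from linearization in \lemref{derivs} and the multiplicative constants accumulated in the earlier steps. This is why \defref{Omega-K} requires $\beta \in (0,(\sigma/\alpha)^2)$ rather than merely $b \leq (1 + (\sigma/\alpha)^2)\lbb$. All the preceding reductions are essentially direct applications of the cited lemmas; the exponent calculation is the only genuinely delicate point.
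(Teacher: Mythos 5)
Your proposal is correct and follows the same strategy as the paper's proof: extract a lower bound on $\abs{R}$ (equivalently $v'$) from the hypothesis on $\rcv(\opR f)$, combine it with the upper bound on $\abs{L}$ from \eqnref{ubL}, and carry out the same exponent bookkeeping via the derivative estimates of \lemref{derivs}, the relation $a+1-\alpha-\ooan{b} \geq \sigma - \ooan{b}$, and the slack $\beta>0$ in \defref{Omega-K}. The paper phrases the first step through $\rcv(\opR f) = 1 - \abs{f^{b}(f(R))}/\abs{C}$ and a mean-value estimate on $f^{b}|_{f(R)}$, whereas you pass through $v' = \abs{Q(R)}/\abs{V}$, the distortion bound on $\psi'$ from \propref{dist}, and a lower bound on $\abs{V}$ via $\opD f_0^{-b}(c)$; these are dual viewpoints that yield the same estimate up to the constants absorbed into your $\gtrsim$ (the missing $\alpha^{a}$ relative to the paper's $\alpha^{-(b-a)}$ is harmless since $a$ is uniformly bounded by \eqnref{Omega}).
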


\begin{proof}
  First we look for a lower bound on $\abs{R}$.  The condition on
  $\rcv(\opR f)$ gives
  \[
    1-\lambda \geq \rcv(\opR f) = 1 - \frac{\abs{f^b(f(R))}}{\abs{C}}
  \]
  and hence $\abs{f^b(f(R))} \geq \lambda \abs{C} \geq \lambda
  \abs{L}$.\footnote{This is the \emph{only} place where we use the condition
  on $\rcv(\opR f)$.}  On the other hand, the mean value theorem shows that
  there exists $\xi \in f(R)$ such that
  \[
    \abs{f^b(f(R))} = \opD f^b(\xi) \abs{f(R)} \leq
    \opD f^b(\xi) \e^\delta (\abs{R}/\eps)^\alpha.
  \]
  Thus
  \begin{equation} \label{eq:lbR}
    \abs{R}^\alpha \geq
    \frac{\lambda \abs{L} \eps^\alpha}{\e^\delta \opD f^b(\xi)},
    \qquad \xi \in f(R).
  \end{equation}

  Next, we look for an upper bound on $\abs{L}$.  The mean value theorem in
  conjunction with $C \supset f^a(f(L))$ and $2\abs{L} > \abs{C}$, shows that
  $2\abs{L} > \abs{C} \geq \opD f^a(\eta) \e^{-\delta} u(\abs{L}/c)^\alpha$, for
  some $\eta \in f(L)$.  Hence
  \begin{equation} \label{eq:ubL2}
    \abs{L}^{\alpha-1} \leq \frac{2 \e^\delta c^\alpha}{u \opD f^a(\eta)},
    \qquad \eta \in f(L).
  \end{equation}

  Equations \eqnref{lbR} and~\eqnref{ubL2} show that
  \begin{equation} \label{eq:lbRL}
    \frac{\abs{R}}{\abs{L}} \geq \frac{\eps}{c}
    \left( \frac{\lambda u}{2\e^{2\delta}}
    \frac{\opD f^a(\eta)}{\opD f^b(\xi)} \right)^{\mrlap\ooa}.
  \end{equation}
  Now apply \lemref{derivs} (\propref{dist} can be used to bound $\opD f^b(\xi)$
  in case $f^b(\xi) > c$) to get that
  \[
    \eps(\opR f) = \frac{\abs{R}}{\abs{L}+\abs{R}} = \frac{\abs{R}}{\abs{L}}
    \cdot \left(1 + \frac{\abs{R}}{\abs{L}}\right)\inv
    \geq k_0 
    \left(\eps^{-(a + 1 - \alpha - \ooan{b})} \alpha^{-(b-a)}
    \right)^{\mrlap\ooa}.
  \]
  By \eqnref{Omega}, $a+1-\alpha-\alpha^{-b} \geq \sigma-\alpha^{-b}$ which we
  may assume to be positive (by choosing $\lbb$ large), so inserting $\eps
  \leq \theta\alpha^{-\lbb \sigma/\alpha^2}$ in the right-hand side we get
  that $\eps(\opR f) \geq k_1
  \alpha^{-t/\alpha}$, where
  \begin{equation} \label{eq:lbexp}
    t = -\frac{\lbb \sigma}{\alpha^2} \big(\sigma - \alpha^{-\lbb}\big) +
    \big(1 + (\sigma/\alpha)^2 - \beta\big)\lbb
    = \Big( 1 + \frac{\sigma\alpha^{-\lbb}}{\alpha^2} - \beta\Big) \lbb.
  \end{equation}
  We may assume that $\beta > \sigma\alpha^{-\lbb}/\alpha^2$ by
  choosing $\lbb$ large enough.  Hence
  \[
    \eps(\opR f) \geq k_1 \alpha^{-\rho \lbb/\alpha},
    \qquad \rho = 1 + \frac{\sigma\alpha^{-\lbb}}{\alpha^2} - \beta,
  \]
  which is larger than $\eps^{-\lbb/\alpha}$ for $\lbb$ large enough since
  $\rho \downarrow 1-\beta < 1$ as $\lbb \to \infty$.
\end{proof}

The above propositions are all we need to prove invariance:

\begin{proof}[Proof of~\thmref{K-inv}]
  Apply Propositions \ref{prop:ubeps}, \ref{prop:dist} and~\ref{prop:lbeps}.
\end{proof}

To prove \thmref{K-inv2} we need to show that twice renormalizable maps
in~$\setK$ automatically satisfy the condition on $\rcv(\opR f)$.  The only
problem is that twice renormalizable maps may have $\eps(\opR f) <
\alpha^{-\lbb/\alpha}$ in general, but even so we can still apply
\lemref{invorbits} to
$\opR f$ to get some bound on $\rcv(\opR f)$.

\begin{proof}[Proof of~\thmref{K-inv2}]
  If we go through the proof of \propref{lbeps} without using the condition on
  $\rcv(\opR f)$ and instead use $f^b(f(R)) \supset R$, then \eqnref{lbR}
  becomes
  \begin{equation*}
    \abs{R}^{\alpha-1} \geq
    \frac{\eps^\alpha}{\e^\delta \opD f^b(\xi)},
    \qquad \xi \in f(R),
  \end{equation*}
  and \eqnref{lbRL} becomes
  \begin{equation*}
    \frac{\abs{R}}{\abs{L}} \geq \frac{\eps}{c}
    \left( \frac{u}{2\e^{2\delta}}
    \frac{\opD f^a(\eta)}{\opD f^b(\xi)} \right)^{\mrlap{\xop{\alpha-1}}}.
  \end{equation*}
  This time we get that $\eps(\opR f) \geq k_1 \alpha^{-t/(\alpha-1)}$, where
  $t$ is the same as in \eqnref{lbexp}.  However, the important thing to
  note is that we still get a lower bound of the type $\tilde \eps = \eps(\opR
  f) \geq k_1 \alpha^{-K \lbb}$.  This and Propositions~\ref{prop:ubeps}
  and~\ref{prop:dist} show that we can apply \lemref{invorbits} to $\tilde f =
  \opR f$.

  By the above argument we can apply \lemref{invorbits} to $\tilde f$ and
  $\tilde \eps = 1 - \tilde c$ to get that
  \[
    \frac{\tilde c - \tilde f_0^{-b}(\tilde c)}{\tilde c} \geq
    \nu \left(k_1\alpha^{-K \lbb}\right)^\ooan{b} \geq k_2 > 0.
  \]
  Let $\lambda = k_2$.  Note that for monotone combinatorics $\rcv \leq
  f^{-b}_0(c)$ and since $\tilde f$ is renormalizable this shows that
  $1 - \rcv(\tilde f) > \lambda$.
\end{proof}


\section{A priori bounds} 
\label{sec:apb}

In this section we begin exploiting the existence of the relatively compact
`invariant' set of \thmref{K-inv}.  An important consequence of this theorem is
the existence of so-called \emph{a priori bounds} (or \emph{real bounds}) for
infinitely renormalizable maps.  We use the a priori bounds to analyze
infinitely renormalizable maps and their attractors.

From now on we will assume that the sets $\Omega$ and~$\setK$ of
\defref{Omega-K} have been fixed; in particular, we assume that $\lbb$ has been
chosen large enough for \thmref{K-inv2} to hold.

\begin{theorem}[A priori bounds] \label{thm:realbounds}
  \index{a priori bounds}
  If $f \in \setLS_{\rtype} \cap \setK$ is infinitely renormalizable with
  $\rtype \in \Omega^\nats$, then $\{\opR^n f\}_{n\geq0}$ is a relatively
  compact family (in $\setL^0$).
\end{theorem}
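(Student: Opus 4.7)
The plan is that this theorem is essentially an immediate consequence of the two main results that have already been established in this section, namely \thmref{K-inv2} (conditional invariance of $\setK$) and \propref{K-relcpt} (relative compactness of $\setK$ in $\setL^0$). So the work is to set up a clean induction that chains these together along the renormalization orbit.

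First I would check a prerequisite: that negative Schwarzian derivative is preserved under renormalization, so that $\opR^n f \in \setLS$ whenever $f \in \setLS$. This is standard, since $\opR f$ is an affine conjugate of a composition of iterates of $f$, and negative Schwarzian derivative is preserved by composition and by affine conjugation. Combined with the hypothesis $\rtype = (\omega_0,\omega_1,\dotsc) \in \Omega^\nats$, this yields $\opR^n f \in \setLS_\Omega$ for every $n \geq 0$, and in particular the hypotheses of \thmref{K-inv2} on the combinatorial type are fulfilled at every step.

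Next I would prove by induction on $n$ that $\opR^n f \in \setK$. The base case $n=0$ is the hypothesis $f \in \setK$. For the inductive step, assume $\opR^n f \in \setK$. By the observation above, both $\opR^n f$ and $\opR(\opR^n f) = \opR^{n+1}f$ lie in $\setLS_\Omega$, so \thmref{K-inv2} applies to $\opR^n f$ and gives $\opR^{n+1}f \in \setK$. This closes the induction, so the entire forward orbit $\{\opR^n f\}_{n\geq 0}$ is contained in~$\setK$.

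Finally, \propref{K-relcpt} asserts that $\setK$ is relatively compact in $\setL^0$. Any subset of a relatively compact set is itself relatively compact, so $\{\opR^n f\}_{n\geq 0} \subset \setK$ is relatively compact in~$\setL^0$, which is the desired conclusion. There is no real obstacle here beyond verifying the Schwarzian preservation; the heavy lifting has all been done in proving \thmref{K-inv2}, and this theorem is essentially the packaging of that invariance as an a priori bounds statement.
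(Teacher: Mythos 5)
Your proof is correct and takes essentially the same approach as the paper, which simply cites \thmref{K-inv2} and \propref{K-relcpt}; you have supplied the implicit induction and the (standard but worth noting) fact that negative Schwarzian derivative survives renormalization, both of which are needed to make the citation airtight.
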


\begin{proof}
  This is a consequence of \thmref{K-inv2} and \propref{K-relcpt}.
\end{proof}

\begin{theorem} \label{thm:weak-markov}
  If $f \in \setLS_{\rtype} \cap \setK$ is infinitely renormalizable with
  $\rtype \in \Omega^\nats$, then $f$ satisfies the weak Markov property.
\end{theorem}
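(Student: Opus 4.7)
The plan is to take the renormalization intervals themselves as the required nested family. For each $n \geq 1$, let $C_n \subset \uint$ be the (maximal) interval around the critical point $c$ whose first-return map under $f$ is affinely conjugate to $\opR^n f$; equivalently, $C_{n+1}$ is the pullback inside $C_n$ of the renormalization interval of $\opR^n f$. Nestedness $C_n \supset C_{n+1}$ is immediate. Each $C_n$ is a nice interval because, for monotone combinatorics, the two boundary points of the renormalization interval of $\opR^{n-1} f$ are eventually iterated onto the fixed points $0$ and $1$, which lie outside the interior of $[0,1]$; pulling back, the orbit of $\bndry C_n$ stays on the corresponding boundary points of $C_{n-1}, \dotsc, C_1$ and then at $\{0, 1\}$, hence is disjoint from $C_n$.

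The heart of the proof is a uniform $\delta$--scaled-neighborhood bound, and this is precisely where the a priori bounds do the work. By \thmref{realbounds} the sequence $\{\opR^n f\}$ remains in $\setK$. Viewing the rescaled copy of $C_n$ as $[0,1]$, I need to bound both components of $[0,1] \setminus C(\opR^n f)$ from below by a uniform multiple of $\abs{C(\opR^n f)}$. On one hand, \eqnref{ubL} combined with $a/(\alpha - 1) \geq 1 + \sigma/(\alpha-1) > 1$ gives $\abs{C(\opR^n f)} \leq \bigoh(\eps^{a/(\alpha-1)}) \ll \eps$. On the other hand, $C(\opR^n f)$ is a small neighborhood of the critical point whose right gap $1-\ub{C(\opR^n f)}$ is of order $\eps$ and whose left gap $\lb{C(\opR^n f)}$ is of order $1$ (since $c$ is close to~$1$); both gaps dominate $\abs{C(\opR^n f)}$, so the ratio is bounded below uniformly in $n$.

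For almost-everywhere definedness of the transfer map to each $C_n$, I would invoke \propref{Tdomain}: its hypotheses are $\opS f < 0$, which holds since $f \in \setLS$, and the absence of periodic attractors. The latter follows from infinite renormalizability: by a Lorenz-map analogue of Singer's theorem, the immediate basin of any attracting periodic orbit must contain a critical value or a boundary fixed point; the fixed points $0, 1$ are repelling as $\opD Q(0) = u\alpha/c > 1$ and $\opD Q(1) = v\alpha/\eps$ is huge, while the orbits of $\lcv, \rcv$ enter each $C_n$ infinitely often and so cannot accumulate on a periodic orbit lying outside these shrinking renormalization cycles. I expect the main obstacle to be this no-periodic-attractor step, which requires a careful adaptation of Singer's theorem to the discontinuous Lorenz setting; the other steps are short consequences of \thmref{realbounds} together with the estimates of \secref{inv-set}.
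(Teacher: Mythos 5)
Your proof takes the same overall strategy as the paper---take the nested return intervals $C_n$ as the nice intervals, invoke Singer's theorem together with \propref{Tdomain} for almost-everywhere definedness of the transfer maps, and establish the uniform $\delta$-scaled-neighborhood property---but the third step is done by a genuinely different argument. The paper introduces iterates $f^{l_n}(L_n)$ and $f^{r_n}(R_n)$ landing in the two components of $C_{n-1}\setminus C_n$ and argues by compactness: if the ratios $\abs{f^{l_n}(L_n)}/\abs{C_n}$ had infimum zero, the $\setC^0$-closure of $\{\opR^n f\}$ (compact by the a priori bounds of \thmref{realbounds}) would contain a renormalizable map whose cycle has a degenerate interval, a contradiction. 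You instead bound the two components of $[0,1]\setminus C(\opR^n f)$ directly from the explicit estimates of \secref{inv-set}: $\abs{C(\opR^n f)}=\bigoh(\eps^{a/(\alpha-1)})\ll\eps$, the right gap $1-q$ is $\asymp\eps$, the left gap $p$ is $\asymp 1$. Both arguments are valid; yours is sharper and gives quantitative control of $\delta$ in terms of $\lbb$, while the paper's is softer, needing only compactness and not the fine estimates of Section~4.

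One factual error to fix in the niceness step: the boundary points of $C_n$ are \emph{not} eventually iterated onto $\{0,1\}$. They are \emph{periodic} points of $f$ (of periods $a+1$ and $b+1$ at the first level; at deeper levels, of periods given by products of the return times), as stated explicitly in the notation block of \secref{the-derivative} and used in \lemref{R-bndry}. You appear to be thinking of the full branch endpoints $l<p$ and $r>q$ of $f^{a+1}|_{(l,c)}$ and $f^{b+1}|_{(c,r)}$, which do eventually hit $\{0,c,1\}$, but these lie strictly outside $\clos C_n$. The correct argument, which the paper gives, is that $p$ and~$q$ are periodic and their forward orbits lie on the boundaries of the renormalization cycle intervals $\clos U_i$, $\clos V_j$, all of which are disjoint from $C_n$ by minimality of the return times. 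This misstep does not propagate into the rest of your proof.
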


\begin{proof}
  Since $f$ is infinitely renormalizable there exists a
  sequence $C_0 \supset C_1 \supset \cdots$ of nice intervals whose lengths
  tend to zero (i.e.\ $C_n$
  is the range of the $n$--th first-return map and this interval is nice since
  the boundary consists of periodic points whose orbits do not enter $C_n$).

  Let $T_n$ denote the transfer map to~$C_n$.  We must show that $T_n$ is
  defined almost everywhere and that there exists $\delta>0$ (not depending
  on~$n$) such that $C_n$ contains a $\delta$--scaled neighborhood
  of~$C_{n+1}$, for every~$n \geq 0$.

  By a theorem of Singer\footnote{
   Singer's theorem is stated for unimodal maps but the statement and proof can
   easily be adapted to Lorenz maps.
  }
  $f$ cannot have a periodic attractor since it would attract at least one of
  the critical values.  This does not happen for infinitely renormalizable maps
  since the critical orbits have subsequences which converge on the critical
  point.  Thus \propref{Tdomain} shows that $T_n$ is defined almost everywhere.

  Let $L_n = C_n \cap (0,c)$ and let $R_n = C_n \cap (c,1)$, where $c$ is the
  critical point of~$f$.  Since $f$ is infinitely renormalizable there exists
  $l_n$ and~$r_n$ such that $f^{l_n}(L_n)$ is in the right component of
  $C_{n-1} \setminus C_n$, and such that $f^{r_n}(R_n)$ is contained in the
  left component of $C_{n-1} \setminus C_n$.  We contend that
  \begin{equation} \label{eq:inf-ratio}
    \inf_n\; \abs{f^{l_n}(L_n)}/\abs{C_n} > 0
    \quad\text{and}\quad
    \inf_n\; \abs{f^{r_n}(R_n)}/\abs{C_n} > 0.
  \end{equation}
  Suppose not, and consider the $\setC^0$--closure of~$\{\opR^n f\}$.  The a
  priori bounds show that this set is compact and hence there exists a
  subsequence
  $\{\opR^{n_k} f\}$ which converges to some~$f_*$.  But then $f_*$ is a
  renormalizable map whose cycles of renormalization contain an interval of
  zero diameter.  This is impossible, hence \eqnref{inf-ratio} must hold.
  
  Equation~\eqnref{inf-ratio} shows that $C_{n-1}$ contains a $\delta$--scaled
  neighborhood of~$C_n$ and that $\delta$ does not depend on $n$.
\end{proof}

\begin{theorem} \label{thm:cantorattractor}
  Assume $f \in \setLS_{\rtype} \cap \setK$ is infinitely renormalizable with
  $\rtype \in \Omega^\nats$.  Let $\Lambda$ be the closure of the orbits of the
  critical values.  Then:
  \begin{compactitem}
    \item $\Lambda$ is a Cantor set,
    \item $\Lambda$ has Lebesgue measure zero,
    \item the Hausdorff dimension of $\Lambda$ is strictly inside $(0,1)$,
    \item the complement of the basin of attraction of $\Lambda$ has zero
      Lebesgue measure.
  \end{compactitem}
\end{theorem}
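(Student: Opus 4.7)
The plan is to exploit the a priori bounds (\thmref{realbounds}) and the weak Markov property (\thmref{weak-markov}) to control the geometry of the nested cycles of renormalization. Write $\mathcal{E}_n = \bigcup_i U_i^{(n)} \cup \bigcup_j V_j^{(n)}$ for the union of the cycles of $f$ at renormalization level $n$, so that $\Lambda \subset \bigcap_n \mathcal{E}_n$. The set $\Lambda$ is closed by definition, and it has no isolated points because the forward orbits of the critical values re-enter cycles of arbitrarily small diameter at every level; nowhere-density will follow once $|\mathcal{E}_n| \to 0$ is established.

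For Lebesgue measure zero, I would prove a uniform geometric contraction $|\mathcal{E}_{n+1}| \leq \lambda|\mathcal{E}_n|$ for some $\lambda < 1$ independent of $n$. Each cycle $U_i^{(n)}$ (and each $V_j^{(n)}$) maps diffeomorphically onto $C_n$ under an appropriate iterate of $f$, and by negative Schwarzian derivative together with the definite Koebe space supplied by the a priori bounds and monotone combinatorics, the distortion of this iterate is bounded by a uniform constant $K$. Therefore
\[
\frac{|\mathcal{E}_{n+1} \cap U_i^{(n)}|}{|U_i^{(n)}|} \leq 1 - K^{-1}\frac{|C_n \setminus \mathcal{E}_{n+1}|}{|C_n|}.
\]
The relative compactness of $\{\opR^m f\}$ in $\setL^0$ implies that the level-one cycles of each renormalization leave a gap of uniform measure $\eta > 0$ in $\uint$; otherwise a convergent subsequence would produce a Lorenz map whose disjoint nontrivial cycles cover $\uint$, which is impossible. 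Summing over all level-$n$ cycles gives $|\mathcal{E}_{n+1}| \leq (1-\eta/K)|\mathcal{E}_n|$ and hence $|\Lambda| = 0$.

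For the Hausdorff dimension, note that $\Omega$ is finite, so the return times $a, b$ are uniformly bounded and the number of level-$n$ cycles satisfies $N_n \leq C^n$ for some $C$. Combined with $|\mathcal{E}_n| \leq \lambda^n$, H\"older's inequality applied to the cover of $\Lambda$ by level-$n$ cycles gives
\[
\mathcal{H}^s(\Lambda) \leq \sum_i |U_i^{(n)}|^s \leq N_n^{1-s}|\mathcal{E}_n|^s \leq \big(C^{1-s}\lambda^s\big)^n,
\]
which tends to zero for $s$ sufficiently close to $1$, giving $\dim_H \Lambda < 1$. For the lower bound $\dim_H \Lambda > 0$, I would use a Moran-type construction: each level-$n$ cycle contains at least two disjoint level-$(n+1)$ cycles whose lengths are bounded below by a uniform fraction of the parent (again by a priori bounds and Koebe), so a standard mass-distribution argument yields a positive lower bound.

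Finally, for the basin of attraction, fix any $x$ in the intersection of the domains of all transfer maps $T_n$ to~$C_n$, which has full Lebesgue measure by the weak Markov property and \propref{Tdomain}. The orbit of $x$ then visits every $C_n$ and between successive visits it must pass through the cycles at level $n$, so $\omega(x) \subset \bigcap_n \mathcal{E}_n$; a minimality argument based on the transitive shadowing of the renormalization dynamics on its cycles sharpens this to $\omega(x) \subset \Lambda$, so a.e.\ $x$ lies in the basin. The main obstacle I anticipate is the lower bound on Hausdorff dimension, which requires identifying a robust sub-IFS with controlled branching and contraction; the remaining three claims should follow relatively directly from the Koebe lemma and the a priori bounds.
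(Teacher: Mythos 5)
Your proposal takes essentially the same approach as the paper: nested cycle sets $\Lambda_n$ (your $\mathcal{E}_n$), uniform geometric bounds from the a priori bounds and compactness of $\{\opR^m f\}$ in $\setL^0$, followed by standard Cantor-set arguments and the transfer map for the basin. The paper is terser—it extracts a two-sided bound $\mu < \abs{J}/\abs{I} < \lambda$ and $\mu < \abs{G}/\abs{I} < \lambda$ (for generation-$(n+1)$ intervals $J$ and gaps $G$ inside a generation-$n$ interval $I$) directly from compactness, then cites ``standard arguments''—whereas you make the measure-zero contraction and the upper Hausdorff estimate explicit via Koebe distortion and the finite-alphabet cycle count $N_n \leq C^n$, both of which are correct. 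Your hesitation about the lower Hausdorff bound is unwarranted: the same two-sided ratio bound, which your Koebe argument also yields (children and gaps have lengths comparable to the parent), hands you the Moran structure directly, so the mass-distribution principle closes that gap with no extra idea needed. The one point you should not gloss over is the reverse inclusion $\bigcap_n \mathcal{E}_n \subset \Lambda$, which is needed both for the lower dimension bound and for $\Lambda$ to be perfect; the paper gets it by noting that generation diameters tend to zero and the critical value orbits visit every generation-$n$ interval.
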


\begin{proof}
  Let $L_n$ and $R_n$ denote the left and right half of the return interval of
  the $n$--th first-return map, let $i_n$ and $j_n$ be the return times for
  $L_n$ and $R_n$, let $\Lambda_0 = \uint$, and let
  \[
    \Lambda_n =
    \bigcup_{i = 0}^{i_n-1} \clos f^i(L_n) \,\cup
    \bigcup_{j = 0}^{j_n-1} \clos f^j(R_n), \quad n = 1,2,\dots
  \]
  Components of $\Lambda_n$ are called \Index{intervals of generation~$n$} and
  components of $\Lambda_{n-1} \setminus \Lambda_n$ are called \Index{gaps of
  generation~$n$} (see \figref{attractor}).

  Let $I$ be an interval of generation $n$, let $J \subset I$ be an interval of
  generation $n+1$, and let $G \subset I$ be a gap of generation $n+1$.  We
  claim that there exists constants $0 < \mu < \lambda < 1$ such that
  \[
    \mu < \abs{J}/\abs{I} < \lambda
    \quad\text{and}\quad
    \mu < \abs{G}/\abs{I} < \lambda,
  \]
  where $\mu$ and~$\lambda$ do not depend on $I$, $J$ and~$G$.  To see this,
  take the $\setL^0$--closure of $\{\opR^n f\}$.  This set is compact
  in~$\setL^0$, so the infimum and supremum of $\abs{J}/\abs{I}$ over all $I$
  and~$J$ as above are bounded away from $0$ and~$1$ (otherwise there would
  exist an infinitely renormalizable map in~$\setL^0$ with $I$ and~$J$ as above
  such that $\abs{J} = 0$ or $\abs{I} = \abs{J}$).  The same argument holds for
  $I$ and~$G$.  Since $\{\opR^n f\}$ is a subset of the closure the claim
  follows.

  Next we claim that $\Lambda = \bigcap \Lambda_n$.  Clearly $\Lambda \subset
  \bigcap \Lambda_n$ (since the critical values are contained in the closure of
  $f(L_n) \cup f(R_n)$ for each~$n$).  From the previous claim $\abs{\Lambda_n}
  < \lambda\abs{\Lambda_{n-1}}$ so the lengths of the intervals of generation
  $n$ tend to~$0$ as $n \to \infty$.  Hence $\Lambda = \bigcap \Lambda_n$.

  It now follows from standard arguments that $\Lambda$ is a Cantor set of zero
  measure with Hausdorff dimension in~$(0,1)$.

  It only remains to prove that almost all points are attracted
  to~$\Lambda$.  Let $T_n$ denote the transfer map to the $n$--th return
  interval~$C_n$.  By \propref{Tdomain} the domain of~$T_n$ has full measure
  for every~$n$ and hence almost every point visits every~$C_n$.  This finishes
  the proof.
\end{proof}

\begin{figure}
  \begin{center}
    \includegraphics{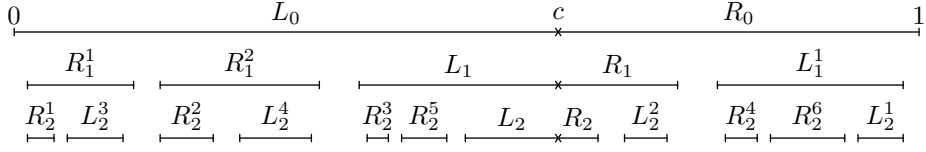}
  \end{center}
  \caption{Illustration of the intervals of generations $0$, $1$ and~$2$ for a
    $(01,100)$--renormalizable map.  Here $L_n^i = f^i(L_n)$ and $R_n^i =
    f^i(R_n)$.  The intersection of all levels $n = 0,1,2,\dots$ is a Cantor
    set, see \thmref{cantorattractor}.}
  \label{fig:attractor}
\end{figure}


\section{Periodic points of the renormalization operator} 
\label{sec:ppts}

In this section we prove the existence of periodic points of the
renormalization operator.  The argument is topological and does not imply
uniqueness even though we believe the periodic points to be unique within each
combinatorial class.\footnote{The conjecture is that the restriction of $\opR$
to the set of infinitely renormalizable maps should contract maps of the same
combinatorial type and this would imply uniqueness.}

The notation used here is the same as in \secref{inv-set}, in particular the
sets $\Omega$ and~$\setK$ are defined in \defref{Omega-K}.  We will implicitly
assume that $\lbb$ has been chosen large enough for \thmref{K-inv} to hold.

\begin{theorem} \label{thm:periodic-points}
  For every periodic combinatorial type $\rtype \in \Omega^\nats$ there exists
  a periodic point of~$\opR$ in $\setL_{\rtype}$.
\end{theorem}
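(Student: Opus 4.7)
The plan is to apply the Schauder--Tychonoff fixed point theorem to $\opR^p$, where $p$ is the minimal period of $\rtype=(\omega_0,\omega_1,\dotsc)$. Let $Y=\setLS_\rtype\cap\setK$. Iterating \thmref{K-inv2} along the renormalization orbit, every $\opR^i f$ with $f\in Y$ lies in $\setK$, and periodicity of $\rtype$ then gives $\opR^p(Y)\subset Y$. I would take the $\setL^0$--closure $\bar Y$, which is compact by \propref{K-relcpt}, and use the a priori bounds of \thmref{realbounds} to conclude that the cycles of renormalization remain uniformly non-degenerate throughout $\bar Y$. This makes the first-return construction depend continuously on $f$, so $\opR^p$ extends to a continuous self-map $\opR^p\colon\bar Y\to\bar Y$.

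To invoke a fixed point theorem, I would exploit the convex structure on $\setK$ coming from the nonlinearity operator (see \remref{linstruct}): the conditions $\distortion\phi\leq\delta$ and $\distortion\psi\leq\delta$ are $L^\infty$--balls in the nonlinearity norm, hence convex, and the admissible ranges of $u,v,c$ are convex as well. Thus $\setK$ sits as a convex subset of the Banach space $\reals^3\times\setD^k\times\setD^k$ under that structure. There are two natural ways to proceed: either show that $\bar Y$ is itself convex in this structure---for instance by parametrizing it through the tower of nonlinearities $(\opN\phi(\opR^i f),\opN\psi(\opR^i f))_{i\geq0}$ and identifying it with a closed subset of a Hilbert cube---or extend $\opR^p$ continuously to the closed convex hull of $\bar Y$ in $\setK$, noting that invariance forces any fixed point of the extension to lie in $\bar Y$. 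Either way, Schauder--Tychonoff produces some $f^\ast\in\bar Y$ with $\opR^p f^\ast=f^\ast$; minimality of $p$ inside $\rtype$ then ensures the combinatorial period of $f^\ast$ is exactly $p$.

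The main obstacle is the topological setup around the fixed point theorem. Establishing convexity of the relevant subspace, or constructing a continuous extension of $\opR^p$ past the boundary of its domain, is delicate: at the boundary of the set of $p$--times renormalizable maps the return cycles can collapse and $\opR^p$ becomes discontinuous, and it is not obvious a priori that limits in $\bar Y$ remain genuine renormalizable Lorenz maps rather than trivial ones. What makes the argument go through is precisely the content of \thmref{K-inv2}: invariance of $\setK$ keeps the distortion bounds and the range of $\eps$ uniform along orbits, ruling out the degenerations that would otherwise destroy either convexity or continuity.
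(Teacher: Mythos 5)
Your proposal takes a genuinely different route from the paper, but it has a gap that I do not see how to close. The paper does \emph{not} use Schauder--Tychonoff. Instead it works with the set $\setY=\setLS_\omega\cap\setK$ of \emph{once}-renormalizable maps (not infinitely renormalizable ones), carefully characterizes $\bndry\setY$ (\propref{dY}), builds a homotopy $\opR_t=\rho_t\circ\opR$ that retracts $\opR$ onto a projection $\opR_1$ landing in a two-dimensional slice, and then shows (i) the homotopy is fixed-point-free on $\bndry\setY_\lambda$ and (ii) every extension of $\opR_1|_{\bndry\setY_\lambda}$ has a fixed point because the displacement map on the boundary of a full island has nonzero degree. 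The Leray--Schauder-type continuation theorem of \appref{fixpt-thm} then produces the fixed point. The crucial feature of that theorem is that it requires no convexity of the domain at all.

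The gap in your approach is precisely the hypothesis you flag as ``delicate'': Schauder--Tychonoff demands a compact \emph{convex} set, and neither of your two suggestions delivers one. The set $\setK$ itself is convex in the nonlinearity linear structure (the distortion bound is a ball in $\norm{\opN\phi}_\infty$, and the $\eps$-range is an interval), but $\setK$ is not $\opR$-invariant---only its renormalizable subset is, and renormalizability of a given type is a highly nonlinear, non-convex condition. Identifying $\bar Y$ with a closed subset of a Hilbert cube does not help: subsets of convex sets need not be convex, and there is no reason $\bar Y$ should be. The second alternative---extending $\opR^p$ to the closed convex hull of $\bar Y$---faces the problem you already anticipated: $\opR^p$ is not even defined (let alone continuous) off the renormalizable set, so the extension must be invented, and nothing forces fixed points of an invented extension to lie in $\bar Y$. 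Invariance of $\setK$ controls distortion and $\eps$ along orbits, but it does not supply the convexity that the fixed point theorem needs; that is exactly the obstruction the paper's degree-theoretic argument is designed to bypass. Two smaller issues: (a) taking the $\setL^0$-closure risks losing renormalizability and smoothness in the limit, so you would also need an argument that $\opR^p$ extends continuously to all of $\bar Y$ as a self-map (the paper only applies such a compactness argument to a single orbit closure in \thmref{weak-markov}); and (b) your final sentence about ``minimality of $p$'' is not needed---the theorem asserts only existence of a periodic point in $\setL_{\rtype}$, and the paper explicitly remarks that no claim about periods is being made.
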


\begin{remark}
  We are not saying anything about the periods of the periodic points.  For
  example, we are \emph{not} asserting that there exists a period-two point of
  type $(\omega,\omega)^\infty$ for some $\omega \in \Omega$ --- all we say is
  that there is a fixed point of type $(\omega)^\infty$.  The point here is
  that $(\omega,\omega)^\infty$ is just another way to write $(\omega)^\infty$
  so these two types are the same.
\end{remark}

To begin with we will consider the restriction $\opR_\omega$ of $\opR$ to some
$\omega \in \Omega$ and show that $\opR_\omega$ has a fixed point.  Fix
$\lambda\in(0,1)$ and let
\[
  \setY = \setLS_\omega \cap \setK, \quad\text{and}\quad
  \setY_\lambda = \{ f \in \setY \mid 1 - \rcv(\opR f) \geq \lambda \}.
\]
Note that if $\lambda$ is made smaller then we may have to compensate by
increasing $\lbb$.  Also note that $\setY_\lambda$ is nonempty for all choices
of $\lambda$.  We will again implicitly assume that $\lbb$ is sufficiently
large for \thmref{K-inv} to hold.

The proof of \thmref{periodic-points} is based on a careful investigation of
the boundary of $\setY$ and the action of $\opR$ on this boundary.  However, we
need to introduce the set~$\setY_\lambda$ because we do not have a good enough
lower bound on $\eps(\opR f)$ for $f\in\setY$, see the discussion after
\thmref{K-inv}.

\begin{definition}
  A branch $B$ of $f^n$ is full\index{branch!full} if $f^n$ maps $B$ onto the
  domain of~$f$; $B$ is trivial\index{branch!trivial} if $f^n$ fixes both
  endpoints of~$B$.
\end{definition}

\begin{proposition} \label{prop:dY}
  \index{renormalization!boundary of}
  The boundary of $\setY$ consists of three parts, namely $f \in
  \bndry\setY$ if and only if at least one of the following conditions hold:
  \begin{compactenum}[(Y1)]
    \item the left or right branch of $\opR f$ is full or trivial,
      \label{dY1}
    \item $\eps(f) = \lb\eps$ or $\eps(f) = \ub\eps$, where $\eps(f) = 1 -
      c(f)$ and \label{dY2}
      \[
        \lb\eps = \min\{\eps(g)\mid g \in \setK\}
        \quad\text{and}\quad \ub\eps = \max\{\eps(g)\mid g \in \setK\}
      \]
    \item $\distortion \phi(f) = \delta$ or $\distortion \psi(f) = \delta$
      ($\delta$ is the same as in \defref{Omega-K}).
      \label{dY3}
  \end{compactenum}
  Also, each condition occurs somewhere on $\bndry\setY$.
\end{proposition}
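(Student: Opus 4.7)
The plan is to decompose $\setY$ as the intersection of two constraint sets, $\setY = \setLS_\omega \cap \setK$, and identify the boundary with the loci where each defining inequality becomes sharp. Working inside $\setLS$ as the ambient space, I would first note that
\[
\bndry\setY \subset \bigl(\bndry \setLS_\omega \cap \setK\bigr) \cup \bigl(\setLS_\omega \cap \bndry\setK\bigr),
\]
so it suffices to analyze the two boundaries separately and then verify that each of (Y\ref{dY1})--(Y\ref{dY3}) both occurs and is attainable.

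For $\bndry\setK$, recall that $\setK$ is cut out by three pairs of inequalities: $\lb\eps \le \eps(f)\le \ub\eps$ and $\distortion \phi\le \delta$, $\distortion \psi \le \delta$. Each of $\eps$, $\distortion\phi$, $\distortion\psi$ is continuous in $f \in \setL^1$, so $\setK$ is closed and its interior is precisely the set where all six inequalities are strict. Consequently $f\in\bndry\setK$ iff at least one inequality is an equality, yielding exactly (Y\ref{dY2}) and (Y\ref{dY3}). For each, realizability is immediate: deforming $c$ in the five-tuple representation reaches $\lb\eps$ or $\ub\eps$ while keeping the diffeomorphic parts and the combinatorics fixed, and scaling the nonlinearity of $\phi$ (respectively $\psi$) in its $\setD^1$ representative to saturate the distortion bound yields points on the corresponding face. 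The full family theorem guarantees that such deformations can be chosen to remain $\omega$-renormalizable with $f \in \setLS$.

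For $\bndry \setLS_\omega$, the crucial observation is that $\omega$-renormalizability with $\omega = (0\underbrace{1\cdots1}_a, 1\underbrace{0\cdots0}_b)$ is an open condition in $\setLS$ that fails exactly under two qualitative changes. On the one hand, if one of the branches of $\opR f$ becomes \emph{full} (i.e.\ $f^{a+1}(L)=C$ or $f^{b+1}(R)=C$), then an arbitrarily small perturbation pushes the orbit of $L$ or $R$ past a boundary point of $C$ at an earlier step, which lowers the return time by one and alters the combinatorial word, so $f$ ceases to be $\omega$-renormalizable. On the other hand, if a branch of $\opR f$ is \emph{trivial}, so $\opR f$ fixes a boundary point together with its adjacent critical value, then a perturbation in the appropriate direction collapses the critical value onto $c(\opR f)$, making the renormalization trivial (and hence $f\notin\setLS_\omega$ by our convention). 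Conversely, if neither branch of $\opR f$ is full or trivial, then all the relevant inequalities defining the combinatorics $\omega$ and nontriviality of $\opR f$ are strict, so continuity of the (finitely many) iterates of $f$ on compact subintervals bounded away from~$c$ gives a $\setC^1$-neighborhood of $f$ inside $\setLS_\omega$. This shows $f \in \bndry\setLS_\omega$ iff (Y\ref{dY1}) holds. Non-emptiness follows again from the full family theorem: within $\setY$ one can deform $(u,v)$ to move a critical value of $\opR f$ onto $\{0,1\}$ (full) or onto $c(\opR f)$ (trivial).

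Assembling the two analyses gives the full characterization: $f\in\bndry\setY$ iff at least one of (Y\ref{dY1})--(Y\ref{dY3}) holds, and each condition is realized. The main technical obstacle is the detailed combinatorial justification of (Y\ref{dY1}) -- one must verify carefully that the \emph{only} ways to cross from $\setLS_\omega$ into its complement (while staying in $\setK$) are via a return time change (full branch) or via trivialization (trivial branch), and not via some other degeneration such as a critical value reaching a preimage of $c$ that does not bound $C$. This is handled by observing that, for maps in $\setK$ with the monotone combinatorics $\omega$, \lemref{crit-vals} and the a~priori smallness of $\eps$ force the relevant preimages of $c$ into the cycles $\{U_i\},\{V_j\}$ of \lemref{tuple-renorm}, so the only combinatorial transitions available at the boundary are exactly the ones captured by fullness and triviality.
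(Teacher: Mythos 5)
Your decomposition $\bndry\setY \subset (\bndry\setLS_\omega\cap\setK) \cup (\setLS_\omega\cap\bndry\setK)$ and the realizability argument for (Y\ref{dY2})--(Y\ref{dY3}) via the full family theorem (\propref{full}) match the paper's strategy. The gap is in the (Y\ref{dY1}) part, which is the real content of the proposition. The paper establishes that a map in $\setL^0_\omega$ not satisfying (Y\ref{dY1}) has a $\setC^0$--neighborhood inside $\setL^0_\omega$ by citing \lemref{R-bndry}: if $(l,c)$ and $(c,r)$ are the branches of $f^{a+1}$ and $f^{b+1}$ containing $L$ and $R$, then $f^{a+1}(l)\leq l$ and $f^{b+1}(r)\geq r$. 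Combined with the branches being neither full nor trivial, these give a fixed-point configuration (a repelling fixed point for $f^{a+1}$ in $(l,c)$ and for $f^{b+1}$ in $(c,r)$, with the critical values strictly inside $C$ and $C$ strictly inside the branches) that is defined by finitely many strict inequalities, hence persists under small perturbation.

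Your proposal instead appeals to ``continuity of finitely many iterates'' and, in the final paragraph, to \lemref{crit-vals} and the smallness of $\eps$ to ``force the relevant preimages of $c$ into the cycles.'' This does not close the argument. \lemref{crit-vals} only controls the position of the critical values $\lcv, \rcv$; it says nothing about the persistence of the periodic boundary points $p,q$ of $C$, which is precisely what must be shown for $\omega$-renormalizability to be an open condition away from (Y\ref{dY1}). In particular, the possibility that the fixed point $p$ collides with the branch endpoint $l$ (so that $f^{a+1}(l)=l$), without either branch of $\opR f$ being full or trivial, is not excluded by your estimate and would destroy the maximal return interval under arbitrary perturbations. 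The inequality $f^{a+1}(l)\leq l$ from \lemref{R-bndry} is exactly the structural fact that lets one control this; without it, the ``conversely'' direction of your argument for (Y\ref{dY1}) is unjustified. Replace the appeal to \lemref{crit-vals} with \lemref{R-bndry} and the proof goes through essentially as the paper's does.
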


Before giving the proof we need to introduce some new concepts and recall some
established facts about families of Lorenz maps.

\begin{definition}
  A \Index{slice} (in the
  parameter plane) is any set of the form
  \[
    \slice = \uint^2 \times \{c\} \times \{\phi\} \times \{\psi\},
  \]
  where $c$, $\phi$ and $\psi$ are fixed.  We will permit ourselves to be a
  bit sloppy with notation and write $(u,v) \in \slice$ when it is clear which
  slice we are talking about (or if it is irrelevant).
\end{definition}

A slice $\slice = \uint^2 \times \{c\} \times \{\phi\} \times \{\psi\}$ induces a family of Lorenz maps
\[
  \slice \ni (u,v) \mapsto f_{u,v} = (u,v,c,\phi,\psi) \in \setL.
\]
Any family induced from a slice is \emph{full},\index{full family} by which we
mean that it realizes all possible combinatorics.  See \citep{MdM01} for a
precise definition and a proof of this statement.  For our discussion the only
important fact is the following:

\begin{proposition} \label{prop:full}
  Let $(u,v) \mapsto f_{u,v}$ be a family induced by a slice.  Then this
  family intersects $\setL_{\rtype}$ for every $\rtype$ such that
  $\setL_{\rtype} \neq \emptyset$.  Note that $\rtype$ can be finite or
  infinite.
\end{proposition}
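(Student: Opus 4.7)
The plan is to realize every admissible type $\rtype$ by exploiting that the two-dimensional parameter $(u,v)$ essentially controls the two critical values $\lcv = \phi(u)$ and $\rcv = \psi(1-v)$ independently. Since the combinatorics of a Lorenz map is determined by the itineraries of its two critical values, and since these itineraries can be prescribed essentially freely by adjusting $(u,v)$, every realizable combinatorial type should be hit.

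I would first treat finite types $\rtype = (\omega_0,\dots,\omega_{n-1})$ by induction on $n$. Set
\[
  A_{\rtype} = \{(u,v) \in \uint^2 : f_{u,v} \in \setL_{\rtype}\}.
\]
The base case $n=1$ reduces to showing that the parameter square decomposes (up to degeneracies) into regions $A_{\omega}$ indexed by single renormalization types, with boundary components corresponding to the degeneracies catalogued in \propref{dY}: one branch of the candidate renormalization becoming full or trivial. The key point is that $(u,v)$ controls the itineraries of $\lcv$ and $\rcv$ monotonically, and a two-dimensional intermediate value theorem (or, equivalently, a degree argument on the itinerary map) then forces $A_{\omega}$ to be nonempty, indeed with nonempty interior, whenever $\omega$ is realizable by any Lorenz map at all. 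For the inductive step I would pick $(u_0,v_0)$ in the interior of $A_{(\omega_0,\dots,\omega_{n-2})}$ and observe that on a neighborhood $W \subset \uint^2$ the composition $(u,v) \mapsto \opR^{n-1} f_{u,v}$ is a continuous two-parameter family of Lorenz maps whose derivative in $(u,v)$ is nondegenerate; the base case applied to this new family locates a parameter producing an $\omega_{n-1}$-renormalization, and pulling back yields a point of $A_{\rtype}$.

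For infinite $\rtype$, compactness of $\uint^2$ completes the argument: choose $(u_n,v_n)$ in the interior of $A_{(\omega_0,\dots,\omega_{n-1})}$ at each finite level, extract a convergent subsequence with limit $(u_\ast,v_\ast)$, and use the interior-point selection together with the nested structure $A_{(\omega_0,\dots,\omega_n)} \subset A_{(\omega_0,\dots,\omega_{n-1})}$ to conclude that no degeneracy occurs in the limit, so $f_{u_\ast,v_\ast}$ is infinitely renormalizable of type $\rtype$.

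The main obstacle is making the two-dimensional monotonicity/degree argument rigorous in the base case. In one dimension the analogous Milnor--Thurston-style kneading monotonicity is classical, but here the kneading pair lives in a partially ordered product space and the topology of the decomposition $\uint^2 = \bigcup_\omega A_\omega$ must be controlled carefully. In practice, as in \citep{MdM01}, one verifies monotonicity branch-by-branch and exploits that every boundary component of $A_\omega$ corresponds to the degeneracy of exactly one critical-value itinerary; a boundary-preserving continuous deformation of $\clos A_\omega$ onto a distinguished model square then provides surjectivity of the itinerary map via degree theory, and this is the step where the bulk of the technical work lies.
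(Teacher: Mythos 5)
The paper's own proof is a one-line citation to Theorem~A of \citet{MdM01} (the full family theorem); it does not reprove the result. Your sketch is instead an attempt to rederive that theorem, and while it is in the right spirit---monotonicity of the slice family in $(u,v)$, a degree argument on a model square, and nesting---there are two concrete gaps.

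First, the inductive step is not sound as stated. You propose to apply the base case to the family $(u,v) \mapsto \opR^{n-1} f_{u,v}$ on a neighborhood of a point in $\intr A_{(\omega_0,\dots,\omega_{n-2})}$. But this family is \emph{not} slice-induced: renormalization changes $c$, $\phi$, and $\psi$ along with $(u,v)$, so the base case you established for slice families does not directly apply, and a nondegenerate derivative alone is not enough to reproduce the monotone/degree structure. The missing ingredient is precisely Theorem~B of \citet{MdM01} (``islands are full''): inside an island $I$ of $A_\omega$ the restricted family $(u,v) \in I \mapsto \opR f_{u,v}$ is again full, and this is what lets the induction close. The paper itself appeals to exactly this fact immediately after \propref{full} when it introduces full islands.

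Second, the passage to infinite $\rtype$ does not work by taking interior points $(u_n,v_n)$ at each finite level and extracting a convergent subsequence. The limit $(u_\ast,v_\ast)$ could perfectly well sit on the boundary of some $A_{(\omega_0,\dots,\omega_{k-1})}$, where one of the renormalizations has a full or trivial branch and hence, in the paper's convention, the map is \emph{not} renormalizable of that type. The correct argument intersects a nested sequence of nonempty compact sets (closures of nested full islands) and then argues separately that the intersection point cannot be degenerate at any level; this requires the island structure, not just compactness of $\uint^2$. As written, your limiting step asserts ``no degeneracy occurs in the limit'' without justification, and that is where the claim would fail under scrutiny.
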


\begin{proof}
  This follows from \citep[Theorem~A]{MdM01}.
\end{proof}

Recall that $C = \clos L \cup R$ is the return interval for a renormalizable
map, and the return times for $L$ and $R$ are $a+1$ and $b+1$, respectively
(see the end of \secref{renorm-op}).

\begin{lemma} \label{lem:R-bndry}
  Assume that $f$ is renormalizable.  Let $(l,c)$ be the branch of $f^{a+1}$
  containing $L$ and let $(c,r)$ be the branch of $f^{b+1}$ containing $R$.
  Then
  \[
    f^{a+1}(l) \leq l
    \quad\text{and}\quad
    f^{b+1}(r) \geq r.
  \]
\end{lemma}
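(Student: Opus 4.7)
The plan is to prove the left inequality $f^{a+1}(l) \leq l$; the right inequality $f^{b+1}(r) \geq r$ follows by a symmetric argument with the roles of $L$ and $R$, $a$ and $b$, $f_0$ and $f_1$, and $\rcv$ and $\lcv$ exchanged.

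If $l = 0$ the claim is immediate, since $0$ is fixed by $f$. Assume then $l > 0$. Because $(l,c)$ is a maximal interval of monotonicity of $f^{a+1}$ with constant symbol sequence $\word{\hat L} = 0\,1\cdots 1$ (one zero followed by $a$ ones), the endpoint $l$ must be a preimage of the critical point: there is a minimal $j \in \{1,\dots,a\}$ with $\lim_{x \to l^+} f^j(x) = c$, and since the symbol at position $j$ equals $1$ this limit is approached from above, i.e.\ $f^j(l^+) = c^+$. The key intermediate claim is that $j = a$. Indeed, if $j < a$, then $f^{j+1}$ is continuous on the branch near $l^+$, and $\lim_{x \to l^+} f^{j+1}(x) = f(c^+) = \rcv$; but the symbol at position $j+1$ equals $1$, so $f^{j+1}(x) > c$ on the branch, and the one-sided limit forces $\rcv \geq c$, contradicting the nontriviality $\rcv < c$ of $f$. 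Hence $j = a$, and therefore $f^{a+1}(l^+) = f(c^+) = \rcv$.

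It remains to verify $\rcv \leq l$. I first claim $f_0(\rcv) = f^2(c^+) < c$: if $b \geq 2$ this is immediate from the symbol at position $2$ of $\word{\hat R} = 1\,0\cdots 0$ being $0$; if $b = 1$ it follows from the nontriviality of $\opR f$, which gives $\rcv(\opR f) < c$, i.e.\ $f^{b+1}(c^+) = f_0(\rcv) < c$. Therefore $\rcv < f_0^{-1}(c)$. When $a = 1$ we have $l = f_0^{-1}(c)$ and the inequality is immediate. When $a \geq 2$, the identity $f_0(l) = f_1^{-(a-1)}(c) > c$ (obtained by tracing through the relation $f^a(l^+) = c$) gives $l > f_0^{-1}(c) > \rcv$, so $f^{a+1}(l) = \rcv < l$ in all cases.

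The main obstacle is the combinatorial identification $j = a$: this requires simultaneously exploiting the constancy of the symbol sequence on a single branch, the one-sided continuity of $f^{j+1}$ from within the branch, and the strict nontriviality $\rcv < c$. Once $j = a$ is established, the rest of the argument reduces to the elementary inequality $f_0(\rcv) < c$, which follows either from the combinatorics of $\hat R$ (when $b \geq 2$) or from the nontriviality of $\opR f$ (when $b = 1$).
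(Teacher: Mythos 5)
Your proof is correct. The paper itself gives no argument here; it simply cites Lemma~4.1 of Martens--de~Melo \citep{MdM01}, which treats general combinatorics. Your proof takes a genuinely different, self-contained route that exploits the monotone branch words $0\,1\dotsm1$ and $1\,0\dotsm0$ directly: it buys a short, explicit derivation at the cost of being tied to monotone type (which is all the paper uses). The key step is your identification $j=a$: pushing the hypothetical $j<a$ one step forward forces $f^{j+1}(l^+)=\rcv$, and $\sigma_{j+1}=1$ together with nontriviality $\rcv<c$ yields the contradiction. The reduction of the remaining inequality to $f_0(\rcv)<c$, split by $b\geq2$ versus $b=1$ (the latter via nontriviality of $\opR f$), also checks out, and the tracing $f_0(l)=f_1^{-(a-1)}(c)>c$ is valid since $f_1^{-k}(c)\in(c,1)$ for all $k\geq1$. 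Two minor remarks. First, the phrase ``$\rcv(\opR f)<c$'' in the $b=1$ subcase should read $\rcv(\opR f)<c(\opR f)$; unwinding through the orientation-preserving affine rescaling gives $f^{b+1}(c^+)<c$ as you intend, so nothing is affected. Second, for monotone combinatorics with $a\geq1$ the case $l=0$ never occurs (it would force $f((0,c))=(0,\lcv)\subset(c,1)$, impossible since $0<c$), and symmetrically $r<1$ always; so your argument in fact yields the strict statements $f^{a+1}(l)=\rcv<l$ and $f^{b+1}(r)=\lcv>r$, a slightly sharper conclusion than the lemma records.
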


\begin{proof}
  This is a special case of \citep[Lemma~4.1]{MdM01}.
\end{proof}

\begin{proof}[Proof of~\propref{dY}]
  Let us first consider the boundary of $\setL^0_\omega$.  If either branch of
  $\opR f$ is full or trivial, then we can perturb $f$ in $\setC^0$ so that it
  no longer is renormalizable.  Hence (Y\ref{dY1}) holds on
  $\bndry\setL^0_\omega$.  If $f \in \setL^0_\omega$ does not satisfy
  (Y\ref{dY1}) then any sufficiently small $\setC^0$--perturbation of~$f$ will
  still be renormalizable by \lemref{R-bndry}.  Hence the boundary of
  renormalization is exactly characterized by (Y\ref{dY1}).

  Conditions (Y\ref{dY2}) and~(Y\ref{dY3}) are part of the boundary of $\setK$.
  These boundaries intersect $\setLS_\omega$ by \propref{full} and hence these
  conditions are also boundary conditions for $\setY$.
\end{proof}

Fix $1-c_0 = \eps_0 \in (\lb\eps,\ub\eps)$ and let $\slice = \uint^2 \times
\{c_0\} \times \{\id\} \times \{\id\}$.  Let $\rho_t$ be the deformation
retract onto $\slice$ defined by
\[
\rho_t(u,v,c,\phi,\psi) = (u,v,c+t(c_0-c),(1-t)\phi,(1-t)\psi),
  \quad t \in \uint.
\]
In order to make sense of this formula it is important to note that the linear
structure on the diffeomorphisms is that induced from $\setC^0$ via the
nonlinearity operator~$\opN$ (see \remref{linstruct}).  Hence, for example
$t\phi$ is by definition the diffeomorphism $\opN\inv(t\opN\phi)$.  Let
\[
  \opR_t = \rho_t \circ \opR.
\]
The choice of slice is somewhat arbitrary in what follows, except that we will
have to be a little bit careful when chosing $c_0$ as will be pointed out in
the proof of the next lemma.  However, it is important to note that the slice
intersects~$\setY$.

\begin{figure}
  \begin{center}
    \includegraphics{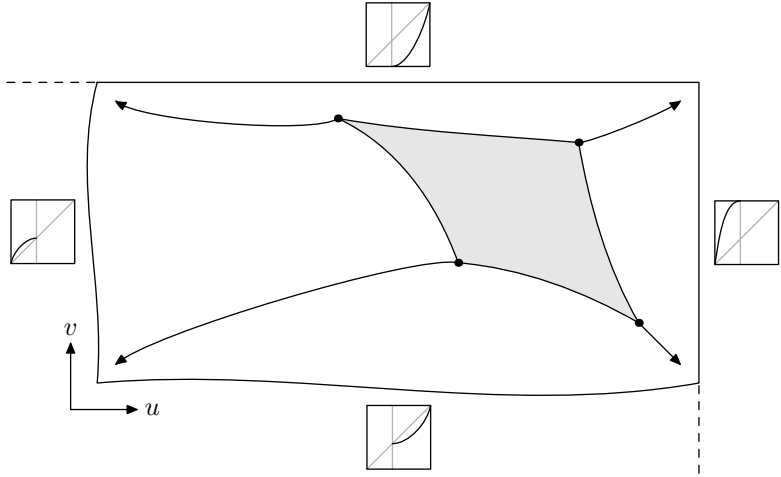}
  \end{center}
  \caption{Illustration of the action of $\rho_1 \circ \opR|_\slice$.  The
           shaded area corresponds to a full island.  The boxes shows what the
           branches of $\rho_1 \circ \opR f$ look like on each boundary piece.}
  \label{fig:slice}
\end{figure}

\begin{lemma}
  There exists $c_0$ such that $\opR_t$ has a fixed point on
  $\bndry\setY_\lambda$ for some $t \in \uint$ if and only if $\opR$ has a
  fixed point on $\bndry\setY_\lambda$.
\end{lemma}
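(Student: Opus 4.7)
The plan is to handle the two directions of the equivalence separately. The backward direction is immediate: $\opR_0 = \rho_0 \circ \opR = \opR$, so any fixed point of $\opR$ on $\bndry\setY_\lambda$ is automatically a fixed point of $\opR_0$ on $\bndry\setY_\lambda$, regardless of the choice of $c_0$.

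For the forward direction I would choose $c_0 \in (0,1)$ so that $\eps_0 := 1-c_0$ lies strictly inside the open interval $(\lb\eps, \ub\eps)$; the slice $\slice$ must also meet $\setY_\lambda$ nontrivially, which is possible by \propref{full}. Suppose $f = (u,v,c,\phi,\psi) \in \bndry\setY_\lambda$ is a fixed point of $\opR_t$ for some $t \in \uint$, and write $\opR f = (u',v',c',\phi',\psi')$. Unpacking $f = \rho_t(\opR f)$ coordinate-wise gives
\[
  u = u', \quad v = v', \quad c = (1-t)c' + tc_0, \quad \opN\phi = (1-t)\opN\phi', \quad \opN\psi = (1-t)\opN\psi'.
\]
Since $\opR$ sends $\clos{\setY_\lambda}$ into $\clos\setK$ by \thmref{K-inv}, we have $\eps' \in [\lb\eps, \ub\eps]$ and $\distortion\phi', \distortion\psi' \leq \delta$. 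By \propref{dY}, $f$ satisfies one of (Y1), (Y2), or (Y3).

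If (Y2) holds, say $\eps = \lb\eps$, then $\lb\eps = (1-t)\eps' + t\eps_0 \geq (1-t)\lb\eps + t\eps_0$; together with $\eps_0 > \lb\eps$ this forces $t = 0$, and $f$ is already a fixed point of $\opR$ on $\bndry\setY_\lambda$ (the case $\eps = \ub\eps$ is symmetric, using $\eps_0 < \ub\eps$). If (Y3) holds, say $\distortion\phi = \delta$, then integrating $\opN\phi = (1-t)\opN\phi'$ yields $\log(\opD\phi(x)/\opD\phi(y)) = (1-t)\log(\opD\phi'(x)/\opD\phi'(y))$ for all $x,y$, hence $\distortion\phi = (1-t)\distortion\phi' \leq (1-t)\delta$; equality with $\delta$ again forces $t = 0$, and similarly for $\psi$.

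The delicate case, and the main obstacle, is (Y1), where a branch of $\opR f$ is full or trivial. Because $\rho_t$ preserves the $(u,v)$--coordinates, this combinatorial boundary is $\rho_t$--invariant, so an $\opR_t$--fixed point here need not arise at $t = 0$. My strategy for this case is to exploit that (Y1) pins $u$ or $v$ to a boundary value (e.g.\ $u = 1$ for a full left branch), reducing the fixed point equation to a system on a codimension-one subfamily parametrized by the remaining $(v,c,\phi,\psi)$; a one-dimensional continuation in $t$ on this subfamily together with \lemref{R-bndry} and the full family theorem should transfer the $\opR_t$--fixed point to a fixed point of $\opR$ on the same boundary piece. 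A generic choice of $c_0$ (beyond merely $\eps_0 \in (\lb\eps,\ub\eps)$) may be needed to avoid accidental degeneracies along this continuation.
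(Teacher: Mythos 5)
Your handling of cases (Y2) and (Y3) is correct and in fact slightly slicker than the paper's: by unpacking $f = \rho_t(\opR f)$ coordinate-wise and noting that $\opR f \in \clos\setK$, the linearity in $t$ forces $t=0$ outright when $\eps_0$ is chosen interior to $[\lb\eps,\ub\eps]$. The backward direction is trivially correct, as you say.

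The genuine gap is exactly where you flag it: case (Y1). Your proposed strategy (pin $u$ or $v$, continue in $t$ on a codimension-one slice, invoke \lemref{R-bndry} and the full family theorem) is both unfinished and unlikely to close cleanly, and you don't need it. The paper's argument is more elementary and goes the other way: rather than transporting an $\opR_t$--fixed point to an $\opR$--fixed point, it shows \emph{directly} that no $f\in\bndry\setY_\lambda$ of type (Y1) can be fixed by $\opR_t$ at all. The two sub-cases are handled by different mechanisms. If a branch of $\opR f$ is \emph{full}, then since $\rho_t$ preserves the $(u,v)$--coordinates (and fullness is the condition $u'=1$ or $v'=1$), the corresponding branch of $\opR_t f$ is full for every $t$; but $f$ is renormalizable, and renormalizable maps never have a full branch, so $f\neq\opR_t f$. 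If a branch of $\opR f$ is \emph{trivial}, then $\opR f$ is not renormalizable, because \lemref{crit-vals} forces $\lcv$ very close to $1$ and $\rcv$ very close to $0$ for renormalizable maps in $\setK$, which is incompatible with a critical value sitting at the critical point or at $1-\lambda$. The remaining technical step is to guarantee that the whole segment $s\mapsto\opR_s f = \rho_s(\opR f)$ stays non-renormalizable; this is where the freedom in $c_0$ is actually used (choose $\eps_0$ close to $\lb\eps$ for the trivial left branch, and use $\lambda > \ub\eps$ via \remref{Ybndry} for the trivial right branch). Once $\opR_t f$ is non-renormalizable, $\opR_t f \neq f$ because $f$ is renormalizable. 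Your plan would also need to confront the fact that (Y1) constrains $u'$ or $v'$ (coordinates of $\opR f$), not $u$ or $v$ directly; these coincide only \emph{after} one assumes the fixed-point equation, so the ``codimension-one slice'' is not a family you can parametrize independently of $t$.
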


\begin{remark} \label{rem:Ybndry}
  The condition $1 - \rcv(\opR f) \geq \lambda$ roughly states that
  $v(\opR f) \geq 1-\lambda$.  Thus $\setY_\lambda$ has another
  boundary condition given by $\rcv(\opR f) = 1-\lambda$.
  Instead of treating this as separate boundary condition we subsume it
  into (Y\ref{dY1}) by saying that the right branch is trivial also if
  $\rcv(\opR f) = 1-\lambda$.
\end{remark}

\begin{proof}
  The `if' statement is obvious since $\opR = \opR_0$, so assume that $\opR$
  has no fixed point on $\bndry\setY_\lambda$.  Let $f \in \bndry\setY_\lambda$
  and assume that $\opR_t f = f$ for some $t > 0$.  We will show that this is
  impossible.

  To start off choose $\eps_0 \in (\lb{\eps},\ub{\eps})$ and let $c_0 = 1 -
  \eps_0$ as usual (we will be more specific about the choice of $\eps_0$
  later).

  Note that (Y\ref{dY2}) cannot hold for $\opR_t f$ since $\eps_0 \in
  (\lb{\eps},\ub{\eps})$ and hence the same is true for $\eps(\opR_t f)$, since
  $t > 0$ and $\eps(\opR f) \in [\lb{\eps},\ub{\eps}]$ by \thmref{K-inv}.

  Similarly, (Y\ref{dY3}) cannot hold for $\opR_t f$ since the distortion of the
  diffeomorphic parts of $\opR f$ are not greater than~$\delta$ (by
  \thmref{K-inv}) and hence the distortion of the diffeomorphic parts of
  $\opR_t f$ are strictly smaller than~$\delta$ (since $t>0$).\footnote{
    This follows from $\distortion (1-t)\phi < \distortion \phi$ if, $t>0$ and
    $\distortion \phi>0$.
  }


  The only possibility is that $f = \opR_t f$ belongs to the boundary part
  described by condition (Y\ref{dY1}).

  If either branch of $\opR f$ is full then corresponding branch of $\opR_t f$
  is full as well which shows that $f$ cannot be fixed by $\opR_t$, since a
  renormalizable map cannot have a full branch.  Thus one of the branches of
  $\opR f$ must be trivial.

  Assume that the left branch of $\opR f$ is trivial, that is $\lcv(\opR f) =
  c(\opR f)$.  In particular, $\opR f$ is not renormalizable since $\lcv$ for a
  renormalizable map is away from the critical point by \lemref{crit-vals}.
  Because of this lemma we can assure that $\opR_s f$ is not renormalizable for
  all $s \in \uint$ by choosing $\eps_0$ close to $\lb{\eps}$.  In particular,
  $\opR_t f$ is not renormalizable and hence cannot equal~$f$.

  Assume that the right branch of $\opR f$ is trivial (see \remref{Ybndry}).
  Then we may without loss of generality assume that $\lambda > \ub\eps$ and
  hence $\rcv(\opR f) = 1-\lambda$ (just choose $\lambda$ not too small, or
  increase $\lbb$).  In particular $\opR f$ is not renormalizable since that
  requires $\rcv(\opR f)$ to be close to~$0$ by \lemref{crit-vals}.  The same
  holds for $\opR_s f$ for all $s \in \uint$ since $\lambda > \ub{\eps}$.  In
  particular, $f$ cannot be fixed by $\opR_t$ since $f$ is renormalizable.

  We have shown that $f \notin \bndry\setY_\lambda$ which is a contradiction
  and hence we conclude that $\opR_t f \neq f$ for all $t \in \uint$.
\end{proof}

The slice $\slice$ intersects the set $\setL_\omega$ of renormalizable maps of
type~$\omega$ by \propref{full}.  This intersection can in general be a
complicated set, but there will always be at least one connected component $I$
of the interior such that the restricted family $I \ni (u,v) \mapsto f_{u,v}$
is full \citep[see][Theorem~B]{MdM01}.  Such a set $I$ is call a full island.
The action of $\opR$ on a full island is illustrated in \figref{slice}.  Note
that the action of $\opR$ on the boundary of $I$ is given by (Y\ref{dY1}) which
also explains this figure.

\begin{lemma}
  Any extension of $\opR_1|_{\bndry\setY_\lambda}$ to $\setY_\lambda$ has a fixed point.
\end{lemma}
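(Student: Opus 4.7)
The plan is to reduce this to a two-dimensional degree argument enabled by the fact that $\opR_1 = \rho_1 \circ \opR$ lands inside the slice $\slice$. Since the image of $\opR_1|_{\bndry\setY_\lambda}$ is contained in $\slice$, any extension $F$ may be assumed to take values in $\slice$ (if it does not, compose with $\rho_1$ to get another extension $\rho_1 \circ F$ that does; a fixed point of the modified map lies in $\slice$ and is automatically a fixed point of $F$ since $\rho_1$ is the identity on $\slice$). In particular any fixed point of $F$ must lie in $\slice \cap \setY_\lambda$, so it suffices to analyze the restriction $F_\slice := F|_{\slice \cap \setY_\lambda}\colon \slice \cap \setY_\lambda \to \slice$.

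First I would observe that the relative boundary of $\slice \cap \setY_\lambda$ in $\slice$ lies in $\bndry\setY_\lambda$, and that on the slice the boundary conditions (Y\ref{dY2}) and (Y\ref{dY3}) are inactive for $\eps_0 \in (\lb\eps,\ub\eps)$ and $\phi = \psi = \id$ (which makes $\distortion \phi = \distortion\psi = 0$). Hence the relative boundary is cut out entirely by (Y\ref{dY1}). Next, I would invoke \propref{full} to select a full island $I$ — a connected component of the interior of $\slice \cap \setL_\omega$ for the fixed type $\omega$ — so that $F_\slice|_{\bndry I} = \opR_1|_{\bndry I}$.

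The heart of the argument is the degree computation for $\opR_1|_{\bndry I}$ as a map $\bndry I \to \slice \cong \uint^2$, whose structure is exactly that depicted in \figref{slice}. The boundary $\bndry I$ decomposes into four arcs matching the four cases of (Y\ref{dY1}): left branch of $\opR f$ full, left branch trivial, right branch full, right branch trivial (the last subsuming the condition $\rcv(\opR f) = 1-\lambda$ via \remref{Ybndry}). On each arc the corresponding coordinate of $\opR_1 f = (u',v',c_0,\id,\id)$ is pinned to $1$, $0$, $1$, or $0$ respectively, so that $\opR_1(\bndry I) \subset \bndry\uint^2$ and traverses $\bndry\uint^2$ once as one traverses $\bndry I$ once. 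This shows that $\opR_1|_{\bndry I}$ has winding number one around every interior point of $\uint^2$, and in particular around every interior point of~$I \subset \uint^2$.

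Finally, since $I$ is homeomorphic to a closed disk and $F_\slice|_I\colon I \to \slice$ is a continuous extension of $\opR_1|_{\bndry I}$ with the latter of degree one, the Brouwer fixed-point theorem (applied to the map $f \mapsto f - F_\slice(f)$ on $I \subset \uint^2$) produces a fixed point $f^* \in I$ of $F_\slice$, which is then a fixed point of $F$ in $\setY_\lambda$. The main obstacle is the degree computation of the previous paragraph: one must carefully verify that the four arcs of $\bndry I$ are mapped to four distinct faces of $\bndry\uint^2$ in the cyclic order that yields winding number one (and not, say, degree zero because adjacent arcs collapse onto the same face). The condition $1 - \rcv(\opR f) \geq \lambda$ defining $\setY_\lambda$ is essential here for keeping the right-branch-trivial arc disjoint from the right-branch-full arc.
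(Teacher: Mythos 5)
Your overall strategy — restrict to a full island $I \subset \slice$, observe nonzero degree for $\opR_1|_{\bndry I}$, and conclude via a displacement/winding-number argument — is the same as the paper's. The paper simply picks an arbitrary $\slice$-valued extension $R\colon I \to \slice$ of $\opR_1|_{\bndry I}$ and runs the degree argument with $\delta(x) = (x-R(x))/\abs{x-R(x)}$; you correctly flag that a general extension $F$ of $\opR_1|_{\bndry\setY_\lambda}$ need not take values in $\slice$ on the interior of $I$, a point the paper leaves implicit.

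However, your proposed fix in the first paragraph is incorrect, and it is the only place where you go beyond the paper's argument. If $p$ is a fixed point of $\rho_1 \circ F$, then $\rho_1(F(p)) = p$, which forces $p \in \slice$ since $\rho_1$ has image in $\slice$; but you cannot then conclude $F(p) = p$. The map $\rho_1$ is a retraction, not an injection: in coordinates $\rho_1(u,v,c,\phi,\psi) = (u,v,c_0,\id,\id)$, so $\rho_1(F(p)) = p$ only says that $F(p)$ and $p$ agree in the $(u,v)$ slots, while the $(c,\phi,\psi)$ part of $F(p)$ is simply discarded by $\rho_1$ and may differ from that of $p$. The identity $\rho_1|_\slice = \id$ would give $F(p)=p$ only if you already knew $F(p) \in \slice$, which is exactly what you cannot assume. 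As written, your argument therefore produces a fixed point of $\rho_1 \circ F$, not of $F$. A smaller imprecision: the four boundary arcs of $I$ are not sent by $\opR_1$ onto the four faces of $\bndry\uint^2$ — the trivial-branch conditions pin $\lcv(\opR f) = c(\opR f)$ and $\rcv(\opR f) = 1-\lambda$, not $u'=0$ and $v'=0$ — but this does not change the degree count, which really is the content of the assertion that $I$ is a full island.
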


\begin{proof}
  If $\opR_1$ has a fixed point on $\bndry\setY_\lambda$ then there is nothing to
  prove, so assume that this is not the case.

  Let $\slice = \uint^2 \times \{c_0\} \times \{\id\} \times \{\id\}$.  By the
  above discussion there is a full island $I \subset \slice$.  Note that
  $\bndry I \subset \bndry\setY_\lambda$.

  Pick any $R: I \to \slice$ such that $R|_{\bndry I} = \opR_1|_{\bndry I}$.
  Now define the displacement map $\delta: \bndry I \to S^1$ by
  \[
    \delta(x) = \frac{x - R(x)}{\abs{x-R(x)}}.
  \]
  This map is well-defined since $\opR_1$ was assumed not to have any fixed
  points on~$\bndry\setY_\lambda$ and $\bndry I \subset \bndry\setY_\lambda$.  The degree
  of~$\delta$ is nonzero since $I$ is a full island.  This implies that $R$ has
  a fixed point in~$I$, otherwise $\delta$ would extend to all of~$I$ which
  would imply that the degree of~$\delta$ was zero.
  This finishes the proof since $R$ was an arbitrary extension of
  $\opR_1|_{\bndry I}$ and $\bndry I \subset \bndry\setY_\lambda$.
\end{proof}

\begin{proposition}
  $\opR_\omega$ has a fixed point.
\end{proposition}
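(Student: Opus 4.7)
The strategy is a standard homotopy-of-degree argument built directly on top of the two preceding lemmas. Suppose first that $\opR$ has a fixed point on $\bndry\setY_\lambda$; then we are done, so assume the contrary. By the first of the two preceding lemmas, the family $\{\opR_t\}_{t\in[0,1]}$ then has no fixed point on $\bndry\setY_\lambda$ for any $t$. Thus $\{\opR_t\}$ provides a fixed-point-free homotopy on the boundary between $\opR_0 = \opR$ and $\opR_1 = \rho_1\circ\opR$.

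The second preceding lemma is exactly the statement that the topological index of $\id - \opR_1$ on $\bndry\setY_\lambda$ is nonzero: if it were zero then the obstruction to extending the displacement map $x\mapsto (x-\opR_1 x)/\abs{x-\opR_1 x}$ from $\bndry\setY_\lambda$ into $\setY_\lambda$ would vanish, yielding a fixed-point-free extension and contradicting the lemma. This index calculation takes place in the finite-dimensional slice $\slice$, to which $\opR_1$ retracts by construction, so we are really working with ordinary Brouwer degree on a planar region $I\subset \slice$ (a full island), plus the outer $(c,\phi,\psi)$--directions which are collapsed by $\rho_1$ and therefore contribute trivially.

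I would then invoke homotopy invariance of topological degree along $\opR_t$: since the homotopy is fixed-point-free on $\bndry\setY_\lambda$ (by step one) and takes values in the relatively compact set $\setK$ by \propref{K-relcpt} (so that $\id - \opR_t$ is a compact perturbation and Leray--Schauder degree applies), we get
\[
  \deg(\id - \opR,\,\setY_\lambda,\,0) \;=\; \deg(\id - \opR_1,\,\setY_\lambda,\,0) \;\neq\; 0.
\]
Consequently $\opR$ has a fixed point in $\setY_\lambda\subset\setY\subset\setLS_\omega$, which is the desired fixed point.

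The main obstacle, if any, is bookkeeping around the degree: one needs to check that the compactness of $\setK$ justifies using a Leray--Schauder style degree (or, alternatively, to reduce everything to Brouwer degree on the slice by noting that $\opR_1$ factors through $\slice$, that the $\phi,\psi$ components of $\opR_t$ are uniformly contracting towards $\id$ in the nonlinearity norm as $t\uparrow1$, and that the $c$--direction is collapsed similarly). Once the degree-theoretic setup is fixed, the three-step scheme (boundary fixed point, or nonzero boundary index via Lemma 2, transported by homotopy invariance via Lemma 1) closes immediately.
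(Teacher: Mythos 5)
Your argument is correct and arrives at the conclusion, but it takes a genuinely different route from the paper. The paper's proof is one line: ``By the previous two lemmas either $\opR_\omega$ has a fixed point on $\bndry\setY_\lambda$ or we can apply \thmref{top-fp}.'' The work is thus delegated to \thmref{top-fp}, which is a general fixed-point theorem for maps on closed subsets of a normal topological space, and whose proof is \emph{not} degree-theoretic at all: it takes the set $F = \bigcup_t\{\text{fixed points of } h_t\}$, shows $F$ is closed and disjoint from $\bndry X$, uses a Urysohn function $\lambda$ with $\lambda|_F=0$, $\lambda|_{\bndry X}=1$ to form $\bar g(x)=h(x,\lambda(x))$, and observes that a fixed point of this extension must lie in $F$ at parameter $0$, hence be a fixed point of $f$. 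You, by contrast, unfold the argument into a Leray--Schauder/Brouwer degree computation: Lemma~2 gives $\deg(\id-\opR_1,\setY_\lambda,0)\neq0$, Lemma~1 gives the homotopy without boundary fixed points, and homotopy invariance of degree transports the nonzero degree back to $\opR$. This is conceptually equivalent and captures the essential idea (indeed the degree content is already visible in the proof of Lemma~2 via the displacement map $\delta$), but it raises exactly the technical concerns you flag: in the infinite-dimensional setting one needs $\id-\opR_t$ to be of Leray--Schauder type and the homotopy to be admissible, so compactness of $\setK$ and the factoring of $\opR_1$ through the slice have to be checked with care. The paper's Urysohn-function argument buys you a clean escape from all of that --- no degree theory, no compact-perturbation bookkeeping, works in any normal space --- at the cost of packaging the geometric content (nonzero degree on the full island) inside the hypothesis that ``every extension of $g|_{\bndry X}$ has a fixed point.'' Both are valid; the paper's is more elementary in its machinery, yours is more revealing of the degree-theoretic mechanism.
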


\begin{proof}
  By the previous two lemmas either $\opR_\omega$ has a fixed point on
  $\bndry\setY_\lambda$ or we can apply \thmref{top-fp}.  In both cases $\opR_\omega$
  has a fixed point.
\end{proof}

\begin{proof}[Proof of \thmref{periodic-points}]
  Pick any sequence $(\omega_0,\dotsc,\omega_{n-1})$ with $\omega_i \in
  \Omega$.  The proof of the previous proposition can be repeated with
  \[
    \opR' = \opR_{\omega_{n-1}} \circ \dotsb \circ \opR_{\omega_0}
  \]
  in place of $\opR$ to see that $\opR'$ has a fixed point~$f_*$.  But then
  $f_*$ is a periodic point of $\opR$ and its combinatorial type is
  $(\omega_0,\dotsc,\omega_{n-1})^\infty$.
\end{proof}


\section{Decompositions} 
\label{sec:decompositions}

In this section we introduce the notion of a decomposition.  We show how to
lift operators from diffeomorphisms to decompositions and also how
decompositions can be composed in order to recover a diffeomorphism.  This
section is an adaptation of techniques introduced in~\citet{Mar98}.

\begin{definition}
  A \Index{decomposition} $\decomp{\phi}: T \to \setD^2(\uint)$ is an ordered
  sequence of diffeo\-morphisms labelled by a totally ordered and at most
  countable set~$T$.  Any such set~$T$ will be called a \Index{time set}.  The
  space $\setD$ is defined in~\appref{nonlin}.

  The space of decompositions $\decomps_T$ over~$T$ is the direct
  product
  \[
    \decomps_T = \prod_T \setD^2(\uint)
  \]
  together with the $\ell^1$--norm
  \[
    \norm{\decomp{\phi}} = \sum_{\tau \in T} \norm{\phi_\tau}.
  \]
  The notation here is $\phi_\tau = \decomp{\phi}(\tau)$.  The distortion of a
  decomposition is defined similarly: \index{distortion!of a decomposition}
  \[
    \distortion{\decomp\phi} = \sum_{\tau \in T} \distortion{\phi_\tau}.
  \]

  The sum of two time sets $T_0 \du T_1$ is the disjoint union
  \[
    T_0 \du T_1 = \{(x,i) \mid x \in T_i, i=0,1\},
  \]
  with order $(x,i) < (y,i)$ if and only~if $x < y$, and $(x,0) < (y,1)$ for
  all $x$, $y$.

  The sum of two decompositions
  \[
    \decomp{\phi}_0 \du \decomp{\phi}_1 \in \decomps_{T_0 \du T_1},
  \]
  where $\decomp{\phi}_i \in \decomps_{T_i}$, is
  defined by $\decomp{\phi}_0 \du \decomp{\phi}_1(x,i) = \decomp{\phi}_i(x)$.
  In other words, $\decomp{\phi}_0 \du \decomp{\phi}_1$ is the diffeomorphisms
  of $\decomp{\phi}_0$ in the order of~$T_0$, followed by the diffeomorphisms
  of $\decomp{\phi}_1$ in the order of~$T_1$.

  Note that $\du$ is noncommutative on time sets as well as on decompositions.
\end{definition}

\begin{remark}
  Our approach to decompositions is somewhat different from that
  of~\citet{Mar98}.  In particular, we require a lot less structure on time sets
  and as such our definition is much more suitable to general combinatorics.
  Intuitively speaking, the structure that \citet{Mar98} puts on time sets is
  recovered from limits of the renormalization operator so we will also get
  this structure when looking at maps in the limit set of renormalization.  We
  simply choose not to make it part of the definition to gain some flexibility.
\end{remark}

\begin{proposition}
  The space of decompositions $\decomps_T$ is a Banach space.
\end{proposition}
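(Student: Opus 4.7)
The plan is to recognize $\decomps_T$ as the $\ell^1$--direct sum of copies of the Banach space $\setD^2(\uint)$ indexed by the (at most countable) set $T$, and then to prove completeness by the standard argument for such direct sums. Since $T$ is at most countable, for the proof we may enumerate it as $\tau_1,\tau_2,\dotsc$; the order structure on $T$ plays no role here.

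First I would note that $\setD^2(\uint)$ is itself a Banach space in the $\setC^0$--nonlinearity norm (this is stated in \remref{linstruct} and justified in \appref{nonlin}). Next I would check that $\norm{\cdot}$ is a genuine norm on the subspace of $\decomp\phi\in\prod_T\setD^2(\uint)$ for which $\sum_\tau\norm{\phi_\tau}<\infty$: positivity, homogeneity, and the triangle inequality all follow termwise from the corresponding properties on $\setD^2(\uint)$, and the finiteness of the sums is preserved by scalar multiplication and addition via the triangle inequality $\norm{\phi_\tau+\psi_\tau}\le\norm{\phi_\tau}+\norm{\psi_\tau}$.

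For completeness, I would take a Cauchy sequence $\{\decomp\phi^{(n)}\}\subset\decomps_T$. Since for each $\tau\in T$ we have $\norm{\phi_\tau^{(n)}-\phi_\tau^{(m)}}\le\norm{\decomp\phi^{(n)}-\decomp\phi^{(m)}}$, the sequence $\{\phi_\tau^{(n)}\}_n$ is Cauchy in $\setD^2(\uint)$ and hence converges to some $\phi_\tau\in\setD^2(\uint)$. Define $\decomp\phi(\tau)=\phi_\tau$. To see that $\decomp\phi\in\decomps_T$, fix any finite subset $F\subset T$ and compute
\[
  \sum_{\tau\in F}\norm{\phi_\tau}
  =\lim_{n\to\infty}\sum_{\tau\in F}\norm{\phi_\tau^{(n)}}
  \le\sup_n\norm{\decomp\phi^{(n)}}<\infty,
\]
the last inequality holding because Cauchy sequences are bounded. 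Taking the supremum over finite $F$ yields $\norm{\decomp\phi}<\infty$.

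For norm convergence, given $\eps>0$ pick $N$ with $\norm{\decomp\phi^{(n)}-\decomp\phi^{(m)}}<\eps$ for all $n,m\ge N$. For any finite $F\subset T$ and $n\ge N$,
\[
  \sum_{\tau\in F}\norm{\phi_\tau^{(n)}-\phi_\tau}
  =\lim_{m\to\infty}\sum_{\tau\in F}\norm{\phi_\tau^{(n)}-\phi_\tau^{(m)}}
  \le\eps,
\]
and letting $F$ exhaust $T$ gives $\norm{\decomp\phi^{(n)}-\decomp\phi}\le\eps$. Hence $\decomp\phi^{(n)}\to\decomp\phi$ in $\decomps_T$, proving completeness. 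I do not anticipate any real obstacle: the only mild point to be careful about is that completeness of $\setD^2(\uint)$ must be invoked, and that the interchange of limits with finite sums (not with the full sum over $T$) is what lets the argument go through cleanly.
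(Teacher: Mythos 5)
Your proof is correct and is essentially the paper's argument spelled out in full: the paper identifies $\setD^2(\uint)$ with the Banach space $\setC^0(\uint;\reals)$ via the nonlinearity operator and then tacitly invokes the fact that an $\ell^1$--direct sum of copies of a Banach space is Banach, which is exactly the termwise Cauchy-sequence argument you carry out.
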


\begin{proof}
  The nonlinearity operator takes $\setD^2(\uint)$ bijectively to
  $\setC^0(\uint;\reals)$.  The latter is a Banach
  space so the same holds for $\decomps_T$.
\end{proof}

\begin{definition}
  Let $T$ be a finite time set (i.e.\ of finite cardinality) so that we can
  label $T = \{0,1,\dots,n-1\}$ with the usual order of elements.  The
  \Index{composition operator} $\opO: \decomps_T \to \setD^2$ is defined by
  \[
    \opO\decomp{\phi} = \phi_{n-1} \circ \dotsb \circ \phi_0.
  \]
  The composition operator composes all maps in a decomposition in the order
  of~$T$.  We can also define partial composition operators
  \index{composition operator!partial}
  \[
    \opO_{[j,k]}\decomp{\phi} = \phi_k \circ \dots \circ \phi_j,
    \qquad 0 \leq j \leq k < n.
  \]
  As a notational convenience we will write $\opO_{\leq k}$ instead of
  $\opO_{[0,k]}$ etc.
\end{definition}

Next, we would like to extend the composition operator to countable time sets
but unfortunately this is not possible in general.  Instead of $\setD^2$ we
will work with the space $\setD^3$ with the $\setC^1$--nonlinearity norm:
\[
  \norm{\phi}_1 = \norm{\opN\phi}_{\setC^1} =
  \max_{k=0,1} \{ \abs{\opD^k(\opN\phi)} \},
  \qquad \phi \in \setD^3.
\]
Define $\decomps^3_T = \{ \decomp{\phi}: T \to \setD^3 \mid
\norm{\decomp{\phi}}_1 < \infty \}$, where
\[
  \norm{\decomp{\phi}}_1 = \sum \norm{\phi_\tau}_1.
\]
Note that $\norm{\cdot}$ will still be used to denote the
$\setC^0$--nonlinearity norm.

\begin{proposition} \label{prop:comp-op}
  The composition operator $\opO: \decomps^3_T \to \setD^2$ continuously
  extends to decompositions over countable time sets~$T$.
\end{proposition}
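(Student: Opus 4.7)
The plan is to extend $\opO$ by approximating a countable $T$ with finite subsets. Pick an increasing exhaustion $T_1 \subset T_2 \subset \cdots$ with $\bigcup_n T_n = T$, and set $\Phi_n = \opO(\decomp\phi|_{T_n})$ via the finite-case definition. I will show that $\{\Phi_n\}$ is Cauchy in $\setD^2$ (with respect to the $C^0$--nonlinearity norm) with a limit independent of the exhaustion; this limit is $\opO\decomp\phi$, and the same estimates will give continuity.

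The basic tool is the chain rule for the nonlinearity, $\opN(\psi\circ\phi) = (\opN\psi\circ\phi)\cdot\opD\phi + \opN\phi$. Ordering $T_n = \{\tau_0 < \cdots < \tau_{m-1}\}$ and setting $H_k = \phi_{\tau_k}\circ\cdots\circ\phi_{\tau_0}$ with $H_{-1} = \id$, induction yields
$$\opN\Phi_n = \sum_{k=0}^{m-1}(\opN\phi_{\tau_k}\circ H_{k-1})\cdot\opD H_{k-1}.$$
Subadditivity of distortion under composition gives $\distortion H_k \leq \norm{\decomp\phi}$, and since $\int_0^1\opD H_k = 1$ this forces $\norm{\opD H_k}_{C^0} \leq M := e^{\norm{\decomp\phi}}$ uniformly in $k$ and $n$; consequently $\norm{\opN\Phi_n}_{C^0} \leq M\norm{\decomp\phi}$. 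Differentiating the sum and converting $\opD^2 H_k$ back into nonlinearities via $\opD^2 H_k = \opD H_k \cdot \opN H_k$ yields, after reassembly, the analogous bound $\norm{\opN\Phi_n}_{C^1} \leq M'\norm{\decomp\phi}_1$ on any fixed ball of $\decomps^3_T$.

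Next I compare $\Phi_n$ and $\Phi_{n+1}$ by inserting elements of $T_{n+1}\setminus T_n$ one at a time. Each insertion replaces a composition $G\circ F$ by $G\circ\phi_\sigma\circ F$, and two applications of the chain rule give
$$\opN(G\circ\phi_\sigma\circ F) - \opN(G\circ F) = \Big\{(\opN G\circ\phi_\sigma - \opN G) + (\opN G\circ\phi_\sigma)\cdot(\opD\phi_\sigma - 1) + \opN\phi_\sigma\Big\}\circ F\cdot\opD F.$$
Since $\norm{\phi_\sigma - \id}_{C^0}$ and $\norm{\opD\phi_\sigma - 1}_{C^0}$ are both $O(\norm{\opN\phi_\sigma}_{C^0})$ on a bounded ball, and the uniform $C^1$--bound from the previous step controls $\opN G$ together with its Lipschitz constant, each term in braces is $O(\norm{\phi_\sigma}_1)$ in $C^0$. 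Summing over insertions gives
$$\norm{\opN\Phi_{n+1} - \opN\Phi_n}_{C^0} \leq C\sum_{\sigma\in T_{n+1}\setminus T_n}\norm{\phi_\sigma}_1,$$
with $C$ depending only on $\norm{\decomp\phi}_1$. By $\ell^1$--summability this is a tail of a convergent series, so $\{\Phi_n\}$ is Cauchy in $\setD^2$. Independence of the exhaustion follows by interleaving two of them, and the Lipschitz version of the insertion estimate (applied to $\decomp\phi - \decomp\psi$) delivers continuity of $\opO$ on bounded balls of $\decomps^3_T$.

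The main obstacle is the uniform $C^1$--bound on $\opN\Phi_n$: naive differentiation of the sum produces terms involving $\opD^2 H_k$, and only by rewriting these through the identity $\opD^2 H_k = \opD H_k\cdot\opN H_k$ and reintroducing the nonlinearity of each partial composition does the bound close up in $\norm{\decomp\phi}_1$. This is precisely why the summability hypothesis must be in the $C^1$--nonlinearity norm, since the first error term in the insertion identity requires Lipschitz control of $\opN G$, and the weaker $C^0$--summability used for $\decomps_T$ would not suffice.
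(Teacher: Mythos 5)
Your proposal is correct and takes essentially the same route as the paper. You derive inline, via the insertion identity obtained from two applications of the chain rule for nonlinearities together with Lipschitz control of $\opN G$, precisely what the paper packages as the Sandwich Lemma~\ref{lem:sandwich} (whose proof also applies the chain rule twice and then the mean value theorem to $\opN\phi_+$), and your uniform bounds on $\opD H_k$ and $\norm{\opN\Phi_n}_1$ are the content of Lemma~\ref{lem:finite-decomp-prop}; the paper uses an enumeration $\theta:\nats\to T$ rather than an exhaustion by finite subsets, but that is only bookkeeping, and the Cauchy estimate $\norm{\psi_n-\psi_m}\leq K\sum\norm{\phi_{\theta(i)}}$ is the same tail-summability argument you give.
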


\begin{remark} \label{rem:comp-op}
  It is important to note that there is an inherent loss of smoothness when
  composing a decomposition over a countable time set.  Starting with a bound
  on the $\setC^1$--nonlinearity norm we only conclude a bound on the
  $\setC^0$--nonlinearity norm of the composed map.  This can be generalized;
  starting with a bound on the $\setC^{k+1}$--nonlinearity norm, we can
  conclude a bound on the $\setC^k$--nonlinearity norm for the composed map.

  The reason why we loose one degree of smoothness is because we use the mean
  value theorem for one estimate in the Sandwich Lemma~\ref{lem:sandwich}.  If
  necessary it should be possible to replace this with for example a H\"older
  estimate which would lead to a slightly stronger statement.
\end{remark}

In order to prove this proposition we will need the Sandwich Lemma which in
itself relies on the following properties of the composition operator.

\begin{lemma} \label{lem:finite-decomp-prop}
  Let $\decomp{\phi} \in \decomps_T$ be a decomposition over a finite time
  set~$T$, and let $\phi = O\decomp{\phi}$.  Then
  \[
    \e^{-\norm{\decomp{\phi}}} \leq \abs{\phi'} \leq \e^{\norm{\decomp{\phi}}},
    \quad
    \abs{\phi''} \leq \norm{\decomp{\phi}} \e^{2\norm{\decomp{\phi}}},
    \quad\text{and}\quad
    \norm{\phi} \leq \norm{\decomp{\phi}} \e^{\norm{\decomp{\phi}}}.
  \]
  If furthermore, $\decomp{\phi} \in \decomps^3_T$, then
  \[
    \norm{\phi}_1 \leq (1 + \norm{\decomp{\phi}}) \e^{2\norm{\decomp{\phi}}}
    \norm{\decomp{\phi}}_1.
  \]
\end{lemma}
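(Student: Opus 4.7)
The heart of the argument is the chain-rule identity for the nonlinearity operator: $\opN(g\circ f)(x) = \opN g(f(x))\, f'(x) + \opN f(x)$. Writing $F_k = \phi_k \circ \dotsb \circ \phi_0$ (with $F_{-1} = \id$) and iterating gives the telescoping expansion
\[
\opN \phi(x) = \sum_{k=0}^{n-1} \opN\phi_k\big(F_{k-1}(x)\big) \cdot F_{k-1}'(x).
\]
Every estimate in the lemma will be read off from this one identity, together with straightforward chain-rule bounds on $|F_{k-1}'|$.

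For the two-sided $\phi'$ bound I would bypass the formula above and simply use the chain rule $\log \phi'(x) = \sum_k \log \phi_k'(F_{k-1}(x))$. For any $\psi \in \setD^2(\uint)$ one has $|\log \psi'(x)| \leq \norm{\psi}$: since $\psi$ fixes the endpoints of $\uint$, the mean value theorem produces some $y$ with $\psi'(y) = 1$, and integrating $\opN\psi$ from $y$ to $x$ bounds $|\log \psi'(x)|$ by $\norm{\opN\psi}_{\setC^0} = \norm{\psi}$. Summing over $k$ yields $|\log \phi'| \leq \norm{\decomp\phi}$. Applying $|F_{k-1}'| \leq \e^{\norm{\decomp\phi}}$ term by term in the telescoping formula then gives $|\opN\phi| \leq \norm{\decomp\phi}\, \e^{\norm{\decomp\phi}}$, which is the $\norm{\phi}$ bound. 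The $|\phi''|$ bound is now immediate from $\phi'' = \opN\phi \cdot \phi'$.

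For the $\setC^1$-nonlinearity bound I would differentiate the telescoping formula to get
\[
\opD(\opN\phi)(x) = \sum_{k=0}^{n-1} \Big[\opD(\opN\phi_k)\big(F_{k-1}(x)\big)\,(F_{k-1}'(x))^2 + \opN\phi_k\big(F_{k-1}(x)\big)\,F_{k-1}''(x)\Big].
\]
Each ingredient is bounded using the already-proved parts of the lemma, applied where necessary to the partial decomposition $\decomp\phi|_{<k}$: $|\opD(\opN\phi_k)| \leq \norm{\phi_k}_1$ and $|\opN\phi_k| \leq \norm{\phi_k}_1$ by definition, while $(F_{k-1}')^2 \leq \e^{2\norm{\decomp\phi}}$ and $|F_{k-1}''| \leq \norm{\decomp\phi}\, \e^{2\norm{\decomp\phi}}$. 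Summing the two contributions, pulling out $\e^{2\norm{\decomp\phi}}$, and using $\sum_k \norm{\phi_k} = \norm{\decomp\phi}$ together with $\norm{\cdot} \leq \norm{\cdot}_1$ absorbs the extra factor of $\norm{\decomp\phi}^2$ into $(1 + \norm{\decomp\phi})\norm{\decomp\phi}_1$, yielding the stated estimate; the same upper bound for $|\opN\phi|$ is checked trivially from the previous paragraph.

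The whole argument is elementary and I do not anticipate a real obstacle. The one place where mild care is needed is in the $\setC^1$ estimate, where the bound on $|F_{k-1}''|$ is an instance of the lemma applied to a shorter sub-decomposition $\decomp\phi|_{<k}$; since $\norm{\decomp\phi|_{<k}} \leq \norm{\decomp\phi}$, the constants harmlessly inflate to those of the full decomposition. Apart from that bookkeeping, the proof is a direct unwinding of the nonlinearity chain rule.
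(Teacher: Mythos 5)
Your proposal is correct and follows essentially the same route as the paper: the telescoping chain-rule identity $\opN\phi(x)=\sum_k \opN\phi_k(F_{k-1}(x))F_{k-1}'(x)$ together with the elementary bounds of Lemma~\ref{lem:Nprop} and its differentiated counterpart. The one small improvement you make explicit --- obtaining $\abs{\phi''}\leq\norm{\decomp\phi}\e^{2\norm{\decomp\phi}}$ from $\phi''=\opN\phi\cdot\phi'$ \emph{after} establishing the $\norm{\phi}$ bound, rather than trying to bound $\abs{\phi''}$ directly from \eqnref{sndderiv} (which would give a worse exponent) --- is exactly what the paper's terse ``induction argument'' must also have in mind.
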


\begin{remark}
  Note that the lemma is stated for finite time sets, but the way we define the
  composition operator for countable time sets (see the proof
  of~\propref{comp-op}) will mean that the lemma also holds for countable time
  sets.
\end{remark}

\begin{proof}
  The bounds on $\abs{\phi'}$ and $\abs{\phi''}$ follow from an induction
  argument using only \lemref{Nprop}.

  Since $T$ is finite we can label $\decomp{\phi}$ so that $\phi = \phi_{n-1}
  \circ \dotsb \circ \phi_0$.  Let $\psi_i = \opO_{<i}(\decomp{\phi})$ and let
  $\psi_0 = \id$.  Now the bound on $\norm{\phi}$ follows from
  \[
    \opN\phi(x) = \sum_{i=0}^{n-1} \opN\phi_i(\psi_i(x)) \psi'_i(x),
  \]
  which in itself is obtained from an induction argument using the chain rule
  for nonlinearities (see~\lemref{N-chain-rule}).

  Finally, take the derivative of the above equation to get
  \[
    (\opN\phi)'(x) = \sum_{i=0}^{n-1} (\opN\phi_i)'(\psi_i(x)) \psi'_i(x)^2
    + \opN\phi_i(\psi_i(x)) \psi''_i(x).
  \]
  From this the bound on $\norm{\phi}_1$ follows.
\end{proof}

\begin{lemma}[Sandwich Lemma] \label{lem:sandwich}
  \index{Sandwich Lemma}
  Let $\phi = \phi_{n-1} \circ \dotsb \circ \phi_0$ and let $\psi$ be obtained
  by ``sandwiching $\gamma$ inside $\phi$;''  that is,
  \[
    \psi = \phi_{n-1} \circ \dotsb \circ \phi_i \circ \gamma \circ
    \phi_{i-1} \circ \dotsb \circ \phi_0,
  \]
  for some $i \in \{0,\dotsc,n\}$ (with the convention that $\phi_n = \phi_{-1}
  = \id$).

  For every $\lambda$ there exists $K$ such that if $\gamma,\phi_i \in \setD^3$
  and if $\norm{\gamma}_1 + \sum \norm{\phi_i}_1 \leq \lambda$, then
  $\norm{\psi - \phi} \leq K \norm{\gamma}$.
\end{lemma}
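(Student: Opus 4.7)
The plan is to expand $\opN\psi - \opN\phi$ via the chain rule for nonlinearities ($\opN(f\circ g) = (\opN f \circ g)\opD g + \opN g$) and estimate each resulting piece. Set $\alpha = \phi_{i-1}\circ\cdots\circ\phi_0$ and $\beta = \phi_{n-1}\circ\cdots\circ\phi_i$, so that $\phi = \beta\circ\alpha$ and $\psi = \beta\circ\gamma\circ\alpha$. Two applications of the chain rule, followed by adding and subtracting $\opN\beta\circ\gamma\circ\alpha$, give
\[
  \opN\psi - \opN\phi
  = \opD\alpha \cdot \big\{
      (\opN\beta\circ\gamma\circ\alpha)(\opD\gamma\circ\alpha - 1)
    + (\opN\beta\circ\gamma\circ\alpha - \opN\beta\circ\alpha)
    + \opN\gamma\circ\alpha
    \big\}.
\]
Since $\abs{\opD\alpha} \leq \e^{\norm{\decomp\alpha}} \leq \e^\lambda$ by \lemref{finite-decomp-prop} (applied to the subdecomposition of $\decomp{\phi}$ that composes to $\alpha$), the task reduces to bounding each of the three bracketed expressions in $\setC^0$ by a constant depending only on $\lambda$ times $\norm{\gamma} = \norm{\opN\gamma}_{\setC^0}$.

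The third term needs no work. The first and second are controlled by exploiting that $\gamma$ fixes both endpoints of $\uint$: by the mean value theorem $\opD\gamma(\xi) = 1$ for some $\xi\in\uint$, so
\[
  \norm{\opD\gamma - 1}_\infty
  \leq \norm{\opD^2\gamma}_\infty
  \leq \norm{\gamma}\e^{2\norm{\gamma}},
\]
the last inequality being \lemref{finite-decomp-prop} applied to the trivial decomposition $\{\gamma\}$; integrating once more yields $\norm{\gamma - \id}_\infty \leq \norm{\gamma}\e^{2\norm{\gamma}}$. Combined with the uniform bound $\norm{\opN\beta}_\infty \leq \norm{\beta} \leq \lambda\e^\lambda$ (again from \lemref{finite-decomp-prop}), this handles the first bracketed term. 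For the middle term a further mean value estimate, this time applied to $\opN\beta$ itself, gives
\[
  \abs{\opN\beta\circ\gamma\circ\alpha(x) - \opN\beta\circ\alpha(x)}
  \leq \norm{\opD(\opN\beta)}_\infty \cdot \norm{\gamma - \id}_\infty
  \leq \norm{\beta}_1 \cdot \norm{\gamma}\e^{2\norm{\gamma}},
\]
and $\norm{\beta}_1 \leq (1+\lambda)\e^{2\lambda}\lambda$ by the final assertion of \lemref{finite-decomp-prop}. Collecting constants produces the required $K = K(\lambda)$.

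The only genuine obstacle is the middle term: estimating $\opN\beta\circ\gamma\circ\alpha - \opN\beta\circ\alpha$ requires control on the derivative $\opD(\opN\beta)$, which is precisely why the hypothesis is phrased in terms of the $\setC^1$-nonlinearity norm rather than the $\setC^0$ one. This is where the one-derivative loss noted in \remref{comp-op} enters the argument; everything else is bookkeeping of composition identities and routine use of \lemref{finite-decomp-prop}.
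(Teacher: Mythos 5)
Your proposal is correct and follows essentially the same route as the paper's proof: split $\phi$ and $\psi$ via $\alpha$ and $\beta$ (the paper's $\phi_-$ and $\phi_+$), apply the nonlinearity chain rule twice, regroup into three pieces controlled respectively by $\opD\gamma - 1$, a mean-value estimate on $\opN\beta$ (which is exactly where the $\setC^1$-nonlinearity hypothesis is consumed), and $\opN\gamma$, and then close with \lemref{finite-decomp-prop}. The only cosmetic difference is whether one factors out $\opN\beta(\gamma(y))$ or $\opN\beta(y)$ in the first bracketed term before invoking the mean value theorem; the estimates and the use of the supporting lemmas are the same.
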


\begin{proof}
  Let $\phi_+ = \phi_n \circ \dotsb \circ \phi_i$, and let $\phi_- = \phi_{i-1}
  \circ \dotsb \circ \phi_{-1}$. Two applications of the chain rule for
  nonlinearities gives
  \begin{align*}
    \abs[\big]{\opN\psi(x) - \opN\phi(x)} &=
      \abs[\big]{\opN(\phi_+ \circ \gamma)(\phi_-(x)) - \opN\phi_+(\phi_-(x))}
      \cdot \abs{\phi'_-(x)} \\
    &= \abs[\big]{\opN\phi_+(\gamma(y)) \gamma'(y) - \opN\phi_+(y)
      + \opN\gamma(y)} \cdot \abs{\phi'_-(x)},
  \end{align*}
  where $y = \phi_-(x)$.  By assumption $\opN\phi_+ \in \setC^1$ so by the mean
  value theorem there exists $\eta \in \uint$ such that
  \[
    \opN\phi_+(\gamma(y)) = \opN\phi_+(y) + (\opN\phi_+)'(\eta) \cdot
    \left( \phi(y) - y \right).
  \]
  Hence
  \begin{multline*}
    \abs[\big]{\opN\psi(x) - \opN\phi(x)} \leq
    \abs{\phi'_-(x)} \\
    \cdot \left(
      \abs[\big]{\opN\phi_+(y)} \cdot \abs{\gamma'(y)-1} +
      \abs[\big]{\gamma'(y) \cdot (\opN\phi_+)'(\eta)}
      \cdot \abs{\gamma(y) - y} + \abs{\opN\gamma(y)} \right) \\
    \leq K_1 \cdot \left( K_2 \big(\e^{\norm{\gamma}} - 1\big) +
      K_3 \big(\e^{2\norm{\gamma}} - 1\big) + \norm{\gamma} \right)
      \leq K \norm{\gamma}.
  \end{multline*}
  The constants $K_i$ only depend on~$\lambda$ by \lemref{finite-decomp-prop}.
  We have also used \lemref{Nprop} and \lemref{Nprop2} in the penultimate
  inequality.
\end{proof}

\begin{proof}[Proof of \propref{comp-op}]
  Let $\decomp{\phi} \in \decomps^3_T$ and choose an enumeration $\theta: \nats
  \to T$.  Let $\psi_n$ denote the composition of
  $\{\phi_{\theta(0)},\dots,\phi_{\theta(n-1)}\}$ in the order induced by~$T$.

  We claim that $\{\psi_n\}$ is a Cauchy sequence in $\setD^2$.  Indeed,
  by applying the Sandwich Lemma with $\lambda = \norm{\decomp\phi}_1$ we get a
  constant $K$ only depending on $\lambda$ such that:
  \[
    \norm{\psi_n - \psi_m} \leq
    \sum_{i=m}^{m+n-1} \norm{\psi_{i+1} - \psi_{i}} \leq
    K \sum_{i=m}^{m+n-1} \norm{\phi_{\theta(i)}} \to 0,
    \qquad\text{as $m,n\to\infty$.}
  \]
  Hence $\phi = \lim \psi_n$ exists and $\phi \in \setD^2$.  This also shows
  that $\phi$ is independent of the enumeration~$\theta$ and hence we can
  define $\opO\decomp{\phi} = \phi$.
\end{proof}

We can now use the composition operator to lift operators from $\setD$
to~$\decomps_T$, starting with the zoom operators of \defref{zoom}.

\begin{definition} \label{def:zoom-decomp}
  \index{zoom operator!on decompositions}
  Let $I \subset \uint$ be an interval, let $\decomp{\phi} \in \decomps_T^3$
  and let $I_\tau$ be the image of $I$ under the diffeomorphism
  $\opO_{<\tau}(\decomp{\phi})$.  Define $\opZ(\decomp{\phi}; I) =
  \decomp{\psi}$, where $\psi_\tau = \opZ(\phi_\tau; I_\tau)$, for every $\tau
  \in T$.
\end{definition}

\begin{remark} \label{rem:zoom-equiv}
  An equivalent way of defining the zoom operators on $\decomps_T^3$ is to let
  $I_\tau = \psi_\tau\inv(J)$, where $\psi_\tau = \opO_{\geq
  \tau}(\decomp\phi)$, $J = \phi(I)$, and $\phi = \opO\decomp\phi(I)$.  This is
  equivalent since $\opO\decomp\phi = \opO_{\geq \tau}(\decomp\phi) \circ
  \opO_{<\tau}(\decomp\phi)$.

  The original definition takes the view of zooming in on an interval in the
  domain of the decomposition, whereas the latter takes the view of zooming in
  on an interval in the range of the decomposition.  We will make use of both
  of these points of view.
\end{remark}

Zoom operators on diffeomorphisms are contractions for a fixed interval~$I$ by
\lemref{zoom}.  A similar statement holds for decompositions:

\begin{lemma} \label{lem:dzoom-contract}
  Let $I \subset \uint$ be an interval.
  If $\decomp\phi \in \decomps_T^3$ then
  \[
    \norm{\opZ(\decomp\phi;I)} \leq \e^{\norm{\decomp\phi}} \cdot
    \min\{\abs{I},\abs{\phi(I)}\} \cdot \norm{\decomp\phi},
  \]
  where $\phi = \opO\decomp\phi$.
\end{lemma}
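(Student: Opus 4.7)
The plan is to reduce the decomposition statement to the corresponding fact for single diffeomorphisms. Recall (this is the presumed content of the cited \lemref{zoom}) that for a single diffeomorphism $\phi\in\setD^2$ and interval $J\subset\uint$, the zoom $\opZ(\phi;J)=\zeta_{\phi(J)}\inv\circ\phi\circ\zeta_J$ has nonlinearity obtained by the chain rule, and because affine maps have zero nonlinearity one has $\opN(\opZ(\phi;J))(x)=\abs{J}\cdot\opN\phi(\zeta_J(x))$, so $\norm{\opZ(\phi;J)}\leq\abs{J}\cdot\norm{\phi}$.

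Applying this pointwise to every member of the decomposition,
\[
\norm{\opZ(\decomp\phi;I)}=\sum_{\tau\in T}\norm{\opZ(\phi_\tau;I_\tau)}\leq\sum_{\tau\in T}\abs{I_\tau}\cdot\norm{\phi_\tau},
\]
where $I_\tau=\opO_{<\tau}(\decomp\phi)(I)$ as in \defref{zoom-decomp}. So the only real task is to bound $\abs{I_\tau}$ uniformly in $\tau$. For the bound involving $\abs{I}$, note that $\opO_{<\tau}(\decomp\phi)$ is the composition of a sub-decomposition of $\decomp\phi$ whose norm is at most $\norm{\decomp\phi}$; by \lemref{finite-decomp-prop} (extended to countable time sets via \propref{comp-op}, as indicated in its trailing remark) we have $\abs{\opD\opO_{<\tau}(\decomp\phi)}\leq\e^{\norm{\decomp\phi}}$, so $\abs{I_\tau}\leq\e^{\norm{\decomp\phi}}\abs{I}$.

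For the bound involving $\abs{\phi(I)}$, invoke the equivalent description of the zoom operator given in \remref{zoom-equiv}: $I_\tau=\psi_\tau\inv(J)$, where $\psi_\tau=\opO_{\geq\tau}(\decomp\phi)$ and $J=\phi(I)$. Again by \lemref{finite-decomp-prop} the derivative of $\psi_\tau\inv$ is bounded by $\e^{\norm{\decomp\phi}}$ (the nonlinearity of an inverse diffeomorphism has the same sup-norm as that of the original, so the norm of $\psi_\tau\inv$ is at most $\norm{\decomp\phi}$), yielding $\abs{I_\tau}\leq\e^{\norm{\decomp\phi}}\abs{\phi(I)}$. Taking the smaller of the two bounds on $\abs{I_\tau}$ and pulling the common factor out of the sum gives exactly
\[
\norm{\opZ(\decomp\phi;I)}\leq\e^{\norm{\decomp\phi}}\cdot\min\{\abs{I},\abs{\phi(I)}\}\cdot\norm{\decomp\phi}.
\]

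The only subtlety I anticipate is legitimising the uniform derivative bound $\abs{\opD\opO_{<\tau}(\decomp\phi)}\leq\e^{\norm{\decomp\phi}}$ in the countable case: one has to observe that this bound is preserved under the $\setD^2$-limit defining $\opO$ on countable decompositions in \propref{comp-op}, which is immediate since $\abs{\opD\cdot}$ is continuous with respect to the $\setD^2$-topology (equivalently, $\setC^1$-convergence of the diffeomorphism). Everything else is essentially bookkeeping once the single-diffeomorphism identity $\opN(\opZ(\phi;J))=\abs{J}\,\opN\phi\circ\zeta_J$ is in hand.
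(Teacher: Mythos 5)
Your argument follows the paper's proof essentially line by line: split the norm over the times, apply the single-diffeomorphism scaling $\norm{\opZ(\phi_\tau;I_\tau)}\leq\abs{I_\tau}\norm{\phi_\tau}$, and then bound $\sup_\tau\abs{I_\tau}$ by $\e^{\norm{\decomp\phi}}\abs{I}$ (via the derivative of $\opO_{<\tau}(\decomp\phi)$) and by $\e^{\norm{\decomp\phi}}\abs{\phi(I)}$ (via the derivative of $\opO_{\geq\tau}(\decomp\phi)$). This is exactly the paper's route.

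One parenthetical justification is wrong, though. You write that ``the nonlinearity of an inverse diffeomorphism has the same sup-norm as that of the original, so the norm of $\psi_\tau\inv$ is at most $\norm{\decomp\phi}$.'' That identity is false in general: $\opN(\psi\inv)(y)=-\opN\psi(\psi\inv(y))/\opD\psi(\psi\inv(y))$, so $\norm{\psi\inv}$ is $\sup\abs{\opN\psi}/\opD\psi$, not $\sup\abs{\opN\psi}$, and these can differ by an exponential factor. Fortunately you do not need it. The derivative bound you actually want, $\abs{\opD(\psi_\tau\inv)}\leq\e^{\norm{\decomp\phi}}$, is an immediate consequence of the \emph{lower} bound $\opD\psi_\tau\geq\e^{-\norm{\decomp\phi}}$ from \lemref{finite-decomp-prop} applied to the sub-decomposition $\opO_{\geq\tau}(\decomp\phi)$, since $\opD(\psi_\tau\inv)=1/\opD\psi_\tau\circ\psi_\tau\inv$; no statement about the nonlinearity norm of the inverse is required. (Equivalently, the paper phrases this step as a mean value theorem estimate for $\psi_\tau$ on $I_\tau$ and uses the same lower derivative bound.) With that one correction, the proof is fine.
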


\begin{remark}
  Since we are only dealing with decompositions with very small norm this lemma
  is enough for our purposes.  However, in more general situations
  the constant in front of $\norm{\decomp\phi}$ may not be small enough.
  A way around this is to consider
  decompositions which compose to diffeomorphisms with negative Schwarzian
  derivative.  Then all the intervals $I_\tau$ will have hyperbolic lengths
  bounded by that of~$J$ (notation is as in \remref{zoom-equiv}).  This can
  then be used to show that zoom operators contract and the contraction can be
  bounded in terms of the hyperbolic length of~$J$.
\end{remark}

\begin{proof}
  Using the notation of \defref{zoom-decomp} we have
  \[
    \norm{\opZ(\decomp\phi;I)}
    = \sum_{\tau \in T} \norm{\opZ(\phi_\tau;I_\tau)}
    \leq \sum_{\tau \in T} \abs{I_\tau} \cdot \norm{\phi_\tau}
    \leq \sup_{\tau \in T}\,\abs{I_\tau} \cdot \norm{\decomp\phi}.
  \]
  For every $\tau$ there exists $\xi_\tau \in I$ such that $\abs{I_\tau} =
  (\opO_{<\tau}(\decomp\phi))'(\xi_\tau) \cdot \abs{I}$ which together with
  \lemref{finite-decomp-prop} implies that $\abs{I_\tau} \leq
  \e^{\norm{\decomp\phi}} \cdot \abs{I}$.  Similarly, there exists $\eta_\tau
  \in \phi(I)$ such that $\abs{\phi(I)} =
  (\opO_{\geq\tau}(\decomp\phi))'(\eta_\tau) \cdot \abs{I_\tau}$ so by
  \lemref{finite-decomp-prop} $\abs{I_\tau} \leq \e^{\norm{\decomp\phi}} \cdot
  \abs{\phi(I)}$ as well.
\end{proof}

This contraction property of the zoom operators leads us to introduce the
subspace of pure decompositions (the intuition is that renormalization
contracts towards the pure subspace, see \propref{conv-to-Q}).

\begin{definition} \label{def:pdecomps}
  The subspace of pure decompositions \index{decomposition!pure} $\pdecomps_T
  \subset \decomps_T$ consists of all decompositions $\decomp{\phi}$ such that
  $\phi_\tau$ is a pure map for every~$\tau \in T$.
  
  The subspace of pure maps
  \index{pure map} $\pures \subset \setD^\infty$ consists of restrictions of
  $x^\alpha$ away from the critical point, that is
  \[
    \pures = \big\{ \opZ(x \abs{x}^{\alpha-1};I) \mid
    \intr I \not\ni 0 \big\}.
  \]
  A property of pure maps is that they can be parametrized by one real
  variable.  We choose to parametrize the pure maps by their distortion with a
  sign and call this parameter $s$.  The sign of~$s$ is positive for $I$ to the
  right of~$0$ and negative for $I$ to the left of~$0$.  With this convention
  the graphs of pure maps will look like \figref{pure}.
\end{definition}

\begin{remark} \label{rem:pures}
  Let $\mu_s \in \pures$.  A calculation shows that
  \[
    \distortion \mu_s = \abs{\log\mu_s'(1)/\mu_s'(0)}
  \]
  and from this it is possible to deduce an expression for $\mu_s$:
  \begin{equation} \label{eq:pure}
    \mu_s(x) = \frac{\left(1 +
    \left(\exp\{\frac{s}{\alpha-1}\} - 1\right)x\right)^\alpha -
    1}{\exp\{\frac{\alpha s}{\alpha-1}\} - 1},
    \qquad x \in \uint,\; s \neq 0,
  \end{equation}
  and $\mu_0 = \id$.
  We emphasize that the parametrization is chosen so that $\abs{s}$ equals the
  distortion of~$\mu_s$.  For this reason we call $s$ the \emph{signed}
  distortion of~$\mu_s$. \index{distortion!signed} \figref{pure} shows the
  graphs of $\mu_s$ for different values of~$s$.  Equation~\eqnref{pure} may
  at first seem to indicate that there is some sort of singular behavior at
  $s=0$ but this is not the case; the family $s \mapsto \mu_s$ is smooth.
\end{remark}

\begin{figure}
  \begin{center}
    \includegraphics{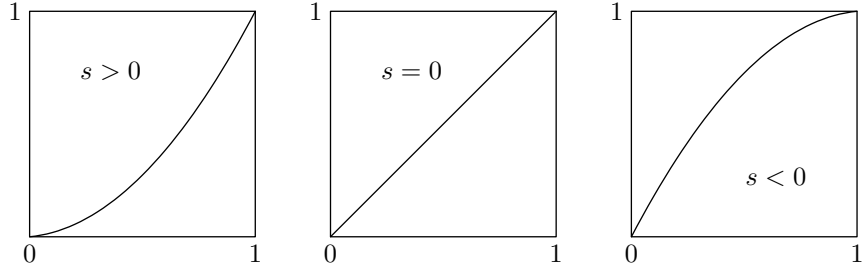}
  \end{center}
  \caption{The graphs of a pure map $\mu_s$ for different values of the signed
  distortion~$s$.}
  \label{fig:pure}
\end{figure}

%

The next two lemmas are needed in preparation for \propref{conv-to-Q}.

\begin{lemma} \label{lem:zoom-contract}
  Let $\phi \in \setD^2$ and let $I \subset \uint$ be an interval.
  Then
  \[
    \distance(\opZ(\phi;I),\pures)
    \leq \abs{I} \cdot \distance(\phi,\pures),
  \]
  where the distance $\distance(\cdot,\cdot)$ is induced by the
  $\setC^0$--nonlinearity norm.
\end{lemma}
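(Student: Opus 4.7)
The plan is to exploit two facts: (i) the family of pure maps is invariant under zoom operators, so that $\opZ(\mu;I)\in\pures$ whenever $\mu\in\pures$; and (ii) the zoom operator contracts the $\setC^0$--nonlinearity norm by the factor $\abs{I}$. Combining these gives the bound on distance to $\pures$ essentially for free.

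First I would verify invariance: if $\mu = \opZ(q;J)\in\pures$ with $q(x)=x\abs{x}^{\alpha-1}$, then a direct computation using the composition rule $\opZ(h\circ g;A)=\opZ(h;g(A))\circ\opZ(g;A)$ shows that $\opZ(\mu;I)$ is again an affine rescaling of the restriction of $q$ to a subinterval, hence lies in $\pures$. This step is routine but essential, since it is what lets us replace an inf over $\pures$ on the left with an inf over $\pures$ on the right.

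Second, I would compute how $\opN$ transforms under zoom. Since $\opZ(\phi;I) = \zeta_{\phi(I)}^{-1}\circ\phi\circ\zeta_I$ is a conjugation by affine maps and affine maps have zero nonlinearity, the chain rule for nonlinearities (\lemref{N-chain-rule}) yields
\[
  \opN\bigl(\opZ(\phi;I)\bigr)(x) = \abs{I}\cdot\opN\phi\bigl(\zeta_I(x)\bigr),
  \qquad x\in\uint,
\]
where the factor $\abs{I}$ comes from $\zeta_I'\equiv\abs{I}$. Applying the same identity to any $\mu\in\pures$ and subtracting gives
\[
  \bigl\lVert\opN\opZ(\phi;I)-\opN\opZ(\mu;I)\bigr\rVert_{\setC^0(\uint)}
  = \abs{I}\cdot\bigl\lVert\opN\phi-\opN\mu\bigr\rVert_{\setC^0(I)}
  \leq \abs{I}\cdot\distance(\phi,\mu).
\]

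Finally, combining with invariance, for every $\mu\in\pures$ we have $\opZ(\mu;I)\in\pures$, so
\[
  \distance\bigl(\opZ(\phi;I),\pures\bigr) \leq
  \distance\bigl(\opZ(\phi;I),\opZ(\mu;I)\bigr) \leq
  \abs{I}\cdot\distance(\phi,\mu),
\]
and taking the infimum over $\mu\in\pures$ on the right gives the desired inequality. There is no real obstacle here; the only thing to be slightly careful about is the invariance of $\pures$ under zooming (one needs $I$ to be positioned so that $\zeta_I$ maps into the domain where $\phi$ is a diffeomorphism, which is automatic since $\phi\in\setD^2(\uint)$), but once that is in hand the rest is just the affine-invariance of nonlinearity.
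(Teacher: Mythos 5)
Your proof is correct and follows essentially the same strategy as the paper's: both rest on the identity $\opN(\opZ(\phi;I)) = \abs{I}\cdot\opN\phi\circ\zeta_I$ combined with the invariance of $\pures$ under the zoom operator. The paper establishes the invariance implicitly through an explicit change of variables in the parametrization $s\mapsto\mu_s$, tracking the range of the new parameter; your version, which appeals directly to the composition rule for zoom operators, is a cleaner way to organize the same two facts.
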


\begin{proof}
  A calculation shows that
  \[
    \opN \mu_s(x) = \frac{r_s(\alpha-1)}{1+r_sx},
    \qquad r_s = \exp\left\{\frac{s}{\alpha-1}\right\} - 1.
  \]
  Let $I = [a,b]$ and let $\zeta_I(x) = a + \abs{I} \cdot x$.  Then
  \begin{align*}
    \distance(\opZ(\phi;I),\pures)
      &= \inf_{s \in \reals} \max_{x \in \uint}
      \abs[\big]{\opN(\opZ(\phi;I))(x) - \opN\mu_s(x)} \\
    &= \inf_{r > -1} \max_{x \in \uint} \abs[\bigg]{\abs{I} \cdot
      \opN\phi(\zeta_I(x)) - \frac{r(\alpha-1)}{1+rx}} \\
    &= \inf_{r > -1} \max_{x \in \uint} \abs[\bigg]{ \abs{I} \cdot
      \opN\phi(\zeta_I(x))
      - \frac{r(\alpha-1)}{1 + r(\zeta_I(x) - a)/\abs{I}}} \\
    &= \abs{I} \cdot \inf_{\rho \notin [-\frac{1}{b},-\frac{1}{a}]} \max_{x \in
      I} \abs[\bigg]{ \opN\phi(x) - \frac{\rho(\alpha-1)}{1 + \rho x}},
  \end{align*}
  where $\rho = r/(b-(1+r)a)$.  Note that $1+\rho x$ has a zero in~$\uint$ if
  $\rho \leq -1$, so the infimum is assumed for $\rho > -1$.  Thus
  \[
    \distance(\opZ(\phi;I),\pures)
    = \abs{I} \cdot \inf_{\rho > -1} \max_{x \in
      I} \abs[\bigg]{ \opN\phi(x) - \frac{\rho(\alpha-1)}{1 + \rho x}}.
  \]
  Taking the max over $x \in \uint$ finishes the proof.
\end{proof}

\begin{lemma} \label{lem:zoom-contract-decomp}
  Let $\decomp\phi \in \decomps_T^3$ and let $I \subset \uint$ be an interval.
  Then
  \[
    \distance\big(\opZ(\decomp{\phi}; I), \pdecomps_T\big)
    \leq \e^{\norm{\decomp\phi}} \cdot \min\{\abs{I},\abs{\phi(I)}\} \cdot
    \distance(\decomp{\phi}, \pdecomps_T),
  \]
  where $\phi = \opO\decomp\phi$.
\end{lemma}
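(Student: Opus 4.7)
The plan is to reduce to the componentwise statement already proved in \lemref{zoom-contract} and then apply the same two-sided bound on the intermediate interval lengths used in \lemref{dzoom-contract}.

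First I would observe that because the norm on $\decomps_T$ is the $\ell^1$--sum of the component norms and $\pdecomps_T = \prod_T \pures$, the infimum defining the distance to pure decompositions splits componentwise:
\[
  \distance(\decomp{\phi},\pdecomps_T) = \sum_{\tau\in T} \distance(\phi_\tau,\pures).
\]
This is the key identification that lets pointwise estimates on each $\phi_\tau$ assemble to the required inequality.

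Next, write $\opZ(\decomp{\phi};I) = \decomp{\psi}$ with $\psi_\tau = \opZ(\phi_\tau;I_\tau)$ and $I_\tau = \opO_{<\tau}(\decomp{\phi})(I)$ as in \defref{zoom-decomp}. Apply \lemref{zoom-contract} to each factor to obtain
\[
  \distance(\psi_\tau,\pures) \leq \abs{I_\tau} \cdot \distance(\phi_\tau,\pures),
\]
and then sum over $\tau\in T$, factoring out $\sup_\tau \abs{I_\tau}$, to get
\[
  \distance(\opZ(\decomp{\phi};I),\pdecomps_T) \leq \sup_{\tau\in T}\abs{I_\tau} \cdot \distance(\decomp{\phi},\pdecomps_T).
\]

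Finally I would bound $\sup_\tau\abs{I_\tau}$ exactly as in the proof of \lemref{dzoom-contract}: applying the mean value theorem to $\opO_{<\tau}(\decomp{\phi})$ on $I$ together with \lemref{finite-decomp-prop} gives $\abs{I_\tau}\leq \e^{\norm{\decomp{\phi}}}\abs{I}$, and the analogous argument for $\opO_{\geq\tau}(\decomp{\phi})$ pulled back from $\phi(I)$ (using the equivalent description of zoom via the range, \remref{zoom-equiv}) gives $\abs{I_\tau}\leq \e^{\norm{\decomp{\phi}}}\abs{\phi(I)}$. Taking the minimum yields the stated bound. I do not expect any real obstacle here: the only subtlety is justifying the componentwise split of the distance to $\pdecomps_T$, but this is immediate from the definitions of $\norm{\,\cdot\,}$ and $\pdecomps_T$.
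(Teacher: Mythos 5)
Your proposal is correct and follows exactly the route the paper intends: the paper's proof is just the instruction ``Use \lemref{zoom-contract} and a similar argument to that employed in the proof of~\lemref{dzoom-contract},'' and you have carried that out, including the correct observation that the $\ell^1$--structure of $\decomps_T$ makes the distance to $\pdecomps_T$ split as $\sum_\tau \distance(\phi_\tau,\pures)$. No gaps.
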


\begin{proof}
  Use \lemref{zoom-contract} and a similar argument to that employed in the
  proof of~\lemref{dzoom-contract}.
\end{proof}

The pure decompositions have some very nice properties which we will make use
of repeatedly.

\begin{proposition} \label{prop:pure-S-neg}
  If $\decomp\phi \in \pdecomps_T$ and $\norm{\decomp\phi} < \infty$, then
  $\phi = \opO\decomp\phi$ is in~$\setD^\infty$ and $\phi$ has nonpositive
  Schwarzian derivative.
\end{proposition}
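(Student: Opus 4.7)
The plan is to verify the statement for finite decompositions (where everything is routine) and then bootstrap to countable $T$ by exploiting the very rigid structure of pure maps.

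First, a direct computation using the explicit formula \eqnref{pure} gives
\[
\opN\mu_s(x) = \frac{r_s(\alpha-1)}{1+r_sx}, \qquad r_s = \exp\{s/(\alpha-1)\}-1 > -1,
\]
a Möbius function, so that $\mu_s \in \setD^\infty$ and
\[
\opD^k\opN\mu_s(x) = (-1)^k k!\, \frac{r_s^{k+1}(\alpha-1)}{(1+r_sx)^{k+1}}.
\]
Since $\opS(x^\alpha) = -(\alpha^2-1)/(2x^2) < 0$ on $(0,\infty)$ for $\alpha>1$, and the Schwarzian is invariant under affine pre- and post-composition, each $\mu_s$ satisfies $\opS\mu_s \leq 0$. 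For a \emph{finite} pure decomposition the proposition now follows from the cocycle identity $\opS(g\circ h) = (\opS g \circ h)(h')^2 + \opS h$, which shows that nonpositivity of $\opS$ is preserved under composition.

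For countable $T$, the key estimate is that the $\setC^k$--nonlinearity norm of a pure map is controlled by its $\setC^0$--nonlinearity norm. Explicitly, from the formula above one computes
\[
\norm{\mu_s}_k \leq C_k \norm{\mu_s},
\]
for a constant $C_k$ depending only on an upper bound for $\norm{\mu_s}$ (and hence uniform in $\tau$ once $\norm{\decomp\phi}$ is bounded, since then $r_{s_\tau}$ is bounded away from $-1$). Summing over $\tau$ gives $\sum_\tau \norm{\phi_\tau}_k \leq C_k\norm{\decomp\phi}<\infty$ for every $k$. A $\setC^k$--analogue of the Sandwich Lemma (as noted in \remref{comp-op}), applied to any enumeration $\psi_n = \opO_{<n}\decomp\phi$, then shows the partial compositions form a Cauchy sequence in the $\setC^{k-1}$--nonlinearity norm and converge to $\phi = \opO\decomp\phi$. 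Since this holds for every $k$, we get $\phi\in\setD^\infty$, and in particular $\psi_n\to\phi$ in $\setC^3$ (take $k=4$). Combined with $\opS\psi_n\leq 0$ from the finite case, this passes to the limit to give $\opS\phi\leq 0$.

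The main obstacle is the third paragraph's bootstrap, promoting the single hypothesis $\norm{\decomp\phi}<\infty$ to summability of all higher nonlinearity norms. This would fail for general decompositions in $\decomps_T$, but succeeds here because the pure maps form an essentially one-parameter family whose nonlinearity is a Möbius function of $x$, so all $\setC^k$--data is controlled by a single scalar. This is exactly the structural feature that makes $\pdecomps$ behave so much more rigidly than $\decomps$, and it is the reason why this proposition holds under the minimal hypothesis $\norm{\decomp\phi}<\infty$ rather than requiring a stronger norm bound.
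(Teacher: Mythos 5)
Your proof is correct and follows essentially the same route as the paper's: you derive the identity $\opD^k\opN\mu_s = \frac{(-1)^k k!}{(\alpha-1)^k}(\opN\mu_s)^{k+1}$ (stated slightly differently in the paper as a recursion on $\eta = \opN\mu_s$), use it to bound every $\setC^k$--nonlinearity norm of a pure map in terms of its $\setC^0$--nonlinearity norm, invoke \propref{comp-op} together with \remref{comp-op} to conclude $\phi\in\setD^\infty$, and then get $\opS\phi\leq0$ from \lemref{S-chain-rule} applied to finite compositions and a passage to the limit. The only difference is one of exposition: you spell out the uniform constant $C_k$ and the $\setC^3$--limit argument for the Schwarzian, both of which the paper leaves implicit.
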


\begin{remark}
  Note that $\norm{\decomp\phi} < \infty$ is equivalent to
  $\distortion\decomp\phi < \infty$, since
  \[
    \distortion\mu = \int_0^1 \abs[\big]{\opN\mu(x)} dx,
  \]
  for pure maps $\mu$.  Hence the norm bound can be replaced by a distortion
  bound and the above proposition still holds.
\end{remark}

\begin{proof}
  Let $\eta$ be the nonlinearity of a pure map. 
  A computation gives
  \[
    \opD^k\eta(x) = \frac{(-1)^k k!}{(\alpha-1)^k} \cdot \eta(x)^{k+1}.
  \]
  Hence, if $\eta$ is bounded then so are all of its derivatives (of course,
  the bound depends on~$k$).  Thus \propref{comp-op} shows that $\phi =
  \opO\decomp\phi$ is well-defined and $\phi \in \setD^k$, for all $k\geq2$
  (use \remref{comp-op}).

  Finally, every pure map has negative Schwarzian derivative so $\phi$ must
  have nonpositive Schwarzian deriviative, since negative Schwarzian is
  preserved under composition by \lemref{S-chain-rule}.
\end{proof}

\begin{notation}
  We put a bar over objects associated with decompositions to distinguish them
  from diffeomorphisms.  Hence $\decomp\phi$ denotes a decomposition, whereas
  $\phi$ denotes a diffeomorphism.  Similarly, $\decomps$ denotes a set of
  decompositions, whereas $\setD$ is a set of diffeomorphisms.

  Given a decomposition $\decomp\phi: T \to \setD$, we use the notation
  $\phi_\tau$ to mean $\decomp\phi(\tau)$ and we call this the diffeomorphism
  at time~$\tau$.  Moreover, when talking about $\decomp\phi$ we consistently
  write $\phi$ to denote the composed map $\opO\decomp\phi$.

  We will frequently consider the disjoint union of all decompositions instead
  of decompositions over some fixed time set~$T$ and for this reason we
  introduce the notation
  \[
    \decomps = \bigsqcup_T \decomps_T
    \quad\text{and}\quad
    \pdecomps = \bigsqcup_T \pdecomps_T.
  \]
\end{notation}


\section{Renormalization of decomposed maps} 
\label{sec:renorm-decomp}

In this section we lift the renormalization operator to the space of decomposed
Lorenz maps (i.e.\ Lorenz maps whose diffeomorphic parts are replaced with
decompositions).  We prove that renormalization contracts towards the subspace
of pure decomposed maps.  This will be used in later sections to compute the
derivative of~$\opR$ on its limit set.

\begin{definition}
  Let $T = (T_0,T_1)$ be a pair of time sets, and let $\decomps_T$ denote the
  product $\decomps_{T_0} \times \decomps_{T_1}$.  The space of decomposed
  Lorenz maps \index{Lorenz map!decomposed} $\lorenzd_T$ over~$T$ is the set
  $\uint^2 \times (0,1) \times \decomps_T$ together with structure induced from
  the Banach space $\reals^3 \times \decomps_T$ with the max norm of the
  products.
\end{definition}

\begin{definition}
  The composition operator \index{composition operator!on decomposed maps}
  induces a map $\lorenzd^3_T \to \setL^2$ which (by slight abuse of notation)
  we will also denote $\opO$.  Explicitly, if $\decomp{f} =
  (u,v,c,\decomp{\phi},\decomp{\psi}) \in \lorenzd_T$, then $f =
  \opO\decomp{f}$ is defined by $f =
  (u,v,c,\opO\decomp{\phi},\opO\decomp{\psi})$.
\end{definition}

We will now define the renormalization operator on the space of decomposed
Lorenz maps.  Formally, the definition is identical to the definition of the
renormalization operator on Lorenz maps.  To illustrate this, let $f =
\opO\decomp{f}$ be renormalizable.  Then, by \lemref{tuple-renorm}, $\opR f =
(u',v',c',\phi',\psi')$, where
\begin{equation} \label{eq:R-uvc}
  u'= \abs{Q(L)}/\abs{U},\qquad v' = \abs{Q(R)}/\abs{V},\qquad
  c' = \abs{L}/\abs{C},
\end{equation}
$\phi' = \opZ(f^a \circ \phi; U)$ and $\psi' = \opZ(f^b \circ \psi; V)$.  Zoom
operators satisfy
\[
  \opZ(g \circ h; I) = \opZ(g; h(I)) \circ \opZ(h; I),
\]
so we can write
\begin{align*}
  \phi' &= \opZ(\psi; Q(U_a)) \circ \opZ(Q; U_a) \circ \cdots \circ
    \opZ(\psi; Q(U_1)) \circ \opZ(Q; U_1) \circ \opZ(\phi; U), \\
  \psi' &= \opZ(\phi; Q(V_b)) \circ \opZ(Q; V_b) \circ \cdots \circ
    \opZ(\phi; Q(V_1)) \circ \opZ(Q; V_1) \circ \opZ(\psi; V).
\end{align*}

\begin{definition}
  \index{renormalization operator!on decomposed maps}
  Define $\opR\decomp{f} = (u',v',c',\decomp{\phi}',\decomp{\psi}')$, where
  $u'$, $v'$, $c'$ are given by~\eqnref{R-uvc} and
  \begin{align*}
    \decomp{\phi}' &= \opZ(\decomp{\phi}; U) \du \opZ(Q; U_1) \du
      \opZ(\decomp{\psi}; Q(U_1)) \du \cdots \du \opZ(Q; U_a) \du
      \opZ(\decomp{\psi}; Q(U_a)), \\
    \decomp{\psi}' &= \opZ(\decomp{\psi}; V) \du \opZ(Q; V_1) \du
      \opZ(\decomp{\phi}; Q(V_1)) \du \cdots \du \opZ(Q; V_b) \du
      \opZ(\decomp{\phi}; Q(V_b)),
  \end{align*}
  where $\opZ(Q;\cdot)$ is now interpreted as a decomposition over a singleton
  time set.  See \figref{R-decomp} for an illustration of the action of~$\opR$.
\end{definition}

\begin{figure}
  \begin{center}
    \includegraphics{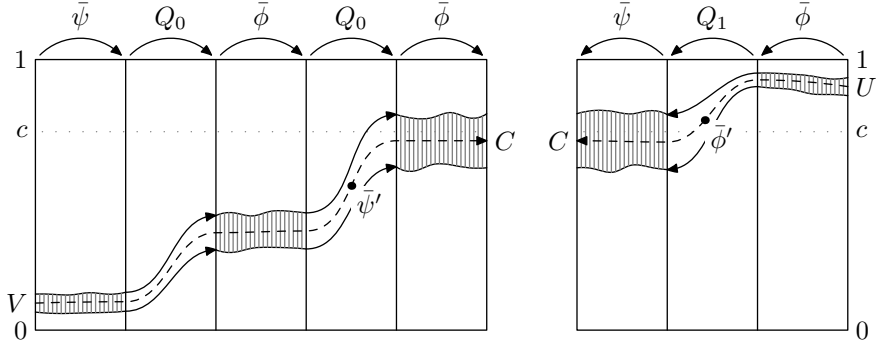}
  \end{center}
  \caption{Illustration of the renormalization operator acting on decomposed
  Lorenz maps.  First the decompositions are `glued' to each other with $Q$
  according to the type of renormalization, here the type is $(01,100)$.  Then
  the interval $C$ is pulled back, creating
  the shaded areas in the picture.  The maps following the dashed arrows from
  $U$ to~$C$ and from $V$ to~$C$ represent the new decompositions before
  rescaling.} \label{fig:R-decomp}
\end{figure}

\begin{definition}
  The domain of~$\opR$ on decomposed Lorenz maps is contained in the disjoint
  union $\lorenzd = \bigsqcup_T \lorenzd_T$ over all time sets~$T$.  Just as
  before we let $\lorenzd_\omega$ denote all $\omega$--renormalizable
  maps in $\lorenzd$; $\lorenzd_{\rtype}$ denotes all maps in $\lorenzd$ such
  that $\opR^i\decomp{f} \in \lorenzd_{\omega_i}$, where $\rtype =
  (\omega_0,\omega_1,\dotsc)$; and $\lorenzd_\Omega = \bigcup_{\omega \in
  \Omega} \lorenzd_\omega$.
\end{definition}

\begin{remark}
  Note that $\opR$ takes the renormalizable maps of $\lorenzd_T$ into
  $\lorenzd_{T'}$, where $T' \neq T$ in general.  This is the reason why we
  have to work with the disjoint union $\bigsqcup_T \lorenzd_T$.
\end{remark}

\begin{lemma}
  The composition operator is a semi-conjugacy.  That is, the following square
  commutes
  \[
    \begin{CD}
      \bigcup \lorenzd^3_\omega @>\opR>> \lorenzd^3 \\
      @VOVV             @VVOV \\
      \bigcup \setL^2_\omega @>\opR>> \setL^2
    \end{CD}
  \]
  and $\opO$ is surjective.
\end{lemma}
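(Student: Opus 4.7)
The plan is to verify the two assertions separately. For commutativity, fix $\decomp f \in \lorenzd^3_\omega$ and set $f = \opO\decomp f$. The scalar components $(u',v',c')$ of $\opR\decomp f$ and of $\opR f$ are given by the same formula \eqnref{R-uvc}, which depends only on the geometric data $L,R,C,U,V$ of $f$ and not on any particular decomposition; so it suffices to match the two diffeomorphic parts.

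For the diffeomorphic parts I would lean on two compatibility identities. First, $\opO$ turns the sum of decompositions into composition of maps, $\opO(\decomp\alpha \du \decomp\beta) = \opO\decomp\beta \circ \opO\decomp\alpha$, which is immediate from the definition of $\du$. Second, the composition operator commutes with the zoom operator in the sense that $\opO\opZ(\decomp\gamma;I) = \opZ(\opO\decomp\gamma;I)$. The latter follows by induction on the length of $\decomp\gamma$ from the zoom composition identity $\opZ(g\circ h;I) = \opZ(g;h(I))\circ\opZ(h;I)$ recorded in the proof of \lemref{tuple-renorm}, and extends to countable time sets by the continuity of $\opO$ in \propref{comp-op}. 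Applying these two identities to the definition of $\decomp\phi'$ yields
\begin{equation*}
  \opO\decomp\phi' = \opZ(\psi;Q(U_a))\circ \opZ(Q;U_a)\circ \cdots \circ \opZ(\psi;Q(U_1))\circ \opZ(Q;U_1)\circ \opZ(\phi;U).
\end{equation*}
Since $f_1 = \psi\circ Q$ and $U_{i+1} = f_1(U_i)$ along the cycle of renormalization, iterating the zoom composition identity one more time collapses the right-hand side to $\opZ(f_1^a\circ\phi;U)$. By \lemref{tuple-renorm} this is precisely $\phi'(\opR f)$. A symmetric calculation gives $\opO\decomp\psi' = \psi'(\opR f)$.

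For surjectivity, given $f = (u,v,c,\phi,\psi) \in \setL^2$, take singleton time sets $T_0 = T_1 = \{0\}$ and set $\decomp\phi(0) = \phi$, $\decomp\psi(0) = \psi$; then $\decomp f = (u,v,c,\decomp\phi,\decomp\psi)$ satisfies $\opO\decomp f = f$. A minor caveat is that the singleton construction requires $\phi, \psi \in \setD^3$; if one insists on reaching all of $\setL^2$, one argues by density of $\setD^3$ in $\setD^2$, or simply observes that the maps of interest (infinitely renormalizable with negative Schwarzian derivative) are already smoother than $\setL^2$. The main obstacle is the noncommutative bookkeeping in the commutativity step — one must track the alternating order of $Q$, $\psi$, $\phi$ carefully — but once the two compatibility identities for $\opO$ are in hand the calculation is essentially mechanical.
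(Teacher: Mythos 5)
Your proof is correct and takes essentially the same approach as the paper, which simply declares commutativity to hold ``by definition'' --- the decomposed $\opR$ is constructed precisely so that $\opO$ intertwines it with the ordinary $\opR$ --- and uses the same (near-)singleton right-inverse construction for surjectivity. Your two $\opO$-compatibility identities (turning $\du$ into composition, and $\opO\circ\opZ = \opZ\circ\opO$) are exactly the unstated verification behind the paper's ``by definition,'' and your regularity caveat ($\setD^3$ vs.\ $\setD^2$) is a legitimate point that the paper's right inverse $\Gamma_\tau$ also quietly glosses over.
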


\begin{remark}
  This lemma shows that we can use the composition operator to transfer results
  about decomposed Lorenz maps to Lorenz maps.
\end{remark}

\begin{proof}
  The square commutes by definition so let us focus on the surjectivity.  Fix
  $\tau \in T$ and define a map $\Gamma_\tau: \setD \to \decomps_T$ by sending
  $\phi \in \setD$ to the decomposition $\decomp{\phi}: T \to \setD$ defined by
  \[
    \decomp{\phi}(t) = \begin{cases}
      \phi, &\text{if $t = \tau$,} \\
      \id, &\text{otherwise.}

    \end{cases}
  \]
  Then $\opO \circ \Gamma_\tau = \id$ which proves that $\opO$ is surjective
  on~$\decomps_T$ and hence it is also surjective on~$\lorenzd_T$.
\end{proof}

The main result for the renormalization operator on Lorenz maps was the
existence of the invariant set~$\setK$ for types in the set~$\Omega$, see
\secref{inv-set}.  It should come as no surprise that $\setK$ and~$\Omega$ will
be central to our discussion on decomposed maps as well.  The first result in
this direction is the following.

\begin{proposition} \label{prop:conv-to-Q}
  If $\decomp{f} \in \lorenzd^3_{\rtype}$ is infinitely renormalizable with
  $\rtype \in \Omega^\nats$, if $\norm{\decomp\phi} \leq K$ and
  $\norm{\decomp\psi} \leq K$, and if $\opO\decomp{f} \in \setK \cap \setLS$,
  then the decompositions of~$\opR^n \decomp{f}$ are uniformly contracted
  towards the subset of pure decompositions.
\end{proposition}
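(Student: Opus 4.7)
The plan is to show that the distances $d_n := \distance(\decomp\phi_n, \pdecomps)$ and $e_n := \distance(\decomp\psi_n, \pdecomps)$, where $\decomp\phi_n, \decomp\psi_n$ are the two decompositions constituting $\opR^n \decomp f$, decay geometrically at a rate depending only on the data $K$, $\alpha$, $\lbb$. The key qualitative input is that each branch of $Q$ (see \eqnref{standard}) is a pure map in the sense of \defref{pdecomps}, and purity is preserved under zooming; hence in the recursive formula for $\opR \decomp f$, every inserted piece of the form $\opZ(Q; U_i^{(n)})$ or $\opZ(Q; V_j^{(n)})$ contributes nothing to the distance to pure.

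Applying \lemref{zoom-contract-decomp} to the remaining (non-pure) pieces and using the dynamical identities $\phi_n(U^{(n)}) = U_1^{(n)}$, $\psi_n(Q(U_i^{(n)})) = U_{i+1}^{(n)}$, $\psi_n(V^{(n)}) = V_1^{(n)}$, and $\phi_n(Q(V_j^{(n)})) = V_{j+1}^{(n)}$, one obtains the system
\begin{align*}
  d_{n+1} &\leq e^{P_n}\abs{U_1^{(n)}}\,d_n \;+\; e^{R_n}\Big(\sum_{i=1}^{a_n}\abs{U_{i+1}^{(n)}}\Big)\,e_n, \\
  e_{n+1} &\leq e^{R_n}\abs{V_1^{(n)}}\,e_n \;+\; e^{P_n}\Big(\sum_{j=1}^{b_n}\abs{V_{j+1}^{(n)}}\Big)\,d_n,
\end{align*}
where $P_n := \norm{\decomp\phi_n}$ and $R_n := \norm{\decomp\psi_n}$. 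A parallel estimate for the norms themselves via \lemref{dzoom-contract}, together with the observation that the inserted $Q$ pieces contribute $\sum_i e^{\norm{Q}} \abs{U_i^{(n)}} \norm{Q}$ and similarly on the other side, yields by induction a uniform bound $P_n, R_n \leq K^* = K^*(K,\alpha,\lbb)$.

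The crucial step is estimating the cyclic sums. By \propref{Tbranches} the intervals $U_i^{(n)}$ (resp. $V_j^{(n)}$) are pairwise disjoint, and for monotone combinatorics $U_1^{(n)}, \dotsc, U_{a_n}^{(n)} \subset (c_n,1)$ while $V_1^{(n)}, \dotsc, V_{b_n}^{(n)} \subset (0,c_n)$. Combining the elementary bounds $\abs{Df_1^{-1}} \leq \bigoh(\eps_n/\alpha)$ and $\abs{Df_0} \leq \bigoh(\alpha)$ with the derivative estimates of \lemref{derivs} (applied to $\abs{U_1^{(n)}}$ and $\abs{V_1^{(n)}}$), the resulting geometric sums give $\sum_i \abs{U_{i+1}^{(n)}} = \bigoh(\abs{C_n})$ and $\sum_j \abs{V_{j+1}^{(n)}} = \bigoh\big(\abs{C_n}/\eps_n\big)$. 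The a priori bound $\abs{C_n} = \bigoh(\eps_n^{1+\sigma/(\alpha-1)})$ from \eqnref{ubL} then reduces both sums to $\bigoh(\eps_n^{\sigma/(\alpha-1)})$, while $\abs{U_1^{(n)}}$ and $\abs{V_1^{(n)}}$ are exponentially small in $\lbb$. Choosing $\lbb$ large ensures $e^{K^*} \bigoh(\eps_n^{\sigma/(\alpha-1)}) \leq \lambda < 1$ uniformly, so setting $M_n := \max(d_n,e_n)$ yields $M_{n+1} \leq \lambda M_n$ and hence the claimed uniform geometric convergence to $\pdecomps$.

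The main obstacle is the second cyclic sum $\sum_j \abs{V_{j+1}^{(n)}}$: the naive disjointness bound gives only $\leq c_n \approx 1$, yielding no contraction on the $\decomp\psi$-side. The resolution is dynamical, namely to use the uniform expansion $\abs{Df_0} \leq \bigoh(\alpha)$ on $(0,c)$ together with \lemref{derivs} to show that the $V_j^{(n)}$ grow geometrically along the forward orbit so that their total length is of order $\abs{C_n}/\eps_n$, which is made small by the a priori upper bound on $\abs{C_n}$.
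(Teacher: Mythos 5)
Your proof follows the same route as the paper's: observe that the $\opZ(Q;\cdot)$ pieces inserted by the renormalization have zero distance to $\pures$, apply \lemref{zoom-contract-decomp} to the remaining pieces to obtain a coupled recursion for the distances to $\pdecomps$, and then argue that the cyclic sums $\sum\abs{U_i}$, $\sum\abs{V_j}$ are small enough for the recursion to contract. The paper's proof is in fact much terser: it simply asserts the smallness of the cyclic sums as a consequence of \secref{inv-set} and does not bother to discuss norm invariance along the orbit (which is deferred to \propref{dinvset}). Your extra work here is appropriate, since the contraction coefficient genuinely depends on having a uniform in~$n$ bound on the decomposition norms and on the cyclic sums.

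Two points where your details are shaky, though neither sinks the argument. First, your derivation of the geometric-sum bounds $\sum_i\abs{U_{i+1}}=\bigoh(\abs{C})$ and $\sum_j\abs{V_{j+1}}=\bigoh(\abs{C}/\eps)$ leans on the bound $\abs{\opD f_0}\leq\bigoh(\alpha)$, but this is only an upper bound and does not keep the forward/backward multiplier uniformly away from~$1$. Near the boundary between the expanding region of $f_0$ (close to~$0$) and the contracting region (close to~$c$), $\opD f_0$ is close to~$1$, so the ``geometric sum'' estimate needs more care to be made rigorous (an explicit version of what you need for the $V_j$ is essentially \eqnref{f0invorb} in the proof of \lemref{derivs}). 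Second, your claimed $\sum_i\abs{U_{i+1}}=\bigoh(\abs{C})$ is stronger than needed; since $a$ is bounded (by \eqnref{Omega}) and all $U_i\subset(c,1)$, the disjointness bound $\sum\abs{U_i}\leq a\eps$ already suffices. The cleanest way to bound both sums, and the one the paper actually employs (inside the proof of \propref{dinvset}), is via the Koebe/Schwarzian cross-ratio argument: each $U_i$, $V_j$ is a pullback of $C$, negative Schwarzian contracts the cross-ratio, and the cross-ratio of $C$ inside $(\rcv,\lcv)$ is $\bigoh(\eps^t)$ for some $t>0$ by \propref{dist}; since $b<\lbb$ and $\lbb\eps^t\to0$, the sums are uniformly small. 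This sidesteps the delicate multiplier analysis entirely and is worth adopting.
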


\begin{proof}
  From the definition of the renormalization operator (and using the
  fact that $\distance(\opZ(Q;I), \pures) = 0$) we get
  \[
    \distance(\decomp{\phi}', \pdecomps) =
    \sum_{i=1}^a \distance(\opZ(\decomp{\psi}; Q(U_i)), \pdecomps)
    + \distance(\opZ(\decomp{\phi}; U), \pdecomps).
  \]
  Now apply \lemref{zoom-contract-decomp} to get
  \[
    \distance(\decomp{\phi}', \pdecomps) \leq
    \e^{\norm{\decomp\psi}}
    \sum_{i=2}^{a+1} \abs{U_i} \distance(\decomp{\psi},\pdecomps) +
    \e^{\norm{\decomp\phi}} \abs{U_1} \distance(\decomp{\phi},\pdecomps).
  \]

  From \secref{inv-set} we get that $\sum \abs{U_i}$ and $\sum \abs{V_i}$ may
  be chosen arbitrarily small (by choosing the return times sufficiently
  large).  Now make these sums small compared with
  $\max\{\e^{\norm{\decomp\phi}},\e^{\norm{\decomp\psi}}\}$ to see that there
  exists $\mu < 1$ (only depending on~$K$) such that
  \[
    \distance(\decomp{\phi}', \pdecomps) + \distance(\decomp{\psi}', \pdecomps)
    \leq \mu \left[ \distance(\decomp{\phi}, \pdecomps) +
    \distance(\decomp{\psi}, \pdecomps) \right]. \qedhere
  \]
\end{proof}


Our main goal is to understand the limit set of the renormalization operator
and the above proposition will be central to this discussion.

\begin{definition} \label{def:attr}
  The set of forward limits of $\opR$ restricted to types in $\Omega$ is
  defined by
  \[
    \attr_\Omega = \bigcap_{n \geq 1} \opR^n \big(
    \bigcup_{\rtype \in \Omega^n} \rdomd_{\rtype} \big).
  \]
\end{definition}

\begin{remark}
  In other words, $\attr_\Omega$ consists of all maps $\decomp{f}$ which have a
  complete past:
  \[
    \decomp{f} = \opR_{\omega_{-1}} \decomp{f}_{-1}, \quad \decomp{f}_{-1} =
    \opR_{\omega_{-2}} \decomp{f}_{-2}, \quad \dots, \qquad \omega_i \in \Omega.
  \]
  This also describes how we can associate each $\decomp{f} \in \attr_\Omega$
  with a left infinite sequence $(\dots,\omega_{-2},\omega_{-1})$.
\end{remark}

\begin{proposition} \label{prop:attr-pure}
  $\attr_\Omega$ is contained in the subset of pure decomposed Lorenz maps.
\end{proposition}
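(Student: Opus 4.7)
Let $\decomp f \in \attr_\Omega$. By the definition of $\attr_\Omega$, for each $n \geq 1$ there is an $n$-fold preimage $\decomp f_{-n}$ with $\opR^n \decomp f_{-n} = \decomp f$ such that all $n$ intermediate renormalizations have types in $\Omega$. Writing $\decomp f_{-k} = \opR^{n-k} \decomp f_{-n}$ for $0 \leq k \leq n$, we obtain a finite orbit $\decomp f_{-n} \to \dotsb \to \decomp f_0 = \decomp f$ on which every step is an $\omega$--renormalization with $\omega \in \Omega$. The goal is to show that the distance $D_0 := \distance(\decomp\phi,\pdecomps) + \distance(\decomp\psi,\pdecomps)$ vanishes; then $\decomp\phi$ and $\decomp\psi$ lie in $\pdecomps$, which is exactly the statement that $\decomp f$ is pure.

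The central tool is \propref{conv-to-Q}, whose proof actually gives a one-step contraction: if its hypotheses hold at $\decomp f_{-k-1}$, then $D_k \leq \mu D_{k+1}$ for some $\mu \in (0,1)$ depending only on an a priori norm bound $K$. Iterating along the orbit yields
\[
  D_0 \leq \mu^n D_n.
\]
Since $D_0$ does not depend on $n$, in order to conclude $D_0 = 0$ it suffices to exhibit preimages $\decomp f_{-n}$ for which $D_n$ remains bounded as $n \to \infty$.

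Verifying the hypotheses of \propref{conv-to-Q} along the orbit is the technical part. Choose $\decomp f_{-n}$ so that $\opO \decomp f_{-n} \in \setK \cap \setLS$; then \thmref{K-inv2} (invariance of $\setK$ under renormalizations of type in $\Omega$) propagates this condition to every $\opO\decomp f_{-k}$ with $k \leq n$. For the norm bound on the decompositions, I exploit the explicit formula for $\opR$ on $\lorenzd$: the new decompositions $\decomp\phi',\decomp\psi'$ are concatenations of $\opZ(Q;\cdot)$--pieces, which are pure and of uniformly bounded norm, together with zoom-contracted copies of the old $\decomp\phi,\decomp\psi$. Applying \lemref{dzoom-contract} together with the smallness of $\sum \abs{U_i}$ and $\sum \abs{V_j}$ coming from \secref{inv-set} shows that the decomposition norms do not blow up under one renormalization, so a common bound $K$ can be maintained uniformly along the entire orbit.

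The main obstacle is choosing the preimages $\decomp f_{-n}$ so that $D_n$ is bounded uniformly in $n$, since nothing in the definition of $\attr_\Omega$ a priori excludes pathological preimages whose decompositions are very far from $\pdecomps$. I plan to resolve this by restricting, for each $n$, to preimages whose underlying Lorenz map lies in $\setK \cap \setLS$ and whose decomposition norm is at most $K$; the existence of such preimages follows by combining \thmref{K-inv2} (which realizes the constraint $\opO \decomp f_{-n} \in \setK$) with the relative compactness of $\setK$ in $\setL^0$ (\propref{K-relcpt}) and the observation that the $\opZ(Q;\cdot)$--pieces generated by the renormalization formula force decompositions in the image of $\opR$ to lie in a fixed bounded set. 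With this uniform bound on $D_n$ in hand, letting $n \to \infty$ in $D_0 \leq \mu^n D_n$ gives $D_0 = 0$ and hence $\decomp f \in \pdecomps$.
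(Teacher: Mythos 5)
Your strategy mirrors the paper's: \propref{conv-to-Q} supplies a one-step contraction $D_k \leq \mu D_{k+1}$, which you iterate along an $n$-step preimage orbit to get $D_0 \leq \mu^n D_n$ and then let $n \to \infty$. You also correctly flag the delicate point: for this to work you need preimages on which the hypotheses of \propref{conv-to-Q} can be verified (a uniform norm bound $K$ on the decompositions, together with $\opO\decomp f_{-n} \in \setK\cap\setLS$), and the bare definition of $\attr_\Omega$ does not hand you such preimages for free. The paper's one-line proof simply asserts ``direct consequence,'' implicitly treating the preimages as lying in the norm-bounded set on which the contraction estimate actually holds.

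Where your proposal does not hold up is the claimed resolution of this point. The assertion that ``the $\opZ(Q;\cdot)$--pieces \ldots\ force decompositions in the image of $\opR$ to lie in a fixed bounded set'' is not correct: the estimate at the start of the proof of \propref{dinvset} gives
\[
\norm{\decomp\phi'} \leq \e^{\norm{\decomp\phi}}\abs{U_1}\norm{\decomp\phi}
  + \e^{\norm{\decomp\psi}}\textstyle\sum_i\abs{U_i}\norm{\decomp\psi}
  + \textstyle\sum_i\norm{\opZ(Q;U_i)},
\]
so the post-renormalization norm is controlled only when the pre-renormalization norm already is; the factor $\e^{\norm{\decomp\phi}}$ swamps any a priori bound coming from the pure $\opZ(Q;\cdot)$ pieces alone. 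Likewise, \thmref{K-inv2} and \propref{K-relcpt} give forward invariance and relative compactness, not existence of norm-bounded preimages. What is actually needed --- and what both your sketch and the paper's terse proof leave implicit --- is to take the intersection defining $\attr_\Omega$ over preimages in the norm-bounded class (preimages with $\opO\decomp f_{-n}\in\setK\cap\setLS$ and $\norm{\decomp\phi},\norm{\decomp\psi}\le K$), so that \propref{conv-to-Q} applies uniformly along the whole backward orbit. Once that restriction is in place your argument goes through; without it, ``choose preimages with $D_n$ bounded'' remains a step that is asserted rather than proved.
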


\begin{proof}
  This is a direct consequence of \propref{conv-to-Q}.
\end{proof}

Since $\attr_\Omega$ is contained in the set of pure decomposed maps we will
restrict our attention to this subset from now on.  This is extremely
convenient since pure decompositions satisfy some very strong properties, see
\propref{pure-S-neg}, and it will allow us to compute the derivative at all
points in~$\attr_\Omega$ in \secref{the-derivative}.

Next we would like to lift the invariant set~$\setK$ to the decomposed maps,
but simply taking the preimage $\opO\inv(\setK)$ will yield a set which is too
large\footnote{Any preimage under $\opO$ contains decompositions whose norm is
arbitrarily large.  As an example of how things can go wrong, fix $K > 0$ and
consider $\decomp\phi: \nats \to \setD$ defined by $\phi_{n+1} = \phi\inv_n$
and $\norm{\phi_n} = K$ for every $n$.  Then $\phi_{2n-1} \circ \dotsb \circ
\phi_0 = \id$ for every~$n$, but $\sum\norm{\phi_n} = \infty$.} so we will have
to be a bit careful.

\begin{definition} \label{def:dinvset}
  Let $\delta$, $\setK$ and $\Omega$ be the same as in \defref{Omega-K} and
  let
  \[
    \lb\eps = \min\{\eps(g)\mid g \in \setK\},\quad
    \ub\eps = \max\{\eps(g)\mid g \in \setK\}.
  \]
  Define
  \[
    \dinvset = \{ (u,v,c,\decomp\phi,\decomp\psi) \mid
    \lb{\eps} \leq 1-c \leq \ub{\eps},\;
    \distortion{\decomp\phi} \leq \delta,\;
    \distortion{\decomp\psi} \leq \delta,\;
    \decomp\phi,\decomp\psi \in \pdecomps \},
  \]
  Note that $\dinvset$ is defined analogously to~$\setK$ but with the additional
  assumption that the decompositions are pure.
\end{definition}

\begin{proposition} \label{prop:dinvset}
  If $\decomp f \in \lorenzd_\Omega$ and $1 - \rcv(\opR f) \geq
  \lambda > 0$ for some constant $\lambda$ (not depending on $\lbb$), then
  \[
    f \in \dinvset \implies \opR f \in \dinvset,
  \]
  for $\lbb$ large enough.
\end{proposition}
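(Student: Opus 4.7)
The plan is to reduce \propref{dinvset} to \thmref{K-inv} via the semi-conjugacy $\opO\circ\opR=\opR\circ\opO$, verify purity of the renormalized decompositions directly from the defining formula, and then carefully bound the sum of distortions of the pieces of $\opR\decomp{f}$.

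First I would observe that $f:=\opO\decomp{f}$ lies in $\setK\cap\setLS$: \propref{pure-S-neg} shows that $\opO\decomp{\phi}$ and $\opO\decomp{\psi}$ have nonpositive Schwarzian, and since $Q$ has negative Schwarzian on each branch, $f\in\setLS$; the distortion of a composition never exceeds the sum of the distortions of its pieces, so $\distortion\opO\decomp{\phi}\leq\distortion\decomp{\phi}\leq\delta$, which places $f$ in $\setK$. The semi-conjugacy then gives $\opO(\opR\decomp{f})=\opR f$, so \thmref{K-inv} (together with the hypothesis $1-\rcv(\opR f)\geq\lambda$) yields $\opR f\in\setK$, and hence $\lb{\eps}\leq 1-c(\opR\decomp{f})\leq\ub{\eps}$. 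Purity of $\decomp{\phi}',\decomp{\psi}'$ is immediate from the defining formula: each piece is either a zoom $\opZ(\phi_\tau;U_\tau)$ of a pure map, or a zoom $\opZ(Q;U_i)$ or $\opZ(Q;V_j)$ of $Q$ restricted to an interval away from the critical point; in both cases the result is an affine image of $x|x|^{\alpha-1}$ and so lies in $\pures$.

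The main work is bounding the decomposition distortion
\[
  \distortion\decomp{\phi}' = \distortion\opZ(\decomp{\phi};U)
  + \sum_{i=1}^{a}\distortion\opZ(Q;U_i)
  + \sum_{i=1}^{a}\distortion\opZ(\decomp{\psi};Q(U_i))
\]
by $\delta$. For the outer sums I would use that for a pure map $\mu_s$ of small signed distortion, $\distortion\opZ(\mu_s;I)=\int_I|\opN\mu_s|\,dy\leq K|I||s|$; summing over each decomposition and using $|U_\tau|\leq\e^\delta|U|$ from \lemref{finite-decomp-prop}, this yields $\distortion\opZ(\decomp{\phi};U)\leq K'|U|\delta$ and $\sum_i\distortion\opZ(\decomp{\psi};Q(U_i))\leq K'\delta\sum_i|Q(U_i)|$. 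Both $|U|$ and $\sum_i|Q(U_i)|$ are $\littleoh(1)$ for $\lbb$ large by \lemref{derivs} and the pairwise disjointness of the cycles, so these contributions are $\littleoh(\delta)$.

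The hard part is controlling $\sum_i\distortion\opZ(Q;U_i)=\int_{\bigcup_i U_i}|\opN Q_1|\,dy$. Since $\opN Q_1>0$ on $(c,1)$, this sum is exactly the $Q_1$-contribution to $\int_{\hat U}\opN(f_1^a)$, where $\hat U=f_1^{-a}(C)$; the $\psi$-contribution has absolute value at most $\int_{\bigcup_i Q_1(U_i)}|\opN\psi|\leq K\delta\sum_i|Q(U_i)|=\littleoh(\delta)$. The Koebe-based argument of \propref{dist} gives $\distortion f_1^a|_{\hat U}=\bigoh(\eps^t)$ with $t=\sigma/(\alpha-1)>0$, so combining these two observations forces $\sum_i\distortion\opZ(Q;U_i)=\bigoh(\eps^t)$. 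Since $\eps\leq\theta\alpha^{-\lbb\sigma/\alpha^2}$ by \defref{Omega-K}, we have $\eps^t\ll(1/\lbb)^2=\delta$ for $\lbb$ large, so $\distortion\decomp{\phi}'\leq\delta$; the bound on $\distortion\decomp{\psi}'$ is obtained analogously by interchanging the roles of the two branches.
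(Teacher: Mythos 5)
Your proof is correct and reaches the same conclusion, but it bounds the key sum $\sum_i\distortion\opZ(Q;U_i)$ by a genuinely different mechanism. The paper first proves invariance of the \emph{norm}: it uses the identity $\sum_i\norm{\opZ(Q;U_i)}=(\alpha-1)\sum_i|U_i|/\distance(c,U_i)$, bounds each term by the cross-ratio of $U_i$ inside $[c,1]$, and then invokes cross-ratio contraction for the negative-Schwarzian pull-back branch to control this by the cross-ratio $\chi$ of $C$ inside $[\rcv,1]$; finally it transfers from norm to distortion via the identity $\norm{\gamma}=(\alpha-1)(\exp\{\distortion\gamma/(\alpha-1)\}-1)$ valid for pure $\gamma$. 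You work with distortion directly and instead use the chain rule for nonlinearities to identify $\sum_i\distortion\opZ(Q;U_i)$ as the $Q_1$-part of $\int_{\hat U}\opN(f_1^a)$, bounding the full integral by $\distortion(f_1^a|_{\hat U})$ (the same Koebe estimate as in \propref{dist}) and the $\psi$-part separately. Both routes rest on the same Koebe space, namely the monotone extension of the pull-back branch to $(\rcv,\lcv)$, but yours trades the geometric cross-ratio contraction for chain-rule bookkeeping; this is a legitimate and arguably more transparent alternative, at the cost of having to isolate and separately dominate the $\psi$-contribution.

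One point where you are too quick: the assertion that $\sum_i|Q(U_i)|=\littleoh(1)$ ``by \lemref{derivs} and pairwise disjointness'', and especially the closing sentence that the $\decomp\psi'$ bound follows ``analogously.'' For $\decomp\phi'$ the sum has only $a$ terms with $a$ bounded (by \eqnref{Omega}), so the exponential contraction $\opD f_1^{-j}\asymp(\eps/\alpha)^j$ from \lemref{derivs} does settle it. But for $\decomp\psi'$ the analogous sum $\sum_{j=1}^b|Q(V_j)|$ has $b\asymp\lbb$ terms, disjointness alone only gives $\bigoh(1)$, and \lemref{derivs} only states the bound for the full $\opD f_0^{-b}(c)$. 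You need an extra ingredient here: either the cross-ratio contraction of the paper (which gives $\sum_j|V_j|\leq\sum_j|V_j|/\distance(c,V_j)\leq b\chi\to 0$ since $\distance(c,V_j)<1$), or a careful geometric summation of the intermediate derivative estimates from the \emph{proof} of \lemref{derivs}. This is fillable, but as written the $\decomp\psi'$ half does not literally follow by ``interchanging roles.''
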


\begin{proof}
  Let $f = \opO\decomp f = (u,v,c,\phi,\psi)$.  Note first of all that
  $\distortion \decomp\phi \leq \delta$ implies that $\distortion \phi \leq
  \delta$, since $\distortion$ satisfies the subadditivity property
  \[
    \distortion \gamma_2 \circ \gamma_1 \leq
    \distortion \gamma_1 + \distortion \gamma_2.
  \]
  Hence, $f$ automatically satisfies the conditions of \thmref{K-inv}, so all
  we need to prove is that $\distortion \decomp\phi' \leq \delta$ and
  $\distortion \decomp\psi' \leq \delta$.  This is the reason why we define
  $\dinvset$ by a distortion bound instead of a norm bound.  Note that $f$ has
  nonpositive Schwarzian since the decompositions are pure,
  see~\propref{pure-S-neg}.

  We will first show that the norm is invariant, then we transfer this
  invariance to the distortion.  The reason why we consider the norm first is
  because it satisfies the contraction property in \lemref{dzoom-contract}
  which makes it easier to work with.

  From the definition of~$\opR$ and \lemref{dzoom-contract} we get
  \begin{align*}
    \norm{\decomp\phi'} &=
      \norm{\opZ(\decomp\phi;U)} + \sum_{i=1}^a
      \norm{\opZ(\decomp\psi;Q(U_i))} + \norm{\opZ(Q;U_i)} \\
    &\leq \e^{\norm{\decomp\phi}} \norm{\decomp\phi} \cdot \abs{U_1}
      + \e^{\norm{\decomp\psi}} \norm{\decomp\psi} \sum_{i=2}^{a+1} \abs{U_i}
      + \sum_{i=1}^a \norm{\opZ(Q;U_i)}.
  \end{align*}
  The norm of a pure map is determined by how far away its domain is from the
  critical point.  More precisely, we have that
  \[
    \sum_{i=1}^a \norm{\opZ(Q;U_i)} =
    (\alpha-1) \sum_{i=1}^a \frac{\abs{U_i}}{\distance(c,U_i)}.
  \]
  Each term in this sum is bounded by the cross-ratio of $U_i$ inside $[c,1]$.
  Since maps with positive Schwarzian contract cross-ratio, since $\opS f < 0$,
  and since $U_i$ is a pull-back of~$C$ under an iterate of~$f$, this
  cross-ratio is bounded by the cross-ratio $\chi$ of~$C$ inside $[\rcv,1]$.
  Thus, the above sum is bounded by $a (\alpha-1) \chi$.  From the proof of
  \thmref{K-inv} we know that $\chi$ is of the order $\eps^t$ for some $t>0$.
  Since $a < \lbb$ and $\lbb \eps^t \to 0$ we see that the above sum has a
  uniform bound which tends to zero as $\lbb \to \infty$.

  A similar argument for $\decomp\psi'$ gives
  \begin{align*}
    \norm{\decomp\phi'} + \norm{\decomp\psi'} &\leq
      \left(\norm{\decomp\phi} + \norm{\decomp\psi}\right)
      \exp\left\{\norm{\decomp\phi} + \norm{\decomp\psi}\right\}
      \left(\sum \abs{U_i} + \sum \abs{V_i}\right) + m \\
    &= k \left(\norm{\decomp\phi} + \norm{\decomp\psi}\right) + m,
  \end{align*}
  where $m = \sum \norm{\opZ(Q;U_i)} + \sum \norm{\opZ(Q;V_i)}$.
  Hence
  \[
    \norm{\decomp\phi} + \norm{\decomp\psi} \leq \delta
    \implies
    \norm{\decomp\phi'} + \norm{\decomp\psi'} \leq \delta,
    \quad\text{if $\delta \geq m/(1-k)$.}
  \]
  By \defref{Omega-K}, $\delta = (1/\lbb)^2$ and $\eps$ is of the order
  $\alpha^{-\lbb K}\!$, and by the above $m$ is of the order $\lbb\eps^t$.
  Hence $\delta \geq m/(1-k)$ for $\lbb$ large enough.

  The final observation which we use to finish the proof is that if $\gamma \in
  \pures$ then
  \[
    \norm{\gamma} = (\alpha-1) \cdot \left(
    \exp\left\{\frac{\distortion \gamma}{\alpha-1}\right\} - 1 \right).
  \]
  That is $\norm{\gamma} \approx \distortion \gamma$ for pure maps $\gamma$
  with small distortion.  This allows us to slightly modify the above
  invariance argument for the norm so that it holds for the distortion as well.
\end{proof}


\section{The derivative} 
\label{sec:the-derivative}

The tangent space of $\opR$ on the pure decomposed Lorenz maps can be written
$X \times Y$, where $X = \reals^2$ and $Y = \reals \times \ell^1 \times
\ell^1$.  The coordinates on~$X$ correspond to the $(u,v)$ coordinates
on~$\lorenzd_T$.  Let $(x,y) \in X \times Y$ denote the coordinates on the
tangent space and recall that we are using the max norm on the products.  The
derivative of $\opR$ at~$\decomp f$ is denoted
\begin{equation} \label{eq:M}
  \opD\opR_{\decomp f} = M = \begin{pmatrix}
    M_1 & M_2 \\
    M_3 & M_4
  \end{pmatrix},
\end{equation}
where $M_1: \reals^2 \to \reals^2$, $M_2: Y \to \reals^2$, $M_3: \reals^2
\to Y$ and $M_4: Y \to Y$ are bounded linear operators.

Note that the differentiability of $\opR$ follows from the calculations in this
section since they could be carried out to the second order.  However, we have
chosen to only make first order calculations since they are already quite
involved.

\begin{remark}
  The fact that the derivative on the pure decomposed maps can be written as an
  infinite matrix is one of the reasons why we restrict ourselves to the pure
  decompositions.  Deformations of pure decompositions are also easy to
  deal with since they are `monotone' in the sense that the dynamical intervals
  that define the renormalization move monotonically under such deformations.
  This makes it possible to estimate the elements of the derivative matrix.
\end{remark}

\begin{theorem} \label{thm:opnorms}
  There exist constants $k$ and~$K$ such that if $\decomp f \in \dinvset \cap
  \lorenzd_\Omega$ and $1-\rcv(\opR \decomp f) \geq \lambda$ for some $\lambda
  \in (0,1)$ (not depending on~$\decomp f$), then
  \begin{align*}
    \norm{M_1 x} &\geq k \min\{\abs{U}\inv,\abs{V}\inv\} \cdot \norm{x}, &
      \norm{M_2} &\leq K \abs{C}\inv, \\
    \norm{M_3x} &\leq K \rho'
      \left(\frac{\abs{x_1}}{\abs{U}} + \frac{\abs{x_2}}{\abs{V}}\right), &
      \norm{M_4} &\leq K \rho' \abs{C}\inv,
  \end{align*}
  where $\rho' = \max\{\eps',\distortion\decomp\phi',\distortion\decomp\psi'\}$
  and $\lbb$ is sufficiently large.
\end{theorem}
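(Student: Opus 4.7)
The plan is to compute the four blocks of $M$ by differentiating the renormalization formulas of \lemref{tuple-renorm} and the decomposition formula
\[
    \decomp\phi' = \opZ(\decomp\phi; U) \du \opZ(Q; U_1) \du
      \opZ(\decomp\psi; Q(U_1)) \du \cdots \du \opZ(Q; U_a) \du
      \opZ(\decomp\psi; Q(U_a))
\]
(and the symmetric expression for $\decomp\psi'$). I split an infinitesimal deformation of $\decomp f$ as $(\delta u,\delta v) \in X = \reals^2$ together with a tangent vector in $Y = \reals \times \ell^1 \times \ell^1$ (containing $\delta c$ and the decomposition directions), translate each first variation into a change of the dynamical intervals $L, R, U, V, C, U_i, V_j$, and invoke the a priori bounds of \secref{inv-set} and the contraction estimates of \lemref{dzoom-contract} and \lemref{zoom-contract-decomp}.

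For $M_1$ I would show that the diagonal entries $\partial u'/\partial u$ and $\partial v'/\partial v$ are bounded below by $k|U|\inv$ and $k|V|\inv$ respectively: a unit change in $u$ moves $\lcv = \phi(u)$ by a uniformly bounded amount (since $\distortion\phi \leq \delta$), which then moves $|Q(L)|$ by a comparable amount, and $u' = |Q(L)|/|U|$ has the small denominator $|U|$. The off-diagonal entries $\partial u'/\partial v$ and $\partial v'/\partial u$ come from the motion of $C$ as $(u,v)$ varies (through the fixed-point equations that determine the endpoints $l, r$ in \lemref{R-bndry}), and by \lemref{derivs} are smaller than the diagonal entries by a positive power of $\alpha^b \eps\inv$; this domination yields the claimed lower bound on $\|M_1 x\|$. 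The block $M_2$ arises from the same formula: a unit-$\ell^1$ perturbation of $\decomp\phi$, $\decomp\psi$ or $c$ changes $|Q(L)|$ by an amount of order $|C|$ (because the decompositions act inside dynamics at the $C$--scale), and dividing by $|U| \asymp |C|$ gives the uniform bound $K/|C|$.

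For $M_4$ I use that $\decomp\phi'$ is assembled from the zooms $\opZ(\decomp\phi; U)$, $\opZ(\decomp\psi; Q(U_i))$ together with the pure inserts $\opZ(Q; U_i)$. Differentiating in a decomposition direction, \lemref{dzoom-contract} and \lemref{zoom-contract-decomp} show that each non-pure zoom is contracted by a factor $|U_i|$ or $|Q(U_i)|$; summing over the cycle and using $\sum|U_i|, \sum|V_j|$ small (from \secref{inv-set}) produces the overall factor $|C|\inv$. The factor $\rho'$ reflects the fact that after renormalization the decomposition is close to the pure subspace (by \propref{conv-to-Q} and \propref{attr-pure}), so the surviving first-order sensitivity lies in the pure component whose size is controlled precisely by $\max\{\eps', \distortion\decomp\phi', \distortion\decomp\psi'\} = \rho'$. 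The bound on $M_3$ is analogous: a perturbation in $(u,v)$ shifts the cycle intervals $U_i, V_j$, which are then zoomed, and the same accounting yields $\rho'(|x_1|/|U| + |x_2|/|V|)$.

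The main obstacle is the lower bound for $M_1$, where one must rule out cancellation in $\reals^2$ between the diagonal and off-diagonal entries. This is exactly where the long-return condition encoded in $\Omega$ is indispensable: by the derivative estimates of \lemref{derivs}, the motion of $C$ (and hence of $U, V$) with respect to $(u,v)$ is smaller than the direct effect on $|Q(L)|, |Q(R)|$ by a positive power of $\eps$ or $\alpha^{-b}$, both of which are negligible in the regime $\lbb$ large. Once this quantitative domination is established, $M_1$ is essentially diagonal in the $(u,v)$ basis and the lower bound is immediate.
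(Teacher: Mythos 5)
Your high-level plan --- differentiate \lemref{tuple-renorm}, split the tangent space into $(u,v)$ and the rest, reduce $M_4$ to the zoom contraction of \lemref{dzoom-contract} and the $\rho'$--smallness near the pure subspace --- does track the paper's decomposition of the argument into Propositions~\ref{prop:M1}, \ref{prop:c-partials}, \ref{prop:M3}, \ref{prop:M2-M4} and Corollaries~\ref{cor:detM1}, \ref{cor:expansion}. But the step you flag as the crux, the lower bound on $\norm{M_1 x}$, contains a concrete error. You claim $M_1$ is diagonally dominant because the off-diagonal entries $\partial_v u'$, $\partial_u v'$ are smaller than the diagonal by a positive power of $\alpha^b\eps\inv$. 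This is false. Writing $M_1$ in the paper's scaled form
\[
  M_1 = \begin{pmatrix} m_{11}/\abs{U} & -m_{12}/\abs{V} \\ -m_{21}/\abs{U} & m_{22}/\abs{V} \end{pmatrix},
\]
\propref{M1} gives $m_{11}\approx 1$, while $m_{21} = \frac{v'}{u}\cdot\frac{\opD\Phi(\Phi\inv(p))}{\opD\Psi(\Psi\inv(p))}\cdot\frac{\abs{U}}{\abs{V}}\cdot\frac{Q(p)}{\opD f^{a+1}(p)-1}$. Using $\opD\Phi/\opD\Psi \asymp \abs{V}/\abs{U}$, $Q(p)/u \approx 1$, and $\opD f^{a+1}(p)-1 \asymp \alpha-1$, one finds $m_{21} \asymp v'/(\alpha-1)$, which is bounded away from zero under the standing hypothesis $1-\rcv(\opR\decomp f)\geq\lambda$ (so $v'\gtrsim\lambda$). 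Hence the $(2,1)$ entry is of the same order as the $(1,1)$ entry; the matrix is not diagonally dominant, and your argument does not exclude a vector $x$ on which the two rows almost cancel.

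The paper rules out this cancellation by a determinant argument, not a domination argument. Corollary~\ref{cor:detM1} shows $\abs{U}\abs{V}\det M_1 \geq 1 - \bigoh(\eps') > \mu > 0$, using the distortion bound $\frac{\opD\Phi(\Phi\inv(p))}{\opD\Phi(\Phi\inv(q))}\frac{\opD\Psi(\Psi\inv(q))}{\opD\Psi(\Psi\inv(p))}\leq\e^{2\delta}$ and the fact that $v'/(\opD f^{b+1}(q)-1)$ carries a factor $\eps'$. Corollary~\ref{cor:expansion} then inverts $M_1$ explicitly and bounds $\norm{M_1\inv} \leq K\max\{\abs{U},\abs{V}\}$ from the determinant lower bound; this is the asserted expansion. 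That structural step is what your proposal is missing. A secondary mistake: in your $M_2$ discussion you assert $\abs{U}\asymp\abs{C}$, but $\abs{U}$ is smaller than $\abs{C}$ by the factor $(\eps/\alpha)^a$ (cf.\ \lemref{derivs}); the bound $\norm{M_2}\leq K\abs{C}\inv$ comes instead from the direct computation $\partial_c u' \asymp -\abs{C}\inv$ in \propref{c-partials}, driven by the motion of the periodic endpoints $p,q$ of $C$ rather than by any cancellation between $\abs{Q(L)}$ and $\abs{U}$.
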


\begin{remark}
  The set $\dinvset$ is introduced in \defref{dinvset} and $\Omega$ is given by
  \defref{Omega-K} as always.  Note that the decompositions of $\decomp f \in
  \dinvset$ are pure and hence $\opO\decomp f \in \setLS$ by
  \propref{pure-S-neg}.  Finally, the condition on $\rcv(\opR \decomp f)$ is
  used to avoid maps whose renormalization has a right branch which is close to
  being trivial (see the proof of \propref{c-partials}).
\end{remark}

\begin{proof}
  The proof of this theorem is split up into a few propositions that are in
  this section.  The estimate for $M_1$ is given in \corref{expansion}.  The
  estimates for $M_2$ and~$M_4$ follow from Propositions \ref{prop:c-partials}
  and~\ref{prop:M2-M4}.  Finally, the estimate for $M_3$ follows from
  Propositions \ref{prop:c-partials} and~\ref{prop:M3}.
\end{proof}

\begin{notation}
  Let $\decomp f = (u,v,c,\decomp\phi,\decomp\psi)$ and as always use primes to
  denote the renormalization $\opR \decomp f =
  (u',v',c',\decomp\phi',\decomp\psi')$.  We introduce special notation for the
  diffeomorphic parts of the renormalization \emph{before} rescaling:
  \begin{align}
    \Phi &= f_1^a \circ \phi,& \Psi &= f_0^b \circ \psi, \label{eq:Phi-Psi-C}
  \end{align}
  so that $\Phi: U \to C$, $\Psi: V \to C$, and $C = (p,q)$.
  Note that $p$ and~$q$ are by definition periodic points of periods $a+1$
  and~$b+1$, respectively.

  We will use the notation $\partial_s t$ to denote the partial derivative
  of~$t$ with respect to~$s$.  In the formulas below we write $\partial t$ to
  mean the partial derivative of~$t$ with respect to any direction.

  The notation $g(x) \asymp y$ is used to mean that there exists $K < \infty$
  not depending on~$g$ such that $K\inv y \leq g(x) \leq K y$ for all $x$ in
  the domain of~$g$.
\end{notation}

The $\partial$
operator satisfies the following rules:
\begin{lemma} \label{lem:partial-props}
  The following expressions hold whenever they make sense:
  \begin{align}
    \partial(f \circ g)(x) &=
      \partial f(g(x)) + f'(g(x)) \partial g(x),
      \label{eq:partial-chain-rule} \\
    \partial\big(f^{n+1}\big)(x) &=
      \sum_{i=0}^n \opD f^{n-i}\big( f^{i+1}(x) \big)
      \partial f\big( f^i(x) \big),
      \label{eq:partial-iterate} \\
    \partial\big(f\inv\big)(x) &=
      -\frac{\partial f\big( f\inv(x) \big)}{f'\big( f\inv(x) \big)}.
      \label{eq:partial-inv}
  \end{align}
  Furthermore, if $f(p)=p$ then
  \begin{equation}
    \partial p = -\frac{\partial f(p)}{f'(p)-1}.
    \label{eq:partial-fixedpt}
  \end{equation}
\end{lemma}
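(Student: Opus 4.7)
The plan is to treat $\partial$ as ``partial derivative with respect to a parameter'' acting on maps that depend smoothly on both a parameter $s$ and the spatial variable $x$, and then to derive each of the four formulas from the ordinary multivariable chain rule applied to a suitable composition, with $x$ held fixed. So throughout I am thinking of $f=f_s$, $g=g_s$, and when I write $\partial h(x)$ I mean $\tfrac{d}{ds}\bigl[h_s(x)\bigr]$ evaluated at the base parameter.

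For \eqnref{partial-chain-rule}, I would apply the total derivative in $s$ to $s\mapsto f_s(g_s(x))$ with $x$ fixed; the chain rule gives exactly the two claimed terms, the ``$\partial f$'' term coming from varying the outer map and the ``$f'\cdot\partial g$'' term coming from varying the point at which it is evaluated. For \eqnref{partial-iterate}, the natural approach is induction on $n$. The base case $n=0$ is tautological. For the inductive step, write $f^{n+1} = f \circ f^{n}$ and apply \eqnref{partial-chain-rule}:
\[
\partial f^{n+1}(x) = \partial f\bigl(f^{n}(x)\bigr) + f'\bigl(f^{n}(x)\bigr)\,\partial f^{n}(x),
\]
then insert the inductive expression for $\partial f^{n}(x)$ and telescope using $\opD f \cdot \opD f^{n-i-1} = \opD f^{n-i}$ (evaluated at the correct iterates), which shifts all indices up by one and produces the missing top term.

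For \eqnref{partial-inv}, differentiate the identity $f_s(f_s^{-1}(x)) = x$ with respect to $s$, treating $x$ as a parameter-independent point; \eqnref{partial-chain-rule} applied to the left side yields $\partial f(f^{-1}(x)) + f'(f^{-1}(x))\cdot\partial f^{-1}(x) = 0$, and solving gives the claim. For \eqnref{partial-fixedpt}, differentiate $f_s(p_s)=p_s$ with respect to $s$, this time remembering that $p=p_s$ itself depends on the parameter, so \eqnref{partial-chain-rule} produces $\partial f(p) + f'(p)\,\partial p = \partial p$; assuming $f'(p)\neq 1$ (which is automatic here since $p$ is a repelling periodic point of period one for the relevant return map, so $f'(p)\ne1$), one solves for $\partial p$.

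None of these four steps has any genuine difficulty; the only mild subtlety is keeping straight, in \eqnref{partial-iterate}, which evaluation points appear in each factor, and in \eqnref{partial-fixedpt}, that one must vary $p$ with $s$ while in \eqnref{partial-inv} one holds $x$ fixed. The latter is likely the most error-prone place to slip a sign or an argument. Everything follows directly from \eqnref{partial-chain-rule}, so essentially the whole lemma reduces to the standard chain rule plus careful bookkeeping.
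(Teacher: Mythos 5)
Your proposal is correct and follows essentially the same route as the paper: establish \eqnref{partial-chain-rule} by differentiating $f_s(g_s(x))$ in the parameter, derive \eqnref{partial-iterate} by induction using it, and obtain \eqnref{partial-inv} and \eqnref{partial-fixedpt} by differentiating the identities $f\circ f\inv = \id$ and $f(p)=p$. The paper's only cosmetic difference is that it proves \eqnref{partial-chain-rule} via an explicit first-order Taylor expansion of $f_\eps(g_\eps(x))$ rather than by invoking the multivariable chain rule directly, but these are the same calculation.
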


\begin{remark} \label{rem:partial-props}
  The $\partial$ operator clearly also satisfies the product rule
  \begin{equation}
    \partial (f \cdot g)(x) = \partial f(x) g(x) + f(x) \partial g(x).
  \end{equation}
  This and the chain rule gives the quotient rule
  \begin{equation}
    \partial(f/g)(x) =
    \frac{\partial f(x) g(x) - f(x) \partial g(x)}{g(x)^2}.
  \end{equation}
\end{remark}

\begin{proof}
  Equation~\eqnref{partial-chain-rule} implies the other three.  The second
  equation is an induction argument and the last two follow from
  \[
    0 = \partial(x) = \partial\big( f \circ f\inv(x) \big)
    = \partial f\big( f\inv(x) \big) + f'\big( f\inv(x) \big)
    \partial\big( f\inv(x) \big),
  \]
  and
  \[
    \partial(p) = \partial( f(p) ) = \partial f(p) + f'(p) \partial p.
  \]
  Equation~\eqnref{partial-chain-rule} itself can be proved by writing
  $f_\eps(x) = f(x) + \eps \hat f(x)$, $g_\eps(x) = g(x) + \eps \hat g(x)$ and
  using Taylor expansion:
  \begin{align*}
    f_\eps (g_\eps(x)) &=
      f_\eps(g(x)) + \eps f_\eps'(g(x)) \hat g(x) + \bigoh(\eps^2) \\
    &= f(g(x)) + \eps \big\{ \hat f(g(x))
      + f'(g(x)) \hat g(x) \big\} + \bigoh(\eps^2).\qedhere
  \end{align*}
\end{proof}

We now turn to computing the derivative matrix~$M$.  The first three rows
of~$M$ are given by the following formulas.

\begin{lemma} \label{lem:uvc-partials}
  The partial derivatives of $u'$, $v'$ and $c'$ are given by
  \begin{align*}
    \partial u' &=
      \frac{\partial\left(Q_0(c) - Q_0(p)\right) - u' \cdot
      \partial\left(\Phi\inv(q) - \Phi\inv(p)\right)}{\abs{U}}, \\
    \partial v' &=
      \frac{\partial\left(Q_1(q) - Q_1(c)\right) - v' \cdot
      \partial\left(\Psi\inv(q) - \Psi\inv(p)\right)}{\abs{V}}, \\
    \partial c' &=
      \frac{\partial(c - p) - c' \cdot \partial(q - p)}{\abs{C}}.
  \end{align*}
\end{lemma}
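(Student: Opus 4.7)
The plan is to rewrite the expressions for $u'$, $v'$, $c'$ from \lemref{tuple-renorm} explicitly in terms of the endpoints $p$ and $q$ of $C$ and the critical point $c$, and then apply the quotient rule (see \remref{partial-props}) direction-wise.

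First, I would unpack the lengths. Since $\Phi: U \to C = (p,q)$ is an orientation-preserving diffeomorphism, $\abs{U} = \Phi\inv(q) - \Phi\inv(p)$, and similarly $\abs{V} = \Psi\inv(q) - \Psi\inv(p)$. Because $L = (p,c)$ and $R = (c,q)$ with $Q_0$ increasing on $(0,c)$ and $Q_1$ increasing on $(c,1)$, the lengths of the images are
\begin{equation*}
  \abs{Q(L)} = Q_0(c) - Q_0(p), \qquad \abs{Q(R)} = Q_1(q) - Q_1(c),
\end{equation*}
and of course $\abs{L} = c - p$ and $\abs{C} = q - p$. Substituting into \lemref{tuple-renorm} gives
\begin{equation*}
  u' = \frac{Q_0(c)-Q_0(p)}{\Phi\inv(q)-\Phi\inv(p)}, \quad
  v' = \frac{Q_1(q)-Q_1(c)}{\Psi\inv(q)-\Psi\inv(p)}, \quad
  c' = \frac{c-p}{q-p}.
\end{equation*}

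Now I would apply the quotient rule to each expression: for $u' = N/D$,
\begin{equation*}
  \partial u' = \frac{\partial N \cdot D - N \cdot \partial D}{D^2}
  = \frac{\partial N - u' \cdot \partial D}{D},
\end{equation*}
where we used $u' = N/D$ to eliminate $N$ in favor of $u'$. Doing this separately for the three quotients above and recalling $D = \abs{U}$, $\abs{V}$, $\abs{C}$ respectively yields exactly the three formulas in the statement. This uses the standard product/quotient rules for $\partial$, which are collected in \remref{partial-props}; there is no analytic subtlety because $\partial$ is just a directional derivative acting on smoothly parametrized quantities.

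The only point requiring a little care is that $p$ and $q$ themselves depend on the parameter since they are periodic points of $f$, and likewise $\Phi\inv(p), \Phi\inv(q), \Psi\inv(p), \Psi\inv(q)$ depend on the parameter both through the periodic points and through $\Phi, \Psi$. However, these dependencies are absorbed into the notation $\partial(q-p)$, $\partial(\Phi\inv(q)-\Phi\inv(p))$, etc., and unfolding them further is exactly the content of the subsequent lemmas in this section (they will use \eqnref{partial-fixedpt} for $\partial p, \partial q$ and \eqnref{partial-inv} to handle $\partial\Phi\inv, \partial\Psi\inv$). So this lemma is intentionally stated at a level where everything is a direct quotient-rule computation and no such expansion is needed yet. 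There is no real obstacle; the lemma is essentially a bookkeeping step setting up the more delicate derivative estimates that follow.
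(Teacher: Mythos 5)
Your proof is correct and is exactly the paper's argument: the paper's proof simply cites the explicit formulas for $u',v',c'$ (equation \eqnref{R-uvc}, which restates \lemref{tuple-renorm}) and the quotient rule from \remref{partial-props}. You have just unfolded that one-line citation into the same substitution $\abs{U}=\Phi\inv(q)-\Phi\inv(p)$, etc., followed by the quotient-rule step $\partial(N/D)=(\partial N-(N/D)\partial D)/D$, so the content and route match.
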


\begin{proof}
  Use \eqnref{R-uvc}, \lemref{partial-props} and \remref{partial-props}.
\end{proof}

Let us first consider how to use these formulas when deforming in the $u$, $v$
or $c$ directions (i.e.\ the first three columns of~$M$).  Almost everything in
these
formulas is completely explicit --- we have expressions for $Q_0$ and~$Q_1$ so
evaluating for example $\partial_u Q_0(c)$ is routine.  In order to evaluate
for example the
term $\partial_u \Psi\inv(q)$ we make use of~\eqnref{partial-inv}
and~\eqnref{partial-iterate}.  This involves estimating the sum
in~\eqnref{partial-iterate} which can be done with mean value theorem
estimates.  The terms $\partial p$ and~$\partial q$ are evaluated using
\eqnref{partial-fixedpt} and the fact that $p = \Phi \circ Q_0(p)$ and $q =
\Psi \circ Q_1(q)$.  There are a few shortcuts to make the calculations simpler
as well, for example $\partial_u \Phi = 0$ since $\Phi$ does not contain $Q_0$
which is the only term that depends on~$u$, and so on.

Deforming in the $\decomp\phi$ or $\decomp\psi$ directions (there are countably
many such directions) is similar.  Here we make use of the fact that the
decompositions are pure and we have an explicit formula \eqnref{pure} for pure
maps where the free parameter represents the signed distortion
(see~\remref{pures}), so we can
compute their derivative, partial derivative with respect to distortion etc.
These deformations will affect the partial derivatives of any expression
involving $\Phi$ or~$\Psi$, but all others will not `see' these deformations.
The calculations involved do not make any particular use of which direction we
deform in, so even though there are countably many directions we essentially
only need to perform one calculation for $\decomp\phi$ and another
for~$\decomp\psi$.

We now turn to computing the partial derivatives of $\decomp\phi'$
and~$\decomp\psi'$.

\begin{lemma} \label{lem:diffeo-partials}
  Let $\mu_{s'} = \opZ(\mu_s;I)$, where $\mu_s,\mu_{s'} \in \pures$ and $I =
  [x,y]$.  Then
  \[
    \partial s' =
    \opN\mu_s(y) \partial y
    - \opN\mu_s(x) \partial x
    + \frac{\partial(\opD\mu_s)(y)}{\opD\mu_s(y)}
    - \frac{\partial(\opD\mu_s)(x)}{\opD\mu_s(x)}.
  \]
\end{lemma}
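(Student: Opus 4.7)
The plan rests on a single identity for pure maps: the signed distortion $s$ satisfies $s = \log\bigl(\opD\mu_s(1)/\opD\mu_s(0)\bigr)$. This can be verified directly from the explicit formula \eqnref{pure}: a short computation gives $\opD\mu_s(1)/\opD\mu_s(0) = \e^s$. The sign match with the convention of \defref{pdecomps} follows from the observation that $\opD\mu_s$ is monotone on $\uint$ (since a pure map is a restriction of $x\abs{x}^{\alpha-1}$ up to affine rescaling, and this has monotone derivative on each side of $0$); writing out which side of $0$ gives $\opD\mu_s(1)>\opD\mu_s(0)$ and which side gives the reverse confirms that the sign of $\log\bigl(\opD\mu_s(1)/\opD\mu_s(0)\bigr)$ is positive for $I$ to the right of $0$ and negative for $I$ to the left.

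The next step is to express $s'$ in terms of $s$, $x$, $y$. Unwinding $\mu_{s'} = \opZ(\mu_s; I) = \zeta\inv_{\mu_s(I)} \circ \mu_s \circ \zeta_I$ via the chain rule gives
\[
  \opD\mu_{s'}(t) = \frac{\abs{I}}{\abs{\mu_s(I)}} \cdot \opD\mu_s(\zeta_I(t)),
\]
so the affine prefactor cancels in the ratio $\opD\mu_{s'}(1)/\opD\mu_{s'}(0) = \opD\mu_s(y)/\opD\mu_s(x)$. Feeding this back through the signed-distortion identity yields
\[
  s' = \log\opD\mu_s(y) - \log\opD\mu_s(x).
\]

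To conclude, apply $\partial$ to both sides and expand via the paper's convention for partial derivatives (cf.\ \lemref{partial-props}). For each term,
\[
  \partial\bigl[\log\opD\mu_s(z)\bigr] = (\log\opD\mu_s)'(z)\,\partial z + \partial(\log\opD\mu_s)(z),
\]
where the first piece tracks the motion of the evaluation point and the second the variation of the underlying family (here $s$ changes but $z$ is held fixed). Since $\log$ itself does not depend on the deformation, the second piece equals $\partial(\opD\mu_s)(z)/\opD\mu_s(z)$, while $(\log\opD\mu_s)'(z) = \opN\mu_s(z)$ by the very definition of the nonlinearity. Applying this for $z\in\{x,y\}$ and taking the difference yields the claimed formula. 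The whole argument is a bookkeeping exercise once the log-ratio characterization of $s$ is in hand; no serious obstacle arises.
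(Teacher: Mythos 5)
Your proof is correct and follows essentially the same route as the paper: characterize the signed distortion by $s = \log\{\opD\mu_s(1)/\opD\mu_s(0)\}$, use invariance of distortion under affine rescaling to get $s' = \log\{\opD\mu_s(y)/\opD\mu_s(x)\}$, and differentiate using the $\partial$-calculus. The only difference is that you spell out the chain-rule cancellation of the affine prefactor where the paper simply invokes ``distortion is invariant under zooming.''
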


\begin{proof}
  By definition $s = \log\{\opD\mu_s(1)/\opD\mu_s(0)\}$.  Distortion is
  invariant under zooming, so this shows that $s' =
  \log\{\opD\mu_s(y)/\opD\mu_s(x)\}$.  A calculation gives
  \[
    \partial\big(\log \opD\mu_s(x)\big) =
    \frac{\partial(\opD\mu_s)(x)}{\opD\mu_s(x)} + \opN\mu_s(x) \partial x.
    \qedhere
  \]
\end{proof}

By definition $\decomp\phi'$ consists of maps of the form $\opZ(\mu_s;I)$ (as
well as finitely many of the form $\opZ(Q;I)$ but these can be thought of as
$\lim_{s\to\pm\infty} \opZ(\mu_s;I)$).  Hence the above lemma shows us how to
compute the partial derivatives at each time in $\decomp\phi'$.  Note that we
implicitly identify $\reals$ with $\pures$ via $s \mapsto \mu_s$.

In order to use the lemma we also need a way to evaluate the terms $\partial x$
and~$\partial y$.  One way to do this is to express these in terms of $\partial
p$ and~$\partial q$ which have already been computed at this stage.  If we let
$T: I \to [p,q]$ denote the `transfer map' to~$C$, then $p = T(x)$ and hence
\eqnref{partial-chain-rule} shows that
\[
  \partial x = \frac{\partial p - \partial T(x)}{\opD T(x)}.
\]
The terms $\partial T$ and $\opD T$ can be bounded by $\partial \Phi$
and~$\opD\Phi$ (or $\partial \Psi$ and~$\opD\Psi$) all of which have already
been computed as well.

\medskip
We will now compute the $M_1$ part of the derivative matrix.  Note that $M_1$
has nothing to do with decompositions so the following proposition is stated
for nondecomposed Lorenz maps.

\begin{proposition} \label{prop:M1}
  If $f \in \setK \cap \setLS_\Omega$, then
  \begin{align*}
    M_1 =
    \begin{pmatrix}
      \frac{1}{\abs{U}} \left(
      1 + \frac{1-u'}{u} \frac{Q(p)}{\opD f^{a+1}(p) - 1} \right)
      &
      -\frac{1}{\abs{U}} \frac{u'}{v}
      \frac{\opD\Psi(\Psi\inv(q))}{\opD\Phi(\Phi\inv(q))}
      \frac{1 - Q(q)}{\opD f^{b+1}(q) - 1}
      \\
      -\frac{1}{\abs{V}} \frac{v'}{u}
      \frac{\opD\Phi(\Phi\inv(p))}{\opD\Psi(\Psi\inv(p))}
      \frac{Q(p)}{\opD f^{a+1}(p) - 1}
      &
      \frac{1}{\abs{V}} \left(
      1 + \frac{1-v'}{v} \frac{1 - Q(q)}{\opD f^{b+1}(q) - 1} \right)
    \end{pmatrix}
    + M_1^e,
  \end{align*}
  where the error term $M_1^e$ is negligible.
\end{proposition}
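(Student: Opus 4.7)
The plan is to compute each entry of $M_1$ by directly differentiating the expressions for $u'$ and $v'$ given in \lemref{uvc-partials}, combined with the fixed-point identity \eqnref{partial-fixedpt} applied to the periodic points $p$ and $q$.

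The first step is to record the parameter dependencies with $c$ fixed: $Q_0$ depends on $u$ with $\partial_u Q_0|_x = Q(x)/u$, while $Q_1$ depends on $v$ with $\partial_v Q_1|_x = (Q(x)-1)/v$; the composition $\Phi = f_1^a \circ \phi$ depends on $v$ (through $f_1 = \psi \circ Q_1$) but is independent of $u$, and symmetrically $\Psi = f_0^b \circ \psi$ depends on $u$ but not on $v$. Applying \eqnref{partial-fixedpt} to $p = \Phi(Q_0(p))$ and $q = \Psi(Q_1(q))$ then gives
\[
  \partial_u p = -\frac{\opD\Phi(Q_0(p))\cdot Q(p)/u}{\opD f^{a+1}(p)-1},\qquad
  \partial_v p = -\frac{\partial_v\Phi(Q_0(p))}{\opD f^{a+1}(p)-1},
\]
and analogous formulas for $\partial_u q$, $\partial_v q$, where $\partial_u q$ carries the nontrivial term $\partial_u\Psi(Q_1(q))$ whereas $\partial_v q$ contains the explicit factor $\opD\Psi(Q_1(q))(1-Q(q))/v$.

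For the diagonal $(1,1)$ entry, I would substitute into \lemref{uvc-partials}. The key simplification is $\partial_u \Phi^{-1}(x) = \partial_u x/\opD\Phi(\Phi^{-1}(x))$ (a consequence of $\partial_u \Phi = 0$), together with the identity $\opD f^{a+1}(p) = \opD\Phi(Q_0(p))\cdot Q_0'(p)$. A short algebraic reduction yields $\partial_u(Q_0(c)-Q_0(p)) = 1 + Q(p)/(u(\opD f^{a+1}(p)-1))$ and $\partial_u(\Phi^{-1}(q)-\Phi^{-1}(p)) = \partial_u q/\opD\Phi(\Phi^{-1}(q)) + Q(p)/(u(\opD f^{a+1}(p)-1))$; assembling produces the advertised $(1,1)$ entry plus a residual $u'\partial_u q/[\abs{U}\opD\Phi(\Phi^{-1}(q))]$ absorbed into $M_1^e$. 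For the off-diagonal $(1,2)$ entry, the useful observation is the coincidence $\partial_v Q_0(p) = \partial_v \Phi^{-1}(p)$ (both equal $-Q_0'(p)\partial_v\Phi(Q_0(p))/(\opD f^{a+1}(p)-1)$), which collapses $\partial_v u'$ to $(1/\abs{U})[-(1-u')\partial_v\Phi^{-1}(p) - u'\partial_v\Phi^{-1}(q)]$. The main contribution then comes from the $\partial_v q$ piece of $\partial_v\Phi^{-1}(q) = (\partial_v q - \partial_v\Phi(\Phi^{-1}(q)))/\opD\Phi(\Phi^{-1}(q))$, which produces exactly the claimed factor $\opD\Psi(\Psi^{-1}(q))/\opD\Phi(\Phi^{-1}(q))$. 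The second row of $M_1$ is obtained by the evident symmetry $u\leftrightarrow v$, $\phi\leftrightarrow\psi$, $p\leftrightarrow q$, $a\leftrightarrow b$.

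The main obstacle is bookkeeping rather than any conceptual difficulty; every term is produced mechanically from the chain rule, \eqnref{partial-fixedpt}, and the parameter dependencies, but one must be careful about which pieces belong in the displayed leading matrix and which collect into $M_1^e$. Justifying that $M_1^e$ is genuinely negligible (as needed in \thmref{opnorms}) reduces to bounding $\partial_u\Psi$ and $\partial_v\Phi$, both of which are sums of the form $\sum_i \opD f_0^{b-1-i}\cdot\partial_u f_0$ and $\sum_i \opD f_1^{a-1-i}\cdot\partial_v f_1$ along the periodic orbits of $q$ and $p$; the derivative estimates of \lemref{derivs}, together with the hyperbolic expansion $\opD f^{b+1}(q)\gg 1$ and the bounds on $1-u$ and $1-v$ from \remref{uv}, make each such contribution strictly smaller than the corresponding main terms of orders $\abs{U}^{-1}$ and $\abs{V}^{-1}$.
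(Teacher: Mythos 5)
Your proposal is correct and follows essentially the same route as the paper's proof: both start from \lemref{uvc-partials}, use \eqnref{partial-fixedpt} applied to $p=\Phi(Q_0(p))$, $q=\Psi(Q_1(q))$, exploit the vanishing of $\partial_u\Phi$ and $\partial_v\Psi$, and use $\opD f^{a+1}(p)=\opD\Phi(Q_0(p))\opD Q_0(p)$ to reduce the diagonal terms. Your algebraic reductions for the $(1,1)$ and $(1,2)$ entries check out and agree with the displayed matrix after absorbing the residuals into $M_1^e$.

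One caveat on the last paragraph: the negligibility of $M_1^e$ does not quite ``reduce to \lemref{derivs}''. The paper derives fresh estimates $\partial_u\Psi(x)\approx 1$ and $\partial_v\Phi(x)\asymp -a\eps$ by splitting the telescoping sum $\sum_i \opD f^{b-i}(x_i)\,\partial_u f(x_{i-1})$ into a region where the fixed point at $0$ dominates (the derivative chain telescopes via the mean value theorem) and a region near the critical point, and only then applies the distortion/renormalization-cycle bounds from \secref{inv-set}. So it is the same circle of ideas as \lemref{derivs} but a separate computation, and it is precisely this computation that certifies that the residual terms $u'\partial_u\Psi(Q_1(q))/[\abs{U}\opD\Phi(\Phi\inv(q))(\opD f^{b+1}(q)-1)]$ and its symmetric partners are dominated by the displayed leading entries (using $\abs{\opD\Phi}\gg 1$, $\abs{\opD\Psi}\gg 1$, and $\opD f^{a+1}(p),\opD f^{b+1}(q)\gtrsim\alpha>1$; note that ``$\gg 1$'' is a little too strong for $\opD f^{b+1}(q)$ without the extra condition on $\rcv(\opR f)$ used in \propref{c-partials}).
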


\begin{remark} \label{rem:M1}
  From \secref{inv-set} we know that the critical point of the renormalization
  is very close to $1$ and that the distortion of the diffeomorphic parts of
  the renormalization are bounded by $\delta$ (which is very small).  From
  these two facts we can get an idea of the size of the entries of $M_1$.  For
  example, $u'$ is very close to $1$ since $c'$ is (and $\opR f$ is assumed to
  be nontrivial so $u' \geq \e^{-\delta}c'$).  Furthermore, $\opD f^{a+1}(p) =
  \opD(\opR f)(0)$ and $\opD f^{b+1} = \opD(\opR f)(1)$ since an affine change
  of coordinates does not change the derivative, so the distortion bounds for
  $\opR f$ implies that $\opD f^{a+1}(p) \asymp \alpha u' / c'$ and $\opD
  f^{b+1}(q) \asymp \alpha v' / \eps'$ (these expressions come from the
  derivative of $Q(x)$, see \eqnref{standard}).
\end{remark}

\begin{proof}
  We begin by computing $\partial p$ and $\partial q$.  Use $\Phi \circ Q_0(p)
  = p$, $\Psi \circ Q_1(q) = q$, and~\eqnref{partial-fixedpt} to get
  \begin{align}
    \partial_u p &=
      -\frac{\opD\Phi(Q_0(p)) \partial_u Q_0(p)}{\opD f^{a+1}(p) - 1},&
      \partial_u q &=
      -\frac{\partial_u\Psi(Q_1(q))}{\opD f^{b+1}(q) - 1},
      \label{eq:dupq} \\
    \partial_v p &=
      -\frac{\partial_v\Phi(Q_0(p))}{\opD f^{a+1}(p) - 1},&
      \partial_v q &=
      -\frac{\opD\Psi(Q_1(q)) \partial_v Q_1(q)}{\opD f^{b+1}(q) - 1}.
      \label{eq:dvpq}
  \end{align}
  Here we have used that $\partial_u\Phi = 0$ and $\partial_v\Psi = 0$.

  \smallskip Next, let us estimate $\partial_u\Psi$.  Let $x \in V$ and let
  $x_i = f^i \circ \psi(x)$.  From \eqnref{partial-iterate} we get
  \[
    \partial_u\Psi(x) = \partial_u\big(f^b \circ \psi)(x)
    = \partial_u f( x_{b-1}) +
    \sum_{i=1}^{b-1} \opD f^{b-i}(x_i)
    \partial_u f(x_{i-1}),
  \]
  where $\partial_u f(x) = \phi'(Q_0(x)) Q_0(x) / u$.  Note that $\partial_u
  f(x_{i-1}) \leq \e^{2\delta} x_i / u$.  In order to bound the sum we divide
  the estimate into two parts.  Let $n < b$ be the smallest integer such that
  $\opD f(x_i) \leq 1$ for all $i \geq n$.  In the part where $i < n$ we
  estimate
  \[
    \opD f^{b-i}(x_i) x_i = \opD f^{n-i}(x_i) \opD f^{b-n}(x_n) x_i \leq
    K_1 \frac{x_n}{x_i} \opD f(x_{b-1}) x_i \leq K_2 \eps^{1-1/\alpha}.
  \]
  Here we have used the mean value theorem to find $\xi_i \leq x_i$ such that
  $\opD f^{n-i}(\xi_i) = x_n/x_i$  and $\opD f^{n-i}(x_i) \leq K_1 \opD
  f^{n-i}(\xi_i)$, since $\phi$ has very small distortion.  In the part where
  $i \geq n$ we estimate
  \[
    \opD f^{b-i}(x_i) x_i \leq \opD f(x_{b-1}) \leq K \eps^{1-1/\alpha}.
  \]
  Summing over the two parts gives us the estimate
  \[
    \sum_{i=1}^{b-1} \opD f^{b-i}(x_i) \partial_u f(x_{i-1}) \leq
    K (b-1) \eps^{1-1/\alpha}.
  \]
  Hence
  \begin{equation} \label{eq:duPsi}
    \partial_u\Psi(x) = \partial_u f\big(f^{b-1} \circ \psi(x)\big)
    + \bigoh\big(b \eps^{1-1/\alpha}\big) \approx 1.
  \end{equation}

  \smallskip
  We will now estimate $\partial_v\Phi$.  Let $x \in U$ and let $x_i = f^i
  \circ \phi(x)$.  Similarly to the above, we have
  \[
    \partial_v\Phi(x)
    = \partial_v f( x_{a-1}) +
    \sum_{i=1}^{a-1} \opD f^{a-i}(x_i)
    \partial_v f(x_{i-1}),
  \]
  where $\partial_v f(x) = -\psi'(Q_1(x)) (1 - Q_1(x)) / v$.  By the mean value
  theorem there exists $\xi_i \in [x_i,1]$ such that $\opD f^{a-i}(\xi_i) =
  (1-x_a)/(1-x_i)$, since $f^{a-i}(x_i) = x_a$.  From \lemref{derivs} it
  follows that $\opD f^{a-i}(x_i) \asymp \opD f^{a-i}(\xi_i)$.  Putting all of
  this together we get that the sum above is proportional to
  \[
    \sum_{i=1}^{a-1} \opD f^{a-i}(\xi_i) (1 - x_i) = (a-1) (1 - x_a).
  \]
  Thus
  \begin{equation} \label{eq:dvPhi}
    \partial_v\Phi(x) \asymp -a \eps,
  \end{equation}
  since $x_a \in C$ and hence $1-x_a = \eps + \bigoh(\abs{C}) \approx \eps$.

  \smallskip
  We now have all the ingredients we need to compute $M_1$.
  \lemref{uvc-partials} shows that
  \begin{align*}
    \abs{U} \partial_u u' &= \partial_u Q_0(c) - \partial_u Q_0(p)
    - Q_0'(p) \partial_u p \\
    &\quad- u' \left(
    \opD\Phi\inv(q) \partial_u q - \opD\Phi\inv(p) \partial_u p \right).
  \end{align*}
  Here we have used $\partial_u\Phi = 0$.  Now use \eqnref{dupq} to get
  \[
    Q_0'(p) \partial_u p = -\partial_u Q_0(p)
    \frac{\opD f^{a+1}(p)}{\opD f^{a+1}(p) - 1},
    \quad
    \opD\Phi\inv(p) \partial_u p =
    -\frac{\partial_u Q_0(p)}{\opD f^{a+1}(p) - 1}.
  \]
  Thus
  \begin{equation}
    \abs{U} \partial_u u' = 1 +
    \frac{(1 - u')\partial_u Q_0(p)}{\opD f^{a+1}(p) - 1}
     + \frac{u' \partial_u\Psi(Q_1(q))}%
    {\opD\Phi(\Phi\inv(q)) (\opD f^{b+1}(q) - 1)}.
  \end{equation}
  The last term is much smaller than one because of \eqnref{duPsi} and since
  $\abs{\opD\Phi} \gg 1$ (and also $\opD f^{b+1}(q) \asymp v'\alpha/\eps' \geq
  \e^{-\delta}\alpha$).

  From \lemref{uvc-partials} we get
  \begin{align*}
    \abs{V} \partial_v v' &= \partial_v Q_1(q) + Q_1'(q) \partial_v q
    - \partial_v Q_1(c) \\
    &\quad- v' \left(
    \opD\Psi\inv(q) \partial_v q - \opD\Psi\inv(p) \partial_v p \right).
  \end{align*}
  Here we have used $\partial_v\Psi = 0$.  Now use \eqnref{dvpq} to get
  \[
    Q_1'(q) \partial_v q =
    -\frac{\partial_v Q_1(q) \opD f^{b+1}(q)}{\opD f^{b+1}(q) - 1},
    \qquad
    \opD\Psi\inv(q)\partial_v q =
    -\frac{\partial_v Q_1(q)}{\opD f^{b+1}(q) - 1}.
  \]
  Thus
  \begin{equation}
    \abs{V} \partial_v v' = 1 - 
    \frac{(1-v')\partial_v Q_1(q)}{\opD f^{b+1}(q) - 1}
    - \frac{v' \partial_v\Phi(Q_0(p))}%
    {\opD\Psi(\Psi\inv(p))(\opD f^{a+1}(p) - 1)}.
  \end{equation}
  The last term is much smaller than one by \eqnref{dvPhi} and since
  $\abs{\opD\Psi} \gg 1$ (and also $\opD f^{a+1}(p) \asymp \alpha u'/c' \geq
  \e^{-\delta}\alpha$).

  From \lemref{uvc-partials} we get
  \begin{align*}
    \abs{U} \partial_v u' = -Q_0'(p) \partial_v p - u' \Big(
    \partial_v\Phi\inv(q) &+ \opD\Phi\inv(q) \partial_v q \\
    - \partial_v\Phi\inv(p) &- \opD\Phi\inv(p) \partial_v p \Big).
  \end{align*}
  Let us prove that that the dominating term is the one with $\partial_v q$.
  From~\eqnref{dvpq} we get
  \[
    \partial_v q = -\frac{\partial_v Q_1(q)}{Q_1'(q)}
    \frac{\opD f^{b+1}(q)}{\opD f^{b+1}(q) - 1},
  \]
  which diverges as $\lbb \to \infty$, since $\abs{R}/\eps \to 0$ and hence
  $Q_1'(q) \to 0$ (by the proof of~\propref{ubeps}).  From
  \eqnref{dvpq} and~\eqnref{dvPhi} we get that $\partial_v p \to 0$, which
  shows that
  the last term is dominated by the term with $\partial_v q$.  Now,
  $\partial_v\Phi\inv(x) = -\partial_v\Phi(x)/\opD\Phi(x)$, which combined with
  \eqnref{dvPhi} shows that the term with $\partial_v q$ dominates the two
  terms with $\partial_v\Phi\inv$. Furthermore
  \[
    Q_0'(p) \partial_v p = -\frac{\partial_v\Phi(Q_0(p))}{\opD\Phi(Q_0(p))}
    \frac{\opD f^{a+1}(p)}{\opD f^{a+1}(p) - 1},
  \]
  which combined with \eqnref{dvPhi} shows that the term with $\partial_v q$
  dominates the above term.  Thus
  \begin{equation}
    \abs{U} \partial_v u' = 
    u' \frac{\opD\Psi(\Psi\inv(q))}{\opD\Phi(\Phi\inv(q))}
    \frac{\partial_v Q_1(q)}{\opD f^{b+1}(q) - 1} + e,
  \end{equation}
  where the error term $e$ is tiny compared with the other term on the
  right-hand side.

  From \lemref{uvc-partials} we get
  \begin{align*}
    \abs{V} \partial_u v' = Q_1'(q) \partial_u q - v' \Big(
    \partial_u\Psi\inv(q) &+ \opD\Psi\inv(q) \partial_u q \\
    - \partial_u\Psi\inv(p) &- \opD\Psi\inv(p) \partial_u p \Big).
  \end{align*}
  Let us prove that that the dominating term is the one with $\partial_u p$.
  From~\eqnref{dupq} we get
  \[
    \partial_u p = -\frac{\partial_u Q_0(p)}{Q_0'(p)}
    \frac{\opD f^{a+1}(p)}{\opD f^{a+1}(p) - 1},
  \]
  which diverges as $\lbb \to \infty$, since $\abs{L}/c \to 0$ and hence
  $Q_0'(p) \to 0$.  From \eqnref{dupq} and~\eqnref{duPsi} we get that
  $\partial_u q$ is bounded and hence the $\partial_u p$ term dominates the
  second term involving $\partial_u q$.  Now, $\partial_u\Psi\inv(x) =
  -\partial_u\Psi(y)/\opD\Psi(y)$, $y = \Psi\inv(x)$, which combined with
  \eqnref{duPsi} shows that the $\partial_u p$ term dominates the two terms
  involving $\partial_u\Psi\inv$.  Furthermore
  \[
    Q_1'(q) \partial_u q = -\frac{\partial_u\Psi(Q_1(q))}{\opD\Psi(Q_1(q))}
    \frac{\opD f^{b+1}(q)}{\opD f^{b+1}(q) - 1},
  \]
  which combined with \eqnref{duPsi} shows that the $\partial_u p$ term
  dominates the above term.
  Thus
  \begin{equation}
    \abs{V} \partial_u v' = -v'
    \frac{\opD\Phi(\Phi\inv(p))}{\opD\Psi(\Psi\inv(p))}
    \frac{\partial_u Q_0(p)}{\opD f^{a+1}(p) - 1} + e,
  \end{equation}
  where the error term $e$ is tiny compared with the other term on the
  right-hand side.
\end{proof}

\begin{corollary} \label{cor:detM1}
  If $f \in \setK \cap \setLS_\Omega$, then $\det M_1 > 0$ for $\lbb$ large
  enough.
\end{corollary}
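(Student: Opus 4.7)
The plan is to read off the determinant of $M_1$ from \propref{M1}, show that the leading diagonal product dominates the leading off-diagonal product by a large factor, and then absorb the error term $M_1^e$.

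First I would note that the diagonal entries of (the main part of) $M_1$ have the form $\frac{1}{|U|}(1+\eps_1)$ and $\frac{1}{|V|}(1+\eps_2)$, where
\[
\eps_1 = \frac{1-u'}{u} \cdot \frac{Q(p)}{\opD f^{a+1}(p)-1},
\qquad
\eps_2 = \frac{1-v'}{v} \cdot \frac{1-Q(q)}{\opD f^{b+1}(q)-1}.
\]
By \remref{M1}, $\opD f^{a+1}(p)\asymp \alpha u'/c'$ is bounded away from $1$, so $\eps_1$ is controlled by $1-u' = \bigoh(\eps^2)$ (\remref{uv}); similarly $\eps_2$ is controlled by $1-v' = \bigoh(e^{-\lbb})$. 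Hence both $\eps_i\to0$ as $\lbb\to\infty$, and the diagonal product satisfies $A_{11}A_{22} = \frac{1}{|U||V|}(1+o(1))$.

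Next I would estimate the off-diagonal product
\[
A_{12}A_{21} = \frac{u'v'}{|U||V|\,uv}\cdot
\frac{\opD\Psi(\Psi\inv(q))\,\opD\Phi(\Phi\inv(p))}{\opD\Phi(\Phi\inv(q))\,\opD\Psi(\Psi\inv(p))}\cdot
\frac{Q(p)(1-Q(q))}{(\opD f^{a+1}(p)-1)(\opD f^{b+1}(q)-1)}.
\]
The first factor is close to $1/(|U||V|)$ since $u,v,u',v'\to1$ (\remref{uv}). The ratio of derivatives of $\Phi,\Psi$ is close to $1$ because the distortions of $\Phi$ and $\Psi$ are uniformly small (the distortion of $\phi,\psi$ is at most $\delta$ by $f\in\setK$, and the estimates in the proof of \propref{dist} show that the additional distortion picked up along the cycles of renormalization is also tiny). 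Also $Q(p)\approx u\approx1$ and $1-Q(q)\approx v\approx1$ because $p,q$ lie in the shrinking return interval $C$ and thus $(c-p)/c$ and $(q-c)/\eps$ are small. Finally, by \remref{M1} and \propref{ubeps},
\[
\opD f^{b+1}(q)-1 \asymp \frac{\alpha v'}{\eps'} \longrightarrow \infty
\quad\text{as }\lbb\to\infty,
\]
while $\opD f^{a+1}(p)-1$ stays bounded. Therefore $A_{12}A_{21} = \bigoh(\eps'/(|U||V|))$, which is negligible compared to $A_{11}A_{22} \asymp 1/(|U||V|)$.

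Combining these two computations, the main part of $M_1$ has determinant of the order $1/(|U||V|)$, strictly positive, with a multiplicative error $1+o(1)$ as $\lbb\to\infty$. The error term $M_1^e$ in \propref{M1} has entries which are negligible relative to the main part (that is precisely the content of ``negligible'' there), and \thmref{opnorms} gives the operator norm $\norm{M_1 x}\gtrsim \min\{|U|\inv,|V|\inv\}\norm{x}$, so $M_1^e$ cannot alter the sign of the determinant for $\lbb$ large. Hence $\det M_1>0$. The main obstacle is the careful bookkeeping of all the sub-leading contributions; once the asymptotic hierarchy $\eps' \ll 1$, $1-u' = \bigoh(\eps^2)$, $1-v' = \bigoh(e^{-\lbb})$ established in \secref{inv-set} is in hand, the argument is essentially a size comparison.
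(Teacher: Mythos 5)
Your argument is essentially the same as the paper's: both hinge on the observation that the off-diagonal product carries the factor $v'/(\opD f^{b+1}(q)-1)\asymp\eps'$, so $\abs{U}\abs{V}\det M_1\to1$. One small caution: your claim that $\eps_1,\eps_2\to0$ invokes \remref{uv} for the primed quantities $u',v'$, but that remark is stated for $f\in\setL^1_\Omega\cap\setK$ and you have not established that $\opR f$ satisfies those hypotheses; the paper sidesteps this entirely by just dropping the nonnegative diagonal corrections $\eps_1,\eps_2\geq0$ to get the lower bound $\abs{U}\abs{V}\det M_1>1-\text{(off-diagonal term)}$, which is all that's needed.
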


\begin{proof}
  From \propref{dist} we get that
  \[
    \frac{\opD\Phi(\Phi\inv(p))}{\opD\Phi(\Phi\inv(q))}
    \frac{\opD\Psi(\Psi\inv(q))}{\opD\Psi(\Psi\inv(p))}
    \leq \e^{2\delta},
  \]
  since distortion is invariant under linear rescaling.  Now use this
  together with \propref{M1} to get
  \[
    \abs{U}\abs{V} \det M_1 >
      1 - \e^{2\delta} \frac{u' v'}{u v}
      \frac{Q(p)(1 - Q(q))}{(\opD f^{a+1}(p) - 1)(\opD f^{b+1}(q) - 1)}.
  \]
  Equation \eqnref{standard} gives
  \[
    \frac{Q(p)}{u} = 1 - \left(\frac{\abs{L}}{c}\right)^\alpha < 1
    \quad\text{and}\quad
    \frac{1-Q(q)}{v} = 1 - \left(\frac{\abs{R}}{\eps}\right)^\alpha < 1.
  \]
  \remref{M1} allows us to estimate
  \[
    \frac{u'}{\opD f^{a+1}(p)-1} \leq \frac{\e^\delta}{\alpha - \e^{2\delta}}
    \quad\text{and}\quad
    \frac{v'}{\opD f^{b+1}(q)-1} \leq \eps'
    \frac{\e^\delta}{\alpha - \e^{2\delta}}.
  \]
  Taken all together we get
  \[
    \abs{U}\abs{V} \det M_1 > 1 - \eps'
    \frac{\e^{4\delta}}{(\alpha-\e^{2\delta})^2} \to 1,
    \quad\text{as $\lbb\to\infty$,}
  \]
  by \propref{ubeps}.  In particular, $\det M_1 > 0$ for $\lbb$ large enough.
\end{proof}

\begin{corollary} \label{cor:expansion}
  There exists $k>0$ such that if $f$ is as above, then
  \[
    \norm{M_1x} \geq k \cdot \min\{\abs{U}\inv,\abs{V}\inv\}\cdot\norm{x}.
  \]
\end{corollary}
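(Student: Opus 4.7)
The plan is to combine the explicit formula for $M_1$ from Proposition~\ref{prop:M1} with the determinant bound from Corollary~\ref{cor:detM1}, via the elementary inequality for invertible $2\times 2$ matrices
\[
\norm{M_1 x} \;\geq\; \frac{\abs{\det M_1}}{2\,\norm{M_1}_{\mathrm{op}}}\,\norm{x},
\]
where $\norm{\cdot}$ is the max norm on $\reals^2$ and $\norm{M_1}_{\mathrm{op}} = \max_i \sum_j \abs{(M_1)_{ij}}$ the induced operator norm. This inequality is immediate from Cramer's rule, which writes $M_1\inv$ as the cofactor transpose divided by $\det M_1$; the cofactor transpose has every entry bounded in absolute value by $\norm{M_1}_{\mathrm{op}}$, so its row sums are at most $2\norm{M_1}_{\mathrm{op}}$.

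Granted this reduction, the main step is to show that $\norm{M_1}_{\mathrm{op}} \leq K/\min\{\abs{U},\abs{V}\}$. Consulting Proposition~\ref{prop:M1} together with Remark~\ref{rem:M1}, the diagonal entries are straightforward: $u',v'$ are close to $1$, $\opD f^{a+1}(p) \asymp \alpha$ is bounded away from $1$, and $\opD f^{b+1}(q) \asymp \alpha v'/\eps'$ is large, so the correction terms inside $M_{11}$ and $M_{22}$ are $O(1)$ and we obtain $\abs{M_{11}} \asymp 1/\abs{U}$, $\abs{M_{22}} \asymp 1/\abs{V}$. For the off-diagonal entries the key is to control the ratios $\opD\Psi(\Psi\inv(q))/\opD\Phi(\Phi\inv(q))$ and $\opD\Phi(\Phi\inv(p))/\opD\Psi(\Psi\inv(p))$. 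Proposition~\ref{prop:dist}, combined with the invariance of distortion under zooming, shows that $\Phi$ and $\Psi$ have distortion at most $\delta$ on $U$ and $V$ respectively, so the mean value theorem forces $\opD\Phi \asymp \abs{C}/\abs{U}$ uniformly on $U$ and $\opD\Psi \asymp \abs{C}/\abs{V}$ uniformly on $V$. The two ratios are therefore comparable to $\abs{U}/\abs{V}$ and $\abs{V}/\abs{U}$, and combining with $(1-Q(q))/(\opD f^{b+1}(q)-1) = O(\eps')$ and $Q(p)/(\opD f^{a+1}(p)-1) = O(1)$ yields $\abs{M_{12}} \leq K\eps'/\abs{V}$ and $\abs{M_{21}} \leq K/\abs{U}$. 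Each row sum of $M_1$ is then bounded by $K'/\min\{\abs{U},\abs{V}\}$, as required.

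To finish, note that the proof of Corollary~\ref{cor:detM1} in fact delivers $\det M_1 \geq 1/(2\abs{U}\abs{V})$ for $\lbb$ sufficiently large (since $\abs{U}\abs{V}\det M_1 \to 1$ there). Plugging this and the operator norm estimate into the opening inequality gives
\[
\norm{M_1 x} \;\geq\; \frac{1/(2\abs{U}\abs{V})}{2K'/\min\{\abs{U},\abs{V}\}}\,\norm{x}
\;=\; \frac{k}{\max\{\abs{U},\abs{V}\}}\,\norm{x},
\]
which is exactly the desired bound since $\min\{\abs{U}\inv,\abs{V}\inv\} = 1/\max\{\abs{U},\abs{V}\}$. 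The only place demanding care is the bookkeeping of the derivative ratios in the off-diagonal entries, and this reduces cleanly to the distortion bound of Proposition~\ref{prop:dist}; no new ideas beyond those already in this section are needed.
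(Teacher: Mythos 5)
Your proof is correct and follows essentially the same route as the paper's: both invert the $2\times 2$ block $M_1$, pull out the $\abs{U}\inv$ and $\abs{V}\inv$ column scalings, and use that the remaining entries are bounded while the (normalized) determinant is bounded away from zero. The only cosmetic difference is that the paper computes $\norm{M_1\inv}$ exactly in the max norm and asserts the ratios $(b+d)/(ad-bc)$, $(a+c)/(ad-bc)$ are bounded, whereas you package the same facts into the generic estimate $\norm{M_1 x} \geq \abs{\det M_1}\,\norm{x}/(2\norm{M_1}_{\mathrm{op}})$ and quote \corref{detM1} explicitly.
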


\begin{proof}
  Write $M_1$ as
  \[
    M_1 = \begin{pmatrix}
      \phantom{-}\frac{a}{\abs{U}} & -\frac{b}{\abs{V}} \\
      -\frac{c}{\abs{U}} & \phantom{-}\frac{d}{\abs{V}}
    \end{pmatrix}.
  \]
  (Here we have used that the distortion of $\Phi$ and~$\Psi$ are small, so
  $\opD\Phi/\opD\Psi \asymp \abs{V}/\abs{U}$.)  Then
  \[
    M_1\inv = (ad-bc)\inv \begin{pmatrix}
      d\abs{U} & b\abs{U} \\
      c\abs{V} & a\abs{V}
    \end{pmatrix}.
  \]
  We are using the max-norm, hence
  \[
    \norm{M_1\inv} = (ad-bc)\inv \cdot \max\{(b+d)\abs{U},(c+a)\abs{V}\}.
  \]
  It can be checked that $(b+d)/(ad-bc)$ and $(a+c)/(ad-bc)$ are bounded by
  some $K$.  Let $k = 1/K$ to finish the proof.
\end{proof}

\begin{proposition} \label{prop:c-partials}
  If $f \in \setK \cap \setLS_\Omega$ and $1-\rcv(\opR f) \geq \lambda$ for
  some $\lambda\in(0,1)$ (not depending on~$f$), then
  \begin{gather*}
    \partial_c u' \asymp -\abs{C}\inv, \quad
    \partial_c v' \asymp \abs{C}\inv, \quad
    \partial_c c' \asymp -c'\eps'\abs{C}\inv, \\
    \partial_u c' \asymp c' \eps' \abs{U}\inv, \quad
    \partial_v c' \asymp -c' \eps' \abs{V}\inv.
  \end{gather*}
\end{proposition}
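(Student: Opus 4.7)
The plan is to apply \lemref{uvc-partials} to each of the three partial-derivative directions $\partial_c, \partial_u, \partial_v$ and, for each of the five quantities in the proposition, identify the dominating term using the same machinery developed for the proof of \propref{M1}. The starting point is that the critical values $Q_0(c) = u$ and $Q_1(c) = 1-v$ are independent of~$c$, so $\partial_c Q_0(c) = \partial_c Q_1(c) = 0$, and similarly $\partial_u Q_1 \equiv 0 \equiv \partial_v Q_0$. Combining these vanishing boundary terms with the periodic-point identities $Q_0(p) = \Phi\inv(p)$ and $Q_1(q) = \Psi\inv(q)$ (which hold as identities in the parameter being differentiated, so that $\partial\Phi\inv(p) = \partial Q_0(p)$ and $\partial\Psi\inv(q) = \partial Q_1(q)$ as total derivatives), one collapses \lemref{uvc-partials} to
\begin{align*}
  \abs{U}\,\partial_c u' &= -(1-u')\,\partial_c Q_0(p) - u'\,\partial_c \Phi\inv(q), \\
  \abs{V}\,\partial_c v' &= (1-v')\,\partial_c Q_1(q) + v'\,\partial_c \Psi\inv(p), \\
  \abs{C}\,\partial c' &= \partial c - \eps'\,\partial p - c'\,\partial q,
\end{align*}
with $\partial c = 1$ for $\partial = \partial_c$ and $\partial c = 0$ otherwise.

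The next task is to evaluate $\partial_*p$ and $\partial_*q$ via \eqnref{partial-fixedpt} applied to $p = \Phi\circ Q_0(p)$ and $q = \Psi\circ Q_1(q)$, then to expand the resulting $\partial_*f^{a+1}(p)$ and $\partial_*f^{b+1}(q)$ by the iterate formula \eqnref{partial-iterate} along the periodic orbits. The one-step variations $\partial_c f, \partial_u f, \partial_v f$ come directly from \eqnref{standard} and are uniformly bounded, while the orbit weights $\opD f^{a-i}(f^{i+1}(p))$ and $\opD f^{b-i}(f^{i+1}(q))$ are estimated using \lemref{derivs} together with the distortion control of \propref{dist}; the sums collapse to their last-term contributions up to geometric errors. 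The denominators are controlled by \remref{M1}: $\opD f^{a+1}(p)-1 \asymp \alpha u'/c'-1$ is a bounded positive constant, whereas $\opD f^{b+1}(q)-1 \asymp \alpha v'/\eps'-1$ is where the hypothesis $1-\rcv(\opR f)\geq\lambda$ enters. That hypothesis forces $v' \geq \lambda$ and hence $\opD f^{b+1}(q) \to \infty$ as $\lbb \to \infty$, which keeps $\partial_* q$ bounded despite the factor of $\eps'\inv$ lurking in the iterates near the right fixed point.

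Inserting the resulting estimates into the reduced formulas above yields the claimed asymptotics term by term. For $\partial_c u'$ the cross term $-u'\,\partial_c\Phi\inv(q)$ dominates: by \eqnref{partial-inv} and the bounded distortion $\opD\Phi(\Phi\inv(q))\asymp \abs{C}/\abs{U}$ one has $\partial_c\Phi\inv(q)\asymp \abs{U}/\abs{C}$, giving $\partial_c u' \asymp -\abs{C}\inv$; the competing $(1-u')\partial_c Q_0(p)$ term is suppressed by a factor of $\eps'$ via \remref{uv}. The computation for $\partial_c v'$ is symmetric but of opposite sign, driven by $v'\,\partial_c\Psi\inv(p)$. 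For $\partial_u c'$, the dominant term is $-\eps'\,\partial_u p$ with $\partial_u p = -\opD\Phi(Q_0(p))\,\partial_u Q_0(p)/(\opD f^{a+1}(p)-1) \asymp -\abs{C}/\abs{U}$, yielding $\partial_u c' \asymp c'\eps'\abs{U}\inv$; analogously $\partial_v c'$ is driven by $-c'\,\partial_v q \asymp -c'\eps'\abs{V}\inv$. For $\partial_c c'$, both $\partial_c p$ and $\partial_c q$ are close to $1$ (the periodic points shift essentially rigidly with $c$), and the identity $1 - \eps' - c' = 0$ forces an exact leading-order cancellation in $1 - \eps'\partial_c p - c'\partial_c q$; the residual arises from the $O(\eps')$ corrections to $\partial_c p$ and $\partial_c q$, producing the claimed $\partial_c c' \asymp -c'\eps'\abs{C}\inv$.

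The main obstacle is this final cancellation in the $\partial_c c'$ estimate. Unlike the other four asymptotics, where the leading behavior is obtained after one application of the mean-value and iterate estimates, here one must compute $\partial_c p$ and $\partial_c q$ to one order beyond leading and verify that the residual has the correct sign. This amounts to identifying the $O(\eps')$ contributions in the iterate sum \eqnref{partial-iterate} coming from the coupling between the orbit of $p$ (spending $a$ steps near the right fixed point) and the orbit of $q$ (spending $b$ steps near the left fixed point); several competing $\eps'$-sized terms of comparable magnitude arise whose signs must be tracked using the explicit forms of $\partial_c f$ from~\eqnref{standard}.
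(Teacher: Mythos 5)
Your overall route is the same as the paper's: reduce \lemref{uvc-partials} via $\partial_c[Q_0(c)]=\partial_c[Q_1(c)]=0$ and the periodic-point identities $\Phi\inv(p)=Q_0(p)$, $\Psi\inv(q)=Q_1(q)$, compute $\partial_*\Phi$, $\partial_*\Psi$ from \eqnref{partial-iterate}, get $\partial_*p$, $\partial_*q$ from \eqnref{partial-fixedpt}, and insert term by term using $\opD\Phi\asymp\abs{C}/\abs{U}$, $\opD\Psi\asymp\abs{C}/\abs{V}$, $\opD f^{a+1}(p)\asymp\alpha/c'$, $\opD f^{b+1}(q)\asymp\alpha/\eps'$. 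Your treatment of $\partial_c u'$, $\partial_c v'$, $\partial_u c'$, $\partial_v c'$ is essentially the paper's (note the paper for $\partial_c u'$ does not need to suppress $(1-u')\partial_c Q_0(p)$; it observes both terms have the same sign, so the sum is $\asymp$ the larger, which is already $\asymp -\abs{U}/\abs{C}$).

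Your sketch of the $\partial_c c'$ case contains a real inaccuracy which you should fix before claiming the proof is complete. The assertion that ``both $\partial_c p$ and $\partial_c q$ are close to $1$'' is false for $\partial_c p$: from $\partial_c\Phi(Q_0(p))\asymp-a$ and $\opD f^{a+1}(p)\asymp\alpha/c'$ one gets
\[
\partial_c p = \frac{\tfrac{p}{c}\,\opD f^{a+1}(p)-\partial_c\Phi(Q_0(p))}{\opD f^{a+1}(p)-1}
\approx\frac{\alpha+a}{\alpha-1},
\]
which is bounded but uniformly bounded away from~$1$ (it is $\geq(2\alpha-1)/(\alpha-1)$). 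Consequently $\eps'(1-\partial_c p)$ is $O(\eps')$ not because $1-\partial_c p$ is small but solely because of the prefactor~$\eps'$ — the ``exact leading-order cancellation'' you invoke covers only the $c'(1-\partial_c q)$ piece. The paper instead expands
\[
\partial_c c' = -\frac{c'\big(1-\partial_c\Psi(Q_1(q))\big)}{\abs{C}\big(\opD f^{b+1}(q)-1\big)}
-\frac{\eps'\big(1-\partial_c\Phi(Q_0(p))\big)}{\abs{C}\big(\opD f^{a+1}(p)-1\big)}
+\bigoh(c'\eps'/\eps),
\]
and the sign verification you flag as the ``main obstacle'' is dispatched in one line: both $\partial_c\Phi<0$ and $\partial_c\Psi<0$ directly from the iterate sums, hence $1-\partial_c\Phi>0$ and $1-\partial_c\Psi>0$, the two terms above are both negative, and each is $\asymp c'\eps'/\abs{C}$. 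No finer-than-leading-order bookkeeping of ``several competing $\eps'$-sized terms'' is needed. You should carry out this explicit expansion and the sign check to close the argument; as written your $\partial_c c'$ case is not complete.
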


\begin{proof}
  A straightforward calculation shows that
  \begin{equation} \label{eq:dcQ}
    \frac{\partial_c Q_0(x)}{Q_0'(x)} = -\frac{x}{c}
    \quad\text{and}\quad
    \frac{\partial_c Q_1(x)}{Q_1'(x)} = -\frac{1-x}{1-c}.
  \end{equation}
  This together with $\Phi \circ Q_0(p) = p$, $\Psi \circ Q_1(q) = q$,
  \eqnref{Phi-Psi-C} and~\eqnref{partial-fixedpt} gives
  \begin{equation*}
    \partial_c p =
      \frac{\tfrac{p}{c}\opD f^{a+1}(p) - \partial_c\Phi(Q_0(p))}%
      {\opD f^{a+1}(p) - 1},
    \quad
    \partial_c q =
      \frac{\tfrac{1-q}{\eps}\opD f^{b+1}(q) - \partial_c\Psi(Q_1(q))}%
      {\opD f^{b+1}(q) - 1}.
  \end{equation*}
  From \eqnref{partial-iterate} and~\eqnref{dcQ} we get
  \begin{align*}
    \partial_c\Phi(x) &= -\frac{1}{\eps} \sum_{i=0}^{a-1}
      \opD f^{a-i}(x_i) \cdot (1-x_i),&
      x_i &= f^i\circ\phi(x),\quad x \in U, \\
    \partial_c\Psi(x) &= -\frac{1}{c} \sum_{i=0}^{b-1}
      \opD f^{b-i}(x_i) \cdot x_i,&
      x_i &= f^i\circ\psi(x),\quad x \in V.
  \end{align*}
  Using a similar argument as in the proof of \propref{M1} this shows that
  \begin{equation*}
    \partial_c\Phi(x) \asymp -a
    \quad\text{and}\quad
    \partial_c\Psi(x) = -\bigoh(b\eps^{1-1/\alpha}),
  \end{equation*}
  and hence $\partial_c p \asymp 1$ and $\partial_c q \asymp 1$.

  Now apply \lemref{uvc-partials} using the fact that $\Phi\inv(p) = Q_0(p)$ to
  get
  \begin{equation*}
    \abs{U} \partial_c u' =
      -(1-u') \partial_c\big(Q_0(p)\big) - u' \partial_c\big(\Phi\inv(q)\big).
  \end{equation*}
  A calculation gives
  \[
    \partial_c\big(Q_0(p)\big) =
    \frac{\opD f^{a+1}(p) \left( \frac{p}{c} - \partial_c\Phi(Q_0(p))\right)}%
    {\opD\Phi(Q_0(p)) \left( \opD f^{a+1}(p) - 1\right)}
    \asymp \frac{1}{\opD\Phi(Q_0(p))}
  \]
  and
  \[
    \partial_c\big(\Phi\inv(q)\big) =
    \frac{\partial_c q - \partial_c\Phi\big(\Phi\inv(q)\big)}%
    {\opD\Phi\big(\Phi\inv(q)\big)}
    \asymp \frac{1}{\opD\Phi\big(\Phi\inv(q)\big)}.
  \]
  (In particular, both terms have the same sign.) But $\opD\Phi(x) \asymp
  \abs{C}/\abs{U}$, so this gives $\partial_c u' \asymp -\abs{C}\inv$.  The
  proof that $\partial_c v' \asymp \abs{C}\inv$ is almost identical.

  From \lemref{uvc-partials} we get
  \[
    \abs{C} \partial_c c' = c' (1 - \partial_c q) + \eps' (1 - \partial_c p),
  \]
  and hence
  \begin{align*}
    \partial_c c' &=
      \frac{c'}{\eps}
      \frac{\eps' \opD f^{b+1}(q) - \eps\abs{C}\inv (1-\partial_c\Psi(Q_1(q)))}%
      {\opD f^{b+1}(q)-1} \\
    &\quad+ \frac{\eps'}{c}
      \frac{c' \opD f^{a+1}(p) - c\abs{C}\inv (1-\partial_c\Phi(Q_0(p)))}%
      {\opD f^{a+1}(p)-1} \\
    &= -\frac{c'\big(1-\partial_c\Psi(Q_1(q))\big)}%
      {\abs{C}(\opD f^{b+1}(q)-1)} -
      \frac{\eps'\big(1-\partial_c\Phi(Q_0(p))\big)}%
      {\abs{C}(\opD f^{a+1}(p)-1)}
      + \bigoh(c'\eps'/\eps).
  \end{align*}
  From \remref{M1} we know that $\opD f^{a+1}(p) \asymp \alpha u'/c'$ and $\opD
  f^{b+1}(q) \asymp \alpha v'/\eps'$.  Note that $u' \approx 1$ for $f \in
  \setK \cap \setLS_\Omega$, but that $v'$ can in general be small (this
  happens if $f$ renormalizes to a map whose right branch is trivial).
  However, the assumption that $1-\rcv(\opR f) \geq \lambda$ implies that $v'
  \geq \e^{-\delta}\lambda$ and hence we may assume that $v'/\eps' \gg 1$ (by
  increasing $\lbb$ if necessary).  Thus, $\opD f^{a+1}(p) \asymp \alpha/c'$
  and $\opD f^{b+1}(q) \asymp \alpha/\eps'$ and by plugging this into the above
  equation we get $\partial_c c' \asymp -c'\eps'\abs{C}\inv$.  (Note that
  $\partial_c\Phi(x)<0$ and $\partial_c\Psi(x)<0$ so there is no cancellation
  happening.)

  Apply \lemref{uvc-partials} to get
  \[
    \abs{C} \partial_u c' = -c' \partial_u q - \eps' \partial_u p.
  \]
  This and the proof of \propref{M1} shows that
  \[
    \partial_u c' =
    \frac{c' \big(\opD f^{a+1}(p)-1\big) \partial_u\Psi(Q_1(q))
    + \eps' \big(\opD f^{b+1}(q)-1\big) \opD\Phi(Q_0(p)) \partial_u Q_0(p)}%
    {\abs{C}\big(\opD f^{a+1}(p)-1\big) \big(\opD f^{b+1}(q)-1\big)}.
  \]
  Since $c' (\opD f^{a+1}(p)-1) \asymp \alpha-c'$, $\eps' (\opD f^{b+1}(q)-1)
  \asymp \alpha - \eps'$, $\abs{\partial_u\Psi} \ll \abs{\opD\Phi}$, and
  $\partial_u Q_0(p) \approx 1$, this shows that
  \[
    \partial_u c' \asymp c' \eps' \frac{\opD\Phi(Q_0(p))}{\abs{C}}
    \asymp \frac{c' \eps'}{\abs{U}}.
  \]
  The proof that $\partial_v c' \asymp -c' \eps' \abs{V}\inv$ is almost
  identical.
\end{proof}

\begin{notation}
We need some new notation to state the remaining propositions.  Each pure map
$\phi_\sigma$ in the decomposition $\decomp\phi$ can be identified with a real
number which we denote $s_\sigma \in \reals$, and each $\psi_\tau$ in the
decomposition $\decomp\psi$ can be identified with a real number $t_\tau \in
\reals$:
\[
  \reals \ni s_\sigma \leftrightarrow \phi_\sigma = \decomp\phi(\sigma) \in
  \pures,
  \qquad
  \reals \ni t_\tau \leftrightarrow \psi_\tau = \decomp\psi(\tau) \in \pures.
\]
We put primes on these numbers to denote that they come from the
renormalization, so $s'_{\sigma'} \in \reals$ is identified with
$\decomp\phi'(\sigma')$ and $t'_{\tau'} \in \reals$ is identified with
$\decomp\psi'(\tau')$.  Note that $\sigma$, $\sigma'$ are used to denote times
for $\decomp\phi$, $\decomp\phi'$, and $\tau$, $\tau'$ are used to denote times
for $\decomp\psi$, $\decomp\psi'$, respectively.
\end{notation}

\begin{proposition} \label{prop:M3}
  There exists $K$ such that if $\decomp f \in \dinvset \cap \lorenzd_\Omega$,
  then
  \begin{align*}
    \abs{\partial_u s'_{\sigma'}} &\leq K\frac{\abs{s'_{\sigma'}}}{\abs{U}},&
    \abs{\partial_v s'_{\sigma'}} &\leq K\frac{\abs{s'_{\sigma'}}}{\abs{V}},&
    \abs{\partial_c s'_{\sigma'}} &\leq K\frac{\abs{s'_{\sigma'}}}{\abs{C}}, \\
    \abs{\partial_u t'_{\tau'}} &\leq K\frac{\abs{t'_{\tau'}}}{\abs{U}},&
    \abs{\partial_v t'_{\tau'}} &\leq K\frac{\abs{t'_{\tau'}}}{\abs{V}},&
    \abs{\partial_c t'_{\tau'}} &\leq K\frac{\abs{t'_{\tau'}}}{\abs{C}}.
  \end{align*}
\end{proposition}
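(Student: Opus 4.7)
The plan is to estimate $\partial_\xi s'_{\sigma'}$ (and $\partial_\xi t'_{\tau'}$) directly via \lemref{diffeo-partials}, exploiting the fact that for a pure map $\mu_s$ the signed distortion admits the integral representation $s' = \int_x^y \opN\mu_s(\zeta)\,d\zeta$ whenever $\mu_{s'} = \opZ(\mu_s;[x,y])$. I would first classify each component of
\[
  \decomp\phi' = \opZ(\decomp\phi;U)\du\opZ(Q;U_1)\du\opZ(\decomp\psi;Q(U_1))\du\cdots\du\opZ(Q;U_a)\du\opZ(\decomp\psi;Q(U_a))
\]
into three types: (i) those coming from $\opZ(\decomp\phi;U)$; (ii) those from $\opZ(\decomp\psi;Q(U_i))$ for $i=1,\ldots,a$; and (iii) those of the form $\opZ(Q;U_i)$. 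For types~(i) and~(ii) the underlying pure map $\mu_s$ is independent of the parameters $(u,v,c)$, so the last two terms of \lemref{diffeo-partials} vanish. Because the decompositions have distortion bounded by $\delta$ and $\mu_s \in \pures$, the nonlinearity $\opN\mu_s$ is nearly constant on the zoom interval $[x,y]$, so $s'_{\sigma'} \asymp \opN\mu_s(x)\cdot(y-x)$ and
\[
  \partial_\xi s'_{\sigma'} = \opN\mu_s(y)\partial_\xi y - \opN\mu_s(x)\partial_\xi x \asymp s'_{\sigma'}\cdot\frac{\partial_\xi(y-x)}{y-x}.
\]
The desired bound thus reduces to establishing $|\partial_\xi(y-x)|/|y-x|\leq K/|X|$ with $X\in\{U,V,C\}$.

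To obtain this logarithmic-derivative estimate, I would express the endpoints $x,y$ as images of the periodic points $p,q$ bounding $C$ (or of endpoints of some cycle interval $U_i$ or $V_j$) under an appropriate partial composition of decompositions and partial iterates of~$f$, and then apply the chain rule in the form of \eqnref{partial-iterate}. Combining the derivative estimates of \lemref{derivs} with the bounds on $\partial_\xi p$, $\partial_\xi q$ already obtained in the proof of \propref{M1} and in \propref{c-partials}, a typical calculation for $\xi=u$ gives
\[
  \partial_u(y-x) \asymp \frac{|y-x|}{|C|}\big(\partial_u q - \partial_u p\big) \asymp \frac{|y-x|}{|C|}\cdot\frac{|C|}{|U|} = \frac{|y-x|}{|U|},
\]
since $\partial_u q \ll \partial_u p \asymp |C|/|U|$ by the proof of \propref{M1}. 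Because all zoom intervals arising in $\decomp\phi'$ are comparable up to factors $\e^{\bigoh(\delta)}$ to one of $|U|,|U_i|$, or $|Q(U_i)|$ (by the distortion bound on $\dinvset$ together with \lemref{derivs}), this reasoning applies uniformly to every component of types~(i) and~(ii). The analogous bookkeeping for $\xi=v$ and $\xi=c$, and for the symmetric decomposition $\decomp\psi'$, produces the factor $1/|V|$ or $1/|C|$ in place of $1/|U|$.

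The main technical obstacle is type~(iii), since $\mu_s = Q$ depends explicitly on $c$ (and through its amplitudes on $u,v$), so the two remaining terms $\partial_\xi(\opD Q)(\cdot)/\opD Q(\cdot)$ in \lemref{diffeo-partials} do not vanish. From the explicit formula \eqnref{standard} one computes $\partial_c\log\opD Q_1(z) = \alpha/(1-c) - (\alpha-1)/(z-c)$, and hence the difference of this expression at $y_i$ and $x_i$ equals $(\alpha-1)|U_i|/((y_i-c)(x_i-c))$. Since $|s'_{\sigma'}|\asymp(\alpha-1)|U_i|/\distance(c,U_i)$ for $\opZ(Q;U_i)$, this extra contribution is of order $|s'_{\sigma'}|/\distance(c,U_i)$, and one must verify $\distance(c,U_i)\gtrsim|C|$. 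This follows from \lemref{invorbits}: each preimage of $c$ under $f_1$ sits at distance of order $\eps$ from $c$, so each $U_i\subset(c,1]$ satisfies $\distance(c,U_i)\gtrsim\eps$, while $|C|=\bigoh(\eps^{a/(\alpha-1)})$ by \eqnref{ubL}, giving $\distance(c,U_i)\gg|C|$. The same argument handles $\partial_u,\partial_v$ for type~(iii) and the mirror-image contributions in $\decomp\psi'$, so combining all three types for both decompositions yields the six inequalities claimed.
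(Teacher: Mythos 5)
Your proposal follows essentially the same route as the paper: classify each component of $\decomp\phi'$ (resp.\ $\decomp\psi'$) by type, apply \lemref{diffeo-partials}, express $\partial_\xi x$, $\partial_\xi y$ via the transfer map $T:I\to C$ in terms of $\partial_\xi p$, $\partial_\xi q$, $\partial_\xi T$, and invoke the estimates from Propositions~\ref{prop:M1} and~\ref{prop:c-partials}. Two remarks. First, the reduction step
$\partial_\xi s'_{\sigma'} = \opN\mu_s(y)\partial_\xi y - \opN\mu_s(x)\partial_\xi x \asymp s'_{\sigma'}\,\partial_\xi(y-x)/(y-x)$
is overstated as a two-sided estimate: writing the left side as $\opN\mu_s(x)\,\partial_\xi(y-x) + \bigl(\opN\mu_s(y)-\opN\mu_s(x)\bigr)\partial_\xi y$, the second (error) term is not automatically dominated by the first in the presence of cancellation, for instance in the $\partial_c$ direction where $\partial_c p \asymp \partial_c q \asymp 1$. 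What one should prove is the one-sided bound $\abs{\partial_\xi s'_{\sigma'}} \lesssim \abs{s'_{\sigma'}}\,(\abs{\partial_\xi x}+\abs{\partial_\xi y})/\abs{y-x}$ together with $\abs{\opD\opN\mu_s}\asymp\abs{\opN\mu_s}^2/(\alpha-1)$ to control the error; both then reduce to bounding $\abs{\partial_\xi x}$ and $\abs{\partial_\xi y}$ separately, which your chain-rule/periodic-point computation does in fact accomplish. Since only an upper bound is required, your argument goes through, but the $\asymp$ should be replaced by an inequality. Second, you are right to single out type~(iii) as the genuine obstacle: the paper asserts the last two terms of \lemref{diffeo-partials} cancel because $\partial_v Q_1'/Q_1' = 1/v$ (and similarly $\partial_u Q_0'/Q_0' = 1/u$), but this cancellation fails for $\partial_c$, where $\partial_c\log\opD Q_1(z)$ depends on $z$. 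Your computation of the residual contribution $(\alpha-1)\abs{U_i}/\bigl((y_i-c)(x_i-c)\bigr) \asymp \abs{s'_{\sigma'}}/\distance(c,U_i)$, combined with $\distance(c,U_i)\gtrsim\eps\gg\abs{C}$ (from \lemref{invorbits} and \eqnref{ubL}), is a genuine addition that closes a step the paper passes over as ``almost identical.''
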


\begin{proof}
  We will compute $\partial_v s'_{\sigma'}$; the other calculations are almost
  identical.  There are four cases to consider depending on which time in the
  decomposition $\decomp\phi'$ we are looking at:
  \begin{inparaenum}[(1)]
    \item $\decomp\phi'(\sigma') = \opZ(\phi_\sigma;I)$, \label{i:s}
    \item $\decomp\phi'(\sigma') = \opZ(\psi_\tau;I)$, \label{i:t}
    \item $\decomp\phi'(\sigma') = \opZ(Q_0;I)$, \label{i:Q0}
    \item $\decomp\phi'(\sigma') = \opZ(Q_1;I)$. \label{i:Q1}
  \end{inparaenum}
  In each case let $I=[x,y]$ and let $T:I \to C$ be the `transfer map'
  to~$C$.  This means that $T = f^i \circ \gamma$ for some $i$
  and $\gamma$ is a partial composition (e.g.\ $\gamma =
  \opO_{\geq\sigma}(\decomp\phi)$ in case~\ref{i:s}) or a pure map (in cases
  \ref{i:Q0} and~\ref{i:Q1}).

  In case~\ref{i:s} \lemref{diffeo-partials} gives
  \[
    \partial_v s'_{\sigma'} =
    \frac{\opN\phi_\sigma(y)}{\opD T(y)} (\partial_v q - \partial_v T(y)) -
    \frac{\opN\phi_\sigma(x)}{\opD T(x)} (\partial_v p - \partial_v T(x)).
  \]
  By \lemref{zoom} $\opN\phi_\sigma(y) = \opN\phi'_{\sigma'}(1)/\abs{I}$ and
  hence
  \[
    \frac{\opN\phi_\sigma(y)}{\opD T(y)} \asymp 
    \frac{\opN\phi'_{\sigma'}(1)/\abs{I}}{\abs{C}/\abs{I}} \asymp 
    \frac{s'_{\sigma'}}{\abs{C}}.
  \]
  Here we have used that the nonlinearity of $\phi'_{\sigma'}$ does not change
  sign so $s'_{\sigma'} = \int \opN\phi'_{\sigma'}$ and that $\int
  \opN\phi'_{\sigma'} \approx \opN\phi'_{\sigma'}(1)$ since the nonlinearity is
  close to being constant (which is true since $\decomp\phi'$ is pure and has
  very small norm).

  We now need to estimate $\partial_v T$ but this can very roughly be bounded
  by $\partial_v\Phi$ since
  \[
    \partial_v T(y) = \partial_v f_1^i(\gamma(y)),
  \]
  so the estimate that was used for $\partial_v\Phi$ in the proof of
  \propref{M1} can be employed.  From the same proof we thus get that
  $\partial_v q$ dominates both $\partial_v p$ and $\partial_v T$.
  
  The above arguments show that
  \[
    \partial_v s'_{\sigma'}
    \asymp
    \frac{s'_{\sigma'}}{\abs{C}} \partial_v q
    \asymp
    -\frac{s'_{\sigma'}}{\abs{C}} \frac{\opD\Psi(Q_1(q))}{\opD f^{b+1}(q)-1}
    \asymp
    -\frac{s'_{\sigma'}}{\abs{V}} \frac{1}{\opD f^{b+1}(q)-1}.
  \]
  This concludes the calculations for case~\ref{i:s}.

  Case~\ref{i:t} is almost identical to case~\ref{i:s}.  Case \ref{i:Q1}
  differs in that \lemref{diffeo-partials} now gives two extra terms
  \begin{align*}
    \partial_v s'_{\sigma'} &=
      \frac{\opN Q_1(y)}{\opD T(y)} (\partial_v q - \partial_v T(y)) -
      \frac{\opN Q_1(x)}{\opD T(x)} (\partial_v p - \partial_v T(x)) \\
    &\quad+ \frac{\partial_v Q'_1(y)}{Q'_1(y)}
      - \frac{\partial_v Q'_1(x)}{Q'_1(x)}.
  \end{align*}
  However, $\partial_v Q_1 = 1/v$ so the last two terms cancel.  The rest of
  the calculations go exactly like in case~\ref{i:s}.  Case~\ref{i:Q0} is
  similar to case~\ref{i:Q1}.
\end{proof}

\begin{remark}
  A key point in the above proof is that deformations in a decomposition
  direction is monotone.  This is what allowed us to estimate the partial
  derivatives of the `transfer map' $T$ by the partial derivatives of $\Phi$
  or~$\Psi$.
\end{remark}

\begin{proposition} \label{prop:M2-M4}
  There exists $K$ and $\rho>0$ such that if $\decomp f \in \dinvset \cap
  \lorenzd_\Omega$ and $1-\rcv(\opR \decomp f) \geq \lambda$ for some
  $\lambda\in(0,1)$ (not depending on~$\decomp f$),
  then
  \begin{gather*}
    \abs{\partial_\star u'} \leq
      \frac{K\eps^\rho}{\abs{C}}, \qquad
    \abs{\partial_\star v'} \leq
      \frac{K\eps^\rho}{\abs{C}}, \qquad
    \abs{\partial_\star c'} \leq
      \frac{Kc'\eps'\eps^\rho}{\abs{C}}, \\
    \abs{\partial_\star s'_{\sigma'}} \leq
      \frac{K\eps^\rho\abs{s'_{\sigma'}}}{\abs{C}}, \qquad
    \abs{\partial_\star t'_{\tau'}} \leq
      \frac{K\eps^\rho\abs{t'_{\tau'}}}{\abs{C}},
  \end{gather*}
  for $\star \in \{s_\sigma,t_\tau\}$.
\end{proposition}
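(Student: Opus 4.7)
The plan is to mimic the structure of Propositions \ref{prop:M1}, \ref{prop:c-partials}, and \ref{prop:M3}, but now differentiating in the decomposition directions $s_\sigma$ (and, dually, $t_\tau$). The basic simplification is that $\partial_{s_\sigma}$ annihilates $u$, $v$, $c$, $Q_0$, $Q_1$, and $\psi$, so the entire effect reduces to propagating a perturbation of a single pure map $\phi_\sigma$ through $\phi = \opO\decomp{\phi}$ and then through the iterates $\Phi = f_1^a\circ\phi$ and $\Psi = f_0^b\circ\psi$ (the latter depending on $\phi$ via $f_0 = \phi\circ Q_0$), together with the fixed-point locations $p$ and~$q$.

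First I would estimate $\partial_{s_\sigma}\phi$ in $\setC^0$. Writing $\phi = \opO_{>\sigma}(\decomp{\phi}) \circ \phi_\sigma \circ \opO_{<\sigma}(\decomp{\phi})$ and applying the chain rule gives
\[
  \partial_{s_\sigma}\phi(x) = \opD\big[\opO_{>\sigma}(\decomp{\phi})\big]\big(\phi_\sigma(\phi_-(x))\big) \cdot \partial_{s_\sigma}\phi_\sigma(\phi_-(x)),
\]
where $\phi_- = \opO_{<\sigma}(\decomp{\phi})$. The explicit formula~\eqnref{pure} yields a bounded $\setC^0$-estimate for $\partial_{s_\sigma}\mu_{s_\sigma}$, and \lemref{finite-decomp-prop} applied to $\decomp{\phi}$ (whose distortion is at most~$\delta$) controls $\opD[\opO_{>\sigma}(\decomp{\phi})]$ uniformly. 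This estimate feeds into the iterate formula~\eqnref{partial-iterate} applied to $\Phi$ and $\Psi$: the $\opD f^{a-i}$ and $\opD f^{b-i}$ factors in the resulting sums are controlled by \lemref{derivs}, and the summations are handled exactly as in the proof of \propref{M1}. With $\partial_{s_\sigma}\Phi$ and $\partial_{s_\sigma}\Psi$ in hand, $\partial_{s_\sigma} p$ and $\partial_{s_\sigma} q$ follow from~\eqnref{partial-fixedpt} applied to $\Phi\circ Q_0$ and $\Psi\circ Q_1$, and the five desired bounds follow by substituting into \lemref{uvc-partials} and \lemref{diffeo-partials} — exactly as in the proofs of \propref{c-partials} and \propref{M3}. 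The condition $1-\rcv(\opR\decomp{f})\geq\lambda$ enters as in \propref{c-partials} to guarantee $\opD f^{b+1}(q)\asymp\alpha/\eps'$ rather than being close to~$1$.

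The small factor $\eps^\rho$ should emerge from two sources. First, the iterate sums appearing in $\partial_{s_\sigma}\Phi$ and $\partial_{s_\sigma}\Psi$ pick up contributions of order $\sum\abs{U_i}$ and $\sum\abs{V_j}$, which are of order $\eps^t$ for some $t>0$ by the geometric estimates of \secref{inv-set} (in particular via~\eqnref{ubL}). Second, the ratios that appear after substitution into \lemref{uvc-partials} and \lemref{diffeo-partials} --- for example $\opD\Psi(\Psi\inv(q))/\opD\Phi(\Phi\inv(q))$ --- are controlled by the Koebe distortion bounds of \propref{dist}, which are themselves of order $\eps^t$. The main obstacle I anticipate is the bookkeeping required to combine these competing factors --- the expansion $\opD f^a$, $\opD f^b$, the small $\abs{U_i}$ and $\abs{V_j}$, and the Koebe distortion --- into a single clean bound of the claimed form; in particular one must check that the position $\sigma\in T$ affects the estimate on $\partial_{s_\sigma}\phi$ only through a uniform constant, which follows because $\decomp{\phi}$ has summable distortion bounded by~$\delta$.
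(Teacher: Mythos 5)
Your proposal gets the global architecture right (chain rule through $\phi = \opO_{>\sigma}(\decomp\phi)\circ\phi_\sigma\circ\opO_{<\sigma}(\decomp\phi)$, propagation through $\Phi$ and $\Psi$ via the iterate formula, fixed-point formula for $\partial p,\partial q$, then substitution into Lemmas \ref{lem:uvc-partials} and \ref{lem:diffeo-partials}), and you correctly flag that $\Psi$ depends on $\phi$ through $f_0=\phi\circ Q_0$. But the argument has a genuine gap: you misidentify where the small factor $\eps^\rho$ comes from, and with only the mechanisms you name the estimate does not close.

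You propose that $\eps^\rho$ arises from (a) iterate sums of order $\sum\abs{U_i}$, $\sum\abs{V_j}$ and (b) Koebe distortion ratios such as $\opD\Psi(\Psi\inv(q))/\opD\Phi(\Phi\inv(q))$. Neither is the operative source. The crucial structural input, which your proposal leaves at ``a bounded $\setC^0$-estimate for $\partial_{s_\sigma}\mu_{s_\sigma}$,'' is the much sharper fact that $\partial_s\mu_s(x)\asymp -x(1-x)$ for pure maps of small signed distortion (the paper's equation~\eqnref{ds}); that is, the derivative \emph{vanishes at both endpoints}. This vanishing is what makes the estimate go through. For the $\Phi$-direction the factor $(1-x)$ transports, via $\opD\Phi$ and the mean value theorem, to $1-\Phi(x)\approx\eps$, giving $\abs{\partial_{s_\sigma}\Phi}\leq K\eps$. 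For the $\Psi$-direction the vanishing at $0$ yields $\abs{\partial_{s_\sigma}f_0(x_{i-1})}\leq K\abs{x_i}$, and only then does the $\propref{M1}$-type summation produce the bound $\bigoh(b\,\eps^{1-1/\alpha})$. If $\partial_{s_\sigma}\phi_\sigma$ were merely bounded, the chain rule would hand you $\abs{\partial_{s_\sigma}\Phi}\asymp\opD\Phi\asymp\abs{C}/\abs{U}\gg 1$, which is far too large, and neither the sums $\sum\abs{U_i}$ (which enter the norm invariance in \propref{dinvset}, not these partial derivatives) nor the distortion ratios (which are $\asymp 1$ and in fact cancel in the $M_1$ computation) can rescue it. So the key missing idea is the endpoint vanishing of the pure-map deformation, and with it the estimates~\eqnref{phi-s} and~\eqnref{psi-s}; once those are in hand, the rest of your outline follows the paper.

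A secondary omission: for $\star\in\{s'_{\sigma'},t'_{\tau'}\}$ the formula of \lemref{diffeo-partials} produces the extra terms $\partial_{s_\sigma}(\opD\phi_\sigma)/\opD\phi_\sigma$ evaluated at the two endpoints of $I$, which do not simply cancel (unlike the $Q$-cases in \propref{M3}); one must check, again from the explicit form of $\mu_s$, that their difference is $\approx y-x = \abs{I}$ and hence negligible.
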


\begin{proof}
  Let us first consider $\partial_{s_\sigma}$, that is deformations in the
  direction of~$\phi_\sigma$.  Since $\phi_\sigma$ is pure we can use
  \eqnref{pure} to compute
  \begin{equation} \label{eq:ds}
    \partial_{s_\sigma} \phi_\sigma(x) \asymp -x(1-x).
  \end{equation}

  From \eqnref{partial-fixedpt} we get
  \[
    \partial_{s_\sigma} p =
    -\frac{\partial_{s_\sigma}\Phi\big(Q_0(p)\big)}{\opD f^{a+1}(p)-1}
    \quad\text{and}\quad
    \partial_{s_\sigma} q =
    -\frac{\partial_{s_\sigma}\Psi\big(Q_1(q)\big)}{\opD f^{b+1}(q)-1}.
  \]
  so the first thing to do is to calculate the partial derivatives of $\Phi$
  and~$\Psi$.

  Let $x \in U$, then
  \begin{align*}
    \partial_{s_\sigma}\Phi(x) &=
      \partial_{s_\sigma}\big( f_1^a \circ \opO_{>\sigma}(\decomp\phi) \circ
      \phi_\sigma \circ \opO_{<\sigma}(\decomp\phi) \big)(x) \\
    &= \opD\big( f_1^a \circ \opO_{>\sigma}(\decomp\phi) \big)
      \big( \opO_{\leq\sigma}(\decomp\phi)(x) \big) \cdot
      \partial_{s_\sigma}\phi_\sigma\big( \opO_{<\sigma}(\decomp\phi)(x) \big).
  \end{align*}
  Note that we have used that $f_1$ does not depend on $s_\sigma$.  From
  \eqnref{ds} we thus get that
  \begin{equation} \label{eq:phi-s}
    \abs{\partial_{s_\sigma}\Phi(x)}
    \leq K' \cdot \opD\Phi(x) (1-x) \leq K \eps.
  \end{equation}

  Let $x \in V$ and let $x_i = f_0^i \circ \psi(x)$.  As in the proof of
  \propref{M1} we have
  \[
    \partial_{s_\sigma}\Psi(x) = \partial_{s_\sigma} f_0(x_{b-1})
    + \sum_{i=1}^{b-1} \opD f_0^{b-i}(x_i) \partial_{s_\sigma} f_0(x_{i-1}).
  \]
  From \eqnref{ds} we get
  \begin{align*}
    \abs{\partial_{s_\sigma} f_0(x_{i-1})} &= 
      \abs[\big]{
      \opD\big( \opO_{>\sigma}(\decomp\phi)\big)
      \big( \opO_{\leq\sigma}(\decomp\phi) \circ Q_0(x_{i-1}) \big)
      \cdot \partial_{s_\sigma}
      \big( \opO_{<\sigma}(\decomp\phi) \circ Q_0(x_{i-1}) \big)} \\
    &\leq K\abs{x_i}.
  \end{align*}
  Using the same estimate as in the proof of \propref{M1} this
  shows that
  \begin{equation} \label{eq:psi-s}
    \abs{\partial_{s_\sigma}\Psi(x)} \leq
    K' (1 - x_b) + \bigoh(b \eps^{1-1/\alpha}) = \bigoh(b \eps^{1-1/\alpha}).
  \end{equation}

  We can now argue as in the proof of \propref{M1} to find bounds on
  $\partial_{s_\sigma} \star$ for $\star \in \{u',v',c'\}$.  From
  \lemref{uvc-partials} we get
  \begin{align*}
    \partial_{s_\sigma} u' &=
      \frac{1-u'}{\abs{U}}\cdot
      \frac{\partial_{s_\sigma}\Phi(Q(p))}{\opD\Phi(Q(p))} \cdot
      \frac{\opD f^{a+1}(p)}{\opD f^{a+1}(p)-1}
      + \frac{u'}{\abs{U}} \frac{\partial_{s_\sigma}\Phi\big(\Phi\inv(q)\big)
      - \partial_{s_\sigma} q}{\opD\Phi\big(\Phi\inv(q)\big)}, \\
    -\partial_{s_\sigma} v' &=
      \frac{1-v'}{\abs{V}}\cdot
      \frac{\partial_{s_\sigma}\Psi(Q(q))}{\opD\Psi(Q(q))} \cdot
      \frac{\opD f^{b+1}(q)}{\opD f^{b+1}(q)-1}
      + \frac{v'}{\abs{V}} \frac{\partial_{s_\sigma}\Psi\big(\Psi\inv(p)\big)
      - \partial_{s_\sigma} p}{\opD\Psi\big(\Psi\inv(p)\big)}, \\
    \partial_{s_\sigma} c' &=
      c' \cdot \frac{\partial_{s_\sigma}\Psi\big(Q_1(q)\big)}%
      {\opD f^{b+1}(q)-1} +
      \eps' \cdot \frac{\partial_{s_\sigma}\Phi\big(Q_0(p)\big)}%
      {\opD f^{a+1}(p)-1}.
  \end{align*}
  Use that $\opD\phi \asymp \abs{C}/\abs{U}$, $\opD\Psi \asymp
  \abs{C}/\abs{V}$, $\opD f^{a+1}(p) \asymp \alpha/c'$ and $\opD f^{b+1}(q)
  \asymp \alpha/\eps'$ (see the proof of \propref{c-partials}) to finish the
  estimates for $\partial_{s_\sigma} u'$, $\partial_{s_\sigma} v'$ and
  $\partial_{s_\sigma} c'$.  Note that $b\eps^r \to 0$ for any $r > 0$ so it is
  clear from \eqnref{phi-s} and~\eqnref{psi-s} that we can find a $\rho>0$ such
  that $\abs{\partial_{s_\sigma}\Phi} < K\eps^\rho$ and
  $\abs{\partial_{s_\sigma}\Psi} < K\eps^\rho$.

  In order to find bounds for $\star \in \{s'_{\sigma'},t'_{\tau'}\}$ we
  argue as in the proof of \propref{M3}.  The last two terms from
  \lemref{diffeo-partials} are slightly different (when nonzero).  In this
  case they are given by
  \[
    \frac{\partial_{s_\sigma}\big(\opD\phi_\sigma\big)(y)}{\opD\phi_\sigma(y)}-
    \frac{\partial_{s_\sigma}\big(\opD\phi_\sigma\big)(x)}{\opD\phi_\sigma(x)}.
  \]
  Using \eqnref{ds} we can calculate this difference.  For $\abs{s_\sigma}\ll1$
  it is close to $y-x$ which turns out to be negligible.  All other details are
  exactly like the proof of~\propref{M3}.

  The estimates for $\partial_{t_\tau}$ are handled similarly.  The only
  difference is the estimates of the partial derivatives of $\Phi$ and~$\Psi$.
  These can be determined by arguing as in the above and the proof of
  \propref{M1} which results in
  \begin{equation}
    \abs{\partial_{t_\tau}\Phi(x)} \leq K\eps^{1-1/\alpha}
    \quad\text{and}\quad
    \abs{\partial_{t_\tau}\Psi(y)} \leq Ka\eps,
  \end{equation}
  for $x \in U$ and $y \in V$.  The remaining estimates are handled identically
  to the above.
\end{proof}


\section{Invariant cone field} 
\label{sec:cone-field}

A standard way of showing hyperbolicity of a linear map is to find an invariant
cone field with expansion inside the cones and contraction in the complement of
the cones.  In this section we show that the derivative of the renormalization
operator has an invariant cone field and that it expands these cones.  However,
our estimates on the derivative are not sufficient to prove contraction in the
complement of the cones so we cannot conclude that the derivative is
hyperbolic.  The results in this section are used in \secref{island-structure}
to study the structure of the parameter plane and in \secref{unstable} to
construct unstable manifolds in the limit set of renormalization.

Let
\[
  \cone(\decomp f,\kappa) = \{ (x,y) \mid \norm{y} \leq \kappa \norm{x} \}
\]
denote the standard horizontal $\kappa$--cone \index{cone field} on the tangent
space at $\decomp f$.  Recall that we decompose the tangent space into a
two-dimensional subspace with coordinate $x$ and a codimension two subspace
with coordinate~$y$.  The $x$--coordinate corresponds to the $(u,v)$--subspace
in~$\lorenzd$.  We use the max-norm so if $z=(x,y)$ then $\norm{z} =
\max\{\norm{x},\norm{y}\}$.

\begin{proposition} \label{prop:cone-field}
  Assume $\decomp f \in \dinvset \cap \lorenzd_\Omega$ and $1-\rcv(\opR\decomp
  f) \geq \lambda$ for some $\lambda \in (0,1)$ (not depending on $\decomp f$).
  Define
  \[
    \lb\kappa(\decomp f) = \lb K
    \max\{\eps,\distortion\decomp\phi,\distortion\decomp\psi\}
    \quad\text{and}\quad
    \ub\kappa(\decomp f) = \ub K
    \min\left\{\frac{\abs{C}}{\abs{U}},\frac{\abs{C}}{\abs{V}}\right\}.
  \]
  It is possible to choose $\ub K$, $\lb K$ (not depending on~$\decomp f$) such
  that if $\kappa \leq \ub\kappa(\decomp f)$, then
  \[
    \opD\opR_{\decomp f}\big(\cone(\decomp f,\kappa)\big) \subset
    \cone\big(\opR\decomp f,\lb\kappa(\opR \decomp f)\big)\;,
  \]
  for $\lbb$ large enough.
  In particular, the cone field $\decomp f \mapsto \cone(\decomp f,1)$ is
  mapped strictly into itself by~$\opD\opR$.
\end{proposition}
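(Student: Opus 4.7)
The plan is to argue directly from the block form~\eqnref{M} of $M=\opD\opR_{\decomp f}$ together with the operator bounds of~\thmref{opnorms}. Fix $(x,y)\in\cone(\decomp f,\kappa)$ with $\kappa\le\ub\kappa(\decomp f)$ and write $(x',y')=M(x,y)=(M_1x+M_2y,\,M_3x+M_4y)$. The goal is to show that $\norm{y'}\le\lb\kappa(\opR\decomp f)\norm{x'}$.

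First I would derive a lower bound on $\norm{x'}$. From~\thmref{opnorms}, $\norm{M_1x}\ge k\min\{\abs{U}\inv,\abs{V}\inv\}\norm{x}$ and $\norm{M_2y}\le K\abs{C}\inv\norm{y}$. The cone hypothesis $\norm{y}\le\kappa\norm{x}\le\ub K\min\{\abs{C}/\abs{U},\abs{C}/\abs{V}\}\norm{x}$ converts the latter into $\norm{M_2y}\le K\ub K\min\{\abs{U}\inv,\abs{V}\inv\}\norm{x}$. Choosing $\ub K$ small enough that $K\ub K\le k/2$ yields $\norm{x'}\ge(k/2)\min\{\abs{U}\inv,\abs{V}\inv\}\norm{x}$.

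Next I would bound $\norm{y'}\le\norm{M_3x}+\norm{M_4y}$ from above. The $M_4$ part is analogous to $M_2$: $\norm{M_4y}\le K\rho'\abs{C}\inv\norm{y}\le K\ub K\rho'\min\{\abs{U}\inv,\abs{V}\inv\}\norm{x}\le C\rho'\norm{x'}$. The $M_3$ part requires the bound $\norm{M_3x}\le K\rho'(\abs{x_1}/\abs{U}+\abs{x_2}/\abs{V})$ of~\thmref{opnorms} to be matched against $\norm{x'}$. The point is that \propref{M1} and~\propref{M3} give $M_1$ and $M_3$ a parallel ``column-weighted'' structure (column~$1$ scaled by $\abs{U}\inv$, column~$2$ by $\abs{V}\inv$), so the explicit diagonal-dominant formula for $M_1$ should deliver the sharper directional estimate $\norm{M_1x}\ge c\max\{\abs{x_1}/\abs{U},\abs{x_2}/\abs{V}\}$. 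Plugging this into the $M_3$ bound gives $\norm{M_3x}\le(2K/c)\rho'\norm{M_1x}\le C'\rho'\norm{x'}$, and summing the two contributions yields $\norm{y'}\le C''\rho'\norm{x'}$. Setting $\lb K=C''$ realises the claim $(x',y')\in\cone(\opR\decomp f,\lb\kappa(\opR\decomp f))$.

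For the final ``in particular'' assertion I would verify separately that $\kappa=1$ is admissible and that the image aperture is strictly smaller than~$1$. The former reduces to $1\le\ub K\min\{\abs{C}/\abs{U},\abs{C}/\abs{V}\}$, which holds because~\lemref{derivs} forces $\abs{U},\abs{V}\ll\abs{C}$ (both $U$ and $V$ are pull-backs of $C$ through the expanding iterates $f_1^a$ and $f_0^b$). The latter reduces to $\lb K\rho'<1$, which holds for $\lbb$ large since $\eps'$ and the distortions of $\decomp\phi'$, $\decomp\psi'$ all tend to zero under the invariance~\propref{dinvset}.

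The main obstacle is promoting the coarse bound $\norm{M_1x}\ge k\min\{\abs{U}\inv,\abs{V}\inv\}\norm{x}$ from \thmref{opnorms} to the column-wise lower bound $\norm{M_1x}\ge c\max\{\abs{x_1}/\abs{U},\abs{x_2}/\abs{V}\}$. Using the coarse estimate alone would leave an uncontrolled factor $\max\{\abs{U},\abs{V}\}/\min\{\abs{U},\abs{V}\}$ in the ratio $\norm{y'}/\norm{x'}$, and this factor is genuinely unbounded in the regime $b\gg a$ where $\abs{V}$ and $\abs{U}$ are of very different orders. To circumvent this I would return to the explicit diagonal-dominant formula for $M_1$ in~\propref{M1}, use the sign information contained in that proposition together with the positivity of $\det M_1$ from~\corref{detM1} to preclude cancellations between the off-diagonal entries, and thereby read off the sharper directional bound that matches the column structure of $M_3$.
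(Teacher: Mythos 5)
Your proof is correct and takes essentially the same route as the paper: both rest on the block form of $M$, the operator bounds of Theorem~\ref{thm:opnorms}, and the lower bound on $\det M_1$ supplied by Corollary~\ref{cor:detM1}, and you correctly identify the scale mismatch between $\abs{U}$ and $\abs{V}$ as the crux. The paper handles this by directly minimizing the ratio $g(x)=\abs[\big]{\norm{M_1x}-\kappa\norm{M_2}}/(\norm{M_3x}+\kappa\norm{M_4})$ over the max-norm sphere via an explicit critical-point analysis of a piecewise-M\"obius function, whereas you factor the argument through the intermediate directional bound $\norm{M_1x}\geq c\max\{\abs{x_1}/\abs{U},\abs{x_2}/\abs{V}\}$; establishing that bound reduces to the same case analysis (endpoints and the crossover point where the two components of $M_1x$ agree in absolute value, at which the determinant bound gives the floor $\mu/\nu$), so the two arguments are identical in substance, with yours being the more modular packaging.
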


\begin{remark} \label{rem:fat-to-thin}
  Note that as $\lbb$ increases, $\lb\kappa \downarrow 0$ and $\ub\kappa
  \uparrow \infty$.  Thus a fatter and fatter cone is mapped into a thinner and
  thinner cone.  In particular, the invariant subspaces inside the thin cone
  and the complement of the fat cone eventually line up with the coordinate
  axes.
\end{remark}

\begin{proof}
  Assume $\norm{y} \leq \kappa \norm{x}$.  Let $z' = Mz$, where $M =
  \opD\opR_{\decomp f}$ as in~\eqnref{M}, $z'=(x',y')$ and $z=(x,y)$.  Then
  \[
    \frac{\norm{x'}}{\norm{y'}} \geq
    \frac{\abs[\big]{\norm{M_1x} - \norm{M_2}\norm{y}}}%
    {\norm{M_3x} + \norm{M_4}\norm{y}} \geq
    \frac{\abs[\big]{\norm{M_1\frac{x}{\norm{x}}} - \kappa\norm{M_2}}}%
    {\norm{M_3\frac{x}{\norm{x}}} + \kappa\norm{M_4}}.
  \]
  We are interested in a lower bound on $\norm{x'}/\norm{y'}$ so this shows
  that we need to minimize
  \[
    g(x) = \frac{\abs[\big]{\norm{M_1x} - \kappa\norm{M_2}}}%
    {\norm{M_3x} + \kappa\norm{M_4}},
  \]
  subject to the constraint $\norm{x} = \max\{\abs{x_1},\abs{x_2}\}=1$.  We can
  write $M_1$ on the form
  \[
    M_1 = \begin{pmatrix}
      \frac{m_{11}}{\abs{U}} & -\frac{m_{12}}{\abs{V}} \\
      -\frac{m_{21}}{\abs{U}} & \frac{m_{22}}{\abs{V}} \\
    \end{pmatrix},
  \]
  where the entries $m_{ij}$ are positive, bounded, and $m_{ii} \geq 1$, by
  \propref{M1}.  Furthermore, by \thmref{opnorms} we know that
  \[
    \norm{M_3x} \leq K \rho' \left( \frac{\abs{x_1}}{\abs{U}} +
    \frac{\abs{x_2}}{\abs{V}} \right).
  \]
  Hence, if $\abs{x_1} = 1$, then 
  \[
    g(x) \geq
    \frac{\max\big\{\abs[\big]{\frac{m_{11}}{\abs{U}} -
    \frac{m_{12} x_2}{\abs{V}}},
    \abs[\big]{\frac{m_{21}}{\abs{U}} - \frac{m_{22} x_2}{\abs{V}}} \big\}
    - \kappa\norm{M_2}}%
    {K \rho' \left( \frac{1}{\abs{U}} + \frac{\abs{x_2}}{\abs{V}} \right) +
    \kappa\norm{M_4}}
    = g_1(x_2)
  \]
  and if $\abs{x_2} = 1$, then
  \[
    g(x) \geq
    \frac{\max\big\{\abs[\big]{\frac{m_{11} x_1}{\abs{U}} -
    \frac{m_{12}}{\abs{V}}},
    \abs[\big]{\frac{m_{21} x_1}{\abs{U}} - \frac{m_{22}}{\abs{V}}} \big\}
    - \kappa\norm{M_2}}%
    {K \rho' \left( \frac{\abs{x_1}}{\abs{U}} + \frac{1}{\abs{V}} \right) +
    \kappa\norm{M_4}}
    = g_2(x_1)
  \]
  Thus we are interested in minimizing $g_i(t)$ for $i=1,2$ and $t \in \uint$
  (note that $g_i(-t) \geq g_i(t)$ for $t \in \uint$ so we do not need to
  consider negative $t$).

  The maps $g_i$ are piecewise M\"obius maps (which are also nonsingular); in
  particular, they are piecewise monotone so any minimum is assumed at $0$,
  $1$, or at a boundary of monotonicity.  A boundary of monotonicity can only
  occur when the two terms inside the max term in the numerator are equal.  By
  solving the equations
  \[
    \frac{m_{11}}{\abs{U}} - \frac{m_{12}t}{\abs{V}} =
    \pm \left( \frac{m_{21}}{\abs{U}} - \frac{m_{22}t}{\abs{V}} \right)
  \]
  we see that $g_1$ has (at most) two points, $t_-$ and $t_+$, where it is
  not monotone on any neighborhood.  These points are
  \[
    t_- = \frac{\abs{V}}{\abs{U}} \frac{m_{11}-m_{21}}{m_{12}-m_{22}}
    \quad\text{and}\quad
    t_+ = \frac{\abs{V}}{\abs{U}} \frac{m_{11}+m_{21}}{m_{12}+m_{22}}.
  \]
  From similar considerations we see that $g_2$ has (at most) two points where
  it is not monotone on any neighborhood, namely $t_-\inv$ and $t_+\inv$.
  Note that we say ``at most'' here since we do not know if $t_\pm \in \uint$
  or if $t_\pm\inv \in \uint$, nor will it turn out to matter.

  Thus, to minimize $g(x)$ we only have to find the minimum of $g_1(0)$,
  $g_2(0)$, $g_1(1) = g_2(1)$, $g_1(t_\pm)$ and $g_2(t_\pm\inv)$.  We will
  calculate these values one at a time.

  Consider $g_1(0)$ first.  From \thmref{opnorms} we get that\footnote{This is
  the only place where the condition on $\rcv(\opR f)$ is used.  It is
  necessary to get the $\rho'$ term in the bound on $\norm{M_4}$.}
  \[
    \norm{M_2} \leq K_1/\abs{C}
    \quad\text{and}\quad
    \norm{M_4} \leq K_2\rho'/\abs{C},
  \]
  and hence
  \[
    g_1(0) = \frac{\max\{m_{11},m_{21}\} - \kappa\norm{M_2}\abs{U}}%
    {K \rho' + \kappa\norm{M_4}\abs{U}}
    \geq
    \frac{1 - \kappa K_1 \abs{U}/\abs{C}}%
    {K \rho' + \kappa K_2 \rho' \abs{U}/\abs{C}}.
  \]
  In the inequality we used the fact that $m_{11} \geq 1$.
  Hence
  \begin{equation} \label{eq:g10}
    \kappa \leq \frac{\abs{C}}{2K_1\abs{U}}
    \quad\implies\quad
    g_1(0) \geq \frac{1}%
    {\rho' \left( 2K + K_2/K_1 \right)}.
  \end{equation}

  Consider $g_2(0)$:
  \[
    g_2(0) = \frac{\max\{m_{12},m_{22}\} - \kappa\norm{M_2}\abs{V}}%
    {K \rho' + \kappa\norm{M_4}\abs{V}}
    \geq
    \frac{1 - \kappa K_1 \abs{V}/\abs{C}}%
    {K \rho' + \kappa K_2 \rho' \abs{V}/\abs{C}}.
  \]
  In the inequality we used \thmref{opnorms} and the fact that $m_{22} \geq 1$.
  Hence
  \begin{equation} \label{eq:g20}
    \kappa \leq \frac{\abs{C}}{2K_1\abs{V}}
    \quad\implies\quad
    g_2(0) \geq \frac{1}%
    {\rho' \left( 2K + K_2/K_1 \right)}.
  \end{equation}

  Consider $g_1(t_\pm)$:
  \begin{align*}
    g_1(t_\pm) &= \frac{ \abs[\big]{m_{11} - m_{12}
    \frac{m_{11} \pm m_{21}}{m_{12} \pm m_{22}}} - \kappa\norm{M_2}\abs{U}}%
    {K \rho' \left( 1 + \abs[\big]{\frac{m_{11} \pm m_{21}}{m_{12} \pm m_{22}}}
    \right) + \kappa\norm{M_4}\abs{U}} \\
    &= \frac{ \abs{m_{11} m_{22} - m_{12}m_{21}}
    - \kappa\norm{M_2}\abs{U}\abs{m_{12} \pm m_{22}}}%
    {K \rho' \left( \abs{m_{11} \pm m_{21}} + \abs{m_{12} \pm m_{22}}
    \right) + \kappa\norm{M_4}\abs{U}\abs{m_{11} \pm m_{21}}}.
  \end{align*}
  There exists $\nu$ such that $\sum m_{ij} \leq \nu$ and by \corref{detM1}
  there exists $\mu>0$ such that $m_{11}m_{22}-m_{12}m_{21} \geq \mu$, so
  \[
    g_1(t_\pm) \geq \frac{\mu - \kappa \nu K_1 \abs{U}/\abs{C}}%
    {K \rho' \nu + \kappa \nu K_2 \rho' \abs{U}/\abs{C}},
  \]
  where we once again have used \thmref{opnorms}.  Hence
  \begin{equation} \label{eq:g1t}
    \kappa \leq \frac{\mu\abs{C}}{2K_1\nu\abs{U}}
    \quad\implies\quad
    g_1(t_\pm) \geq \frac{1}%
    {\rho' \left( \frac{2K\nu}{\mu} + \frac{K_2}{K_1} \right)}.
  \end{equation}

  An almost identical calculation for $g_2(t\inv_\pm)$ results in:
  \begin{equation} \label{eq:g2t}
    \kappa \leq \frac{\mu\abs{C}}{2K_1\nu\abs{V}}
    \quad\implies\quad
    g_2(t\inv_\pm) \geq \frac{1}%
    {\rho' \left( \frac{2K\nu}{\mu} + \frac{K_2}{K_1} \right)}.
  \end{equation}

  Finally, consider $g_1(1)$:
  \[
    g_1(1) = \frac{\max\big\{\abs[\big]{\frac{m_{11}}{\abs{U}} -
    \frac{m_{12}}{\abs{V}}},
    \abs[\big]{\frac{m_{21}}{\abs{U}} - \frac{m_{22}}{\abs{V}}} \big\}
    - \kappa\norm{M_2}}%
    {K \rho' \left( \frac{1}{\abs{U}} + \frac{1}{\abs{V}} \right) +
    \kappa\norm{M_4}}.
  \]
  We need to minimize the numerator, so introduce a variable $s$ and assume
  that
  \[
    \frac{m_{11}}{\abs{U}} = s \frac{m_{12}}{\abs{V}}, \quad s \in \reals.
  \]
  Let $H(s) = \max\{h_1(s), h_2(s)\}$, where
  \begin{align*}
    h_1(s) &= \abs[\bigg]{\frac{m_{11}}{\abs{U}} - \frac{m_{12}}{\abs{V}}}
    = \frac{m_{12}}{\abs{V}} \abs{s-1}, \\
    h_2(s) &= \abs[\bigg]{\frac{m_{21}}{\abs{U}} - \frac{m_{22}}{\abs{V}}}
    = \frac{1}{\abs{V}} \abs[\Big]{\frac{m_{12}m_{21}}{m_{11}} s - m_{22}}.
  \end{align*}
  The equation $H(s) = 0$ has two solutions: $s_1 = 1$ and $s_2 =
  m_{11}m_{22}/(m_{12}m_{21})$.  Note that $h_i$ is decreasing to the left of
  $s_i$ and increasing to the right of $s_i$, for $i=1,2$.  Also, $s_1 < s_2$
  by \corref{detM1} so $H(s)$ assumes its minimum at $s_\star$, where $s_\star$
  is defined by $h_1(s_\star) = h_2(s_\star)$ and $s_1 < s_\star < s_2$.
  Solving this equation gives
  \[
    s_\star = \frac{m_{11} (m_{12}+m_{22})}{m_{12} (m_{11}+m_{21})}.
  \]
  Thus
  \[
    \min H(s) = H(s_\star) = \frac{m_{11}m_{22} - m_{12}m_{21}}%
    {\abs{V} (m_{11}+m_{21})}.
  \]
  Note that we can also write $h_1(s) = m_{11}\abs{1-s\inv}/\abs{U}$ and
  thus
  \[
    \min H(s) = h_1(s_\star) = 
    \frac{m_{11}m_{22} - m_{12}m_{21}}%
    {\abs{U} (m_{12}+m_{22})},
  \]
  which shows that
  \[
    H(s) \geq \frac{\mu}{\nu} \max\{\abs{U}\inv, \abs{V}\inv\}.
  \]
  Putting all of this together, we arrive at
  \[
    g_1(1) \geq \frac{\frac{\mu}{\nu} \max\{\abs{U}\inv, \abs{V}\inv\}
    - \kappa K_1/\abs{C}}%
    {K \rho' 2 \max\{\abs{U}\inv, \abs{V}\inv\} + \kappa \rho' K_2/\abs{C}}.
  \]
  Hence
  \begin{equation} \label{eq:g11}
    \kappa \leq \frac{\mu\abs{C} \max\{\abs{U}\inv, \abs{V}\inv\}}%
    {2\nu K_1}
    \quad\implies\quad
    g_1(1) \geq \frac{1}%
    {\rho' \left( \frac{4\nu K}{\mu} + \frac{K_2}{K_1} \right)}.
  \end{equation}

  From \eqnref{g10}, \eqnref{g20}, \eqnref{g1t}, \eqnref{g2t} and \eqnref{g11}
  we get that, if $\norm{y} \leq \kappa\norm{x}$ and
  \[
    \kappa \leq \frac{\min\{1,\mu/\nu\}}{2K_1}
    \min\left\{ \frac{\abs{C}}{\abs{U}}, \frac{\abs{C}}{\abs{V}} \right\},
  \]
  then
  \[
    \norm{y'} \leq \norm{x'} \rho' (2K \max\{1,2\nu/\mu\} + K_2/K_1).\qedhere
  \]
\end{proof}

\begin{proposition} \label{prop:cone-expansion}
  Let $\decomp f \in \dinvset \cap \lorenzd_\Omega$ and $1-\rcv(\opR\decomp f)
  \geq \lambda$ for some $\lambda \in (0,1)$ (not depending on $\decomp f$).
  Then $\opD\opR$ is strongly expanding on the cone field $\decomp f \mapsto
  \cone(\decomp f,1)$.  Specifically, there exists $k>0$ (not depending
  on~$\decomp f$) such that
  \[
    \norm{\opD\opR_{\decomp f} z} \geq
    k \cdot \min\{\abs{U}\inv,\abs{V}\inv\} \cdot \norm{z},
    \qquad\forall z \in \cone(\decomp f,1) \setminus\{0\}.
  \]
\end{proposition}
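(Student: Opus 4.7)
My plan is to reduce the proposition to a lower bound on the $X$-component of $\opD\opR_{\decomp f}z$ and then apply Corollary~\ref{cor:expansion} together with the norm bound on $M_2$ from Theorem~\ref{thm:opnorms}. Write $z=(x,y)$ and $z'=Mz=(x',y')$ with $x' = M_1x + M_2y$. For $z\in\cone(\decomp f,1)$ we have $\norm{y}\leq\norm{x}$, so $\norm{z}=\norm{x}$ and $\norm{z'}\geq\norm{x'}$. Hence it is enough to show
\[
\norm{x'} \geq k\cdot\min\{\abs{U}^{-1},\abs{V}^{-1}\}\cdot\norm{x}.
\]

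The triangle inequality gives $\norm{x'}\geq\norm{M_1x}-\norm{M_2}\norm{y}$. By Corollary~\ref{cor:expansion} there exists $k_1>0$ with $\norm{M_1x}\geq k_1\min\{\abs{U}^{-1},\abs{V}^{-1}\}\norm{x}$, and by Theorem~\ref{thm:opnorms} there exists $K_2$ with $\norm{M_2}\leq K_2/\abs{C}$. Since $\norm{y}\leq\norm{x}$, combining these yields
\[
\norm{x'} \geq \Bigl(k_1\min\{\abs{U}^{-1},\abs{V}^{-1}\} - \frac{K_2}{\abs{C}}\Bigr)\norm{x}.
\]

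The remaining point is that $\abs{U}$ and $\abs{V}$ are much smaller than $\abs{C}$ once $\lbb$ is large, so the $M_1$-contribution dominates the $M_2$-contribution. For $\abs{U}$ this is immediate from the distortion bound on $\phi$ and Lemma~\ref{lem:derivs}, which gives $\abs{U}\asymp\opD f_1^{-a}(\cdot)\cdot\abs{C}\leq K(\eps/\alpha)^a\abs{C}$, a quantity much smaller than $\abs{C}$. For $\abs{V}$ the same lemma gives $\abs{V}/\abs{C}\asymp\alpha^{-b}\eps^{-1+\ooan{b}}$; using the bounds $\eps\leq\theta\alpha^{-\lbb\sigma/\alpha^2}$ and $b\geq\lbb$ from \defref{Omega-K}, this ratio is bounded by $\alpha^{-\lbb(1-\sigma/\alpha^2)}/\theta$ which tends to zero. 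Therefore for $\lbb$ large enough
\[
\min\{\abs{U}^{-1},\abs{V}^{-1}\} \geq \frac{2K_2}{k_1}\cdot\frac{1}{\abs{C}},
\]
so the bracketed factor above is at least $(k_1/2)\min\{\abs{U}^{-1},\abs{V}^{-1}\}$, and we conclude with $k=k_1/2$.

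I do not expect a serious obstacle: Corollary~\ref{cor:expansion} already contains the expansion; the only thing to check is that the ``mixing'' term $M_2y$ cannot defeat it, which it cannot because the off-diagonal block $M_2$ is controlled by $\abs{C}^{-1}$ while $M_1$ is controlled by the strictly larger $\min\{\abs{U}^{-1},\abs{V}^{-1}\}$. The mildest subtlety is simply to record explicitly that $\abs{U},\abs{V}\ll\abs{C}$ under the hypotheses of \defref{Omega-K}, as verified above.
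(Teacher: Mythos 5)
Your argument is precisely the one in the paper: lower-bound $\norm{Mz}$ by the $X$-component, apply Corollary~\ref{cor:expansion} for $\norm{M_1x}$ and the estimate $\norm{M_2}\leq K/\abs{C}$ from Theorem~\ref{thm:opnorms}, and absorb the cross-term because $\abs{U},\abs{V}\ll\abs{C}$ (a dominance the paper leaves implicit, so your extra detail here is welcome). One small slip: when bounding $\abs{V}/\abs{C}\asymp\alpha^{-b}\eps^{-1+\ooan{b}}$ you should invoke the \emph{lower} bound $\eps\geq\alpha^{-\lbb/\alpha}$ rather than the upper bound, since the exponent on $\eps$ is negative and an upper bound on $\eps$ gives the inequality the wrong way; the correct substitution yields $\abs{V}/\abs{C}\lesssim\alpha^{-\lbb(1-1/\alpha)}\to0$, so the conclusion is unaffected.
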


\begin{proof}
  Use \corref{expansion} to get
  \[
    \norm{Mz} \geq \norm{M_1x+M_2y} \geq
    \abs[\big]{k\cdot\min\{\abs{U}\inv,\abs{V}\inv\} - \norm{M_2}} \cdot
    \norm{x}.
  \]
  Now use the fact that $\norm{z} = \norm{x}$ for $z \in \cone(\decomp f,1)$
  to finish the proof.
\end{proof}


\section{Archipelagos in the parameter plane} 
\label{sec:island-structure}

The term \emph{archipelago} was introduced by \citet{MdM01} to describe the
structure of the domains of renormalizability in the parameter plane for
families of Lorenz maps.  In this section we show how the information we have
on the derivative of the renormalization operator can be used to prove that the
structure of archipelagos must be very rigid.

Fix $c_*$, $\phi_*$, $\psi_*$ and let $F: \uint^2 \to \setL$ denote the
associated family of Lorenz maps
\[
  (u,v) = \lambda \mapsto F_\lambda = (u,v,c_*,\phi_*,\psi_*).
\]
We will assume that $F_\lambda \in \setK \cap \setLS_\Omega$ (see
\defref{Omega-K}) and that $\lbb$ has been fixed (and is large enough).

\begin{definition}
  An \Index{archipelago} $A_\omega \subset \uint^2$ of type $\omega \in \Omega$
  is the set of $\lambda$ such that $F_\lambda$ is $\omega$--renormalizable.
  An \Index{island} of $A_\omega$ is a connected component of the interior
  of~$A_\omega$.
\end{definition}

For the family $\lambda \mapsto F_\lambda$ we have the following very strong
structure theorem for archipelagos \citep[this should be contrasted
with][]{MdM01}.  Note that $c_*$, $\phi_*$ and~$\psi_*$ are arbitrary, so the
results in this section holds for \emph{any} family such that $F_\lambda \in
\setK \cap \setLS_\Omega$.

\begin{theorem} \label{thm:island}
  For every $\omega \in \Omega$ there exists a unique island~$I$ such that the
  archipelago $A_\omega$ equals the closure of~$I$.  Furthermore, $I$ is
  diffeomorphic to a square.
\end{theorem}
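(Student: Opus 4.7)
The plan is to study the map $G: A_\omega \to \uint^2$, $G(\lambda) = (u'(\lambda), v'(\lambda))$, where $(u',v',c',\phi',\psi')=\opR F_\lambda$. Its Jacobian is the matrix $M_1$ of \propref{M1}, and \corref{detM1} gives $\det M_1 > 0$ throughout $\setK \cap \setLS_\Omega$; hence $G$ is an orientation-preserving local diffeomorphism on $\intr A_\omega$.

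Next I observe $G(\bndry A_\omega) \subset \bndry \uint^2$. In the fixed slice, conditions (Y\ref{dY2}) and (Y\ref{dY3}) of \propref{dY} are inactive since the family already lies in $\setK$. The only relevant boundary condition is (Y\ref{dY1}): at $\lambda \in \bndry A_\omega$, one branch of $\opR F_\lambda$ is full or trivial, which reads $u'(\lambda) \in \{0,1\}$ or $v'(\lambda) \in \{0,1\}$. A standard compactness argument (any cluster point of a sequence in $I$ whose $G$-images remain in a compact subset of $\intr\uint^2$ must itself lie in $I$) then shows $G|_I: I \to \intr \uint^2$ is proper for every island~$I$.

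Combining the positive Jacobian with properness, $G|_I$ is an orientation-preserving proper local diffeomorphism, hence a finite-sheeted covering of the simply connected $\intr \uint^2$. Any such covering is single-sheeted, so $G|_I$ is a diffeomorphism. Continuous extension to $\clos I$ yields a homeomorphism $\clos I \to \uint^2$, so $I$ is diffeomorphic to a square. Existence of at least one island follows from $A_\omega \neq \emptyset$, itself a consequence of \propref{full}.

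For uniqueness, I argue topologically. By (Y\ref{dY1}), $\bndry A_\omega$ decomposes into four arc-types corresponding to $u'=0$, $u'=1$, $v'=0$, $v'=1$; the sign information in $M_1$ from \propref{M1} (together with \corref{detM1}) determines the cyclic order in which these arc-types appear around any boundary component and shows each occurs as a single arc. This forces $\bndry A_\omega$ to be a single closed curve, mapped homeomorphically onto $\bndry \uint^2$ by $G$, and bounding a unique topological disk in the slice. That disk must be the unique island~$I$, and $A_\omega = \clos I$ follows.

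The main obstacle will be rigorously ruling out that $\bndry A_\omega$ consists of several disjoint closed curves (corresponding either to multiple islands or to $A_\omega$ with holes). To this end I would combine the sign information in $M_1$ with the cone-field expansion of \propref{cone-expansion}: the former dictates the local structure of the boundary where two arcs meet (at the four corners of $\uint^2$), while the latter ensures that the restriction of $G$ to each closed boundary component has nonzero degree into $\bndry \uint^2$, letting degree-theoretic bookkeeping pin the number of such components down to one.
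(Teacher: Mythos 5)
Your map $G(\lambda)=(u'(\lambda),v'(\lambda))$ does \emph{not} send $\bndry A_\omega$ into $\bndry\uint^2$, and this breaks the entire covering argument. Unwind the boundary condition (Y\ref{dY1}) in the $(u',v',c',\phi',\psi')$-coordinates of $\opR F_\lambda$: since $\lcv(\opR F_\lambda)=\phi'(u')$ and $\rcv(\opR F_\lambda)=\psi'(1-v')$, a full left branch means $\phi'(u')=1$, i.e.\ $u'=1$; but a \emph{trivial} left branch means $\phi'(u')=c'$, i.e.\ $u'=(\phi')^{-1}(c')$, which is a $\lambda$-dependent value strictly in the interior of $(0,1)$, not $u'=0$. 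Likewise the trivial right branch gives $v'=1-(\psi')^{-1}(c')$, again an interior, $\lambda$-dependent value. So $G(\bndry A_\omega)$ is not $\bndry\uint^2$: two of the four boundary arcs land on moving interior curves. Consequently $G|_I$ is not a proper map onto a fixed simply connected target, and the "single-sheeted covering, hence diffeomorphism" conclusion does not follow. This is precisely what the paper's operator $R=\pi_S\circ H\circ\opR$ in Definition~\ref{def:R-proj} is built to repair: $H$ passes to the critical-value coordinates $(\lcv,1-\rcv,c')$, in which the full/trivial conditions become $\lcv\in\{c',1\}$, $1-\rcv\in\{1-c',1\}$ (the faces of the wedge $W$ of Figure~\ref{fig:R}), and the $c'$-dependent projection $\pi_S$ then rectifies this moving wedge to the \emph{fixed} square $S=[\tfrac12,1]^2$. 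Only after that rectification is it true that the boundary of an island maps into the boundary of a fixed square, which is what Proposition~\ref{prop:square} needs.

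The uniqueness half of your proposal is also weaker than what the statement requires. You recognize that multiple islands or holes must be excluded, but the claim that "sign information in $M_1$ determines the cyclic order of arc-types and shows each occurs as a single arc" is exactly the nontrivial content; it is not a consequence of $\det M_1>0$ alone, and you leave it as an acknowledged obstacle. The paper's route is more concrete: it introduces the explicit trivial-boundary curves $\trivl$ and $\trivr$ (Lemma~\ref{lem:triv-curves}), proves that at any crossing the local structure singles out exactly one quadrant as renormalizable (Lemma~\ref{lem:cross}), and then uses the global monotonicity of the family together with orientation-preservation of $R$ (Proposition~\ref{prop:square}, via Corollary~\ref{cor:detM1}) to show a second crossing would produce an island of the wrong orientation --- a contradiction. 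If you want to salvage your approach, replace $G$ by the paper's $R$, and replace the degree-theoretic handwave by an analysis of the crossings of $\trivl$ and $\trivr$.
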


\begin{remark}
  This theorem shows that the structure of $A_\omega$ is very rigid.  Note that
  the structure of archipelagos is much more complicated in general.
  There may be multiple islands, islands need not be square, there may be
  isolated points, etc.
\end{remark}

\begin{theorem} \label{thm:cantor-archipelago}
  For every $\rtype \in \Omega^\nats$ there exists a unique $\lambda$ such that
  $F_\lambda$ has combinatorial type $\rtype$.  The set of all such $\lambda$
  is a Cantor set.
\end{theorem}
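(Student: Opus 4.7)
The strategy is to use \thmref{island} inductively to produce a nested sequence of closed ``squares'' in the parameter plane indexed by finite truncations of $\rtype$, then apply the cone-field expansion of \propref{cone-expansion} to force the intersection to collapse to a single point, and finally verify the three standard Cantor-set properties.

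First I would set $\rtype|_n = (\omega_0,\dotsc,\omega_{n-1})$ and prove by induction on $n$ that $A_{\rtype|_n}$ is the closure of a single island $I_n$ diffeomorphic to a square, with $I_n \subset I_{n-1}$. The base case is \thmref{island}. For the inductive step, $A_{\rtype|_{n+1}}$ equals the preimage under $\opR^n \circ F|_{I_n}$ of the archipelago $A_{\omega_n}$ realized inside the target slice family containing $\opR^n F_{\lambda}$. Because the tangent to $F$ is of the form $(x,0) \in X\times Y$ and therefore lies in the thin horizontal cone, and because this cone field is strictly preserved by $\opD\opR$ (\propref{cone-field}), the composition $\opR^n \circ F|_{I_n}$ is a smooth injective map whose image is uniformly transverse to the ``vertical'' directions. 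Applying \thmref{island} inside this image (or, more precisely, adapting its proof to the image slice) then produces a unique island for $\omega_n$, and its pull-back yields $I_{n+1}\subset I_n$.

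Second, by the a priori bounds (\thmref{realbounds}) the intervals $\abs{U_i}$ and $\abs{V_i}$ are uniformly bounded above by some $\rho<1$ as $i$ varies. Iterating \propref{cone-expansion} along the renormalization orbit of $F_\lambda$ gives
\[
  \norm{\opD(\opR^n\circ F)_\lambda(\delta\lambda)}
  \geq \prod_{i=0}^{n-1} k\cdot\min\{\abs{U_i}^{-1},\abs{V_i}^{-1}\}\cdot\norm{\delta\lambda}
  \geq (k/\rho)^n \norm{\delta\lambda}.
\]
Since $\opR^n F_\lambda$ stays in the bounded set $\setK$ for every $\lambda\in I_n$, the mean value theorem forces $\operatorname{diam}(I_n)$ to decay at least geometrically in $n$ and hence to tend to zero. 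Existence of a $\lambda$ realizing $\rtype$ is then immediate from compactness (the nested closed sets $\overline{I_n}$ intersect), and uniqueness follows from $\operatorname{diam}(I_n)\to 0$.

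Finally, the set $\Lambda = \{\lambda(\rtype) : \rtype\in\Omega^\nats\}$ is closed as the intersection $\bigcap_n \bigcup_{\rtype|_n} \overline{I_{\rtype|_n}}$; totally disconnected since islands corresponding to distinct $\omega\in\Omega$ are disjoint; and perfect because $\Omega$ has at least two elements (ensured by \defref{Omega-K} for $\lbb$ large), so for any $\rtype$ and any $n$ we may change $\omega_n$ to some $\omega'_n\neq\omega_n$ in $\Omega$ and continue arbitrarily to obtain $\rtype'\neq\rtype$ with $\lambda(\rtype')\in\overline{I_{\rtype|_n}}$, whence $\lambda(\rtype')\to\lambda(\rtype)$. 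Therefore $\Lambda$ is a Cantor set. The main obstacle I expect is the inductive step that produces a \emph{single} island at each level; justifying the ``slice-family-like'' argument for the non-slice family $\opR^n\circ F|_{I_n}$ is the technical core, and it is precisely where the cone-field estimates of \secref{cone-field} combine with the full-family structure behind \thmref{island} to rule out the splitting of islands under further renormalization.
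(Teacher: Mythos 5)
Your proposal follows the paper's proof almost step for step: use Theorem~\ref{thm:island} to get nested squares, then iterate Proposition~\ref{prop:cone-expansion} (together with the uniform bounds on $\abs{U}$, $\abs{V}$ coming from relative compactness of $\setK$) to make the diameters decay geometrically, and finish with the standard Cantor-set argument. The one spot where you are more explicit than the paper is the induction producing $I_{n+1}\subset I_n$ — you correctly flag that $\opR^n\circ F|_{I_n}$ is not literally a slice family, and the tool that closes that gap (which the paper only uses later, in the proof of Theorem~\ref{thm:monotonicity}) is Lemma~\ref{lem:surface-tube}: once the image surface has tangents in the thin horizontal cone, its intersection with each renormalization tube $\lorenzd_\omega\cap\dinvset$ is again diffeomorphic to a square.
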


The family $F_\lambda$ is monotone\index{monotone family}, by which we mean
that $u \mapsto F_{(u,v)}(x)$ is strictly increasing for $x \in (0,c_*)$, and
$v \mapsto F_{(u,v)}(x)$ is strictly decreasing for $x \in (c_*,1)$.  As a
consequence, if we let
\[
  M_{(u,v)}^+ = \{ (x,y) \mid x \geq u, y \leq v\}
  \quad\text{and}\quad
  M_{(u,v)}^- = \{ (x,y) \mid x \leq u, y \geq v\},
\]
then
\[
  \mu \in M_\lambda^+ \implies F_\mu(x) > F_\lambda(x)
  \quad\text{and}\quad
  \mu \in M_\lambda^- \implies F_\mu(x) < F_\lambda(x),
\]
for all $x \in (0,1) \setminus \{c\}$.  In other words, deformations in
$M_\lambda^+$ moves both branches up, deformations in $M_\lambda^-$ moves
both branches down.  This simple observation is key to analyzing the structure
of archipelagos.

\begin{definition} \label{def:R-proj}
  Let $\pi_S:\reals^3\to\reals^2$ be the projection which takes the rectangle
  $[c,1] \times [1-c,1] \times \{c\}$ onto $S = [\nicefrac12,1]^2$
  \[
    \pi_S(x,y,c) = \left( 1 - \frac{1-x}{2(1-c)}, 1 - \frac{1-y}{2c} \right),
  \]
  and let $H$ be the map which takes $(u,v,c,\phi,\psi)$ to the height of
  its branches ($c$ is kept around because $\pi_S$ needs it)
  \[
    H(u,v,c,\phi,\psi) = (\phi(u), 1 - \psi(1-v), c).
  \]
  Now define $R: A_\omega \to S$ by
  \[
    R(\lambda) = \pi_S \circ H \circ \opR(F_\lambda).
  \]
\end{definition}

\begin{figure}
  \begin{center}
    \includegraphics{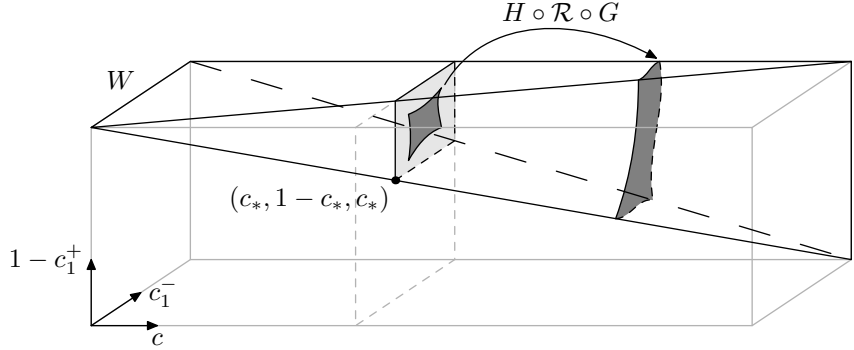}
  \end{center}
  \caption{Illustration of the action of $\opR$ on the family $F_\lambda$.  The
  dark gray island is mapped onto a set which is wrapped around the wedge~$W$.
  That is, the boundary of the island is mapped into the boundary of~$W$ with
  nonzero degree.  Note that in this illustration we project the image
  of~$\opR$ to $\reals^3$ via the map~$H$.  The maps $H$ and~$G$ convert
  between critical values $(\lcv,\rcv)$ and $(u,v)$--parameters.  Explicitly
  $G(\lcv,1-\rcv,c_*) = (\phi_*\inv(\lcv),1 -
  \psi_*\inv(\rcv),c_*,\phi_*,\psi_*)$.}
  \label{fig:R}
\end{figure}

\begin{remark}
  The action of $R$ can be understood by looking at \figref{R}.  The boundary
  of an island $I$ is mapped into the boundary of the wedge~$W$ by the map $H
  \circ \opR$.  The four boundary pieces of the wedge correspond to when the
  renormalization has at least one full or trivial branch.  Note that the image
  of $\bndry I$ in $\bndry W$ will not in general lie in a plane, instead it
  will be bent around somewhat.  For this reason we project down to the
  square~$S$ via the projection~$\pi_S$.  This gives us the final operator~$R:
  A_\omega \to S$.
\end{remark}

\begin{proposition} \label{prop:square}
  Let $I \subset A_\omega$ be an island.  Then $R$ is an orientation-preserving
  diffeomorphism that takes the closure of~$I$ onto~$S$.
\end{proposition}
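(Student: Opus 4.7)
The plan is to prove this in two stages: first show that $R$ is a local diffeomorphism with positive Jacobian, then show that it is a proper map of degree one between the island and the square. Combined, these give the desired diffeomorphism.

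First I would compute $\opD R$ at a point $\lambda \in \intr I$. Writing $R(\lambda) = (R_1, R_2)$ explicitly as $R_1 = 1 - (1-\lcv')/(2\eps')$ and $R_2 = 1 - \rcv'/(2c')$, the derivative is a linear combination of $\opD\opR$ applied to the horizontal tangent vector $(\delta u, \delta v)$, followed by the map $H$ (which only uses the positive derivatives $\opD\phi'$ and $\opD\psi'$) and the affine projection $\pi_S$. Using the block decomposition of $\opD\opR$ from \eqnref{M}, the dominant contribution to $\opD R$ comes from the $M_1$ block; the remaining contributions coming from $\partial c'$ and $M_3$ are controlled by \propref{c-partials} and \thmref{opnorms}, and turn out to be negligible compared with $M_1$ in the limit of large $\lbb$. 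Since $\det M_1 > 0$ by \corref{detM1}, and the subsequent transformations by $H$ and $\pi_S$ have positive Jacobians (orientation-preserving diffeomorphism composed with affine rescaling), I conclude that $\opD R$ has positive determinant. Hence $R$ is an orientation-preserving local diffeomorphism on $\intr I$.

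Second I would verify that $R$ extends continuously to $\clos I$ and sends $\bndry I$ into $\bndry S$. On the boundary of an island $I \subset A_\omega$ one of the four boundary conditions of \propref{dY}\eqref{dY1} holds for $\opR F_\lambda$: the left (resp.\ right) branch is full or trivial. The four cases correspond respectively to $\lcv' = 1$, $\lcv' = c'$, $\rcv' = 0$, $\rcv' = c'$, which under $\pi_S \circ H$ correspond exactly to the four edges of $S = [1/2,1]^2$. This also shows that $R$ is proper from $\clos I$ to $S$.

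Third I would compute the degree of the boundary map $R|_{\bndry I} : \bndry I \to \bndry S$. By the full island property \citep[Theorem~B]{MdM01} and the monotonicity of the family $F_\lambda$, the four arcs of $\bndry I$ on which the four boundary conditions hold are arranged cyclically around $I$, and each arc is mapped monotonically onto the corresponding edge of $S$; this monotonicity is a direct consequence of $u \mapsto F_{(u,v)}(x)$ being strictly increasing on $(0,c_*)$ and $v \mapsto F_{(u,v)}(x)$ strictly decreasing on $(c_*,1)$, together with the positivity of the relevant partial derivatives coming from \propref{M1}. It follows that $R|_{\bndry I}$ has degree $+1$.

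Finally I would assemble the pieces: a proper local diffeomorphism $R : \clos I \to S$ between compact surfaces with boundary whose boundary map has degree one is a covering of degree one, hence a diffeomorphism. The orientation-preserving property already follows from the positive Jacobian obtained in the first stage. The main obstacle is the first stage, since it requires verifying that the off-diagonal corrections in $\opD R$ coming from the decomposition directions and from $c'$-dependence are genuinely small enough (in the sense of \thmref{opnorms} with $\rho'$ small, using $\lbb$ large) not to flip the sign of the Jacobian; this is where the invariant cone field of \propref{cone-field} enters, because it guarantees precisely that the image of the horizontal tangent plane of the family under $\opD\opR$ remains in a thin horizontal cone, so that $\opD R$ is controlled by the $M_1$ block up to a small error.
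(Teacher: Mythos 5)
Your argument is correct and mirrors the paper's: show $\opD R_\lambda$ is an orientation-preserving local isomorphism on $\clos I$, show $R(\bndry I)\subset\bndry S$ via condition (Y\ref{dY1}), and conclude by topology. The paper's version of the final step skips the explicit degree computation: it argues that if $R$ were not onto $S$ then $R(\bndry I)$ would be strictly contained in $\bndry S$, forcing $\opD R$ to be singular somewhere on $\bndry I$, and then invokes that a local isomorphism between simply connected surfaces is a diffeomorphism; your degree-one covering argument reaches the same conclusion. One place where you are actually more careful than the paper's own proof is the sign of the Jacobian: the paper merely observes that the top-left $2\times 2$ blocks of $\opD\pi_S$, $\opD H$, and $\opD\opR$ are each orientation-preserving and stops, leaving implicit why the off-diagonal contributions --- the $c$-, $\decomp\phi$-, and $\decomp\psi$-columns of $\opD(\pi_S\circ H)$ acting on the vertical output $M_3 x$ of $\opD\opR$ --- cannot flip the sign. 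Your explicit appeal to \thmref{opnorms} and the smallness of $\rho'$ (equivalently, the cone field of \propref{cone-field}) supplies exactly the estimate needed to close that gap.
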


\begin{remark}
  This already shows that the structure of archipelagos is very rigid.  First
  of all every island is full, but there are also exactly one of each type of
  extremal points, and exactly one of each type of vertex.  In other words,
  there are no degenerate islands of any type!  Extremal points and vertices
  are defined in \citet{MdM01}, see also the caption of \figref{island}.
\end{remark}

\begin{proof}
  By definition $R$ maps $I$ into~$S$ and $\bndry I$ into $\bndry S$.  We claim
  that $\opD R_\lambda$ is orientation-preserving for every $\lambda \in \clos
  I$.\footnote{The notation $\opD R_\lambda$ is used to denote the derivative
  of $R$ at the point~$\lambda$.}  Assume that the claim holds (we will prove
  this soon).
  
  We contend that $R$ maps $\clos I$ onto $S$.  If not, then $R(\bndry I)$ must
  be strictly contained in $\bndry S$, since the boundaries are homeomorphic to
  the circle and $R$ is continuous.  But then $\opD R_\lambda$ must be singular
  for some $\lambda \in \bndry I$ which contradicts the claim.

  Hence $R: \clos I \to S$ maps a simply connected domain onto a simply
  connected domain, and $\opD R$ is a local isomorphism.  Thus $R$ is in fact a
  diffeomorphism.

  We now prove the claim.  A computation gives
  \[
    \opD\pi_S(x,y,c) = \begin{pmatrix}
      (2(1-c))\inv & 0        & \star\\
      0            & (2c)\inv & \star
    \end{pmatrix},
  \]
  and
  \[
    \opD H_{(u,v,c,\phi,\psi)} = \begin{pmatrix}
      \phi'(u) & 0       & \dots \\
      0        & \psi'(1-v) & \dots \\
      \star    & \star   & \dots
    \end{pmatrix}.
  \]
  The top-left $2\times2$ matrix is orientation-preserving in both cases and
  the same is true for $\opD\opR$ by \corref{detM1}.  Thus $\opD R_\lambda$ is
  orientation-preserving.
\end{proof}

\begin{figure}
  \begin{center}
    \includegraphics{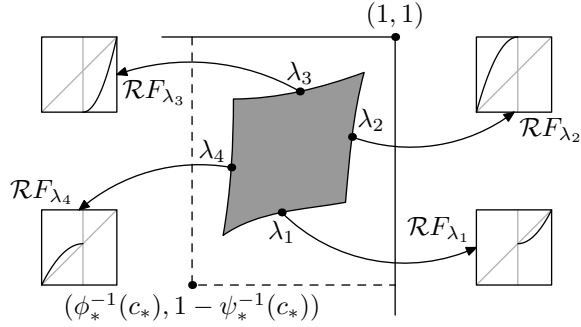}
  \end{center}
  \caption{Illustration of a full island for the family $F_\lambda$.  The
    boundary corresponds to when at least one branch of the renormalization
    $\opR F_\lambda$ is either full or trivial.  The top right and bottom left
    corners are extremal points\index{extremal point}; the top left and bottom
    right corners are vertices\index{vertex}.}
  \label{fig:island}
\end{figure}

\begin{lemma} \label{lem:triv-crossing}
  Assume $f^m(\lcv) = c = f^n(\rcv)$ for some $m,n > 0$.
  Let $(l,c)$ and~$(c,r)$ be branches of $f^m$ and~$f^n$, respectively.
  Then $f^m(l) \leq l$ and $f^n(r) \geq r$.  In particular, $f$ is
  renormalizable to a map with trivial branches.
\end{lemma}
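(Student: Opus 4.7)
The plan is to imitate the proof of \lemref{R-bndry}, which appears as a special case of \citep[Lemma~4.1]{MdM01}, adapted to our situation where the critical values land exactly on $c$ rather than merely inside a return interval. The argument exploits the monotonicity of each branch of $f$; I focus on the first inequality since the second is symmetric, and the renormalization conclusion will follow from both.

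First, on the branch $(l,c)$ of $f^m$ the map $f^m$ is monotone increasing (as a composition of increasing branches of $f$), and the hypothesis $f^m(\lcv) = c$ controls the behavior near the right endpoint $c$. The left endpoint $l$ is determined by maximality: either $l = 0$---in which case $f^m(l) = 0 = l$ since $0$ is fixed---or $l$ is a critical preimage with $f^j(l) = c$ for some minimal $j \in \{1,\dotsc,m\}$. In the latter case, monotonicity of $f^j$ on a right neighborhood of $l$ inside the branch forces the orbit of $l^+$ to exit $c$ to the right, so $f^{j+1}(l^+) = \rcv$, linking the orbit of $l^+$ to the forward orbit of $\rcv$; hence $f^m(l^+) = f^{m-j-1}(\rcv)$.

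Second, the desired inequality $f^{m-j-1}(\rcv) \leq l$ follows from a combinatorial matching between the backward itinerary defining $l$ (determined by the orbit of $\lcv$ shadowing a left neighborhood of $c$) and the forward orbit of $\rcv$. The hypothesis $f^n(\rcv) = c$ ensures this orbit segment is well defined (i.e.\ $m - j - 1 < n$, which also follows from branch maximality), and the parallel structure of the two critical orbits, both funneled into the same point $c$, forces the itineraries to be compatible. Successive monotone inverse branches then yield the inequality. This combinatorial bookkeeping is the main technical obstacle; in the cited lemma it is handled by a careful case analysis on the itineraries of the two critical orbits, and that analysis carries over unchanged.

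Finally, the symmetric argument gives $f^n(r) \geq r$ for the branch $(c,r)$ of $f^n$. Taking $C = (l, r)$ as the candidate return interval, $L = (l, c)$ is mapped by $f^m$ onto an interval terminating exactly at $c$, and $R = (c, r)$ by $f^n$ onto an interval starting exactly at $c$; after affine rescaling, $\opR f$ is therefore a Lorenz map with $\lcv(\opR f) = c(\opR f) = \rcv(\opR f)$, i.e.\ with both branches trivial, which is the ``in particular'' conclusion.
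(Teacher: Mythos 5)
Your proof has a genuine gap at its center.  You correctly identify the two cases for the left endpoint $l$ (either $l=0$, which is trivial, or $l$ is a preimage of $c$ with the right limit of the orbit passing through $\rcv$), but the heart of the lemma --- establishing the inequality $f^m(l)\leq l$ --- is not actually argued.  You write that it ``follows from a combinatorial matching between the backward itinerary defining $l$ \ldots\ and the forward orbit of $\rcv$,'' acknowledge that ``this combinatorial bookkeeping is the main technical obstacle,'' and then assert that the case analysis from \citep[Lemma~4.1]{MdM01} ``carries over unchanged.''  That is a deferral, not a proof: nothing is done to verify the transfer, and the cited lemma is stated for maps in the interior of the renormalizable locus, whereas here the critical values land \emph{exactly} on $c$, which is precisely the degenerate boundary case.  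The paper deliberately does \emph{not} invoke \citep{MdM01} for this lemma --- it gives its own self-contained argument, which is a strong signal that the claimed ``carries over unchanged'' needs justification.

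The mechanism you are missing is a short contradiction argument exploiting the finiteness of the critical orbit.  Suppose $f^m(l)>l$.  Since $f^m$ is monotone on $(l,c)$ and its right limit at $c$ is $c$ (by $f^m(\lcv)=c$), the interval $(l,c)$ is mapped strictly into itself, so $f^{im}(l)$ is a strictly increasing sequence converging to some $x\in(l,c]$; in particular the $f^m$-orbit of $l$ is infinite and never repeats.  On the other hand, because $l$ is a branch endpoint of $f^m$, the right limit of the orbit of $l$ passes through $\rcv$ (say $f^t(l)=\rcv$), so from time $t$ onward the orbit of $l$ coincides with the orbit of $\rcv$.  But the orbit of $\rcv$ reaches $c$ at time $n$ and is therefore finite (or, taking right limits at $c$, eventually periodic of period $n+1$).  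An infinite strictly increasing sequence cannot lie in a finite/eventually-periodic set --- contradiction.  Hence $f^m(l)\leq l$, and the symmetric argument gives $f^n(r)\geq r$.  Your concluding paragraph deriving the ``in particular'' clause is fine once these inequalities are in hand.
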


\begin{proof}
  In order to reach a contradiction we assume that $f^m(l) > l$.  Then
  $f^{im}(l)
  \uparrow x$ for some point $x \in (l,c]$ as $i \to \infty$, since $f^m(\lcv)
  = c$.  Since $l$ is the left endpoint of a branch there exists $t$ such that
  $f^t(l) = \rcv$.  Hence $f^{m-t}(\rcv) = l$ so the orbit of $\rcv$ contains
  the orbit of~$l$.  But the orbit of $\rcv$ was periodic by assumption which
  contradicts $f^{im}(l) \uparrow x$.  Hence $f^m(l) \leq l$.

  Now repeat this argument for $r$ to complete the proof.
\end{proof}

\begin{definition}
  Define
  \begin{align*}
    \trivl &= \big\{ \lambda \in \uint^2 \;\big\vert\;
      F_\lambda^{a+1}(c_*^-) = c_*
      \text{ and } F_\lambda^i(c_*^-) > c_*, i=1,\dots,a \big\}, \\
    \trivr &= \big\{ \lambda \in \uint^2 \;\big\vert\;
      F_\lambda^{b+1}(c_*^+) = c_*
      \text{ and } F_\lambda^i(c_*^+) < c_*, i=1,\dots,b \big\}.
  \end{align*}
  (The notation here is $g(c_*^-) = \lim_{x\uparrow c_*} g(x)$ and
  $g(c_*^+) = \lim_{x\downarrow c_*} g(x)$.)
\end{definition}

\begin{lemma} \label{lem:triv-curves}
  The set $\trivl$ is the image of a curve $v \mapsto (g(v),v)$.  The map $g$
  is differentiable and takes $[1-\psi_*\inv(c_*),1]$ into
  $[\phi_*\inv(c_*),1)$.

  Similarly, $\trivr$ is the image of a curve $u \mapsto (u,h(u))$ where $h$ is
  differentiable and takes $[\phi_*\inv(c_*),1]$ into $[1-\psi_*\inv(c_*),1)$.
\end{lemma}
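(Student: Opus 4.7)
The plan is to express $\trivl$ as the zero set of a scalar equation and apply the implicit function theorem. Setting
\begin{equation*}
E(u,v) := F_\lambda^{a+1}(c_*^-) - c_* = F_\lambda^a(\phi_*(u)) - c_*
\end{equation*}
(using $F_\lambda(c_*^-) = \lcv = \phi_*(u)$), the key structural observation is that on $\trivl$ the intermediate iterates $F_\lambda^i(\phi_*(u))$ for $i = 0,\dots,a-1$ all lie in $(c_*,1)$, and the right branch $G_v := F_\lambda|_{(c_*,1)} = \psi_* \circ Q_1$ depends on $v$ alone. Hence $E$ depends on $u$ purely through the initial condition $\phi_*(u)$, and by the chain rule
\begin{equation*}
\partial_u E(u,v) = (G_v^a)'(\phi_*(u)) \cdot \phi_*'(u),
\end{equation*}
which is strictly positive since $G_v$ is strictly increasing on $(c_*,1)$ and $\phi_*$ is orientation-preserving.

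With $\partial_u E > 0$ in hand, the remaining steps are, in order: (i) the implicit function theorem yields a differentiable local graph $u = g(v)$ about every point of $\trivl$; (ii) strict monotonicity of $u \mapsto E(u,v)$ upgrades uniqueness to a global graph in the $v$-variable; (iii) for existence set $c_*^{(0)} = c_*$ and define inductively $c_*^{(k)} = G_v^{-1}(c_*^{(k-1)})$, so that $g(v) := \phi_*\inv(c_*^{(a)})$ satisfies $F_\lambda^i(c_*^-) = c_*^{(a+1-i)}$ and meets all the required orbit conditions; (iv) the range inclusion $g(v) \in [\phi_*\inv(c_*), 1)$ follows from $c_*^{(a)} \in [c_*, 1)$, with the degenerate equality at the boundary $v = 1 - \psi_*\inv(c_*)$. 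Differentiability of $g$ on the whole interval $[1 - \psi_*\inv(c_*), 1]$ can be read off from the explicit formula $g(v) = \phi_*\inv \circ G_v^{-a}(c_*)$, since $G_v$ depends smoothly on $v$. The statement for $\trivr$ follows by a symmetric argument: one sets $E(u,v) = F_\lambda^b(\psi_*(1-v)) - c_*$, observes that the relevant orbit involves only the left branch $\phi_* \circ Q_0$ (which depends on $u$ alone), and solves for $v = h(u)$ via the analogous implicit function / preimage construction.

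The main obstacle is verifying that the preimage sequence $c_*^{(k)}$ stays in $(c_*, 1)$ for $k \leq a$, or equivalently that $c_*^{(k-1)} > \rcv$ at each step so $G_v^{-1}$ remains defined. The base case $c_*^{(0)} = c_* \geq \rcv$ is exactly the hypothesis $v \geq 1 - \psi_*\inv(c_*)$, and a short induction using $G_v(c_*^+) = \rcv < c_*$ shows that each subsequent preimage lands strictly above $c_*$ and hence above $\rcv$. The sequence cannot escape through $1$ in finitely many steps because $1$ is a repelling fixed point of $G_v$, so $c_*^{(a)} < 1$ as required; this bound is what ensures $g(v) < 1$.
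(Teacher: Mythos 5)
Your proof arrives at exactly the same closed-form expression $g(v)=\phi_*\inv\circ(\psi_*\circ Q_1)^{-a}(c_*)$ (and the analogous formula for $h$) that the paper simply writes down before declaring ``it can now be checked.'' The implicit-function-theorem scaffolding, the observation that $E$ depends on $u$ only through the initial condition $\phi_*(u)$, and the induction showing the backward orbit $c_*^{(k)}$ stays in $(c_*,1)$ are precisely the checking the paper delegates to the reader, so the two arguments are essentially the same.
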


\begin{proof}
  Define
  \[
    g(v) = \phi_*\inv \circ (\psi_* \circ Q_1)^{-a}(c_*)
    \quad\text{and}\quad
    h(u) = 1 - \psi_*\inv \circ (\phi_* \circ Q_0)^{-b}(c_*).
  \]
  Note that $Q_1$ depends on $v$ and $Q_0$ depends on $u$ so $g$ and~$h$ are
  well-defined maps.  It can now be checked that these maps define $\trivl$
  and~$\trivr$.
\end{proof}

\begin{lemma} \label{lem:cross}
  Assume that $\trivl$ crosses $\trivr$ and let $\lambda \in \trivl \cap
  \trivr$.  Then the crossing is transversal and there exists $\rho > 0$ such
  that if $r < \rho$, then the complement of $\trivl \cup \trivr$ inside the
  ball $B_r(\lambda)$ consists of four components and exactly one of these
  components is contained in the archipelago~$A_\omega$.
\end{lemma}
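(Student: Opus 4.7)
Define
\begin{equation*}
  G_1(\mu) = f_1^a(\phi_*(u)) - c_*, \qquad G_2(\mu) = f_0^b(\psi_*(1-v)) - c_*,
\end{equation*}
so that $\trivl = G_1\inv(0)$ and $\trivr = G_2\inv(0)$; transversality at $\lambda$ amounts to $\opD G_\lambda$ being nonsingular. My plan is to check nonsingularity by direct computation, apply the inverse function theorem to obtain four local components, and then identify the one lying in $A_\omega$ by relating the signs of $G_1, G_2$ to the nontriviality of $\opR F_\mu$.

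For the transversality step, $f_1$ is independent of $u$ and $f_0$ is independent of $v$, so the diagonal partials reduce to $\partial_u G_1 = \opD f_1^a(\phi_*(u)) \cdot \opD\phi_*(u)$ and $\partial_v G_2 = -\opD f_0^b(\psi_*(1-v)) \cdot \opD\psi_*(1-v)$. At $\lambda$ we have $\phi_*(u) = f_1^{-a}(c_*)$ and $\psi_*(1-v) = f_0^{-b}(c_*)$, so \lemref{derivs} gives $\abs{\partial_u G_1} \asymp (\alpha/\eps)^a$ and $\abs{\partial_v G_2} \asymp \alpha^b \eps^{1-\alpha^{-b}}$. The off-diagonal partials $\partial_v G_1$ and $\partial_u G_2$ I would expand via \lemref{partial-props} into sums of $a$ and $b$ terms, each controlled by the derivative bounds of \secref{inv-set} together with the fact that $\partial_v f_1$ and $\partial_u f_0$ are of constant order. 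The combinatorial restrictions defining $\Omega$ should make the ratio $\abs{\partial_u G_1 \cdot \partial_v G_2}/\abs{\partial_v G_1 \cdot \partial_u G_2}$ grow like $\alpha^b$, so the determinant of $\opD G_\lambda$ is dominated by the diagonal product and is nonzero for $\lbb$ large.

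Transversality then yields, by the inverse function theorem, a $\rho > 0$ such that $G$ maps $B_\rho(\lambda)$ diffeomorphically onto a neighborhood of the origin in $\reals^2$; for any $r < \rho$ the four open quadrants of the target pull back to four components of $B_r(\lambda) \setminus (\trivl \cup \trivr)$, labelled by $(\sign G_1, \sign G_2)$. To identify which component lies in $A_\omega$, note that the itineraries $F_\mu^i(c_*^-) > c_*$ for $i = 1,\dots,a$ and $F_\mu^j(c_*^+) < c_*$ for $j = 1,\dots,b$ hold at $\lambda$ by the definitions of $\trivl$ and $\trivr$ and persist in a neighborhood by continuity, so these symbols are locked throughout $B_r(\lambda)$. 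The periodic points $p_\mu, q_\mu$ of periods $a+1$ and $b+1$ bounding the return interval $C_\mu$ exist and depend continuously on $\mu$ by the implicit function theorem, since their multipliers are uniformly away from $1$ by the expansion estimates of \secref{inv-set}. With $h_\mu: \uint \to C_\mu$ the affine rescaling, a direct computation gives
\begin{equation*}
  \lcv(\opR F_\mu) - c'(\opR F_\mu) = \frac{G_1(\mu)}{q_\mu - p_\mu}, \qquad
  \rcv(\opR F_\mu) - c'(\opR F_\mu) = \frac{G_2(\mu)}{q_\mu - p_\mu},
\end{equation*}
so nontriviality of $\opR F_\mu$ is equivalent to $G_1(\mu) > 0 > G_2(\mu)$, and $A_\omega \cap B_r(\lambda)$ is exactly the closure of the unique component $\{G_1 > 0,\ G_2 < 0\}$.

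The main obstacle will be the transversality estimate: controlling the off-diagonal sums $\partial_v G_1$ and $\partial_u G_2$ requires a careful term-by-term analysis in the same spirit as the proof of \propref{M1}, and one must verify that the diagonal product indeed dominates. A minor technicality is that $\opR F_\lambda$ is a trivial Lorenz map at the crossing, so strictly $F_\lambda \notin \setK \cap \setLS_\Omega$; however, the bounds of \lemref{derivs} depend only on the positions of the backward orbits of $c_*$ under $f_0$ and $f_1$ and so remain valid at $\lambda$.
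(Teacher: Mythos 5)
Your component-identification step is sound and essentially matches what the paper does, but in a cleaner coordinate form: the identities $\lcv(\opR F_\mu) - c'(\opR F_\mu) = G_1(\mu)/(q_\mu - p_\mu)$ and $\rcv(\opR F_\mu) - c'(\opR F_\mu) = G_2(\mu)/(q_\mu - p_\mu)$ are correct (since $\lcv = \phi_*(u)$, $\rcv = \psi_*(1-v)$, and the rescaling $h$ sends $C = (p_\mu,q_\mu)$ affinely onto $\uint$), and combined with the locked itinerary and hyperbolicity of $p_\mu, q_\mu$ they do show that $B_r(\lambda) \cap A_\omega$ is the closure of $\{G_1 > 0 > G_2\}$. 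The paper reaches the same conclusion using the monotone cones $M_\mu^\pm$ rather than explicit sign formulas, but the two arguments are interchangeable.

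The real divergence is the transversality step, and here your proposal has a genuine gap. The paper does \emph{not} prove transversality by any derivative calculation. It proves it indirectly: assuming transversality one shows (by the argument above) that the crossing is adjacent to an island, and then invokes \propref{square}, which says $R$ is an orientation-preserving diffeomorphism of the island onto a genuine square, so the corner where $\trivl$ and $\trivr$ meet the island boundary must be transversal; a tangential crossing would make the corner degenerate, contradicting the diffeomorphism. All the hard derivative information needed for this is packaged into \corref{detM1} (positivity of $\det M_1$), which is much weaker than a full Jacobian estimate for your $G = (G_1,G_2)$. Your route instead asks for diagonal dominance of $\opD G_\lambda$, but the off-diagonal entries $\partial_v G_1$ and $\partial_u G_2$ are not estimated: they are iterated sums with $a$ and $b$ terms where the derivative factors $\opD f_1^{a-1-i}$ and $\opD f_0^{b-1-j}$ are evaluated at points that march towards the critical point, so the naive bounds from \lemref{derivs} do not directly give what you want, and the claim that ``the ratio grows like $\alpha^b$'' is unsupported. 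Unless you carry out an analysis at the level of detail of the proof of \propref{M1}, the transversality assertion is unproven, and the rest of your argument (IFT $\Rightarrow$ four components) collapses. Since the paper has already paid for \propref{square}, it gets transversality essentially for free; your approach, if completed, would duplicate a large fraction of \secref{the-derivative} for a statement the paper obtains by a soft argument.

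One further minor comment: you correctly flag that $F_\lambda$ itself lies on the boundary of $\setK \cap \setLS_\Omega$ (since $\opR F_\lambda$ is trivial), so strictly speaking \lemref{derivs} needs to be invoked for the unrenormalized map; your observation that its hypotheses only constrain the backward critical orbits is the right way to handle this, and the paper deals with the same issue implicitly by working with the periodic points $p_\lambda, q_\lambda$ and the minimum principle.
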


\begin{proof}
  To begin with assume that the crossing is transversal so that the complement
  of $\trivl \cup \trivr$ in $B_r(\lambda)$ automatically consists of four
  components for $r$ small enough.  Note that $\trivl \cup \trivr$ does not
  intersect $M_\lambda^+ \cup M_\lambda^- \setminus \{\lambda\}$.  Hence,
  precisely one component will have a boundary point $\mu \in \trivl$ such that
  $\trivr$ intersects $M_\mu^+$.  Denote this component by~$N$.  Note that if
  we move from $\mu$ inside $N \cap M_\mu^+$ then the left critical value of
  the return map moves above the diagonal.  If we move in $N \cap M_\mu^+$ from
  a point in $\trivr$ then the right critical value of the return map moves
  below the diagonal.

  By \lemref{triv-crossing} $F_\lambda$ is renormalizable and moreover the
  periodic points $p_\lambda$ and~$q_\lambda$ that define the return interval
  of $F_\lambda$ are hyperbolic repelling by the minimum principle.  Hence, if
  we deform $F_\lambda$ into $N$ it will still be renormalizable since $N$
  consists of $\mu$ such that $F^{a+1}_\mu(c^-)$ is above the diagonal and
  $F^{b+1}_\mu(c^+)$ is below the diagonal.  By choosing $r$ small enough all
  of $N$ will be contained in $A_\omega$.
  
  Note that if we deform into any other component (other than~$N$) then at
  least one of the critical values of the return map will be on the wrong side
  of the diagonal and hence the corresponding map is not renormalizable.  Thus
  only the component $N$ intersects $A_\omega$.

  Now assume that the crossing is not transversal.  Then we may pick $\lambda$
  in the intersection $\trivl \cap \trivr$ so that it is on the boundary of an
  island (by the above argument).  But then $\lambda$ must be at a transversal
  intersection since islands are square by \propref{square} and the curves
  $\trivl$ and $\trivr$ are differentiable.  Hence every crossing is
  transversal.
\end{proof}

\begin{figure}
  \begin{center}
    \includegraphics{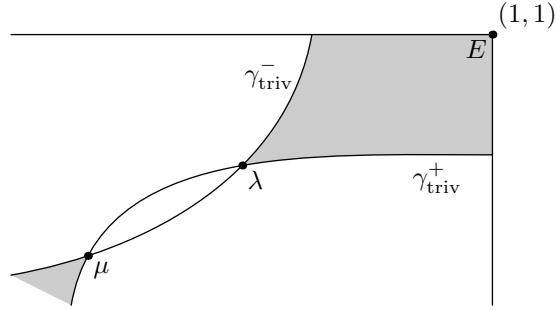}
  \end{center}
  \caption{Illustration of the proof of~\thmref{island}.  Both $\lambda$
  and~$\mu$ must be in the boundary of islands, which lie inside the shaded
  areas.  These two islands have opposite orientation which is impossible.}
  \label{fig:island-proof}
\end{figure}

\begin{proof}[Proof of~\thmref{island}]
  From \propref{square} we know that every island must contain an extremal
  point which renormalizes to a map with only trivial branches, and hence every
  island must be adjacent to a crossing between the curves $\trivl$
  and~$\trivr$.  We claim that there can be only one such crossing and hence
  uniqueness of islands follows.  Note that there is always at least one island
  by \propref{full}.

  By \lemref{triv-curves} $\trivl$ and $\trivr$ terminate in the upper and
  right boundary of $\uint^2$, respectively.  Let $\lambda$ be the crossing
  nearest the points of termination in these boundaries.  Let $E$ be the
  component in the complement of $\trivl \cup \trivr$ in $\uint^2$ that
  contains the point $(1,1)$.  The geometrical configuration of $\trivl$ and
  $\trivr$ is such that $E$ must contain the piece of $A_\omega$ adjacent
  to~$\lambda$ as in \lemref{cross}.  To see this use the fact that
  deformations in the cones $M_\lambda^+$ moves both branches of~$F_\lambda$
  up.

  In order to reach a contradiction assume that there exists another crossing
  $\mu$ between $\trivl$ and~$\trivr$ (see \figref{island-proof}).  By
  \lemref{cross} there is an island attached to this crossing but the
  configuration of $\trivl$ and $\trivr$ at~$\mu$ is such that this island is
  oriented opposite to the island inside~$E$.  But $R$ is
  orientation-preserving so both islands must be oriented the same way and
  hence we reach a
  contradiction.  The conclusion is that there can be no more than one crossing
  between $\trivl$ and $\trivr$ as claimed.

  Finally, the entire archipelago equals the closure of the island since the
  derivative of $R$ is nonsingular at every point in the archipelago.  Hence
  every point in the archipelago must either be contained in an island or on
  the boundary of an island.
\end{proof}

\begin{proof}[Proof of~\thmref{cantor-archipelago}]
  By \thmref{island} there exists a unique sequence of nested
  squares\footnote{By a \emph{square} we mean any set diffeomorphic to the unit
  square.}
  \[
    I_1 \supset I_2 \supset I_3 \supset \dotsm
  \]
  such that $\lambda \in I_k$ implies that $F_\lambda$ is renormalizable of
  type $(\omega_0,\dotsc,\omega_{k-1})$.

  Let $F: \uint^2 \to \setL$ be the map $\lambda \mapsto F_\lambda$, let $p:
  \setL \to \uint^2$ be the projection $(u,v,c,\phi,\psi) \mapsto (u,v)$, and
  let $G_k: I_k \to \uint^2$ be defined by $G_k = p \circ \opR^k \circ F$.  The
  set of tangent vectors $v$ to $F(I_k)$ are all horizontal, so the image of
  $v$ under $\opD\opR$ is in a horizontal cone with a very small angle by
  \propref{cone-field}.  This cone is invariant under $\opD\opR$ by the same
  proposition and furthermore it is strongly expanded by
  \propref{cone-expansion}.\footnote{Note that $\setL$ can be embedded in
  $\lorenzd$ by sending $\phi$ and $\psi$ to singleton decompositions, which is
  how we can apply the propositions from \secref{cone-field} even though they
  are stated for decomposed Lorenz maps.} \propref{K-relcpt} shows that $\setK$
  is relatively compact so there are uniform bounds on $\abs{U}$ and $\abs{V}$
  and hence \propref{cone-expansion} shows that there exists $\mu > 1$ such
  that
  \[
    \norm{\opD G_k} \geq \mu^k.
  \]
  By construction $G_k(I_k) \subset \uint^2$ so the image is bounded, which
  together with the lower bound on $\norm{\opD G_k}$ shows that the diameter of
  $I_k$ shrinks at an exponential rate.  In particular $\bigcap I_k$ is a
  point.  That the union of all such points is a Cantor set is a standard
  argument (using that each type $\rtype \in \Omega^\nats$ has a unique
  associated sequence of squares $\{I_k\}$ and that each such sequence shrinks
  at a uniform exponential rate).
\end{proof}


\section{Unstable manifolds} 
\label{sec:unstable}

The norm used on the tangent space does not give good enough estimates to see a
contracting subspace so we cannot quite prove that the limit set of~$\opR$ is
hyperbolic.  However, these estimates did give an expanding invariant cone
field and in this section we will show how this gives us unstable manifolds at
each point of the limit set.

Instead of trying to appeal to the stable and~unstable manifold theorem for
dominated splittings to get
local unstable manifolds we directly construct global unstable manifolds by
using all the information we have about the renormalization operator and its
derivative.  This is done by defining a graph transform and showing that it
contracts some suitable metric similarly to the Hadamard proof of the stable
and~unstable manifold theorem.  We are only able to show that the resulting
graphs are $\setC^1$ since we do not have hyperbolicity.  Our proof is an
adaptation of the proof of Theorem~6.2.8 in~\citet{KH95}.

\begin{definition}
  Let $\attr_\Omega$ be as in \defref{attr} and define the \Index{limit set of
  renormalization} for types in~$\Omega$ by
  \[
    \limitset_\Omega = \attr_\Omega \cap \lorenzd_{\Omega^\nats}.
  \]
\end{definition}

\begin{remark}
  Here $\lorenzd_{\Omega^\nats}$ denotes the set of infinitely
  renormalizable maps with combinatorial type in~$\Omega^\nats$ and
  $\attr_\Omega$ can intuitively be thought of as the \emph{attractor} for
  $\opR$.  The set $\Omega$ is the same as in \secref{inv-set}, as always.

  Note that by \propref{attr-pure}
  \[
    \limitset_\Omega \subset \uint^2 \times (0,1) \times \pdecomps^2,
  \]
  where $\pdecomps$ denotes the set of pure decompositions, see
  \defref{pdecomps}.
\end{remark}

\begin{theorem} \label{thm:unstable}
  \index{unstable manifold}
  For every $\decomp f = (u,v,c,\decomp\phi,\decomp\psi) \in \limitset_\Omega$
  there exists a unique global unstable manifold $\mfd^u(\decomp f)$.  The
  unstable manifold is a graph
  \[
    \mfd^u(\decomp f) = \big\{ \big(\xi,\sigma(\xi)\big) \mid
    \xi \in I \big\},
  \]
  where $\sigma: I \to (0,1) \times \pdecomps^2$ is
  $\kappa$--Lipschitz for some $\kappa \ll 1$ (not depending on~$\decomp f$).
  The domain $I \subset \reals^2$ is essentially given by
  \[
    \pi\left( \opR(\lorenzd_\omega) \cap
    \big(\uint^2 \times \{c\} \times \{\decomp\phi\} \times \{\decomp\psi\}\big)
    \right),
  \]
  where $\pi$ is the projection onto the $(u,v)$--plane, and $\omega$ is
  defined by $\decomp f$ being in the image $\opR(\lorenzd_\omega)$.
  Additionally, $\mfd^u$ is $\setC^1$.
\end{theorem}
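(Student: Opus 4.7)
The plan is to adapt the Hadamard graph transform method. Since $\decomp f \in \limitset_\Omega$ has a complete past $(\dots,\decomp f_{-2},\decomp f_{-1},\decomp f_0 = \decomp f)$ with $\decomp f_{-n} \in \dinvset$ and $\opR \decomp f_{-n-1} = \decomp f_{-n}$, I would construct the unstable manifold as the fixed point of a graph transform iterated along this backward orbit. Since $\limitset_\Omega \subset \attr_\Omega$ consists of pure decomposed maps, all of the derivative estimates from \secref{the-derivative} and the invariant cone field from \propref{cone-field} are available at every point of the orbit. The two-dimensional $(u,v)$--subspace at each $\decomp f_{-n}$ sits inside the narrow horizontal cone $\cone(\decomp f_{-n},\lb\kappa)$, and \propref{cone-expansion} guarantees uniform expansion by a factor at least $k\cdot\min\{\abs{U_{-n}}\inv,\abs{V_{-n}}\inv\} > 1$.

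Concretely, I would fix $\kappa$ with $\lb\kappa(\decomp f) \leq \kappa \leq \ub\kappa(\decomp f)$ uniformly along the past orbit, and define the space $\Gamma_{-n}$ of $\kappa$--Lipschitz graphs $\sigma_{-n}: I_{-n} \to (0,1) \times \pdecomps^2$ whose domain is the projection of $\opR(\lorenzd_{\omega_{-n-1}})$ onto the $(u,v)$--plane at height $\decomp f_{-n}$. The graph transform $\opR_\ast: \Gamma_{-n-1} \to \Gamma_{-n}$ takes $\graph(\sigma_{-n-1})$ to the connected component of $\opR(\graph(\sigma_{-n-1}))$ through $\decomp f_{-n}$, reparametrized as a graph over $I_{-n}$. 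The cone invariance of \propref{cone-field} ensures this is well-defined and lands in the $\lb\kappa$--cone, which is strictly smaller than the $\kappa$--cone (\remref{fat-to-thin}); in particular $\opR_\ast$ sends $\Gamma_{-n-1}$ into $\Gamma_{-n}$ with strictly smaller Lipschitz constant.

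For contraction, I would equip $\Gamma_{-n}$ with the sup metric $d(\sigma,\tilde\sigma) = \sup_{\xi \in I_{-n}} \norm{\sigma(\xi) - \tilde\sigma(\xi)}$. Two graphs over $I_{-n-1}$ correspond, after applying $\opR$, to graphs whose horizontal separation has been expanded by at least $\min\{\abs{U_{-n-1}}\inv,\abs{V_{-n-1}}\inv\}$ times a uniform constant (\propref{cone-expansion}), while vertical separations are controlled by $\norm{M_4}\leq K\rho'\abs{C}\inv$ together with the Lipschitz bound. Reparametrizing back over $I_{-n}$ therefore gives contraction in $d$ with a uniform factor bounded away from~$1$. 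Iterating $\opR_\ast \circ \opR_\ast \circ \dotsm$ starting from any graph in $\Gamma_{-N}$ and pulling forward to~$\Gamma_0$ yields a Cauchy sequence whose limit $\sigma$ is independent of the initial choice; its graph is $\mfd^u(\decomp f)$. Uniqueness follows because any other invariant Lipschitz graph through $\decomp f$ would also be a fixed point of the same contraction.

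The main obstacle, and the least routine step, is upgrading from Lipschitz to $\setC^1$: the estimates of \secref{the-derivative} do not yield contraction transverse to the cones, so the standard dominated-splitting argument applies only one-sidedly. To handle this I would work on the bundle of admissible tangent planes, i.e.\ two-dimensional subspaces contained in $\cone(\cdot,\lb\kappa)$, and define a secondary graph transform $\opD\opR_\ast$ acting on Lipschitz sections of this bundle over $\mfd^u(\decomp f)$. \propref{cone-field} shows this is well-defined, and the ratio of expansion inside the cone to growth in the cone complement (given by $\norm{M_4}\leq K\rho'\abs{C}\inv$, with $\rho' \to 0$) provides enough contraction for the associated bundle map to be a contraction in the $\setC^0$--topology on tangent planes. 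The fixed planes then serve as tangent spaces to $\mfd^u(\decomp f)$, and continuity of these planes with respect to the base point yields $\setC^1$. Global extension of the domain $I$ to the image $\pi(\opR(\lorenzd_\omega) \cap \slice)$ is then obtained by pushing the local graph forward under finitely many applications of $\opR\inv$, using that the cone field is invariant and that $\opR$ is a local diffeomorphism by \corref{detM1}.
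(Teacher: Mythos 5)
Your overall plan — define a graph transform along the backward orbit of $\decomp f$, use \propref{cone-field} to preserve $\kappa$--Lipschitz graphs, use \propref{cone-expansion} for expansion, and invoke a contraction-mapping argument — matches the paper's strategy. However, there is a genuine gap at the heart of the contraction step, and it is precisely the pitfall the paper warns about.

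You propose the plain sup metric $d(\sigma,\tilde\sigma)=\sup_{\xi}\norm{\sigma(\xi)-\tilde\sigma(\xi)}$ and claim that the horizontal expansion plus $\norm{M_4}\leq K\rho'\abs{C}\inv$ gives contraction after reparametrizing. This does not work. When you reparametrize the pushed-forward graphs over $I_{-n}$, the sup over $I_{-n}$ corresponds to the sup of the original graph difference over a (contracted) preimage neighbourhood of the anchor point $\xi_{-n-1}$. That sup is \emph{not} in general a definite fraction of the sup over all of $I_{-n-1}$: two $\kappa$--Lipschitz graphs through $\decomp f_{-n-1}$ can differ by an almost-constant amount away from the anchor, so restricting the domain barely reduces the sup. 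Meanwhile the vertical factor $\norm{M_4}\leq K\rho'\abs{C}\inv$ is \emph{large}, since $\abs{C}\inv$ blows up as $\lbb\to\infty$; $\rho'$ being small does not offset $\abs{C}\inv$. So the naive ratio $\norm{M_4}\cdot(\text{restricted sup})/(\text{full sup})$ need not be below~$1$, and without a contracting transverse subspace there is nothing else to save you. The paper resolves this by using the weighted metric
\[
\distance_i(\gamma_i,\theta_i) = \sup_{\xi}\frac{\abs{\gamma_i(\xi)-\theta_i(\xi)}}{\abs{\xi-\xi_i}},
\]
whose denominator is exactly what converts horizontal expansion by $k\min\{\abs{U}\inv,\abs{V}\inv\}$ into a gain in the contraction constant: one obtains $\nu = (\norm{M_4}+\kappa\norm{M_2})/(k\min\{\abs{U}\inv,\abs{V}\inv\})$, and the $\abs{C}\inv$ in the numerator is now cancelled against $\min\{\abs{U}\inv,\abs{V}\inv\}$, leaving a factor of order $\rho'\cdot\max\{\abs{U},\abs{V}\}/\abs{C}\ll 1$. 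The paper states this explicitly (``it is not enough to simply use a $\setC^0$--metric since we do not have a contracting subspace of~$\opD\opR$''). Your write-up would need to replace the sup metric by this (or an equivalent) anchored/weighted metric before the contraction argument goes through.

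Two smaller remarks. First, your sketch of the $\setC^1$ upgrade (graph transform on the bundle of admissible tangent planes inside the invariant cone) is a reasonable, standard route and is more detailed than the paper (which simply cites \citealp{KH95}); but the same weighting issue lurks there too, so the dominated-splitting ratio you invoke must be read against the horizontal expansion, not against an (absent) transverse contraction. Second, your claimed upper range $\kappa\leq\ub\kappa(\decomp f)$ is unnecessarily restrictive and also incorrect for the Lipschitz class of the limiting graph: the paper instead fixes $\kappa\in(\lb\kappa,1)$ small enough for \lemref{surface-tube}, and the limiting graph ends up $\lb\kappa$--Lipschitz by \propref{cone-field}, which is the source of the ``$\kappa\ll 1$ not depending on $\decomp f$'' claim in the statement.
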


\begin{remark}
  Note that in stark contrast to the situation in the `regular' stable and
  unstable manifold theorem we get \emph{global} unstable manifolds which are
  graphs and that these are almost completely \emph{straight} due to the
  Lipschitz
  constant being very small.  The statement about the domain~$I$ is basically
  that $I$ is ``as large as possible.''  This will be elaborated on in the
  proof.

  Another thing to note is that we cannot say anything about the uniqueness of
  $\decomp f \in \limitset_\Omega$ for a given combinatorics.  That is, given
  \[
    \rtype = (\dotsc,\omega_{-1},\omega_0,\omega_1,\dotsc)
  \]
  we cannot prove that
  there exists a unique $\decomp f \in \limitset_\Omega$ realizing this
  combinatorics.  Instead we see a foliation of the set of maps with
  type~$\rtype$ by unstable manifolds.  If we had a hyperbolic structure on
  $\limitset_\Omega$ this problem would go away.
\end{remark}

\begin{theorem} \label{thm:monotonicity}
  Let $\decomp f \in \limitset_\Omega$ and let $\rtype \in \Omega^\nats$.  Then
  $\mfd^u(\decomp f)$ intersects the set of infinitely renormalizable maps of
  combinatorial type~$\rtype$ in a unique point, and the union of all such
  points over $\rtype \in \Omega^\nats$ is a Cantor set.
\end{theorem}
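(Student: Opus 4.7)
The plan is to mirror the proof of \thmref{cantor-archipelago}, but with the monotone family $F_\lambda$ of \secref{island-structure} replaced by the unstable manifold $\mfd^u(\decomp f)$, viewed as a Lipschitz graph parametrization. By \thmref{unstable}, $\mfd^u(\decomp f)$ is the image of a map $\Sigma: I \to \lorenzd$, $\xi \mapsto (\xi,\sigma(\xi))$, where $I \subset \reals^2$ is (essentially) the $(u,v)$--projection of a slice of $\opR(\lorenzd_\omega)$. Since $\sigma$ is $\kappa$--Lipschitz with $\kappa \ll 1$, the family $\Sigma$ is a mild, graph-like perturbation of a true monotone family in the sense of \secref{island-structure}: moving $\xi$ in the cone $M_\xi^+$ (resp.~$M_\xi^-$) still moves both branches of $\opO \circ \Sigma(\xi)$ monotonically up (resp.~down), because the effect on $(c,\decomp\phi,\decomp\psi)$ is suppressed by a factor of $\kappa$.

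First, I would establish an analogue of \thmref{island} on $\mfd^u(\decomp f)$. For each $\omega \in \Omega$ I would define the archipelago $A_\omega \subset I$ as the set of $\xi$ such that $\opO \circ \Sigma(\xi)$ is $\omega$--renormalizable, and lift the operator $R$ of \defref{R-proj} to this setting via $R_\Sigma = \pi_S \circ H \circ \opR \circ \Sigma$. The curves $\trivl,\trivr \subset I$ are defined in complete analogy to \lemref{triv-curves} and remain graphs of differentiable functions; this uses only the implicit function theorem applied to the equations $F^{a+1}(c^-)=c$, $F^{b+1}(c^+)=c$ along $\Sigma$, and the Lipschitz smallness of $\sigma$ guarantees the transversality condition. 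The fact that $\opD R_\Sigma$ is orientation-preserving follows from \corref{detM1} together with a cone-domination argument: tangent vectors to $\Sigma(I)$ lie in the cone $\cone(\decomp f,\kappa)$ of \propref{cone-field}, and the leading order contribution to $\opD R_\Sigma$ comes from the $M_1$ block of \propref{M1}. Hence the argument of \thmref{island} (via \lemref{cross} and the uniqueness of the crossing $\trivl \cap \trivr$) carries over to show that $A_\omega$ is the closure of a unique island $I_\omega \subset I$, diffeomorphic to a square.

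Second, I would iterate this construction: if $\rtype = (\omega_0,\omega_1,\dotsc) \in \Omega^\nats$ and $I_{\omega_0}$ is the island in $\mfd^u(\decomp f)$ of type $\omega_0$, then $\opR$ maps $\Sigma(I_{\omega_0})$ into $\mfd^u(\opR\decomp f)$ and one may pull back the island of type $\omega_1$ in $\mfd^u(\opR\decomp f)$ to obtain a nested square $I_{\omega_0,\omega_1} \subset I_{\omega_0}$. Iterating yields nested squares $I_1 \supset I_2 \supset \dotsm$ with $\xi \in I_k$ iff $\opO \circ \Sigma(\xi)$ has renormalization type $(\omega_0,\dotsc,\omega_{k-1})$. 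The intersection is nonempty by compactness.

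Third, I would prove uniqueness and the Cantor property by invoking the cone expansion. Define $G_k = \pi \circ \opR^k \circ \Sigma: I_k \to \uint^2$ as in the proof of \thmref{cantor-archipelago}. Tangent vectors to $\Sigma(I_k)$ lie in $\cone(\decomp f,\kappa)$, which is $\opD\opR$--invariant (\propref{cone-field}) and uniformly expanded by \propref{cone-expansion}. The relative compactness of $\setK$ (\propref{K-relcpt}) provides uniform bounds on $\abs{U},\abs{V}$, so $\norm{\opD G_k} \geq \mu^k$ for some $\mu > 1$ independent of $\decomp f$ and $\rtype$. Since $G_k(I_k) \subset \uint^2$ is bounded, $\operatorname{diam}(I_k) \to 0$ exponentially, and $\bigcap I_k$ is a single point. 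The union of such points over $\rtype \in \Omega^\nats$ is Cantor by the standard symbolic argument.

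The main obstacle is the first step: verifying that the island-structure theorem of \secref{island-structure}, which was proved for a \emph{genuinely} monotone family $F_\lambda$, survives for the perturbed 2-parameter family $\Sigma$ where $(c,\decomp\phi,\decomp\psi)$ drift along $\mfd^u$. The Lipschitz bound $\kappa \ll 1$ from \thmref{unstable} is what makes the monotonicity arguments (the cones $M_\xi^\pm$, the transversality of $\trivl \cap \trivr$, and orientation-preservation of $R_\Sigma$) robust under this perturbation; once these are in place the rest of the proof is a faithful transcription of \thmref{cantor-archipelago}.
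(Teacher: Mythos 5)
Your overall strategy is sound and parallels the paper's, and your Steps 2 and 3 (nested squares and cone expansion via \propref{cone-expansion}, as in \thmref{cantor-archipelago}) are exactly what the paper does. But you take a harder route in Step 1, and your candid ``main obstacle'' paragraph flags precisely what the paper avoids.

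Rather than re-derive an analogue of \thmref{island} on the perturbed family $\Sigma$ --- rebuilding the curves $\trivl,\trivr$, re-verifying monotonicity under the Lipschitz drift of $(c,\decomp\phi,\decomp\psi)$, and re-checking orientation-preservation of $R_\Sigma$ --- the paper uses \lemref{surface-tube}. That lemma invokes \thmref{island} once, in the \emph{ambient} space $\dinvset$, to get that $\lorenzd_\omega \cap \dinvset$ is a tube (diffeomorphic to $\uint^2 \times X$). It then observes that tangent vectors to the boundary of such a tube lie in the complement of the cone $\cone_\kappa$ (because $\opD\opR\inv$ preserves the cone complement by \propref{cone-field}, and the boundary of $\opR\rdomd_\omega$ looks like the wedge of \figref{R}), whereas tangent vectors to the graph $\sigma$ lie inside $\cone_\kappa$ since $\sigma$ is $\kappa$--Lipschitz with $\kappa\ll1$. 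Transversality follows, and the intersection of the graph with each tube is automatically a square --- no need to re-run the archipelago machinery on $\Sigma$. Your version should also work, given the Lipschitz smallness and the dominance of $M_1$, but it reproves a theorem whose consequence can be imported directly; the paper's route is both shorter and more robust, since it only needs the tube structure (a topological fact) plus a cone angle comparison, not the full monotonicity/crossing analysis of \secref{island-structure} redone on a curved section.

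One small caution about your Step 2: be careful with the phrase ``pull back the island of type $\omega_1$ in $\mfd^u(\opR\decomp f)$.'' The unstable manifold $\mfd^u(\opR\decomp f)$ obtained in \thmref{unstable} is precisely $\opR(\mfd^u(\decomp f)\cap \lorenzd_{\omega_0})$ (up to the block truncation), so the pullback is tautologically inside $I_{\omega_0}$; but writing it as you did suggests you would need a separate compatibility check. In the paper's formulation this is built into the graph transform being a contraction, which you are implicitly relying on.
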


\begin{proof}
  \thmref{unstable} shows that the unstable manifolds are straight (see the
  above remark) and hence
  \lemref{surface-tube} enables us to apply the same arguments as in
  \thmref{cantor-archipelago}.
\end{proof}

\begin{lemma} \label{lem:surface-tube}
  There exists $\kappa$ close to~$1$ such that if $\gamma: [0,1]^2 \to (0,1)
  \times \pdecomps^2$ is $\kappa$--Lipschitz and $\graph \gamma \subset
  \dinvset$, then $\lorenzd_\omega \cap \graph \gamma$ is diffeomorphic to a
  square, for every $\omega \in \Omega$.
\end{lemma}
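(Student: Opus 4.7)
The plan is to extend the proof of \thmref{island} from a constant slice to the Lipschitz surface $\graph\gamma$. Parametrize the graph by $F: [0,1]^2 \to \lorenzd$, $F(\lambda) = (\lambda, \gamma(\lambda))$, and set $A_{\gamma,\omega} = F^{-1}(\lorenzd_\omega)$. Since $F$ is a diffeomorphism onto $\graph\gamma$, it suffices to show that $A_{\gamma,\omega}$ is diffeomorphic to a closed square in $[0,1]^2$ for every $\omega \in \Omega$.

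Following \defref{R-proj}, set $R_\gamma(\lambda) = \pi_S \circ H \circ \opR \circ F(\lambda)$ as a map $A_{\gamma,\omega} \to S = [\tfrac12, 1]^2$. The boundary characterization of \propref{dY} (condition (Y\ref{dY1})) carries over to $\graph\gamma$ because it is an open condition on the renormalization $\opR F(\lambda)$, and therefore $R_\gamma(\bndry A_{\gamma,\omega}) \subset \bndry S$. The trivial curves $\trivl_\gamma, \trivr_\gamma \subset [0,1]^2$ are obtained by pulling the equations of \lemref{triv-curves} back through $F$; they remain smooth curves by the implicit function theorem, since the relevant normal derivatives are $\kappa$-perturbations of the constant-slice case handled in that lemma.

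The main step is to show that $\opD R_{\gamma,\lambda}$ is orientation-preserving throughout $A_{\gamma,\omega}$. Using the block decomposition $\reals^2 \oplus Y$ from \secref{the-derivative}, write $\opD F_\lambda = (I_2, \opD\gamma(\lambda))^{T}$. Since $\gamma$ is $\kappa$-Lipschitz, the image of $\opD F_\lambda$ lies in the cone $\cone(F(\lambda),\kappa)$. By \propref{cone-field}, if $\kappa \leq \ub\kappa(F(\lambda))$ then $\opD\opR \circ \opD F_\lambda$ takes values in the much thinner cone $\cone(\opR F(\lambda), \lb\kappa(\opR F(\lambda)))$, which is nearly horizontal by \remref{fat-to-thin} once $\lbb$ is large. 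Composing this nearly horizontal $2$-plane with $\opD(\pi_S \circ H)$---whose top-left $2\times 2$ block is diagonal with positive entries, as displayed in the proof of \propref{square}---produces $\opD R_{\gamma,\lambda}$ as a small perturbation of a positive multiple of $M_1$. Since $\det M_1 > 0$ by \corref{detM1}, we conclude that $\det \opD R_{\gamma,\lambda} > 0$ for all $\lambda \in A_{\gamma,\omega}$ provided $\lbb$ is large enough.

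With orientation-preservation in hand, the rest of the argument from \thmref{island} transfers: an analogue of \lemref{cross} gives transversal crossings between $\trivl_\gamma$ and $\trivr_\gamma$, and any second crossing would yield an oppositely oriented island, contradicting the orientation-preservation of $R_\gamma$. Hence there is a unique crossing, a unique island, and (by the nonsingularity of $\opD R_\gamma$) that island exhausts $A_{\gamma,\omega}$ and is mapped diffeomorphically onto $S$ by $R_\gamma$. The main obstacle will be the quantitative verification that the monotonicity underlying \lemref{cross}---that deformations in $M_\lambda^\pm$ move both branches of $F_\lambda$ in the same direction---remains valid along $\graph\gamma$, since variation in $(u,v)$ now entrains small variation in $(c,\decomp\phi,\decomp\psi)$ through $\opD\gamma$. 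This reduces to showing that the $(u,v)$-component of such deformations dominates the other components after one renormalization, which is precisely the content of the cone invariance of \propref{cone-field} for $\kappa$ in the allowed range.
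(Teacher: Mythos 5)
Your approach is genuinely different from the paper's. You re-run the entire archipelago analysis (\thmref{island}, \lemref{cross}) along the Lipschitz surface $\graph\gamma$: you build the analogue $R_\gamma$ of the map $R$ from \defref{R-proj}, show that $\opD R_\gamma$ is orientation-preserving via the cone field, and then want to reuse the transversal-crossing argument to conclude that there is one full island. The paper instead treats \thmref{island} as a black box: it shows that each $\lorenzd_\omega\cap\dinvset$ is already a \emph{tube} $\uint^2\times X$ (since every constant slice meets it in a square), then proves that the boundary of this tube has tangent vectors lying in the \emph{complement} of the cone $\cone_\kappa$ (by pulling the wedge boundary back through $\opD\opR\inv$, which preserves that complement by \propref{cone-field}), and hence a $\kappa$-Lipschitz graph --- whose tangents lie \emph{inside} $\cone_\kappa$ --- meets each tube transversally and cannot fold over it. That is a short, purely geometric argument that requires no analogue of \lemref{cross} on the graph.

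The part of your proposal where the approaches coincide in spirit (using \propref{cone-field} and \corref{detM1} to see that $\opD R_\gamma$ is a small perturbation of $M_1$ and hence orientation-preserving) is fine. The genuine gap is the one you flag yourself at the end and then wave away: the proof of \lemref{cross} rests on the family being \emph{monotone}, i.e.\ that a deformation in $M_\lambda^+$ moves both branches of the one-dimensional map upward, so that $\trivl$ and $\trivr$ stay out of the cones $M_\lambda^\pm$. That fact is immediate for a constant slice (only $(u,v)$ varies) but is not immediate along a $\kappa$-Lipschitz graph with $\kappa$ close to $1$: a unit change in $(u,v)$ now entrains a comparable change in $(c,\decomp\phi,\decomp\psi)$, and the derivative of, say, $F^{a+1}(c^-)$ with respect to $c$ is of the same order as (or larger than) its derivative with respect to $u$, as one sees from \propref{c-partials} versus \propref{M1}. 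Claiming that this ``reduces to the cone invariance of \propref{cone-field}'' conflates two different statements: cone invariance constrains how $\opD\opR$ acts on tangent directions at a map, whereas the monotonicity in \lemref{cross} is a statement about the sign of the effect of deformations on the critical orbit of $F_\lambda$ itself. No reduction between them is supplied, and without it the four-component picture around a crossing of $\trivl_\gamma$ and $\trivr_\gamma$ --- which is the backbone of the uniqueness argument --- is unproved. The paper's proof avoids this issue entirely by keeping the monotone-family argument confined to constant slices (where it is trivial) and extracting the transversality statement it actually needs from the cone field, not from monotonicity.
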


\begin{proof}
  By \thmref{island} the set $\lorenzd_\omega \cap \dinvset$ is a tube for
  every $\omega\in\Omega$ (by \emph{tube} we mean that the set is diffeomorphic
  to $\uint^2 \times X$ for some set $X$).  Take a tangent vector at a point in
  $\bndry(\opR\lorenzd_\omega) \cap \dinvset$.  Such a tangent will lie in the
  complement of a cone $\cone_{\kappa} = \{\norm{y} \leq \kappa\norm{x}\}$ for
  $\kappa<1$ close to~$1$, since the projection of the image of a tube to the
  $(u,v,c)$--subspace will look like a slightly deformed cut-off part of the
  wedge in~\figref{R} and the maximum angle of a tangent vector in the boundary
  of the wedge is exactly~$1$.  By \propref{cone-field}, $\opD\opR\inv$ maps
  the complement of $\cone_{\kappa}$ into itself and hence every tube ``lies in
  the complement of $\cone_\kappa$''.  That is, a tangent vector at a point in
  the boundary of a tube lies in the complement of~$\cone_\kappa$, so the tubes
  cut the $(u,v)$--plane at an angle which is smaller than~$1/\kappa$.

  Now if we choose $\kappa$ as above, then the graph of~$\gamma$ will also
  intersect every tube on an angle.  Hence the intersection is diffeomorphic
  to a square.  The main point here is that with $\kappa$ chosen properly,
  $\gamma$ cannot `fold over' a tube and in such a way create an intersection
  which is not simply connected.
\end{proof}

\begin{proof}[Proof of \thmref{unstable}]
  The proof is divided into three steps:
  \begin{inparaenum}[(1)]
    \item definition of the graph transform~$\Gamma$,
    \item showing that $\Gamma$ is a contraction,
    \item proof of $\setC^1$--smoothness of the unstable manifold.
  \end{inparaenum}

  \medskip
  \textit{Step 1.}
  From \lemref{crit-vals} we know that the parameters $u$ and~$v$ for any map
  in $\dinvset$ are uniformly close to~$1$ so there exists $\mu \ll 1$ such
  that if we define the `block'
  \[
    \blok = [1-\mu,1]^2 \times (0,1) \times \pdecomps^2 \cap \dinvset,
  \]
  then $\lorenzd_\Omega \cap \dinvset \subset \blok$, $1-\mu > \phi\inv(c)$ and
  $\mu > \psi\inv(c)$ for all $(u,v,c,\phi,\psi) \in \opO(\blok)$.  In other
  words, the block $\blok$ is defined so that it contains all maps in
  $\dinvset$ which are renormalizable of type in $\Omega$ and the square
  $[1-\mu,1]^2$ is contained in the projection of the image
  $\opR(\lorenzd_\Omega \cap \dinvset)$ onto the $(u,v)$--plane.

  Fix $\decomp f_0 \in \limitset_\Omega$ and $\kappa \in (\lb\kappa,1)$, where
  $\lb\kappa$ is the supremum of $\lb\kappa(\decomp f)$ defined in
  \propref{cone-field} and $\kappa$ is small enough so that
  \lemref{surface-tube} applies.  Associated with $\decomp f_0$ are two
  bi-infinite sequences $\{\omega_i\}_{i\in\ints}$ and $\{\decomp
  f_i\}_{i\in\ints}$ such that $\opR_{\omega_i} \decomp f_i = \decomp f_{i+1}$
  for all $i \in \ints$.  Now define $\graphs_i$, the ``unstable graphs
  centered on~$\decomp f_i$,'' as the set of $\kappa$--Lipschitz maps
  $\gamma_i: [1-\mu,1]^2 \to (0,1) \times \pdecomps^2$ such that $\graph
  \gamma_i \subset \blok$ and $\gamma_i(\xi_i) =
  (c_i,\decomp\phi_i,\decomp\psi_i)$, where $\decomp f_i =
  (\xi_i,c_i,\decomp\phi_i,\decomp\psi_i)$.

  Let $\graphs = \prod_i\graphs_i$.  We will now define a metric on $\graphs$.
  Let
  \[
    \distance_i(\gamma_i,\theta_i) = \sup_{\xi\in[1-\mu,1]^2}
    \frac{\abs{\gamma_i(\xi) - \theta_i(\xi)}}%
    {\abs{\xi-\xi_i}},
    \qquad \gamma_i, \theta_i \in \graphs_i,
  \]
  and define
  \[
    \distance(\gamma,\theta) =
    \sup_{i\in\ints} \distance_i(\gamma_i,\theta_i),
    \qquad \gamma,\theta \in \graphs.
  \]
  This metric turns $(\graphs,\distance)$ into a complete metric space.  Note
  that it is not enough to simply use a $\setC^0$--metric since we do not have
  a contracting subspace of~$\opD\opR$.  The denominator in the definition of
  $\distance_i$ is thus necessary to turn the graph transform into a
  contraction.

  We can now define the graph transform $\Gamma: \graphs \to \graphs$ for
  $\decomp f_0$.  Let $\gamma_i \in \graphs_i$ and define $\Gamma_i(\gamma_i)$
  to be the $\gamma'_{i+1} \in \graphs_{i+1}$ such that
  \[
    \graph \gamma'_{i+1} =
    \opR_{\omega_i}(\graph \gamma_i \cap \lorenzd_{\omega_i}) \cap \blok.
  \]
  Let us discuss why this is a well-defined map $\Gamma_i: \graphs_i \to
  \graphs_{i+1}$.
  \lemref{surface-tube} shows that
  $\opR_{\omega_i}(\graph \gamma_i \cap \lorenzd_{\omega_i})$ is the graph of
  some map $I \subset \reals^2 \to (0,1) \times \pdecomps^2$, where $I$ is
  simply connected.  That $I \supset [1-\mu,1]^2$ is a consequence of how
  $\blok$ was chosen.  Finally, this map is $\kappa$--Lipschitz by
  \propref{cone-field}.

  Actually, we have cheated a little bit here since
  \propref{cone-field} is stated for maps satisfying the extra condition
  \[
    1 - \rcv(\opR f) \geq \lambda,
  \]
  for some $\lambda \in (0,1)$ not depending on $\decomp f \in
  \limitset_\Omega$.  In defining the graph transform we should intersect
  $\lorenzd_{\omega_i}$ with the set defined by this condition before mapping
  it forward by $\opR_{\omega_i}$.  Otherwise we do not have enough information
  to deduce that the entire image is $\kappa$--Lipschitz as well.  However,
  this problem is artificial.  We are free to choose the constant $\lambda$ as
  close to~$0$ as we like and we would still get the invariant cone field
  (although $\lbb$ may need to be increased).  All this means is that
  domain~$I$ of the theorem is slightly smaller than it should be (we have to
  cut out a small part of the graph where $v$ is very close to~$0$ but $v$ is
  still allowed to range all the way up to~$1$ so this amounts to a very small
  part of the domain).  This is one reason why we say that ``$I$ is essentially
  given by \ldots'' in the statement of the theorem.  The other reason is that
  the intersection with $\opR(\lorenzd_\omega)$ should be taken with a surface
  with a small angle and not a surface which is parallel to the $(u,v)$--plane.

  The graph transform is now defined by
  \[
    \Gamma(\gamma) = \big\{\Gamma_i(\gamma_i)\big\}_{i\in\ints},
    \qquad \gamma = \{\gamma_i\}_{i\in\ints} \in \graphs.
  \]

  We claim that $\Gamma$ is a contraction on $(\graphs,\distance)$ and hence
  the contraction mapping theorem implies that~$\Gamma$ has a unique fixed
  point $\gamma^* \in \graphs$.  The global unstable manifolds along $\{\decomp
  f_i\}$ are then given by
  \[
    \mfd^u(\decomp f_{i+1}) =
    \graph \Gamma_i(\gamma^*_i), \quad \forall i \in \ints.
  \]
  In particular, this proves existence and uniqueness of the global unstable
  manifold at $\decomp f_0$.  That these are the \emph{global} unstable
  manifolds is a consequence of $\lorenzd_\Omega \cap \dinvset \subset \blok$.
  Furthermore, the Lipschitz constant for these graphs is much smaller than~$1$
  since we can pick $\kappa$ close to~$\lb\kappa$.  Again, we are cheating a
  little bit here since we have to cut out a small part of the domain of the
  graph as discussed above.

  \medskip
  \textit{Step 2.}
  We now prove that $\Gamma$ is a contraction.  The focus will be on $\Gamma_i$
  for now and to avoid clutter we will drop subscripts on elements of
  $\graphs_i$ and $\graphs_{i+1}$.  Pick $\gamma,\theta \in \graphs_i$ and let
  $\gamma' = \Gamma_i(\gamma)$ and $\theta' = \Gamma_i(\theta)$.  Note that
  $\gamma',\theta' \in \graphs_{i+1}$.

  We write
  \[
    \opR \decomp f = (A(\xi,\eta),B(\xi,\eta)),
  \]
  where $\decomp
  f = (\xi,\eta)$, $\xi \in \reals^2$ and $A(\xi,\eta) \in
  \reals^2$.  Let $A_\gamma(\xi) = A(\xi,\gamma(\xi))$ and
  similarly $B_\gamma(\xi) = B(\xi,\gamma(\xi))$.  With this
  notation the action of $\Gamma_i$ is given by
  \[
    \big(\xi,\gamma(\xi)\big) \mapsto
    \big(A_\gamma(\xi), B_\gamma(\xi)\big) =
    \big(\xi',\gamma'(\xi')\big).
  \]
  Hence
  \[
    \distance_{i+1}(\gamma',\theta') =
      \sup_{\xi'} 
      \frac{\norm{\gamma'(\xi') - \theta'(\xi')}}%
      {\norm{\xi' - \xi_{i+1}}}
    = \sup_{A_\gamma(\xi)} 
      \frac{\norm{\gamma' \circ A_\gamma(\xi)
      - \theta' \circ A_\gamma(\xi)}}%
    {\norm{A_\gamma(\xi) - A_\gamma(\xi_i)}}.
  \]
  Recall that the notation here is $(\xi_i,\gamma(\xi_i)) = \decomp
  f_i$ and $(\xi_{i+1},\gamma'(\xi_{i+1})) = \decomp f_{i+1}$.

  The last numerator can be estimated by
  \begin{align*}
    &\norm{\gamma' \circ A_\gamma(\xi) -
      \theta' \circ A_\gamma(\xi)} \\
    &\qquad\leq \norm{\gamma' \circ A_\gamma(\xi) -
      \theta' \circ A_\theta(\xi)} +
      \norm{\theta' \circ A_\gamma(\xi) -
      \theta' \circ A_\theta(\xi)} \\
    &\qquad\leq \norm{B_\gamma(\xi) - B_\theta(\xi)} +
      \kappa\norm{A_\gamma(\xi) - A_\theta(\xi)} \\
    &\qquad\leq \left(\norm{M_4} + \kappa\norm{M_2}\right)
      \norm{\gamma(\xi) - \theta(\xi)}.
  \end{align*}
  The denominator can bounded by \propref{cone-expansion}
  \[
    \norm{A_\gamma(\xi) - A_\gamma(\xi_i)}
    \geq
    k \cdot \min\{\abs{U}\inv,\abs{V}\inv\} \cdot
    \norm{\xi - \xi_i}.
  \]
  Thus
  \[
    \distance_{i+1}(\gamma',\theta') \leq
    \frac{
    (\norm{M_4} + \kappa\norm{M_2})
    }{k \cdot \min\{\abs{U}\inv,\abs{V}\inv\}}
    \distance_i(\gamma,\theta)
    = \nu \distance_i(\gamma,\theta).
  \]
  \thmref{opnorms} shows that $\nu \ll 1$ uniformly in the index~$i$.  Hence
  $\Gamma$ is a (very strong) contraction.

  \medskip
  \textit{Step 3.}
  \TODO{prove this}
  Going from Lipschitz to $\setC^1$ smoothness of the unstable manifold is a
  standard argument.  See for example \citet[Chapter~6.2]{KH95}.
\end{proof}


\appendix

\section{A fixed point theorem} 
\label{sec:fixpt-thm}

The following theorem is an adaptation of \citet[Theorem~4.7]{GD03}.

\begin{theorem} \label{thm:top-fp}
  Let $X \subset Y$ where $X$ is closed and $Y$ is a normal topological space.
  If $f:X \to Y$ is homotopic to a map $g: X \to Y$ with the property that
  every extension of $g|_{\bndry X}$ to~$X$ has a fixed point in~$X$, and if
  the homotopy $h_t$ has no fixed point on~$\bndry X$ for every $t \in \uint$,
  then $f$ has a fixed point in~$X$.
\end{theorem}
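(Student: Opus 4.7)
The approach will be to argue by contradiction via a Urysohn-type construction. Assuming $f$ has no fixed point in $X$, I will use the homotopy $h: X \times \uint \to Y$ (with $h_0 = f$, $h_1 = g$) to manufacture an extension of $g|_{\bndry X}$ to $X$ which likewise has no fixed point, directly contradicting the property assumed of $g$.

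The key object is the fixed-point locus of the whole homotopy,
\[
  T = \{(x,t) \in X \times \uint \mid h_t(x) = x\}.
\]
This is closed in $X \times \uint$ by continuity of $h$, and because $\uint$ is compact its projection $T' \subset X$ is also closed (the projection $X \times \uint \to X$ is a closed map). The hypothesis that $h_t$ has no fixed point on $\bndry X$ for any $t$ translates directly into $T' \cap \bndry X = \emptyset$. Since $X$ is closed in the normal space $Y$ it is itself normal, so Urysohn's lemma furnishes a continuous $\lambda: X \to \uint$ with $\lambda \equiv 0$ on $T'$ and $\lambda \equiv 1$ on $\bndry X$.

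I then define $\tilde g(x) = h_{\lambda(x)}(x)$. By construction $\tilde g = g$ on $\bndry X$ (since $\lambda = 1$ there and $h_1 = g$), so $\tilde g$ extends $g|_{\bndry X}$. If $\tilde g$ had a fixed point $x \in X$, then $(x,\lambda(x)) \in T$, hence $x \in T'$, hence $\lambda(x) = 0$, hence $f(x) = h_0(x) = x$, contradicting the initial assumption. Thus $\tilde g$ is a fixed-point-free extension of $g|_{\bndry X}$, contradicting the hypothesis on $g$. No real obstacle arises here; the only conceptual point is recognizing the homotopy as a continuous family which can be sliced by a Urysohn function that vanishes precisely where fixed points along the homotopy could occur, and equals one on the boundary so as to recover $g$ there.
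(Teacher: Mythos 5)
Your proof is correct and follows essentially the same approach as the paper: both construct a Urysohn function that is $0$ on the fixed-point locus of the entire homotopy and $1$ on $\bndry X$, then use it to splice $h$ into an extension of $g|_{\bndry X}$ whose fixed points are forced to be fixed points of $f$. One small improvement on your side: you establish closedness of the fixed-point locus by projecting the closed set $T \subset X \times \uint$ along the compact factor $\uint$, which is valid in any topological space, whereas the paper's sequential argument tacitly assumes that sequences detect closure.
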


\begin{remark}
  Note that the statement is such that $X$ must have nonempty interior.  This
  follows from the assumption that $g$ has a fixed point (since it is an
  extension of $g|_{\bndry X}$) but the requirement on the homotopy implies
  that $g$ has no fixed point on~$\bndry X$.
\end{remark}

\begin{proof}
  Let $F_t$ be the set of fixed points of $h_t$ and let $F = \bigcup F_t$.
  Since $g$ must have a fixed point $F$ is nonempty.  Since $h_t$ has no fixed
  points on~$\bndry X$ for every $t$, $F$ and $\bndry X$ are disjoint.

  We claim that $F$ is closed.  To see this, let $\{x_n \in F\}$ be a
  convergent sequence, let $x = \lim x_n$.  Note that $x \in X$ since $F
  \subset X$ and $X$ is closed.  By definition there exists $t_n \in \uint$
  such that $x_n = h(x_n,t_n)$.  Pick a convergent subsequence $t_{n_k} \to t$.
  Since $x_n$ is convergent $h(x_{n_k},t_{n_k}) = x_{n_k} \to x$, but at the
  same time $h(x_{n_k},t_{n_k}) \to h(x,t)$ since $h$ is continuous.  Hence
  $h(x,t) = x$, that is $x \in F$ which proves the claim.

  Since $Y$ is normal and $\bndry X$ and $F$ are disjoint closed sets there
  exists a map $\lambda: X \to \uint$ such that $\lambda|_F = 0$ and
  $\lambda|_{\bndry X} = 1$.  Define $\bar{g}(x) = h(x; \lambda(x))$.  Then
  $\bar{g}$ is an extension of $g|_{\bndry X}$ since if $x \in \bndry X$, then
  $\bar{g}(x) = h(x,1) = g(x)$.  Hence $\bar{g}$ has a fixed point $p \in X$.
  However, $p$ must also be a fixed point of $f$ since $p = \bar{g}(p) =
  h(p,\lambda(p))$ so that $p \in F$ and consequently $p = \bar{g}(p) = h(p,0)
  = f(p)$.
\end{proof}


\section{The nonlinearity operator} 
\label{sec:nonlin}

In this appendix we collect some results on the nonlinearity operator.  The
proofs are either simple calculations or can be found for example in the
appendix of \citep{Mar98}.

\begin{definition}
  Let $\setC^k(A;B)$ denote the set of $k$ times continuously differentiable
  maps $f:A\to B$ and let $\setD^k(A;B)\subset\setC^k(A;B)$ denote the subset
  of orientation-preserving homeomorphisms whose inverse lie in $\setC^k(B;A)$.
  
  As a notational convenience we write $\setC^k(A)$ instead of $\setC^k(A;A)$,
  and $\setC^k$ instead of $\setC^k(A;B)$ if there is no need to specify $A$
  and~$B$ (and similarly for $\setD^k$).
\end{definition}

\begin{definition}
  The nonlinearity operator\index{nonlinearity operator}
  $\opN:\setD^2(A;B)\to\setC^0(A;\reals)$ is defined by
  \begin{equation} \label{eq:nonlin}
    \opN\phi = \opD\log\opD\phi.
  \end{equation}
  We say that $\opN\phi$ is the \Index{nonlinearity} of $\phi$.
\end{definition}

\begin{remark}
  Note that
  \[
    \opN\phi = \frac{\opD^2\phi}{\opD\phi}.
  \]
\end{remark}

\begin{definition}
  The \Index{distortion} of $\phi \in \setD^1(A;B)$ is defined by
  \[
    \distortion\phi = \sup_{x,y\in A}\log
    \frac{\opD\phi(x)}{\opD\phi(y)}.
  \]
\end{definition}

\begin{remark}
  We think of the nonlinearity of $\phi \in \setD^2(A;B)$ as the density for
  the distortion of~$\phi$. \index{nonlinearity!as density for distortion}  To
  understand this remark, let $d\mu = \opN\phi(t) dt$.  Assuming $\opN\phi$ is
  a positive function, then $\mu$ is a measure and
  \[
    \distortion\phi = \int_A d\mu,
  \]
  since by \eqnref{nonlin}
  \[
    \int_x^y \opN\phi(t) dt = \log \frac{\opD\phi(y)}{\opD\phi(x)}.
  \]
  If $\opN\phi$ is negative, then $-\opN\phi(t)$ is a density.  The only
  problem with the interpretation of $\opN\phi$ as a density occurs when it
  changes sign.  Intuitively speaking, we can still think of the nonlinearity
  as a \emph{local} density of the distortion (away from the zeros
  of~$\opN\phi$).

  Note that $\opN\phi$ does not change sign in the important special case of
  $\phi$ being a pure map (i.e.\ a restriction of $x^\alpha$).  So the
  (absolute value of the) nonlinearity \emph{is} the density for the distortion
  of pure maps.
\end{remark}

\begin{lemma}
  The kernel of $\opN:\setD^2(A;B)\to\setC^0(A;\reals)$ equals the
  orientation-preserving affine map that takes $A$ onto~$B$.
\end{lemma}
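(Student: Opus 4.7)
The plan is to prove this by showing both inclusions, both of which reduce to elementary calculus on the defining formula $\opN\phi = \opD\log\opD\phi$.

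For the easy inclusion, I would first observe that if $\phi: A \to B$ is the orientation-preserving affine bijection, then $\opD\phi$ equals the positive constant $\abs{B}/\abs{A}$, so $\log\opD\phi$ is constant, and hence $\opN\phi = \opD\log\opD\phi \equiv 0$.

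For the reverse inclusion, suppose $\phi \in \setD^2(A;B)$ satisfies $\opN\phi = 0$. Then $\opD\log\opD\phi \equiv 0$ on the connected interval $A$, so $\log\opD\phi$ is constant, hence $\opD\phi \equiv k$ for some constant $k$. Since $\phi$ is orientation-preserving we have $k > 0$, so $\phi$ is affine of the form $\phi(x) = kx + b$. The bijection condition $\phi(A) = B$ combined with orientation-preservation pins down $k$ and $b$ uniquely, forcing $\phi$ to be \emph{the} orientation-preserving affine map from $A$ onto $B$.

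There is really no obstacle here since the proof is a direct unwinding of the definition; the only thing to be careful about is that one implicitly uses connectedness of $A$ (to conclude that a function with vanishing derivative is constant) and orientation-preservation (to rule out the sign ambiguity when exponentiating $\log\opD\phi$). Both are built into the setting. I would present the proof as two short paragraphs matching the two inclusions above.
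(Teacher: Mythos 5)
Your proof is correct, and it is precisely the ``simple calculation'' the paper alludes to without spelling out (the appendix remarks that these nonlinearity lemmas are either routine or found in~\citet{Mar98}). Both inclusions are handled cleanly, and you rightly flag the two implicit hypotheses that make the converse work: connectedness of~$A$ (an interval) so that vanishing derivative forces a constant, and orientation preservation so that the constant is positive; nothing further is needed.
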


\begin{lemma}
  The nonlinearity operator $\opN:\setD^2(A;B)\to\setC^0(A;\reals)$ is a
  bijection. In the specific case of $A=B=\uint$ the inverse is given by
  \begin{equation}
    \opN\inv f(x) = \frac{\int_0^x\exp\{\int_0^s f(t)\intd t\}\intd s}%
      {\int_0^1\exp\{\int_0^s f(t)\intd t\}\intd s}.
  \end{equation}
\end{lemma}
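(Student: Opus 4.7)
The plan is to address injectivity and surjectivity separately, with the explicit formula providing the construction for the latter in the case $A=B=\uint$.

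For injectivity: suppose $\phi_1,\phi_2\in\setD^2(A;B)$ satisfy $\opN\phi_1=\opN\phi_2$. Then $\opD\log(\opD\phi_1/\opD\phi_2)=0$, so $\opD\phi_1=\lambda\cdot\opD\phi_2$ for some constant $\lambda>0$; integrating and using that both maps send the endpoints of $A$ to the endpoints of $B$ pins down $\lambda=1$ and the constant of integration, giving $\phi_1=\phi_2$. (Alternatively, apply the previous lemma to $\phi_1\circ\phi_2\inv$, which must then be the identity on~$B$.)

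For surjectivity in the case $A=B=\uint$: given $f\in\setC^0(\uint;\reals)$, define $\phi$ by the right-hand side of the displayed formula. The plan is to verify in order that $\phi(0)=0$ and $\phi(1)=1$ (immediate from the normalization), that $\opD\phi(x)=\exp\{\int_0^x f(t)\,\intd t\}/Z$, where $Z$ denotes the normalizing denominator, so in particular $\opD\phi>0$ and $\phi$ is an orientation-preserving homeomorphism of~$\uint$; that $\opD\phi\in\setC^1$ (hence $\phi\in\setC^2$) with a strictly positive derivative, so $\phi\inv\in\setC^2$ as well, and therefore $\phi\in\setD^2(\uint)$; and finally that
\[
  \opN\phi(x)=\opD\log\opD\phi(x)=\opD\!\left(\int_0^x f(t)\,\intd t - \log Z\right)=f(x),
\]
so $\opN\phi=f$ as required.

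For general $A=[a_0,a_1]$ and $B=[b_0,b_1]$, the plan is to conjugate by the orientation-preserving affine maps $\zeta_A:\uint\to A$ and $\zeta_B:\uint\to B$. Given $f\in\setC^0(A;\reals)$, form $\tilde f=\abs{A}\cdot(f\circ\zeta_A)\in\setC^0(\uint;\reals)$; let $\tilde\phi=\opN\inv\tilde f$ via the formula above; and set $\phi=\zeta_B\circ\tilde\phi\circ\zeta_A\inv$. A short computation using the chain rule for nonlinearities (noting that affine maps have zero nonlinearity and that rescaling the domain by a factor $\abs{A}$ rescales the nonlinearity by $1/\abs{A}$) shows $\opN\phi=f$. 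There is no real obstacle here; the only mildly delicate point is keeping track of the affine rescaling factor in the nonlinearity when changing coordinates, but this is routine.
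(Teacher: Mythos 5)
Your proof is correct. The paper does not supply a proof of this lemma (the appendix states the results are "simple calculations" and refers to Martens' paper), so there is nothing to compare against line by line, but your argument is the standard one: injectivity follows from integrating $\opD\log\opD\phi_1 = \opD\log\opD\phi_2$ to get $\opD\phi_1 = \lambda\,\opD\phi_2$ and then using the shared endpoint conditions to force $\lambda=1$; surjectivity on $\uint$ follows by direct verification of the displayed formula; and the general case follows by conjugating with $\zeta_A$, $\zeta_B$, where the factor $\abs{A}$ in $\tilde f$ is exactly what the chain rule for nonlinearities (\lemref{N-chain-rule}) together with $\opN(\text{affine})=0$ requires — this is the same rescaling identity recorded in \lemref{zoom}, $\opN(\opZ\phi)=\abs{A}\cdot\opN\phi\circ\zeta_A$. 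Your alternative injectivity argument via $\phi_1\circ\phi_2\inv$ having zero nonlinearity, hence being the affine self-map of $B$, i.e.\ the identity, is also fine and ties directly to the preceding lemma on the kernel of $\opN$.
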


\begin{lemma}[The chain rule for the nonlinearity operator]
  \label{lem:N-chain-rule}
  \index{chain rule!for nonlinearities}
  If $\phi,\psi\in\setD^2$ then
  \begin{equation}
    \opN(\psi\circ\phi) = \opN\psi\circ\phi\cdot\opD\phi + \opN\phi.
  \end{equation}
\end{lemma}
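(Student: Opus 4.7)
The plan is to unfold the definition of the nonlinearity operator and apply the ordinary chain rule twice, once inside the logarithm and once outside. This is a purely computational identity, so there is no real obstacle---the only ``trick'' is to pass the product inside $\opD(\psi\circ\phi)$ through the logarithm to turn it into a sum before differentiating.

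First I would write, by the classical chain rule,
\[
  \opD(\psi\circ\phi)(x) = \opD\psi(\phi(x))\cdot\opD\phi(x).
\]
Since both factors on the right are strictly positive (as $\phi,\psi \in \setD^2$ are orientation-preserving), I may take logarithms to get
\[
  \log\opD(\psi\circ\phi)(x) = \log\opD\psi(\phi(x)) + \log\opD\phi(x).
\]
Differentiating both sides in $x$ and applying the chain rule to the first term on the right, I obtain
\[
  \opD\bigl[\log\opD(\psi\circ\phi)\bigr](x)
  = \bigl(\opD\log\opD\psi\bigr)(\phi(x))\cdot\opD\phi(x)
    + \opD\log\opD\phi(x).
\]
Recognizing each expression $\opD\log\opD(\cdot)$ as the nonlinearity of the corresponding diffeomorphism yields
\[
  \opN(\psi\circ\phi)(x) = \opN\psi(\phi(x))\cdot\opD\phi(x) + \opN\phi(x),
\]
which is the claimed identity.

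The only thing worth pausing on is regularity: the manipulation above needs $\log\opD\phi$ and $\log\opD\psi$ to be $\setC^1$, which is precisely guaranteed by $\phi,\psi \in \setD^2$ (so that $\opD\phi$, $\opD\psi$ are $\setC^1$ and bounded away from zero). Once this is observed, the computation is completely mechanical and no further work is required.
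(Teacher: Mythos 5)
Your proof is correct and follows essentially the same route as the paper's: apply the classical chain rule to $\opD(\psi\circ\phi)$, split the logarithm of the product into a sum, and differentiate. The explicit remark about positivity of derivatives and the regularity needed for the manipulation is a minor, welcome addition but does not change the argument.
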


\begin{definition}
  We turn $\setD^2(A;B)$ into a Banach space by inducing the usual linear
  structure and uniform norm of $\setC^0(A;\reals)$ via the nonlinearity
  operator.  That is, we define
  \begin{align}
    \alpha\phi+\beta\psi &= \opN\inv\left(\alpha\opN\phi
      + \beta\opN\psi\right), \\
    \norm{\phi} &= \sup_{t\in A}~\abs{\opN\phi(t)},
  \end{align}
  for $\phi,\psi\in\setD^2(A;B)$ and $\alpha,\beta\in\reals$.
\end{definition}

\begin{lemma} \label{lem:Nprop}
  If $\phi\in\setD^2(A;B)$ then
  \begin{gather}
    \e^{-\abs{y-x}\cdot\norm{\phi}} \leq \frac{\opD\phi(y)}{\opD\phi(x)}
      \leq \e^{\abs{y-x}\cdot\norm{\phi}}, \label{eq:dist} \\
    \frac{\abs{B}}{\abs{A}}\cdot \e^{-\norm{\phi}} \leq \opD\phi(x) 
      \leq \frac{\abs{B}}{\abs{A}}\cdot \e^{\norm{\phi}}, \label{eq:deriv} \\
    \abs{\opD^2\phi(x)} \leq \frac{\abs{B}}{\abs{A}}\cdot
      \norm{\phi}\cdot\e^{\norm{\phi}}, \label{eq:sndderiv}
  \end{gather}
  for all $x,y\in A$.
\end{lemma}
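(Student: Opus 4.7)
The plan is to prove (1) by integrating the nonlinearity, to derive (2) from (1) via the mean value theorem, and finally to obtain (3) algebraically from (2) and the definition $\opN\phi = \opD^2\phi/\opD\phi$.

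For (1), the fundamental theorem of calculus gives
\[
\log\frac{\opD\phi(y)}{\opD\phi(x)} = \int_x^y \opD\log\opD\phi(t)\,dt = \int_x^y \opN\phi(t)\,dt,
\]
so that
\[
\abs[\bigg]{\log\frac{\opD\phi(y)}{\opD\phi(x)}} \leq \abs{y-x}\cdot\sup_{t\in A}\abs{\opN\phi(t)} = \abs{y-x}\cdot\norm{\phi}.
\]
Exponentiating yields \eqnref{dist}. This step is the conceptual heart of the lemma: the norm of $\phi$ is, by design, the quantity which controls the logarithmic variation of $\opD\phi$ on unit intervals.

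For (2), the mean value theorem produces a point $\xi\in A$ such that $\opD\phi(\xi) = \abs{B}/\abs{A}$. Applying \eqnref{dist} to the pair $(x,\xi)$ and using that $\abs{x-\xi}\leq\abs{A}\leq 1$ (since $A\subset\uint$) gives both the upper and lower bounds of \eqnref{deriv} at once. For (3), differentiating the identity $\opN\phi = \opD\log\opD\phi$ shows that $\opD^2\phi(x) = \opN\phi(x)\cdot\opD\phi(x)$, so taking absolute values and combining the pointwise bound $\abs{\opN\phi(x)}\leq\norm{\phi}$ with \eqnref{deriv} yields \eqnref{sndderiv} directly.

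There is no real obstacle in this proof; the lemma is really a triad of immediate consequences of the definitions, designed to make later estimates (distortion control, Koebe-type arguments, and the derivative computations in \secref{the-derivative}) a matter of reading off the $\setC^0$--nonlinearity norm. The only subtle point worth flagging is the implicit normalization $\abs{A}\leq 1$ needed to pass from $e^{\abs{y-x}\norm{\phi}}$ in \eqnref{dist} to $e^{\norm{\phi}}$ in \eqnref{deriv}, which is tacit in the convention $A\subset\uint$ used throughout the paper.
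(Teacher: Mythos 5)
Your proof is correct and takes essentially the same route as the paper's own (commented-out) argument: integrate the nonlinearity for \eqnref{dist}, invoke the mean value theorem for \eqnref{deriv}, and use $\opD^2\phi = \opN\phi\cdot\opD\phi$ for \eqnref{sndderiv}. Your explicit flag of the tacit normalization $\abs{A}\leq 1$ needed to pass from $e^{\abs{y-x}\norm{\phi}}$ to $e^{\norm{\phi}}$ is a small but genuine improvement in precision over the paper's terse proof, which silently relies on the same convention.
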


\begin{lemma} \label{lem:Nprop2}
  If $\phi,\psi\in\setD^2(A;B)$ then
  \begin{gather}
    \abs{\phi(x)-\psi(x)} \leq \big(\e^{2\norm{\phi-\psi}} - 1\big)
      \cdot \min\{\phi(x),1-\phi(x)\}, \\
    \e^{-\norm{\phi-\psi}} \leq \frac{\opD\phi(x)}{\opD\psi(x)}
      \leq \e^{\norm{\phi-\psi}},
  \end{gather}
  for all $x\in A$.
\end{lemma}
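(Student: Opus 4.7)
My plan is to prove the second inequality first and then use it to derive the first. Both hinge on the identity $\opN(\phi-\psi) = \opN\phi - \opN\psi$, which follows from the linear structure on $\setD^2$ being induced through $\opN$, so that $\norm{\phi-\psi} = \sup \abs{\opN\phi - \opN\psi}$; after that observation everything reduces to one-variable calculus.

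For the derivative-ratio bound, I would introduce $g(x) = \log\bigl(\opD\phi(x)/\opD\psi(x)\bigr)$, whose derivative $g' = \opN\phi - \opN\psi$ is bounded pointwise by $\norm{\phi-\psi}$. The missing ingredient is a point where $g$ vanishes: since both $\phi$ and $\psi$ map $\uint$ onto $\uint$, one has $\int_0^1 \opD\phi = \int_0^1 \opD\psi = 1$, so the intermediate value theorem applied to the continuous function $\opD\phi - \opD\psi$ supplies some $x_0 \in \uint$ at which $g(x_0) = 0$. Integrating $g'$ from $x_0$ and using $\abs{x - x_0} \leq 1$ gives $\abs{g(x)} \leq \norm{\phi-\psi}$, which exponentiates to the claim.

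For the first inequality, set $K = \norm{\phi-\psi}$. The derivative-ratio bound just proved yields $\abs{\opD\phi(t) - \opD\psi(t)} = \opD\phi(t) \cdot \abs{1 - \opD\psi(t)/\opD\phi(t)} \leq (\e^K - 1)\,\opD\phi(t)$. Integrating from $0$ to $x$, with $\phi(0) = \psi(0) = 0$, gives $\abs{\phi(x) - \psi(x)} \leq (\e^K - 1)\phi(x)$; integrating instead from $x$ to $1$, with $\phi(1) = \psi(1) = 1$, gives the symmetric bound $\abs{\phi(x) - \psi(x)} \leq (\e^K - 1)(1 - \phi(x))$. Taking the minimum of the two and observing that $\e^K - 1 \leq \e^{2K} - 1$ yields the stated inequality, in fact with a factor of $2$ in the exponent to spare.

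The only step requiring any thought is the IVT observation providing the zero of $g$; without it one would pick up a spurious $\abs{A}$-type factor in the derivative bound. Beyond that, I do not anticipate any real obstacle — the rest is direct integration.
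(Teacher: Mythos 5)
Your proof is correct, and it is cleaner than the statement requires. The paper does not supply a proof for this appendix lemma (the preamble to \appref{nonlin} defers to ``simple calculations'' or the appendix of \citet{Mar98}), so there is no internal argument to compare against; but your argument is complete modulo one clarification you already hint at: the statement must be read with $A=B=\uint$, since $\min\{\phi(x),1-\phi(x)\}$ only makes sense when $B=\uint$ and the absence of an $\abs{A}$ factor in the exponent presupposes $\abs{A}=1$. Under that reading, the key ingredients check out: $\norm{\phi-\psi}=\sup\abs{\opN\phi-\opN\psi}$ is exactly what the $\opN$-induced linear structure gives, $\int_0^1(\opD\phi-\opD\psi)=0$ combined with continuity of $\opD\phi-\opD\psi$ (both maps are $\setC^2$) supplies the pivot $x_0$ via the intermediate value theorem, and integrating $\opN\phi-\opN\psi$ from $x_0$ gives the derivative-ratio bound as stated. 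Your passage to the first inequality by integrating $\abs{\opD\phi-\opD\psi}\le(\e^K-1)\opD\phi$ from either endpoint is also correct and in fact establishes the sharper constant $\e^K-1$ in place of $\e^{2K}-1$; the stated lemma is thus a weakening of what your argument proves, which is harmless. One caveat worth recording: the IVT step is genuinely necessary. Without a common normalization point, chaining \lemref{Nprop} separately for $\phi$ and $\psi$ would only give $\opD\phi/\opD\psi\in[\e^{-\norm{\phi}-\norm{\psi}},\e^{\norm{\phi}+\norm{\psi}}]$, which is not controlled by $\norm{\phi-\psi}$ at all, so the ``trick'' you flag is the heart of the proof, not a convenience.
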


\begin{definition}
  Let $\zeta_J:\uint\to J$ be the affine orientation-preserving map
  taking $\uint$ onto an interval~$J$.

  Define the \Index{zoom operator} $\opZ:\setD^2(A;B)\to\setD^2(\uint)$ by
  \begin{equation}
    \opZ\phi = \zeta\inv_B \circ \phi \circ \zeta_A.
  \end{equation}
\end{definition}

\begin{remark}
  Note that if $\phi \in \setD(A;B)$, then $B = \phi(A)$ so $\opZ\phi$ only
  depends on $\phi$ and $A$ (not on~$B$).  We will often write $\opZ(\phi;A)$
  instead of $\opZ\phi$ in order to emphasize the dependence on~$A$.
\end{remark}

\begin{lemma} \label{lem:zoom}
  If $\phi\in\setD^2(A;B)$ then
  \begin{align}
    \opZ(\phi\inv) &= (\opZ\phi)\inv, \label{eq:zoominv} \\
    \opN(\opZ\phi) &= \abs{A} \cdot \opN\phi \circ \zeta_A, \\
    \norm{\opZ\phi} &= \abs{A}\cdot\norm{\phi}. \label{eq:zoomcontract}
  \end{align}
\end{lemma}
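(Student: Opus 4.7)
The plan is to establish the three identities directly from the definitions, using the chain rule for nonlinearities (\lemref{N-chain-rule}) and the fact that affine maps have zero nonlinearity.

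For \eqnref{zoominv}, I would simply unfold the definition $\opZ\phi = \zeta_B\inv \circ \phi \circ \zeta_A$ and compute its inverse. Since the $\zeta$'s are affine bijections, taking inverses gives $(\opZ\phi)\inv = \zeta_A\inv \circ \phi\inv \circ \zeta_B$. On the other hand, $\phi\inv \in \setD^2(B;A)$, so by definition $\opZ(\phi\inv) = \zeta_A\inv \circ \phi\inv \circ \zeta_B$. The two expressions agree, which is a one-line verification.

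For \eqnref{zoomcontract}, I would write $\opZ\phi = \psi \circ \zeta_A$ where $\psi = \zeta_B\inv \circ \phi$, then apply the chain rule for nonlinearities twice. Since $\zeta_A$ is affine, $\opN\zeta_A \equiv 0$ and $\opD\zeta_A \equiv \abs{A}$, so the chain rule yields $\opN(\opZ\phi) = \opN\psi \circ \zeta_A \cdot \abs{A}$. A second application, using $\opN(\zeta_B\inv) \equiv 0$, gives $\opN\psi = \opN\phi$. Composing these identities produces exactly $\opN(\opZ\phi) = \abs{A} \cdot \opN\phi \circ \zeta_A$.

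Finally, \eqnref{zoomcontract} follows from \eqnref{zoominv} by taking the supremum of the absolute value over $\uint$: since $\zeta_A: \uint \to A$ is a bijection,
\[
\norm{\opZ\phi} = \sup_{x \in \uint} \abs{\opN(\opZ\phi)(x)} = \abs{A} \cdot \sup_{x \in \uint} \abs{\opN\phi(\zeta_A(x))} = \abs{A} \cdot \sup_{t \in A} \abs{\opN\phi(t)} = \abs{A} \cdot \norm{\phi}.
\]
There is no serious obstacle here; the only point requiring minor care is keeping track of which nonlinearity operator is acting on which space (specifically, that $\opN\zeta_A = 0$ as an element of $\setC^0(\uint;\reals)$ and $\opN\zeta_B\inv = 0$ as an element of $\setC^0(B;\reals)$), but this is automatic since both are affine.
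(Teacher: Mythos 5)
Your proof is correct and follows essentially the same route as the paper's: unfold the definition for \eqnref{zoominv}, use the chain rule for nonlinearities together with the vanishing of the nonlinearity of affine maps for the middle identity, and take suprema for \eqnref{zoomcontract}. One small slip: in your final paragraph you write that \eqnref{zoomcontract} follows from \eqnref{zoominv}, but it follows from the middle (unlabelled) identity, which is what your computation actually uses.
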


\section{The Schwarzian derivative} 
\label{sec:schwarz}

In this appendix we collect some results on the Schwarzian derivative.  Proofs
can be found in \citet[Chapter~IV]{dMvS93}.

\begin{definition}
  The \Index{Schwarzian derivative} $\opS:\setD^3(A;B)\to\setC^0(A;\reals)$ is
  defined by
  \begin{equation}
    \opS f = \opD(\opN f) - \frac{1}{2}(\opN f)^2.
  \end{equation}
\end{definition}

\begin{remark}
  Note that
  \[
    \opS f = \frac{\opD^3 f}{\opD f}
      - \frac{3}{2}\left[\frac{\opD^2 f}{\opD f}\right]^2\!\!\!.
  \]
\end{remark}

\begin{lemma}[The chain rule for the Schwarzian derivative]
  \label{lem:S-chain-rule}
  \index{chain rule!for Schwarzian derivative}
  If $f,g\in\setD^3$, then
  \begin{equation}
    \opS(f\circ g) = \opS f\circ g\cdot (\opD g)^2 + \opS g.
  \end{equation}
\end{lemma}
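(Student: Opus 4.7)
The plan is to compute $\opS(f\circ g)$ directly from the definition
\[
  \opS h = \opD(\opN h) - \tfrac{1}{2}(\opN h)^2,
\]
applied to $h = f \circ g$, using the already-established chain rule for the nonlinearity operator (\lemref{N-chain-rule}). Write $N = \opN(f\circ g) = \opN f \circ g \cdot \opD g + \opN g$. Then differentiate using the product and chain rules for ordinary differentiation to obtain
\[
  \opD N = \opD(\opN f)\circ g \cdot (\opD g)^2 + \opN f \circ g \cdot \opD^2 g + \opD(\opN g),
\]
and expand the square
\[
  N^2 = (\opN f \circ g)^2 (\opD g)^2 + 2(\opN f \circ g)(\opD g)(\opN g) + (\opN g)^2.
\]

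Next I would group the resulting terms into three blocks. The first block, $\opD(\opN f)\circ g \cdot (\opD g)^2 - \tfrac{1}{2}(\opN f \circ g)^2 (\opD g)^2$, is exactly $\opS f \circ g \cdot (\opD g)^2$. The third block, $\opD(\opN g) - \tfrac{1}{2}(\opN g)^2$, is exactly $\opS g$. The desired identity will therefore follow once I show that the remaining cross terms cancel, i.e.\
\[
  \opN f \circ g \cdot \opD^2 g \;-\; (\opN f \circ g)(\opD g)(\opN g) \;=\; 0.
\]

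The key observation closing the argument is the tautology $\opN g = \opD^2 g / \opD g$, which comes directly from the definition of the nonlinearity (and applies because $g \in \setD^3$ is orientation-preserving, hence $\opD g > 0$). This yields $(\opD g)(\opN g) = \opD^2 g$, making the cross terms vanish identically, and the lemma is proved. There is no serious obstacle here: this is a direct two-line algebraic verification, and the only thing to be careful about is keeping track of which factors are composed with $g$ versus evaluated at $g$'s argument when applying the chain rule.
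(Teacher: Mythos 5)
Your proof is correct and follows exactly the approach the paper takes: apply the chain rule for nonlinearities, differentiate, expand the square, and cancel the cross terms via the identity $\opD g \cdot \opN g = \opD^2 g$. Nothing to add.
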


\begin{lemma}[Koebe Lemma] \label{lem:S-koebe}
  \index{Koebe Lemma}
  If $f\in\setD^3((a,b);\reals)$ and $\opS f\geq0$, then
  \begin{equation}
    \abs{\opN f(x)} \leq 2\cdot\big[\min\{\abs{x-a},\abs{x-b}\}\big]\inv.
  \end{equation}
\end{lemma}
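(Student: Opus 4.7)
The plan is to rewrite the hypothesis $\opS f \geq 0$ as a Riccati-type differential inequality for $\eta := \opN f$ and then integrate it via the substitution $u = -1/\eta$. By definition,
\[
\opS f = \eta' - \tfrac{1}{2}\eta^2,
\]
so the assumption $\opS f \geq 0$ becomes the pointwise inequality $\eta' \geq \tfrac{1}{2}\eta^2$. Since $f \in \setD^3$, $f'$ has no zeros on $(a,b)$, so $\eta = f''/f'$ is a well-defined continuous function on $(a,b)$. Moreover the inequality already forces $\eta' \geq 0$, so $\eta$ is non-decreasing on $(a,b)$.

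Fix $x \in (a,b)$ and split into cases according to the sign of $\eta(x)$. If $\eta(x) > 0$, then monotonicity gives $\eta(y) \geq \eta(x) > 0$ for all $y \in [x,b)$, so $1/\eta$ is well-defined and positive on $[x,b)$. On this interval,
\[
  -\Bigl(\frac{1}{\eta}\Bigr)' = \frac{\eta'}{\eta^{2}} \geq \frac{1}{2},
\]
and integrating from $x$ to $y \in (x,b)$ yields $\frac{1}{\eta(x)} - \frac{1}{\eta(y)} \geq \frac{y-x}{2}$. Since $1/\eta(y) > 0$, we obtain $\eta(x) \leq \frac{2}{y-x}$, and letting $y \uparrow b$ gives $\eta(x) \leq 2/(b-x)$. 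If $\eta(x) < 0$, then by monotonicity $\eta(y) \leq \eta(x) < 0$ for $y \in (a,x]$, so the same computation on $(a,x]$ (integrating from $y$ to $x$) produces $\frac{1}{\eta(y)} - \frac{1}{\eta(x)} \geq \frac{x-y}{2}$; since $1/\eta(y) < 0$, this forces $\frac{x-y}{2} \leq -\frac{1}{\eta(x)}$, and letting $y \downarrow a$ gives $|\eta(x)| \leq 2/(x-a)$. The case $\eta(x) = 0$ is trivial.

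Combining the two cases,
\[
  |\eta(x)| \leq \max\Bigl\{\frac{2}{b-x},\frac{2}{x-a}\Bigr\}
         = \frac{2}{\min\{|x-a|,|x-b|\}},
\]
which is the claim. The only subtlety is ensuring that $1/\eta$ stays finite on the interval of integration; this is handled automatically by the monotonicity of $\eta$, which prevents $\eta$ from crossing zero in the relevant half-interval starting at $x$. No further estimates are needed.
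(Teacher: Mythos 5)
Your proof is correct. It's worth noting that the paper itself does not actually prove this lemma; it refers the reader to the literature (the commented-out ``proof'' in the source cites Jia, Lemma~2.4, and de Melo--van Strien, Section~IV.3). So there is no in-paper argument to compare against. What you have written is the standard self-contained proof of this one-sided Koebe estimate: rewrite $\opS f \geq 0$ as the Riccati inequality $\eta' \geq \tfrac12\eta^2$ for $\eta = \opN f$, observe that $\eta$ is non-decreasing, and integrate $-(1/\eta)' \geq \tfrac12$ on the half-interval on which the sign of $\eta$ forbids a zero-crossing. Two small observations. First, your argument actually yields the slightly sharper pointwise statement that $\eta(x) \leq 2/(b-x)$ whenever $\eta(x) > 0$ and $\eta(x) \geq -2/(x-a)$ whenever $\eta(x) < 0$; the lemma's bound with the $\min$ in the denominator is the weaker consequence, as you correctly note when you pass to the maximum. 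Second, the hypothesis that $f \in \setD^3$ is exactly what you need to make $\eta = \opN f \in \setC^1$ and $f' > 0$, so the integration is legitimate; you say this, and it is the right thing to say.

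Since the paper outsources this lemma, your direct Riccati argument is a genuine addition rather than a restatement: it is more elementary (no cross-ratio machinery, no M\"obius normalization) and makes the factor $2$ transparent as coming from the coefficient $\tfrac12$ in the Schwarzian. The approach in de Melo--van Strien is more general (two-sided Koebe with arbitrary space $\tau$ on both sides, stated in terms of distortion rather than pointwise nonlinearity), so it buys uniformity results that your pointwise bound does not immediately give; but for the statement as formulated here, your proof is exactly the right level of generality.
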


\begin{corollary} \label{cor:koebe}
  Let $\tau>0$ and let $f\in\setD^3(A;B)$.  If $f$ extends to a map
  $F\in\setD^3(I;J)$ with $\opS F<0$ and if $J\setminus B$ has two components,
  each having length at least $\tau\abs{B}$, then
  \[
    \norm{\opZ f} \leq \e^{2/\tau}\cdot 2/\tau.
  \]
\end{corollary}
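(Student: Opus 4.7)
The plan is to reduce the claim to the Koebe Lemma (\lemref{S-koebe}) applied to the inverse $F^{-1}: J \to I$. Since $\opS F < 0$, the chain rule for the Schwarzian derivative (\lemref{S-chain-rule}) applied to $\id = F \circ F^{-1}$ yields $\opS F^{-1}(y) = -\opS F(F^{-1}(y)) \cdot (\opD F^{-1}(y))^2 > 0$ on $J$, so \lemref{S-koebe} gives
\[
  \abs{\opN F^{-1}(y)} \leq \frac{2}{\min\{\abs{y-y_1},\abs{y-y_2}\}}, \qquad y \in J,
\]
where $J = (y_1,y_2)$. By hypothesis each component of $J \setminus B$ has length at least $\tau\abs{B}$, so for $y \in B$ the right-hand side is at most $2/(\tau\abs{B})$.

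Next I would translate this back to a bound on $F$ over $A$. Applying the chain rule for nonlinearities (\lemref{N-chain-rule}) to $\id = F \circ F^{-1}$ gives the identity $\opN F^{-1}(y) = -\opN F(x)/\opD F(x)$ with $x = F^{-1}(y)$, hence
\[
  \frac{\abs{\opN F(x)}}{\opD F(x)} \leq \frac{2}{\tau\abs{B}}, \qquad x \in A.
\]
To convert this into a sup bound on $\abs{\opN F}$ I need control on $\opD F$ over $A$, which I extract by the change of variables $s = F(t)$:
\[
  \int_A \abs{\opN F(t)}\,dt = \int_B \abs{\opN F^{-1}(s)}\,ds \leq \frac{2}{\tau},
\]
so the total variation of $\log \opD F$ on $A$ is at most $2/\tau$. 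Combined with a mean-value point $x_0 \in A$ satisfying $\opD F(x_0) = \abs{B}/\abs{A}$, this yields $\opD F(x) \leq \e^{2/\tau}\abs{B}/\abs{A}$ for every $x \in A$.

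Putting the two estimates together,
\[
  \abs{\opN F(x)} \leq \frac{2\, \opD F(x)}{\tau\abs{B}} \leq \frac{2\e^{2/\tau}}{\tau \abs{A}}, \qquad x \in A,
\]
and the scaling identity $\norm{\opZ f} = \abs{A} \cdot \sup_{x \in A} \abs{\opN F(x)}$ from \lemref{zoom} then gives $\norm{\opZ f} \leq 2\e^{2/\tau}/\tau$, matching the claim. I do not expect any serious obstacle here; the argument is essentially bookkeeping around Koebe applied to $F^{-1}$, together with the standard distortion estimate obtained by integrating the nonlinearity against the change of variables $s = F(t)$.
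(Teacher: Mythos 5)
Your proof is correct and follows essentially the same route as the paper: apply the Koebe Lemma to $F^{-1}$ (whose Schwarzian is nonnegative by the Schwarzian chain rule), then transfer the resulting bound on $\opN F^{-1}$ back to a bound on $\opN F$ and rescale. The paper compresses the transfer step into the inequality $\norm{\opZ f}\le\e^{\norm{\opZ(f^{-1})}}\norm{\opZ(f^{-1})}$ using the packaged lemmas on inverses, zooms and derivative bounds, whereas you rederive the same derivative bound $\opD F\le\e^{2/\tau}\abs{B}/\abs{A}$ by hand via the change-of-variables identity $\int_A\abs{\opN F}=\int_B\abs{\opN F^{-1}}$ and a mean-value point; the content is the same.
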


\bibliographystyle{plainnat}
\bibliography{lrhs}


\end{document}